\theoremstyle{definition}
\newtheorem{theo}{Theorem}[section]
\newtheorem{lemma}[theo]{Lemma}
\newtheorem{prop}[theo]{Proposition}
\newtheorem{defi}[theo]{Definition}
\newtheorem{cor}[theo]{Corollary}
\newtheorem{pozn}[theo]{Remark}
\newtheorem{nota}[theo]{Notation}
\newtheorem{pr}[theo]{Example}
\newcommand{\ddashv}{\dashv\!\!\!\!\!\!\dashv}
\newcommand{\ttop}{\rotatebox{90}{$\ddashv$}}
\newcommand{\tbot}{\rotatebox{-90}{$\ddashv$}}
\newcommand{\talg}{\text{T-Alg}}
\newcommand{\talgl}{\text{T-Alg}_l}
\newcommand{\salgl}{\text{S-Alg}_l}
\newcommand{\talgs}{\text{T-Alg}_s}
\newcommand{\talgc}{\text{T-Alg}_c}
\newcommand{\psdalg}{\text{Ps-D-Alg}}
\newcommand{\rel}{\text{Rel}}
\newcommand\psd[2]{\text{Psd}[#1,#2]}
\newcommand\lax[2]{\text{Lax}[#1, #2]}
\newcommand\res[1]{\text{Res}(#1)}
\newcommand{\set}{\text{Set}}
\newcommand{\parf}{\text{Par}}
\newcommand{\cat}{\text{Cat}}
\newcommand{\CAT}{\text{CAT}}
\newcommand{\prof}{\text{PROF}}
\newcommand{\PROF}{\text{PROF}}
\newcommand\ob{\text{ob }}
\newcommand\sstackrel[2]{\stackrel{\mathmakebox[\widthof{#2}]{#1}}{#2}}
\newcommand\ca{\mathcal {A}}
\newcommand\cb{\mathcal {B}}
\newcommand\cc{\mathcal {C}}
\newcommand\cd{\mathcal {D}}
\newcommand\ce{\mathcal {E}}
\newcommand\cj{\mathcal {J}}
\newcommand\ck{\mathcal {K}}
\newcommand\cl{\mathcal {L}}
\newcommand\cp{\mathcal {P}}
\newcommand\cv{\mathcal {V}}
\newcommand\mbbd{\mathbb{D}}
\newcommand\mbbb{\mathbb{B}}
\newcommand\mbbc{\mathbb{C}}
\newcommand\mbba{\mathbb{A}}
\newcommand\mbbl{\mathbb{L}}
\definecolor{arsenic}{rgb}{0.23, 0.27, 0.29}
\definecolor{ashgrey}{rgb}{0.7, 0.75, 0.71}
\definecolor{charcoal}{rgb}{0.12, 0.18, 0.22}
\definecolor{bulgarianrose}{rgb}{0.28, 0.02, 0.03}
\definecolor{darkblue}{rgb}{0, 0, 0.5}
\date{\today}
\newcommand\blfootnote[1]{%
  \begingroup
  \renewcommand\thefootnote{}\footnote{#1}%
  \addtocounter{footnote}{-1}%
  \endgroup
}
\begin{document}
\title[Colax adjunctions and lax-idempotent pseudomonads]
{Colax adjunctions and lax-idempotent pseudomonads}
\author[Miloslav Štěpán]
{Miloslav Štěpán}

\makeatletter
\dottedcontents{section}[1.5em]{\bfseries}{1.3em}{.6em}
\makeatother

\begin{abstract}
We prove a generalization of a theorem of Bunge and Gray about forming colax adjunctions out of relative Kan extensions and apply it to the study of the Kleisli 2-category for a lax-idempotent pseudomonad. For instance, we establish the weak completeness of the Kleisli 2-category and describe colax change-of-base adjunctions between Kleisli 2-categories. Our approach covers such examples as the bicategory of small profunctors and the 2-category of lax triangles in a 2-category. The duals of our results provide lax analogues of classical results in two-dimensional monad theory: for instance, establishing the weak cocompleteness of the 2-category of strict algebras and lax morphisms and the existence of colax change-of-base adjunctions.
\end{abstract} 
%\keywords{}
%\subjclass{}

\blfootnote{Supported by the Grant agency of the Czech republic under the grant 22-02964S.}

\maketitle
\tableofcontents
%\addtocontents{toc}{~\hfill\textbf{Page}\par}

\newpage

\section{Introduction}%\label{SEKCE_Intro}  %386 OLD INTRO

The primary motivation for this paper is to develop \textbf{lax analogues} of classical results in two-dimensional algebra, in particular two-dimensional monad theory as studied in \cite{twodim}. The examples commonly studied in this area include 2-categories of categories with structure and pseudo morphisms between them -- functors that preserve the structure up to coherent isomorphism. For instance categories equipped with a class of colimits and colimit-preserving functors, or monoidal categories and monoidal functors. Such 2-categories can be described as the 2-category $\talg$ of $T$-algebras and pseudo-$T$-morphisms for a 2-monad $T$. Various results have been proven in \cite{twodim} about $T$-algebras and pseudo-$T$-morphisms, for instance their bicocompleteness or the existence of change-of-base biadjunctions between 2-categories of algebras and pseudo-morphisms for two different 2-monads $S,T$.

On the other hand, there are fewer known results about 2-categories of categories with structure and lax morphisms between them. These still include interesting examples, for instance categories equipped with a class of colimits and all functors between them, or monoidal categories and lax monoidal functors. They can also be described using 2-monads, this time as the 2-category $\talgl$ of $T$-algebras and lax $T$-algebra morphisms. While limits in $\talgl$ have been well-understood (\cite{limitsforlax}, \cite{enhanced2cats}), not much has been proven about colimits. This was for a good reason: 2-colimits or even bicolimits often do not exist in those 2-categories. Our task in this paper is to suitably weaken the notion of a bicolimit and show that 2-categories of lax morphisms are in fact cocomplete in this weak sense. Another task we have is to establish change-of-base theorems for algebras and lax morphisms. Again, the notion that works for pseudo-morphisms -- biadjunctions -- will have to be replaced by a weaker one -- colax adjunctions.

The 2-category $\talg$ of algebras and pseudo-morphisms can often be described as the Kleisli 2-category for a certain pseudo-idempotent 2-comonad. A key observation to be made is that many statements and proofs about $\talg$ in papers \cite{twodim}, \cite{onsemiflexible} are very formal and are in fact true for any pseudo-idempotent 2-comonad on a 2-category. They also easily dualize to pseudo-idempotent 2-monads. Since we are interested in the lax world, we are naturally led to the study of Kleisli 2-categories for lax-idempotent pseudomonads, using the formalism of left Kan pseudomonads \cite{kanext}. The usage of pseudomonads instead of 2-monads will allow us to consider a wider array of examples such as the small presheaf pseudomonad, and lets us prove that the bicategory $\text{PROF}$ of locally small categories and small profunctors is weakly complete in the sense of the previous paragraph.

As mentioned, colax adjunctions are inevitable when working with lax morphisms. The definition of a (co)lax adjunction is hard to work with because it contains a large amount of data. Our first main result, Theorem \ref{THM_Bunge}, shows that a left colax adjoint $F$ to a pseudofunctor $U$ can be more conveniently given by a collection of 1-cells $y_A: A \to UFA$ satisfying certain ``relative $U$-left Kan extension” conditions. This is an extension of the work of Bunge and Gray (\cite{bunge}, \cite{formalcat}) where this has been proven for the case when $U$ is a 2-functor. A result of this kind is similar to how left Kan pseudomonads provide a more convenient description of lax-idempotent pseudomonads. We will use this theorem to obtain results on colax adjunctions involving the Kleisli 2-category for a lax-idempotent pseudomonad (Theorem \ref{THM_BIG_lax_adj_thm}), and the dual of \textit{this} result will be used to obtain results on colax adjunctions involving $T$-algebras and lax $T$-morphisms for a 2-monad (Theorem \ref{THM_big_colax_adj_THM}).

The paper is organized as follows. In Section \ref{SEKCE_koncepty_2kategorie} we recall the necessary concepts that we will need in this paper. With the small exception of \textit{left Kan 2-monads}, everything here is well-known.

In Section \ref{SEKCE_Bunge} we prove the generalization of Bunge's and Gray's results on colax adjunctions to the setting of pseudofunctors: we show that there is a correspondence between left colax adjoint pseudofunctors to a pseudofunctor $U$ and collections of 1-cells $y_A: A \to UFA$ satisfying the aforementioned relative $U$-left Kan extension conditions (Theorem \ref{THM_korespondence_lax_adj_u_coh_ext}).

In Section \ref{SEKCE_Kleisli2cat} we first give (an essentially folklore) characterization of algebras for a lax-idempotent pseudomonad in terms of the existence of certain adjoints (Proposition \ref{THM_characterization_D-algs_D-corefl-incl}). We then use this characterization and the generalized Bunge's and Gray's result to prove that when given a lax-idempotent pseudomonad $D$ on $\ck$, any left biadjoint $\ck \to \cl$ that factorizes through the Kleisli 2-category $\ck_D$ gives rise to a colax left adjoint $\ck_D \to \cl$ (Theorem \ref{THM_BIG_lax_adj_thm}). We list various applications, for instance the weak completeness of $\ck_D$ (Theorem \ref{THM_Kleisli2cat_is_corefl_complete}) provided that $\ck$ is bicomplete, or that there is a canonical colax adjunction between $\ck_D$ and the 2-category of pseudo-$D$-algebras (Corollary \ref{THM_lax_adjunkce_EM_Kleisli}).

In Section \ref{SEKCE_applications_to_twodim} we spell out what these results in particular say about the 2-category $\talgl$ of strict algebras and lax morphisms for a 2-monad $T$. This includes the aforementioned colax base-of-change theorem (Corollary \ref{THM_COR_change_of_base_SalgTalg}) as well as the weak cocompleteness result for $\talgl$ (Theorem \ref{THM_talgl_reflector_cocomplete}).

\medskip

\noindent \textbf{Prerequisities}: We assume the reader is familiar with 2-monads and pseudomonads and their pseudo and strict algebras. We also assume the familiarity with lax-idempotent pseudomonads.

\medskip

\noindent \textbf{Acknowledgements}: I want to thank my Ph.D. supervisor John Bourke for his careful guidance and all the feedback I have received. I also want to thank Nathanael Arkor for sharing his knowledge with me.

\section{Background}\label{SEKCE_koncepty_2kategorie}

\subsection{Colax functors and transformations}\label{SUBS_2_lax_transf}

In this text we will primarily use the \textbf{colax} versions of concepts such as lax functors, lax transformations. The motivation for this is that we are building on the work of \cite{bunge} which uses colax structures, as opposed to lax ones\footnote{In \cite{bunge}, colax natural transformations are referred to as \textit{lax}. In this paper we are following the modern terminology.}. %The motivation for this decision is the fact that colax adjunctions appear in the work\footnote{They have been called \textit{lax adjunctions} there, however.} \cite{bunge} our Section \ref{SEKCE_Bunge} is based on.

\begin{defi}
Let $\ca, \cb$ be 2-categories. A \textit{colax functor} $F: \ca \to \cb$ consists of:
\begin{itemize}
\item A function $F_0: \ob \ca \to \ob \cb$,
\item for every pair $A.B$ of objects of $\ca$ a functor $F_{A,B}: \ca(A,B) \to \cb(FA,FB)$,
\item for every composable pair $(f,g)$ of morphisms in $\ca$ a 2-cell (\textit{associator})\\ $\gamma_{f,g}: F(g \circ f) \Rightarrow Fg \circ Ff$,
\item for every object $A \in \ca$ a 2-cell (\textit{unitor}) $\iota_A: F1_A \Rightarrow 1_{FA}$,
\end{itemize}
subject to associativity and unit axioms, see for instance \cite[Definition 4.1.2]{2dimensionalcats}. If $\gamma$ and $\iota$ go in the other direction, we obtain the notion of a \textit{lax functor}. In case $\gamma$, $\iota$ are invertible, this is called a \textit{pseudofunctor}.
\end{defi}
%\cite[(4.1)]{introbicats}

\noindent For simplicity, we will always use the letters $\gamma, \iota$ for the associator and the unitor of a colax functor, and always omit the index for any of its components.

\begin{defi}
Given 2-categories $\ca, \cb$ and two pseudofunctors $F,G: \ca \to \cb$, a \textit{colax natural transformation} $\alpha: F \Rightarrow G$ consists of the following data:
\begin{itemize}
\item For every $A \in \ca$ a 1-cell $\alpha_A : FA \to GA$,
\item For every $f: A \to B \in \ca$ a 2-cell:
% https://q.uiver.app/#q=WzAsNCxbMCwwLCJGQSJdLFsyLDAsIkdBIl0sWzAsMSwiRkIiXSxbMiwxLCJHQiJdLFsyLDMsIlxcYWxwaGFfQiIsMl0sWzAsMSwiXFxhbHBoYV9BIl0sWzAsMiwiRmYiLDJdLFsxLDMsIkdmIl0sWzQsNSwiXFxhbHBoYV9mIiwyLHsic2hvcnRlbiI6eyJzb3VyY2UiOjIwLCJ0YXJnZXQiOjIwfX1dXQ==
\[\begin{tikzcd}
	FA && GA \\
	FB && GB
	\arrow[""{name=0, anchor=center, inner sep=0}, "{\alpha_B}"', from=2-1, to=2-3]
	\arrow[""{name=1, anchor=center, inner sep=0}, "{\alpha_A}", from=1-1, to=1-3]
	\arrow["Ff"', from=1-1, to=2-1]
	\arrow["Gf", from=1-3, to=2-3]
	\arrow["{\alpha_f}"', shorten <=4pt, shorten >=4pt, Rightarrow, from=0, to=1]
\end{tikzcd}\]
\end{itemize}
These must satisfy certain {unit}, {composition}, local naturality conditions, see \cite[Definition 4.3.1]{2dimensionalcats}. If the 2-cells $\alpha_f$ go in the other direction, this is referred to as a \textit{lax natural transformation}. If $\alpha_f$ is invertible for all morphisms $f$, $\alpha$ is called a \textit{pseudo-natural transformation}. If the $\alpha_f$'s are the identities, we use the term \textit{2-natural transformation}.
%340
\end{defi}

\begin{defi}
Given two pseudonatural transformations $\alpha, \beta$ between pseudofunctors $F,G: \ck \to \cl$, a \textit{modification} $\Gamma: \alpha \to \beta$ consists of a 2-cell $\Gamma_A: \alpha_A \Rightarrow \beta_A$ for every object $A \in \ck$, subject to the modification axiom for each 1-cell in $\ck$, see \cite[Definition 4.4.1]{2dimensionalcats}.
%376
\end{defi}

\begin{pr}
Given an endofunctor $T: \ca \to \ca$, any colax natural transformation $c: T \Rightarrow 1_\ca$ induces a modification $(cc): c \circ Tc \to c \circ cT$, whose component at $A \in \ca$ is given by:
\[
(cc)_A := c_{c_A} :c_A \circ Tc_A \Rightarrow c_A \circ c_{TA}.
\]
\end{pr}

\begin{pozn}
Pseudofunctors preserve colax natural transformations. If $H: \cc \to \cd$ is a pseudofunctor and $\alpha: F \Rightarrow G: \cb \to \cc$ is colax natural, there is an induced colax natural transformation $H\alpha$ whose 1-cell component at $A$ is $H\alpha_A$ and whose 2-cell component at a morphism $f: A \to B$ is the following composite 2-cell that we denote by $(H\alpha)_f$:
%259
% https://q.uiver.app/#q=WzAsNCxbMCwwLCJGQSJdLFsyLDAsIkdBIl0sWzAsMywiRkIiXSxbMiwzLCJHQiJdLFsyLDMsIkhcXGFscGhhX0IiLDJdLFswLDIsIkhGZiIsMl0sWzAsMSwiSFxcYWxwaGFfQSJdLFsxLDMsIkhHZiJdLFswLDMsIiIsMSx7ImN1cnZlIjotM31dLFswLDMsIiIsMSx7ImN1cnZlIjozfV0sWzksOCwiSFxcYWxwaGFfZiIsMix7InNob3J0ZW4iOnsic291cmNlIjoyMCwidGFyZ2V0IjoyMH19XSxbOCwxLCJcXGdhbW1hXnstMX0iLDAseyJzaG9ydGVuIjp7InNvdXJjZSI6MjB9fV0sWzIsOSwiXFxnYW1tYSIsMix7InNob3J0ZW4iOnsidGFyZ2V0IjoyMH19XV0=
\[\begin{tikzcd}
	FA && GA \\
	\\
	\\
	FB && GB
	\arrow["{H\alpha_B}"', from=4-1, to=4-3]
	\arrow["HFf"', from=1-1, to=4-1]
	\arrow["{H\alpha_A}", from=1-1, to=1-3]
	\arrow["HGf", from=1-3, to=4-3]
	\arrow[""{name=0, anchor=center, inner sep=0}, curve={height=-18pt}, from=1-1, to=4-3]
	\arrow[""{name=1, anchor=center, inner sep=0}, curve={height=18pt}, from=1-1, to=4-3]
	\arrow["{H\alpha_f}"', shorten <=6pt, shorten >=6pt, Rightarrow, from=1, to=0]
	\arrow["{\gamma^{-1}}", shorten <=5pt, Rightarrow, from=0, to=1-3]
	\arrow["\gamma"', shorten >=5pt, Rightarrow, from=4-1, to=1]
\end{tikzcd}\]
%Such a transformation may be also pre-composed with a pseudofunctor $K: \ca \to \cb$, \todo
\end{pozn}

\subsection{Colax adjunctions}\label{SUBS_2_lax_adj}

Lax adjunctions, also called \textit{quasi-adjunctions} in \cite[I,7.1]{formalcat} are a categorification of adjunctions between functors where the unit and the counit are replaced by lax natural transformations, and the triangle identities are replaced by modifications. As in the previous section, we will use the dual notion -- colax adjunctions.
%224

\begin{defi}\label{DEFI_lax_adjoint}
A \textit{colax adjunction} consists of two pseudofunctors $U: \cd \to \cc$ and $F: \cc \to \cd$, two colax natural transformations $\eta : 1 \Rightarrow UF$ and $\epsilon : FU \Rightarrow 1$ and two modifications:
% https://q.uiver.app/#q=WzAsNixbMCwwLCJGIl0sWzIsMCwiRlVGIl0sWzIsMiwiRiJdLFs0LDAsIlUiXSxbNiwwLCJVRlUiXSxbNiwyLCJVIl0sWzAsMSwiRlxcZXRhIl0sWzEsMiwiXFxlcHNpbG9uIEYiXSxbMCwyLCIiLDIseyJsZXZlbCI6Miwic3R5bGUiOnsiaGVhZCI6eyJuYW1lIjoibm9uZSJ9fX1dLFszLDUsIiIsMCx7ImxldmVsIjoyLCJzdHlsZSI6eyJoZWFkIjp7Im5hbWUiOiJub25lIn19fV0sWzMsNCwiXFxldGEgVSJdLFs0LDUsIlUgXFxlcHNpbG9uIl0sWzEsOCwiXFxQc2kiLDIseyJzaG9ydGVuIjp7InNvdXJjZSI6MjAsInRhcmdldCI6MjB9fV0sWzksNCwiXFxQaGkiLDAseyJzaG9ydGVuIjp7InNvdXJjZSI6MjAsInRhcmdldCI6MjB9fV1d
\[\begin{tikzcd}
	F && FUF && U && UFU \\
	\\
	&& F &&&& U
	\arrow["F\eta", from=1-1, to=1-3]
	\arrow["{\epsilon F}", from=1-3, to=3-3]
	\arrow[""{name=0, anchor=center, inner sep=0}, Rightarrow, no head, from=1-1, to=3-3]
	\arrow[""{name=1, anchor=center, inner sep=0}, Rightarrow, no head, from=1-5, to=3-7]
	\arrow["{\eta U}", from=1-5, to=1-7]
	\arrow["{U \epsilon}", from=1-7, to=3-7]
	\arrow["\Psi"', shorten <=5pt, shorten >=5pt, Rightarrow, from=1-3, to=0]
	\arrow["\Phi", shorten <=5pt, shorten >=5pt, Rightarrow, from=1, to=1-7]
\end{tikzcd}\]

Before stating the axioms required, let us fix a convention: we will use the symbol $U \Psi$ to denote the modification obtained from $\Psi$ by not just applying $U$, but also by pre- and post-composing it with the associator and the unitor for $U$ so that its domain and codomain are $U\epsilon F \circ UF\eta$, $1_{UF}$. Let us use the same convention for $F \Phi$. The axioms are the \textit{swallowtail identities}, which assert that the two composite modifications below are the identities on $\eta$ and $\epsilon$:
\[
\adjustbox{scale=0.9,center}{
\begin{tikzcd}
	{1_\cc} && UF &&& FU \\
	&&& {} &&&& {} & {} \\
	UF && UFUF & {} &&& {} & FUFU && FU \\
	& {} & {} &&&& {} \\
	&&&& UF &&& FU && {1_{\cd}}
	\arrow[curve={height=24pt}, Rightarrow, no head, from=3-1, to=5-5]
	\arrow[curve={height=-24pt}, Rightarrow, no head, from=1-3, to=5-5]
	\arrow["\eta"', from=1-1, to=3-1]
	\arrow["\eta", from=1-1, to=1-3]
	\arrow["UF\eta", from=1-3, to=3-3]
	\arrow["{\eta UF}"', from=3-1, to=3-3]
	\arrow["{\eta \eta}"{description}, shorten <=12pt, shorten >=12pt, Rightarrow, from=3-1, to=1-3]
	\arrow["{U\epsilon F}"{description}, from=3-3, to=5-5]
	\arrow["{F\eta U}"{description}, from=1-6, to=3-8]
	\arrow["FU\epsilon", from=3-8, to=3-10]
	\arrow["{\epsilon_{FU}}"', from=3-8, to=5-8]
	\arrow["\epsilon"', from=5-8, to=5-10]
	\arrow["\epsilon", from=3-10, to=5-10]
	\arrow[curve={height=24pt}, Rightarrow, no head, from=1-6, to=5-8]
	\arrow[curve={height=-24pt}, Rightarrow, no head, from=1-6, to=3-10]
	\arrow["{\epsilon \epsilon}", shorten <=12pt, shorten >=12pt, Rightarrow, from=3-10, to=5-8]
	\arrow[""{name=0, anchor=center, inner sep=0}, draw=none, from=3-3, to=3-4]
	\arrow[""{name=0p, anchor=center, inner sep=0}, phantom, from=3-3, to=3-4, start anchor=center, end anchor=center]
	\arrow[""{name=1, anchor=center, inner sep=0}, draw=none, from=4-2, to=4-3]
	\arrow[""{name=1p, anchor=center, inner sep=0}, phantom, from=4-2, to=4-3, start anchor=center, end anchor=center]
	\arrow[""{name=2, anchor=center, inner sep=0}, draw=none, from=2-8, to=2-9]
	\arrow[""{name=2p, anchor=center, inner sep=0}, phantom, from=2-8, to=2-9, start anchor=center, end anchor=center]
	\arrow[""{name=3, anchor=center, inner sep=0}, shift left=3, draw=none, from=3-7, to=3-8]
	\arrow[""{name=3p, anchor=center, inner sep=0}, phantom, from=3-7, to=3-8, start anchor=center, end anchor=center, shift left=3]
	\arrow["U\Psi"', shorten >=10pt, Rightarrow, from=0p, to=2-4]
	\arrow["{\Phi F}"', shorten >=4pt, Rightarrow, from=1p, to=3-3]
	\arrow["F\Phi"', shorten <=4pt, shorten >=2pt, Rightarrow, from=2p, to=3-8]
	\arrow["{\Psi U}"', shorten <=5pt, shorten >=9pt, Rightarrow, from=3p, to=4-7]
\end{tikzcd}
}\]

%378

\begin{nota}
We will denote a colax adjunction as follows and say that $F$ is a \textit{left colax adjoint} to $U$:
% https://q.uiver.app/#q=WzAsMyxbMCwwLCIoXFxQc2ksIFxcUGhpKTogKFxcZXBzaWxvbiwgXFxldGEpOiJdLFsxLDAsIlxcY2MiXSxbMywwLCJcXGNkIl0sWzEsMiwiRiIsMCx7ImN1cnZlIjotNH1dLFsyLDEsIlUiLDAseyJjdXJ2ZSI6LTR9XSxbMyw0LCJcXHRib3QiLDEseyJzaG9ydGVuIjp7InNvdXJjZSI6MjAsInRhcmdldCI6MjB9LCJzdHlsZSI6eyJib2R5Ijp7Im5hbWUiOiJub25lIn0sImhlYWQiOnsibmFtZSI6Im5vbmUifX19XV0=
\[\begin{tikzcd}
	{(\Psi, \Phi): (\epsilon, \eta):} & \cc && \cd
	\arrow[""{name=0, anchor=center, inner sep=0}, "F", curve={height=-24pt}, from=1-2, to=1-4]
	\arrow[""{name=1, anchor=center, inner sep=0}, "U", curve={height=-24pt}, from=1-4, to=1-2]
	\arrow["\tbot"{description}, draw=none, from=0, to=1]
\end{tikzcd}\]
\end{nota}

\noindent There are several important variations or special cases:
\begin{itemize}
\item if $\epsilon, \eta$ are lax natural, $\Psi,\Phi$ go in the other directions and an appropriate dual of the swallowtail identities holds, we will call it a \textit{lax adjunction},
\item in case that $\epsilon, \eta$ are pseudonatural transformations and $\Psi ,\Phi$ are isomorphisms, we will use the term \textit{biadjunction}.
\item if $U,F$ are 2-functors, $\epsilon, \eta$ are 2-natural and $\Psi, \Phi$ are the identities, we will call this a \textit{2-adjunction}.
\end{itemize}
Since the last two cases are the more usual notion, we will use the usual symbol $\dashv$ instead of $\ddashv$\phantom{.} for them.
\end{defi}

\begin{pozn}\label{POZN_lax_adjs_not_unique_up_to_equiv}%vvv
Contrary to the case of biadjunctions, left colax adjoints are not unique up to an equivalence, not even when $U$ is a 2-functor, $\eta$ is 2-natural and $\Psi$, $\Phi$ are the identities. An example will be given in Remark \ref{POZN_rali_limits_not_unique_up_to_equiv}.
\end{pozn}

\subsection{Lax-idempotent and left Kan pseudomonads}\label{SUBS_2_left_kan_psmon}

The notion of a lax-idempotent pseudomonad (see \cite[Section 2]{kanext}) contains a large amount of data and axioms. A major simplification can be achieved if one works with \textit{left Kan pseudomonads} instead. In this section we recall all the basic definitions and mention the equivalence of left Kan pseudomonads and lax-idempotent pseudomonads. We also define a special class of left Kan pseudomonads that we call \textit{left Kan 2-monads} -- this is the obvious strict version of the notion.

\begin{defi}
A \textit{left Kan pseudomonad} (\cite{kanext}) $(D,y)$ on a 2-category $\ck$ consists of:

\begin{itemize}
\item A function $D: \ob \ck \to \ob \ck$,
\item For every $A \in \ck$ a 1-cell $y_A: A \to DA$ called its \textit{unit},
\item For every 1-cell $f: A \to DB$ a left Kan extension of $f$ along $y_B$ such that the accompanying 2-cell is invertible:
% https://q.uiver.app/#q=WzAsMyxbMCwwLCJBIl0sWzIsMCwiREEiXSxbMiwyLCJEQiJdLFsxLDIsImZeXFxtYmJkIl0sWzAsMSwieV9CIl0sWzAsMiwiZiIsMl0sWzUsMSwiXFxtYmJkX2YiLDAseyJzaG9ydGVuIjp7InNvdXJjZSI6MjB9fV1d
\begin{equation}\label{EQ_left_kan_ext_mbbd}
\begin{tikzcd}
	A && DA \\
	\\
	&& DB
	\arrow["{f^\mbbd}", from=1-3, to=3-3]
	\arrow["{y_A}", from=1-1, to=1-3]
	\arrow[""{name=0, anchor=center, inner sep=0}, "f"', from=1-1, to=3-3]
	\arrow["{\mbbd_f}", shorten <=6pt, Rightarrow, from=0, to=1-3]
\end{tikzcd}
\end{equation}
\end{itemize}
These are subject to the axioms:
\begin{itemize}
\item For every $A \in \ck$, the identity 2-cell $1_{y_B}$ on $y_B$ exhibits $1_{DA}$ as a left Kan extension of $y_A$ along $y_A$:
% https://q.uiver.app/#q=WzAsMyxbMCwwLCJCIl0sWzIsMCwiREIiXSxbMiwyLCJEQiJdLFsxLDIsIiIsMCx7ImxldmVsIjoyLCJzdHlsZSI6eyJoZWFkIjp7Im5hbWUiOiJub25lIn19fV0sWzAsMSwieV9CIl0sWzAsMiwieV9CIiwyXV0=
%398

\item for every $g: B \to DC$, $f: A \to DB$, $g^\mbbd$ preserves the left Kan extension \eqref{EQ_left_kan_ext_mbbd}.
%351
\end{itemize}
\end{defi}

\begin{defi}\label{DEFI_D_algebra}
A \textit{pseudo-$D$-algebra} consists of an object $C \in \ck$ together with a mapping that sends every 1-cell $f: B \to C$ to the left Kan extension of $f$ along $y_B$ such that the accompanying 2-cell is invertible:
% https://q.uiver.app/#q=WzAsMyxbMCwwLCJCIl0sWzIsMCwiREIiXSxbMiwyLCJDIl0sWzEsMiwiZl5cXG1iYmMiXSxbMCwxLCJ5X0IiXSxbMCwyLCJmIiwyXSxbNSwxLCJcXG1iYmNfZiIsMCx7InNob3J0ZW4iOnsic291cmNlIjoyMH19XV0=
\begin{equation}\label{EQ_left_kan_ext_mbbb}
\begin{tikzcd}
	B && DB \\
	\\
	&& C
	\arrow["{f^\mbbc}", from=1-3, to=3-3]
	\arrow["{y_B}", from=1-1, to=1-3]
	\arrow[""{name=0, anchor=center, inner sep=0}, "f"', from=1-1, to=3-3]
	\arrow["{\mbbc_f}", shorten <=6pt, Rightarrow, from=0, to=1-3]
\end{tikzcd}
\end{equation}
and such that for every $f: A \to DB$, $g^\mbbb$ preserves the left Kan extension \eqref{EQ_left_kan_ext_mbbd}.
%352

A \textit{$D$-pseudomorphism} $h: B \to A$ between pseudo-$D$-algebras $C,X$ is a 1-cell $h: C \to X$ that preserves the Left Kan extension \eqref{EQ_left_kan_ext_mbbb}. A \textit{pseudo-$D$-algebra 2-cell} $\alpha: h \Rightarrow h' : B \to A$ is just a 2-cell in $\ck$. All this data assembles into a 2-category that we denote by $\psdalg$.
\end{defi}
%353

\begin{defi}
By the \textit{Kleisli 2-category} $\ck_D$ associated to the left Kan pseudomonad $(D,y)$ we mean the full sub-2-category of $\psdalg$ spanned by \textit{free $D$-algebras}, that is, algebras whose underlying object is of form $DA$ for some object $A \in \ck$ and the extension operation is given by $(-)^\mbbd$.
\end{defi}

\begin{pozn}\label{POZN_Kleisli_bicat}
We may also define the \textit{Kleisli bicategory} associated to a left Kan pseudomonad $(D,y)$, where objects are the objects in $\ck$ and a morphism $A \rightsquigarrow B$ in $\ck_D$ corresponds to a morphism $A \to DB$ in $\ck$. The unit is given by the unit of the pseudomonad, while the composition is defined using the extension operation:
% https://q.uiver.app/#q=WzAsNixbMCwwLCJBIl0sWzEsMCwiQiJdLFsyLDAsIkMiXSxbMywwLCJcXG1hcHN0byJdLFs0LDAsIkEiXSxbNiwwLCJCIl0sWzAsMSwiZiIsMCx7InN0eWxlIjp7ImJvZHkiOnsibmFtZSI6InNxdWlnZ2x5In19fV0sWzEsMiwiZyIsMCx7InN0eWxlIjp7ImJvZHkiOnsibmFtZSI6InNxdWlnZ2x5In19fV0sWzQsNSwiZ15cXG1iYmQgXFxjaXJjIGYiLDAseyJzdHlsZSI6eyJib2R5Ijp7Im5hbWUiOiJzcXVpZ2dseSJ9fX1dXQ==
\[\begin{tikzcd}
	A & B & C & \mapsto & A && B
	\arrow["f", squiggly, from=1-1, to=1-2]
	\arrow["g", squiggly, from=1-2, to=1-3]
	\arrow["{g^\mbbd \circ f}", squiggly, from=1-5, to=1-7]
\end{tikzcd}\]
Denote this bicategory by $\text{Kl}(D)$. It is routine to verify that there is a pseudofunctor $N: \text{Kl}(D) \to \ck_D$ sending the Kleisli morphism $f: A \rightsquigarrow B$ to $f^\mbbd : DA \to DB$ and that it is a biequivalence of bicategories. In this paper we will for the most part use the 2-category presentation since it is easier to work with.
\end{pozn}

\begin{prop}\label{THM_biadjunkce_ck_ck_D}
There is a ``free-forgetful” biadjunction given as follows:
% https://q.uiver.app/#q=WzAsMyxbMSwwLCJcXGNrIl0sWzMsMCwiXFxja19EIl0sWzAsMCwiKFxcUHNpLCBcXFBoaSk6KHAscSk6Il0sWzAsMSwiSl9EIiwyLHsiY3VydmUiOjR9XSxbMSwwLCJVX0QiLDIseyJjdXJ2ZSI6NH1dLFszLDQsIiIsMCx7ImxldmVsIjoxLCJzdHlsZSI6eyJuYW1lIjoiYWRqdW5jdGlvbiJ9fV1d
\[\begin{tikzcd}
	{(\Psi, \Phi):(p,q):} & \ck && {\ck_D}
	\arrow[""{name=0, anchor=center, inner sep=0}, "{J_D}"', curve={height=24pt}, from=1-2, to=1-4]
	\arrow[""{name=1, anchor=center, inner sep=0}, "{U_D}"', curve={height=24pt}, from=1-4, to=1-2]
	\arrow["\dashv"{anchor=center, rotate=90}, draw=none, from=0, to=1]
\end{tikzcd}\]

\begin{itemize}
\item The right biadjoint $U_D$ is the forgetful 2-functor sending an algebra to its underlying object,
\item the left biadjoint is a normal pseudofunctor sending:
\[
(f: A \to B) \mapsto ((y_B f)^\mbbd: DA \to DB),
\]
\item the counit $p: J_D U_D \Rightarrow 1$ evaluated at the object $DA$ is the following algebra homomorphism:
\[
p_{DA} := (1_{DA})^\mbbd: D^2A \to DA,
\]
With its pseudonaturality square at an algebra morphism $h$ being the canonical isomorphism between $1^\mbbd_{DB} Dh$ and $h 1_{DA}^\mbbd$, as both are the left Kan extensions of $h$ along $y_{DA}$.
\item the unit is given by the unit of the left Kan pseudomonad $y: 1 \Rightarrow U_D J_D$, with the pseudonaturality square at a morphism $h: A \to B$ being given by the canonical isomorphism:
% https://q.uiver.app/#q=WzAsNCxbMCwwLCJBIl0sWzIsMCwiREEiXSxbMiwxLCJEQiJdLFswLDEsIkIiXSxbMSwyLCJEaCJdLFswLDEsInlfQSJdLFszLDIsInlfQiIsMl0sWzAsMywiaCIsMl0sWzYsNSwiXFxtYmJkX3t5X0IgaH0iLDIseyJzaG9ydGVuIjp7InNvdXJjZSI6MjAsInRhcmdldCI6MjB9fV1d
\[\begin{tikzcd}[cramped]
	A && DA \\
	B && DB
	\arrow["Dh", from=1-3, to=2-3]
	\arrow[""{name=0, anchor=center, inner sep=0}, "{y_A}", from=1-1, to=1-3]
	\arrow[""{name=1, anchor=center, inner sep=0}, "{y_B}"', from=2-1, to=2-3]
	\arrow["h"', from=1-1, to=2-1]
	\arrow["{\mbbd_{y_B h}}"', shorten <=4pt, shorten >=4pt, Rightarrow, from=1, to=0]
\end{tikzcd}\]

\item the components of the modifications are given by the canonical isomorphisms:
\begin{align*}
\Psi &: pJ_D \circ J_Dy \cong 1_{J_D},\\
\Phi &: 1_{U_D} \cong U_D p \circ y U_D.
\end{align*}
\end{itemize}
\end{prop}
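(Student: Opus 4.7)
The plan is to construct each piece of data and verify the axioms by repeated appeals to the universal property of left Kan extensions supplied by the left Kan pseudomonad structure.

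First I would upgrade $J_D$ to a normal pseudofunctor. On 1-cells $f: A \to B$ it is $(y_B f)^\mbbd$, and on a 2-cell $\alpha: f \Rightarrow g$ it is the 2-cell $(y_B \alpha)^\mbbd: (y_Bf)^\mbbd \Rightarrow (y_Bg)^\mbbd$ induced by the 2-dimensional universal property. Normality $J_D(1_A) = 1_{DA}$ is exactly the left Kan pseudomonad axiom that $1_{y_A}$ exhibits $1_{DA}$ as the left Kan extension of $y_A$ along $y_A$. The associator $\gamma_{f,g}$ comes from the second pseudomonad axiom: applying $(y_C g)^\mbbd$ to $\mbbd_{y_B f}$ exhibits $(y_Cg)^\mbbd \circ (y_B f)^\mbbd$ as the left Kan extension of $(y_Cg)^\mbbd y_B f \cong y_C g f$ along $y_A$, which therefore agrees canonically with $(y_C g f)^\mbbd$. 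The pseudofunctor axioms follow from uniqueness of factorisations through Kan extensions. The forgetful 2-functor $U_D$ is the restriction of the evident projection $\psdalg \to \ck$ and needs no verification.

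Next I would verify that the prescribed $y$ and $p$ are pseudonatural. For $y$, the pseudonaturality square at $h: A \to B$ is the invertible 2-cell $\mbbd_{y_B h}$ that exhibits $(y_B h)^\mbbd = U_D J_D h$ as a left Kan extension of $y_B h$ along $y_A$, and the pseudonatural transformation axioms again reduce to uniqueness of induced 2-cells into Kan extensions. For $p$, I must first check that $p_{DA} := 1_{DA}^\mbbd : D^2 A \to DA$ is a morphism in $\ck_D$, i.e.\ preserves the defining extensions; this is precisely the preservation axiom in the definition of a left Kan pseudomonad applied to the free algebra $DA$. The pseudonaturality 2-cell at an algebra morphism $h: DA \to DB$ is then the canonical isomorphism between $h \circ 1_{DA}^\mbbd$ and $1_{DB}^\mbbd \circ D h$, both being left Kan extensions of $h$ along $y_{DA}$.

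The modifications $\Psi$ and $\Phi$ are obtained componentwise as the canonical comparison isomorphisms: $1_{DA}^\mbbd \circ (y_{DA} y_A)^\mbbd \cong 1_{DA}$ (by preservation of extensions reduced to the unit axiom) and $1_{DA}^\mbbd \circ y_{DA} \cong 1_{DA}$ (from the invertibility of $\mbbd_{1_{DA}}$). Their modification axioms and the swallowtail identities unfold to equalities of 2-cells landing in a common left Kan extension, and so are forced by uniqueness. The main obstacle is not a single clever step but the coherence bookkeeping: arranging that the associators and unitors of $J_D$ are pasted in with the Kan extension 2-cells in the correct places so that the two sides of each swallowtail identity can be recognised as factorisations through the same Kan extension, at which point uniqueness closes the argument.
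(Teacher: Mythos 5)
Your construction is correct and is the standard argument; note that the paper itself states this proposition without proof, recalling it as part of the background on left Kan pseudomonads from the cited work of Marmolejo--Wood, so there is no in-paper proof to diverge from. Every step you describe (the associator of $J_D$ from the preservation axiom, the pseudonaturality of $p$ via the two left Kan extensions of $h$ along $y_{DA}$, the modifications as canonical comparison isomorphisms, and the reduction of all coherence axioms to uniqueness of 2-cells out of a Kan extension after pasting with the unit) is the expected one and matches the data listed in the statement. The only pedantic caveat is normality of $J_D$: the unit axiom says $1_{DA}$ \emph{is} a left Kan extension of $y_A$ along $y_A$, whereas $J_D 1_A = (y_A)^\mbbd$ is the \emph{chosen} one, so to get a strictly normal pseudofunctor one should fix the choice $(y_A)^\mbbd := 1_{DA}$ rather than appeal to the axiom alone.
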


\noindent This biadjunction is moreover \textit{lax-idempotent}, meaning the following:
\begin{prop}\label{THM_prop_kz_adjunkce}
There exist (non-invertible) modifications $\Gamma, \Theta$ that serve as the unit and the counit of the following adjunctions:
\begin{align*}
(\Phi^{-1}, \Gamma) &: U_D p \dashv y U_D, \\
(\Theta, \Psi^{-1}) &: J_Dy \dashv pJ_D.
\end{align*}
\end{prop}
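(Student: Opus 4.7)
The strategy is to construct the modifications $\Gamma$ and $\Theta$ component-wise using the universal property of left Kan extensions, and then verify the triangle identities and modification axioms by appealing to uniqueness in those universal properties.

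I would begin with $\Gamma$. At each free algebra $DA \in \ck_D$, I need a 2-cell $\Gamma_{DA} : 1_{D^2 A} \Rightarrow y_{DA} \circ p_{DA}$. By definition, $p_{DA} = 1_{DA}^{\mbbd}$ is $\text{Lan}_{y_{DA}} 1_{DA}$ with invertible universal 2-cell $\mbbd_{1_{DA}} : 1_{DA} \Rightarrow p_{DA} \circ y_{DA}$, whose inverse is the component of $\Phi^{-1}$ at $DA$. Moreover, by the unit axiom of a left Kan pseudomonad, $1_{D^2 A}$ is $\text{Lan}_{y_{DA}} y_{DA}$ with universal 2-cell the identity on $y_{DA}$. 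The whiskered 2-cell $y_{DA} \, \mbbd_{1_{DA}} : y_{DA} \Rightarrow y_{DA} \circ p_{DA} \circ y_{DA}$ factors uniquely through this universal property; I define $\Gamma_{DA}$ to be the unique 2-cell $1_{D^2 A} \Rightarrow y_{DA} \circ p_{DA}$ with $\Gamma_{DA}\, y_{DA} = y_{DA}\, \mbbd_{1_{DA}}$.

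The triangle identities for $U_D p \dashv y U_D$ are then verified as follows. The identity $(y_{DA}\, \Phi^{-1}_{DA}) \circ (\Gamma_{DA}\, y_{DA}) = 1_{y_{DA}}$ is immediate from the defining formula of $\Gamma_{DA}$, together with the fact that $\Phi^{-1}_{DA} = \mbbd_{1_{DA}}^{-1}$. The other triangle identity $(\Phi^{-1}_{DA}\, p_{DA}) \circ (p_{DA}\, \Gamma_{DA}) = 1_{p_{DA}}$ reduces, upon whiskering with $y_{DA}$ on the right and substituting the defining formula for $\Gamma_{DA}$, to an equation whose two sides, viewed as 2-cells into the Kan extension $p_{DA} = \text{Lan}_{y_{DA}} 1_{DA}$, correspond under its universal property to the same 2-cell $1_{DA} \Rightarrow p_{DA}\, y_{DA}$ (namely $\mbbd_{1_{DA}}$ itself); uniqueness then yields the identity. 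The modification axiom for $\Gamma$ at each $\ck_D$-morphism $h: DA \to DB$ is checked similarly: both sides of the required equation, after whiskering with $y_{DA}$, reduce to the same composite built from $\mbbd$ and the pseudonaturality 2-cells of $y$ and $p$, so they agree by uniqueness in $\text{Lan}_{y_{DA}} y_{DA}$.

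For the second adjunction $J_D y \dashv p J_D$ in $[\ck, \ck_D]$, I would obtain $\Theta$ by an analogous construction, working in $\ck_D$ and using that the Kleisli structure transports the relevant left Kan extension data; alternatively, since $U_D$ is locally fully faithful on free algebras, one may transport the adjunction $(\Phi^{-1}, \Gamma)$ through $U_D$ to obtain $(\Theta, \Psi^{-1})$ uniquely, with $\Theta$ determined by the requirement that $U_D \Theta$ agree with the corresponding whiskering of $\Gamma$ up to the canonical isomorphism $U_D J_D y \cong y U_D J_D$. The main obstacle is the bookkeeping for the modification axiom, which must thread 2-cells through the pseudonaturality data of $p$ and $y$; but each such verification reduces, by uniqueness in an appropriate Kan extension universal property, to a formal identity among associator, unitor, and $\mbbd$-cells.
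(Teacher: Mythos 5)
Your proposal is correct and follows essentially the same route as the paper: $\Gamma_{DA}$ is defined as the unique 2-cell with $\Gamma_{DA}\,y_{DA} = y_{DA}\,\Phi_{DA}$ using the unit axiom (that $1_{D^2A}$ is $\mathrm{Lan}_{y_{DA}} y_{DA}$), the first triangle identity is immediate from this defining equation, and the second is obtained by pre-composing with $y_{DA}$ and invoking the universal property of $p_{DA} = \mathrm{Lan}_{y_{DA}} 1_{DA}$. The only cosmetic difference is that the paper dispenses with the modification axiom by citing doctrinal adjunction to lift the pointwise adjunctions $p_{DA} \dashv y_{DA}$ to an adjunction of pseudonatural transformations, whereas you verify it directly by uniqueness; both are fine.
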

\begin{proof}
The 2-cell $\Gamma_{DA}: 1_{D^2A} \Rightarrow y_{DA} \circ p_{DA}$ is the unique solution to the following equation:
% https://q.uiver.app/#q=WzAsMTAsWzAsMSwiREEiXSxbMiwwLCJEXjJBIl0sWzIsMiwiRF4yQSJdLFszLDEsIkRBIl0sWzQsMSwiPSJdLFs1LDEsIkRBIl0sWzcsMCwiRF4yQSJdLFs4LDEsIkRBIl0sWzcsMiwiRF4yQSJdLFs3LDFdLFswLDEsInlfe0RBfSJdLFswLDIsInlfe0RBfSIsMl0sWzEsMiwiIiwwLHsibGV2ZWwiOjIsInN0eWxlIjp7ImhlYWQiOnsibmFtZSI6Im5vbmUifX19XSxbMSwzLCJwX3tEQX0iXSxbMywyLCJ5X3tEQX0iXSxbNSw2LCJ5X3tEQX0iXSxbNiw3LCJwX3tEQX0iXSxbNyw4LCJ5X3tEQX0iXSxbNSw3LCIiLDAseyJsZXZlbCI6Miwic3R5bGUiOnsiaGVhZCI6eyJuYW1lIjoibm9uZSJ9fX1dLFs1LDgsInlfe0RBfSIsMl0sWzksNiwiXFxQaGlfe0RBfSIsMCx7InNob3J0ZW4iOnsic291cmNlIjozMH0sImxldmVsIjoyfV0sWzEyLDMsIlxcZXhpc3RzICEiLDAseyJzaG9ydGVuIjp7InNvdXJjZSI6MjB9fV1d
\[\begin{tikzcd}
	&& {D^2A} &&&&& {D^2A} \\
	DA &&& DA & {=} & DA && {} & DA \\
	&& {D^2A} &&&&& {D^2A}
	\arrow["{y_{DA}}", from=2-1, to=1-3]
	\arrow["{y_{DA}}"', from=2-1, to=3-3]
	\arrow[""{name=0, anchor=center, inner sep=0}, Rightarrow, no head, from=1-3, to=3-3]
	\arrow["{p_{DA}}", from=1-3, to=2-4]
	\arrow["{y_{DA}}", from=2-4, to=3-3]
	\arrow["{y_{DA}}", from=2-6, to=1-8]
	\arrow["{p_{DA}}", from=1-8, to=2-9]
	\arrow["{y_{DA}}", from=2-9, to=3-8]
	\arrow[Rightarrow, no head, from=2-6, to=2-9]
	\arrow["{y_{DA}}"', from=2-6, to=3-8]
	\arrow["{\Phi_{DA}}", shorten <=3pt, Rightarrow, from=2-8, to=1-8]
	\arrow["{\exists !}", shorten <=5pt, Rightarrow, from=0, to=2-4]
\end{tikzcd}\]
This also proves the first triangle identity. The proof of the second triangle identity is done by pre-composing by $y_A$ and using the appropriate universal properties. By doctrinal adjunction, the collection of adjunctions $(\Phi^{-1}_{DA}, \Gamma_{DA}): p_{DA} \dashv y_{DA}$ lifts to give the claimed adjunction of pseudonatural transformations. The other adjunction is proven in an analogous way.
\end{proof}

\begin{theo}\label{THM_korespondence_leftkanpsmon_laxidemppsmon}
There is a correspondence between:
\begin{itemize}
\item left Kan pseudomonads $(D,y)$ on $\ck$,
\item lax-idempotent pseudomonads $(D,m,y)$ on $\ck$.
\end{itemize}
Moreover, the left Kan pseudomonad and lax-idempotent pseudomonad corresponding to one another have biequivalent 2-categories of algebras, and this biequivalence commutes with the forgetful 2-functors to $\ck$.
%326
\end{theo}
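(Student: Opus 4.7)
The plan is to construct inverse passages in both directions and then check the correspondence on algebras; most of the required coherence is already supplied by Proposition \ref{THM_biadjunkce_ck_ck_D} and Proposition \ref{THM_prop_kz_adjunkce}, so no major new 2-categorical bookkeeping is needed. First, from a left Kan pseudomonad $(D,y)$ I would read off a pseudomonad from the free-forgetful biadjunction $J_D \dashv U_D$ of Proposition \ref{THM_biadjunkce_ck_ck_D}: the pseudofunctor is $U_D J_D$, extending the object-function $D$ by setting $Df := (y_B f)^{\mbbd}$, with pseudofunctoriality constraints supplied by the universal property of the Kan extensions; the unit is $y$; and the multiplication $m := U_D p$ has components $m_A = (1_{DA})^{\mbbd}$. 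The pseudomonad axioms follow from the modification axioms of that biadjunction, and lax-idempotency is exactly the statement of Proposition \ref{THM_prop_kz_adjunkce}.

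Second, in the opposite direction I would send a lax-idempotent pseudomonad $(D,m,y)$ to the left Kan pseudomonad whose extension operation is $f^{\mbbd} := m_B \circ Df$ for $f : A \to DB$. The comparison 2-cell is the composite of the pseudonaturality square of $y$ at $f$, giving $Df \circ y_A \cong y_{DB} \circ f$, followed by the unit law $m_B \circ y_{DB} \cong 1_{DB}$; this is visibly invertible. The universal property of the Kan extension then reduces to the defining adjunction $Dy \dashv m$ of a lax-idempotent pseudomonad: precomposition with $y_A$ is left adjoint to $h \mapsto m_B \circ Dh$, and this adjunction is a direct rephrasing of the Kan extension universal property. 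The preservation axiom for $g^{\mbbd}$ reduces to pseudonaturality and associativity of $m$. The two passages are mutually inverse up to canonical isomorphism: starting from a left Kan pseudomonad one recovers the original extension operation via the Kan universal property, while starting from a lax-idempotent pseudomonad one recovers $m_A$ from $(1_{DA})^{\mbbd}$ using $m_A \circ Dy_A \cong 1_{DA}$.

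For the biequivalence of algebra 2-categories I would match data directly, object by object. A pseudo-$D$-algebra $(C, (-)^{\mbbc})$ in the left Kan sense yields a lax-idempotent algebra structure $a := (1_C)^{\mbbc}$, with the defining adjunction $a \dashv y_C$ arising by exactly the same argument as in Proposition \ref{THM_prop_kz_adjunkce} applied internally to $C$. Conversely, a lax-idempotent algebra $(C,a)$ produces the extension operation $f^{\mbbc} := a \circ Df$, exactly as in Step~2. For morphisms, the left-Kan preservation condition on $h : C \to X$ is equivalent, via doctrinal adjunction applied to $a \dashv y_C$ and $a' \dashv y_X$, to invertibility of the canonical comparison $h \circ a \Rightarrow a' \circ Dh$, which is the standard notion of $D$-pseudomorphism. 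Commutativity with the forgetful 2-functors is then immediate from the construction.

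The main obstacle is simply the sheer amount of coherence data in the full lax-idempotent pseudomonad definition: verifying the pentagon and triangle laws by hand would be tedious. In practice the genuine content is already compressed into Propositions \ref{THM_biadjunkce_ck_ck_D} and \ref{THM_prop_kz_adjunkce}, and the remaining delicate bookkeeping sits at the morphism level of the algebra biequivalence, where one must confirm that the two superficially different preservation conditions really do coincide.
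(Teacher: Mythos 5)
Your proposal is correct and follows essentially the same route as the paper, which itself only sketches the two passages (pseudofunctor $U_DJ_D$ with unit $y$ and multiplication $p_{DA}$ in one direction; $f^{\mbbd} := m_B \circ Df$ with the pasting of $y_f$ and the unitor in the other) and defers the full verification to Marmolejo--Wood. Your additional remarks on the algebra biequivalence (structure $a = (1_C)^{\mbbc}$, doctrinal adjunction at the morphism level) are consistent with the cited source and with Proposition \ref{THM_characterization_D-algs_D-corefl-incl} later in the paper.
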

\begin{proof}
For the full proof see \cite[4.1, 4.2]{kanext}, here we sketch only the bits relevant for this paper. Given a left Kan pseudomonad $(D, y)$, the lax-idempotent pseudomonad is given by a normal pseudofunctor $D: \ck \to \ck$ with action on 1-cells and 2-cells given by $U_D \circ J_D$ from Proposition \ref{THM_biadjunkce_ck_ck_D}. The components of the unit $y$ become pseudonatural with the pseudonaturality square given by the Kan extension 2-cell:
% https://q.uiver.app/#q=WzAsNCxbMCwwLCJBIl0sWzIsMCwiREEiXSxbMCwxLCJCIl0sWzIsMSwiREIiXSxbMCwxLCJ5X0EiXSxbMSwzLCIoeV9CZileXFxtYmJkID06RGYiXSxbMCwyLCJmIiwyXSxbMiwzLCJ5X0IiLDJdLFs3LDQsIlxcbWJiZCIsMix7InNob3J0ZW4iOnsic291cmNlIjoyMCwidGFyZ2V0IjoyMH19XV0=
\[\begin{tikzcd}
	A && DA \\
	B && DB
	\arrow[""{name=0, anchor=center, inner sep=0}, "{y_A}", from=1-1, to=1-3]
	\arrow["{(y_Bf)^\mbbd =:Df}", from=1-3, to=2-3]
	\arrow["f"', from=1-1, to=2-1]
	\arrow[""{name=1, anchor=center, inner sep=0}, "{y_B}"', from=2-1, to=2-3]
	\arrow["\mbbd"', shorten <=4pt, shorten >=4pt, Rightarrow, from=1, to=0]
\end{tikzcd}\]
The multiplication at $A \in \ck$ is given by the morphism $p_{DA}$ again as in  Proposition \ref{THM_biadjunkce_ck_ck_D}. On the other hand, given a lax-idempotent pseudomonad $(D,m,y)$, the left Kan extension of $f: A \to DB$ along $y_A : A \to DA$ is given by the composite of the pseudonaturality 2-cell $y_f$ and the pseudomonad unitor 2-cell:
% https://q.uiver.app/#q=WzAsNSxbMCwwLCJBIl0sWzIsMCwiREEiXSxbMCwxLCJEQiJdLFsyLDEsIkReMkIiXSxbMiwyLCJEQiJdLFswLDEsInlfQSJdLFswLDIsImYiLDJdLFsyLDMsInlfe0RCfSIsMV0sWzEsMywiRGYiXSxbMyw0LCJtX0IiXSxbMiw0LCIiLDIseyJsZXZlbCI6Miwic3R5bGUiOnsiaGVhZCI6eyJuYW1lIjoibm9uZSJ9fX1dLFs1LDcsInlfZiIsMCx7InNob3J0ZW4iOnsic291cmNlIjoyMCwidGFyZ2V0IjoyMH0sImVkZ2VfYWxpZ25tZW50Ijp7InNvdXJjZSI6ZmFsc2UsInRhcmdldCI6ZmFsc2V9fV0sWzcsMTAsIlxcY29uZyIsMCx7ImxhYmVsX3Bvc2l0aW9uIjo2MCwib2Zmc2V0IjotNSwic2hvcnRlbiI6eyJzb3VyY2UiOjMwfSwiZWRnZV9hbGlnbm1lbnQiOnsic291cmNlIjpmYWxzZSwidGFyZ2V0IjpmYWxzZX19XV0=
\[\begin{tikzcd}
	A && DA \\
	DB && {D^2B} \\
	&& DB
	\arrow[""{name=0, anchor=center, inner sep=0}, "{y_A}", from=1-1, to=1-3]
	\arrow[""{name=0p, anchor=center, inner sep=0}, phantom, from=1-1, to=1-3, start anchor=center, end anchor=center]
	\arrow["f"', from=1-1, to=2-1]
	\arrow[""{name=1, anchor=center, inner sep=0}, "{y_{DB}}"{description}, from=2-1, to=2-3]
	\arrow[""{name=1p, anchor=center, inner sep=0}, phantom, from=2-1, to=2-3, start anchor=center, end anchor=center]
	\arrow[""{name=1p, anchor=center, inner sep=0}, phantom, from=2-1, to=2-3, start anchor=center, end anchor=center]
	\arrow["Df", from=1-3, to=2-3]
	\arrow["{m_B}", from=2-3, to=3-3]
	\arrow[""{name=2, anchor=center, inner sep=0}, Rightarrow, no head, from=2-1, to=3-3]
	\arrow[""{name=2p, anchor=center, inner sep=0}, phantom, from=2-1, to=3-3, start anchor=center, end anchor=center]
	\arrow["{y_f}", shorten <=4pt, shorten >=4pt, Rightarrow, from=0p, to=1p]
	\arrow["\cong"{pos=0.6}, shift left=5, shorten <=3pt, Rightarrow, from=1p, to=2p]
\end{tikzcd}\]

\end{proof}

\noindent In the rest of the paper we will use the terms ``left Kan pseudomonads” and ``lax-idempotent pseudomonads” interchangeably.

%292

\begin{pozn}[Duals]\label{POZN_duals}
A lax-idempotent pseudomonad $T$ on a 2-category $\ck$ is equivalently:
\begin{itemize}
\item a colax-idempotent pseudomonad $T^{co}$ on $\ck^{co}$,
\item a colax-idempotent pseudo-comonad $T^{op}$ on $\ck^{op}$,
\item a lax-idempotent pseudo-comonad $T^{coop}$ on $\ck^{coop}$.
\end{itemize}
\end{pozn}

%algebra is adjoint to unit
%371

\subsection{Left Kan 2-monads}

There is a class of lax-idempotent pseudomonads that will play a role: the ones for which the pseudomonad is actually a \textit{2-monad}. We will show that these correspond to what we call \textit{left Kan 2-monads}.

\begin{defi}\label{DEFI_leftkan2monad}
A left Kan pseudomonad $(D,y)$ is a \textit{left Kan 2-monad} if:
\begin{itemize}
\item $\mbbd_f$ is the identity 2-cell for every 1-cell $f:B \to DA$, meaning that $f^\mbbd \circ y_B = f$,
\item $g^\mbbd f^\mbbd = (g^\mbbd f)^\mbbd$,
\item $y_A^\mbbd = 1_{DA}$.
\end{itemize}
\end{defi}

\noindent Notice that in case of left Kan 2-monads, the biadjunction from Proposition \ref{THM_biadjunkce_ck_ck_D} becomes a 2-adjunction. Let us also note the following:

\begin{prop}
The correspondence from Theorem \ref{THM_korespondence_leftkanpsmon_laxidemppsmon} restricts to the correspondence between left Kan 2-monads $(D, y)$ and lax-idempotent 2-monads $(D,m,i)$.
\end{prop}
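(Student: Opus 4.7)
The plan is to verify that under the correspondence recalled in the proof of Theorem \ref{THM_korespondence_leftkanpsmon_laxidemppsmon}, each of the three bullets in Definition \ref{DEFI_leftkan2monad} encodes exactly the strictness data needed to promote the associated lax-idempotent pseudomonad into a strict 2-monad, and that the translation in the other direction recovers those bullets.

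Starting from a left Kan 2-monad $(D,y)$, I would first check that the assignment on 1-cells $f \mapsto Df := (y_B f)^\mbbd$ is strictly functorial. The identity $D(1_A) = (y_A)^\mbbd = 1_{DA}$ is the third axiom, and given composable $f:A\to B$, $g:B\to C$ we can write $DgDf = (y_Cg)^\mbbd (y_B f)^\mbbd = ((y_Cg)^\mbbd y_B f)^\mbbd = (y_C g f)^\mbbd = D(gf)$, where the middle equality uses the second axiom and the intermediate equality $(y_Cg)^\mbbd y_B = y_Cg$ comes from the first axiom (since $\mbbd_{y_Cg}$ is the identity). Next, the pseudonaturality 2-cell of $y:1\Rightarrow D$ at a morphism $f$ is by construction $\mbbd_{y_Bf}$, which is the identity, so $y$ is 2-natural. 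Finally, for the multiplication $m_A := (1_{DA})^\mbbd$, the unit axioms reduce to $m_A \cdot y_{DA} = (1_{DA})^\mbbd y_{DA} = 1_{DA}$ (first axiom) and $m_A \cdot Dy_A = (1_{DA})^\mbbd (y_{DA} y_A)^\mbbd = ((1_{DA})^\mbbd y_{DA} y_A)^\mbbd = y_A^\mbbd = 1_{DA}$ (second then third axioms), and associativity follows by applying the second axiom on both sides to reduce $m_A \cdot m_{DA}$ and $m_A \cdot Dm_A$ to the same iterated extension.

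Conversely, starting from a lax-idempotent 2-monad $(D,m,y)$, the Kan extension of $f: A \to DB$ along $y_A$ is $f^\mbbd = m_B \cdot Df$, and its defining 2-cell is the composite of the pseudonaturality square $y_f$ with a pseudomonad unitor, as displayed at the end of the proof of Theorem \ref{THM_korespondence_leftkanpsmon_laxidemppsmon}; strictness of $y$ and $m$ makes both factors identities, yielding the first axiom. For the second axiom, 2-naturality of $m$ at $g:B\to DC$ gives $Dg \cdot m_B = m_{DC} \cdot D^2g$, and strict associativity $m_C \cdot Dm_C = m_C \cdot m_{DC}$ then yields $g^\mbbd f^\mbbd = m_C Dg m_B Df = m_C m_{DC} D^2 g Df = m_C Dm_C D^2 g Df = (g^\mbbd f)^\mbbd$. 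The third axiom is the other unit law $y_A^\mbbd = m_A \cdot Dy_A = 1_{DA}$.

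I do not anticipate a substantial obstacle: each axiom corresponds transparently to one piece of strictness. The only bookkeeping point to watch is that the second axiom of Definition \ref{DEFI_leftkan2monad} is being used in two guises (functoriality of $D$ on composites and associativity of $m$), so one should be careful that in the converse direction the strict associativity of $m$ genuinely implies the strict equality $g^\mbbd f^\mbbd = (g^\mbbd f)^\mbbd$ for \emph{all} $f,g$, as verified above via 2-naturality of $m$.
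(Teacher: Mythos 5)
Your argument is correct and follows essentially the same route as the paper's proof: the forward direction uses the identical chain $D(gf)=(y_Cgf)^\mbbd=((y_Cg)^\mbbd y_Bf)^\mbbd=(y_Cg)^\mbbd(y_Bf)^\mbbd$ together with $D1_A=y_A^\mbbd=1_{DA}$ and the triviality of the pseudonaturality square of $y$, and the converse direction carries out exactly the ``straightforward manipulation using the 2-monad identities'' that the paper leaves implicit. The only difference in emphasis is which routine checks are written out --- the paper verifies that the multiplication $m$ is 2-natural while you instead verify the strict unit and associativity laws (both families of checks are needed in full, and both are of the same elementary nature) --- so this is a presentational rather than a substantive divergence.
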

\begin{proof}
``$\Rightarrow$”: Let $(D, y)$ be a left Kan 2-monad. As we outlined in the proof of Theorem \ref{THM_korespondence_leftkanpsmon_laxidemppsmon}, the pseudofunctor $D$ is defined as this left Kan extension:
% https://q.uiver.app/#q=WzAsNCxbMCwwLCJBIl0sWzIsMCwiREEiXSxbMCwxLCJCIl0sWzIsMSwiREIiXSxbMCwxLCJ5X0EiXSxbMSwzLCIoeV9CZileXFxtYmJkID06RGYiXSxbMCwyLCJmIiwyXSxbMiwzLCJ5X0IiLDJdXQ==
\[\begin{tikzcd}
	A && DA \\
	B && DB
	\arrow["{y_A}", from=1-1, to=1-3]
	\arrow["{(y_Bf)^\mbbd =:Df}", from=1-3, to=2-3]
	\arrow["f"', from=1-1, to=2-1]
	\arrow["{y_B}"', from=2-1, to=2-3]
\end{tikzcd}\]
If $(f:A \to B,g: B \to C)$ is a composable pair of morphisms, we have:
\[
D(g f) = (y_C g f)^\mbbd = ((y_C g)^\mbbd y_B f)^\mbbd = (y_C g)^\mbbd (y_B f)^\mbbd = Dg Df
\]
Also, $D1_A = y_A^\mbbd = 1_{DA}$ so $D$ is a 2-functor. This also makes $y$ a 2-natural transformation since the pseudo-naturality square is the identity. Next, the pseudo-naturality square for the multiplication $m: D^2 \Rightarrow D$ is also the identity since both of the triangles below commute:
% https://q.uiver.app/#q=WzAsNCxbMCwwLCJEXjJBIl0sWzAsMSwiRF4yQiJdLFsyLDAsIkRBIl0sWzIsMSwiREIiXSxbMCwxLCJEXjJmIiwyXSxbMSwzLCIxX3tEQn1eXFxtYmJkIiwyXSxbMCwyLCIxX3tEQX1eXFxtYmJkIl0sWzIsMywiRGYiXSxbMCwzLCIoRGYpXlxcbWJiZCIsMV1d
\[\begin{tikzcd}
	{D^2A} && DA \\
	{D^2B} && DB
	\arrow["{D^2f}"', from=1-1, to=2-1]
	\arrow["{1_{DB}^\mbbd}"', from=2-1, to=2-3]
	\arrow["{1_{DA}^\mbbd}", from=1-1, to=1-3]
	\arrow["Df", from=1-3, to=2-3]
	\arrow["{(Df)^\mbbd}"{description}, from=1-1, to=2-3]
\end{tikzcd}\]

``$\Leftarrow$”: If $(D,m,i)$ is a lax-idempotent 2-monad, the corresponding left Kan extension in the proof of Theorem \ref{THM_korespondence_leftkanpsmon_laxidemppsmon} has the 2-cell component equal to the identity. The other identities in Definition \ref{DEFI_leftkan2monad} are shown by a straightforward manipulation using the 2-monad identities.
\end{proof}

\subsection{Examples}\label{SUBS_2_examples}

\begin{pr}\label{PR_presheafpsmonad}
Given a locally small category $\ca$, denote by $P \ca$ the full subcategory of $[\ca^{op},\set]$ spanned by \textit{small} presheaves, that is, presheaves that are small colimits of representables. The assignment $\ca \mapsto P \ca$ defines a left Kan pseudomonad on the (large) 2-category $\CAT$ of locally small categories, with the unit $y_\ca: \ca \to P\ca$ being given by the Yoneda embedding and the extension operation being given by ordinary left Kan extension along $y_\ca$. These are guaranteed to exist because of the cocompleteness of $P \cb$; and since $y_\ca$ is fully faithful, the accompanying 2-cell is invertible):
% https://q.uiver.app/#q=WzAsMyxbMCwwLCJcXGNhIl0sWzIsMCwiUFxcY2EiXSxbMiwyLCJQXFxjYiJdLFswLDEsInlfQSJdLFsxLDIsIlxcdGV4dHtMYW59X3t5X0F9RiJdLFswLDIsIkYiLDJdLFs1LDQsIiIsMix7InNob3J0ZW4iOnsic291cmNlIjoyMCwidGFyZ2V0IjoyMH19XV0=
\[\begin{tikzcd}
	\ca && P\ca \\
	\\
	&& P\cb
	\arrow["{y_A}", from=1-1, to=1-3]
	\arrow[""{name=0, anchor=center, inner sep=0}, "{\text{Lan}_{y_A}F}", from=1-3, to=3-3]
	\arrow[""{name=1, anchor=center, inner sep=0}, "F"', from=1-1, to=3-3]
	\arrow[shorten <=7pt, shorten >=7pt, Rightarrow, from=1, to=0]
\end{tikzcd}\]

%272
A pseudo-$P$-algebra is precisely a cocomplete category, and pseudo-$P$-morphisms are cocontinuous functors. The Kleisli 2-category $\CAT_P$ thus has presheaf categories as objects and cocontinuous functors as morphisms. In fact, it can be seen to be biequivalent to the bicategory $\prof$ whose objects are locally small categories and whose morphisms $\ca \rightsquigarrow \cb$ are small profunctors $H: \cb^{op} \times \ca \to \set$. Here we call a profunctor $H: \cb^{op} \times \ca \to \set$ \textit{small} if for every $a \in \ca$, the presheaf $H(-,a): \cb^{op} \to \set$ is small (belongs to $P \cb$).

Under this identification, the left biadjoint from the Kleisli biadjunction in Proposition \ref{THM_biadjunkce_ck_ck_D} $P: \CAT \to \prof$ sends functor $f: \ca \to \cb$ to the profunctor:
\[
\cb(-,f-): \cb^{op} \times \ca \to \set.
\]
We remark that alternatively there is also a 2-monad presentation for this pseudomonad that uses inaccessible cardinals, see \cite[Chapter 7]{onthemonadicity}.
\end{pr}

%374 cauchy completion pseudomonad

\begin{pr}\label{PR_colax_slice_2cat}
Let $\ck$ be a 2-category with comma objects and fix an object $C \in \ck$. There is a 2-monad $P$ on $\ck / C$ that sends a morphism $f: A \to B$ to the morphism $\pi_f : Pg \to C$ which is a projection of the following comma object in $\ck$:
% https://q.uiver.app/#q=WzAsNCxbMCwwLCJQZiJdLFswLDEsIkMiXSxbMiwwLCJBIl0sWzIsMSwiQyJdLFswLDEsIlxccGlfZiIsMl0sWzAsMiwiXFxyaG9fZiJdLFsxLDMsIiIsMix7ImxldmVsIjoyLCJzdHlsZSI6eyJoZWFkIjp7Im5hbWUiOiJub25lIn19fV0sWzIsMywiZiJdLFs1LDYsIlxcY2hpIiwwLHsic2hvcnRlbiI6eyJzb3VyY2UiOjIwLCJ0YXJnZXQiOjIwfSwiZWRnZV9hbGlnbm1lbnQiOnsic291cmNlIjpmYWxzZSwidGFyZ2V0IjpmYWxzZX19XV0=
\begin{equation}\label{EQ_comma_object_lonely}
\begin{tikzcd}
	Pf && A \\
	C && C
	\arrow[""{name=0, anchor=center, inner sep=0}, "{\rho_f}", from=1-1, to=1-3]
	\arrow[""{name=0p, anchor=center, inner sep=0}, phantom, from=1-1, to=1-3, start anchor=center, end anchor=center]
	\arrow["{\pi_f}"', from=1-1, to=2-1]
	\arrow["f", from=1-3, to=2-3]
	\arrow[""{name=1, anchor=center, inner sep=0}, Rightarrow, no head, from=2-1, to=2-3]
	\arrow[""{name=1p, anchor=center, inner sep=0}, phantom, from=2-1, to=2-3, start anchor=center, end anchor=center]
	\arrow["\chi", shorten <=4pt, shorten >=4pt, Rightarrow, from=0p, to=1p]
\end{tikzcd}
\end{equation}

This 2-monad is known to be colax-idempotent with its algebras being fibrations in $\ck$ (\cite[Proposition 9]{streetfib}). Its Kleisli 2-category can be presented as having the objects functors with codomain $B$, while a morphism $F \rightsquigarrow G$ is a 1-cell $\theta: A \to Pg$ making the triangle below left commute:
% https://q.uiver.app/#q=WzAsOSxbMiwwLCJQZyJdLFsyLDEsIkMiXSxbNCwwLCJCIl0sWzQsMSwiQyJdLFswLDAsIkEiXSxbNiwwLCJBIl0sWzgsMCwiQiJdLFs2LDEsIkMiXSxbOCwxLCJDIl0sWzAsMSwiXFxwaV9nIiwyXSxbMCwyLCJcXHJob19nIl0sWzEsMywiIiwyLHsibGV2ZWwiOjIsInN0eWxlIjp7ImhlYWQiOnsibmFtZSI6Im5vbmUifX19XSxbMiwzLCJnIl0sWzQsMCwiXFx0aGV0YSJdLFs0LDEsImYiLDJdLFs1LDcsImYiLDJdLFs3LDgsIiIsMCx7ImxldmVsIjoyLCJzdHlsZSI6eyJoZWFkIjp7Im5hbWUiOiJub25lIn19fV0sWzUsNiwidSJdLFs2LDgsImciXSxbMTAsMTEsIlxcY2hpIiwwLHsic2hvcnRlbiI6eyJzb3VyY2UiOjIwLCJ0YXJnZXQiOjIwfSwiZWRnZV9hbGlnbm1lbnQiOnsic291cmNlIjpmYWxzZSwidGFyZ2V0IjpmYWxzZX19XSxbMTcsMTYsIlxcYWxwaGEiLDAseyJzaG9ydGVuIjp7InNvdXJjZSI6MjAsInRhcmdldCI6MjB9LCJlZGdlX2FsaWdubWVudCI6eyJzb3VyY2UiOmZhbHNlLCJ0YXJnZXQiOmZhbHNlfX1dXQ==
\[\begin{tikzcd}
	A && Pg && B && A && B \\
	&& C && C && C && C
	\arrow["\theta", from=1-1, to=1-3]
	\arrow["f"', from=1-1, to=2-3]
	\arrow[""{name=0, anchor=center, inner sep=0}, "{\rho_g}", from=1-3, to=1-5]
	\arrow[""{name=0p, anchor=center, inner sep=0}, phantom, from=1-3, to=1-5, start anchor=center, end anchor=center]
	\arrow["{\pi_g}"', from=1-3, to=2-3]
	\arrow["g", from=1-5, to=2-5]
	\arrow[""{name=1, anchor=center, inner sep=0}, "u", from=1-7, to=1-9]
	\arrow[""{name=1p, anchor=center, inner sep=0}, phantom, from=1-7, to=1-9, start anchor=center, end anchor=center]
	\arrow["f"', from=1-7, to=2-7]
	\arrow["g", from=1-9, to=2-9]
	\arrow[""{name=2, anchor=center, inner sep=0}, Rightarrow, no head, from=2-3, to=2-5]
	\arrow[""{name=2p, anchor=center, inner sep=0}, phantom, from=2-3, to=2-5, start anchor=center, end anchor=center]
	\arrow[""{name=3, anchor=center, inner sep=0}, Rightarrow, no head, from=2-7, to=2-9]
	\arrow[""{name=3p, anchor=center, inner sep=0}, phantom, from=2-7, to=2-9, start anchor=center, end anchor=center]
	\arrow["\chi", shorten <=4pt, shorten >=4pt, Rightarrow, from=0p, to=2p]
	\arrow["\alpha", shorten <=4pt, shorten >=4pt, Rightarrow, from=1p, to=3p]
\end{tikzcd}\]
From the definition of the comma object, this corresponds to pairs $(u,\alpha)$ of a 1-cell $u: A \to B$ and a 2-cell $\alpha: gu \Rightarrow f$ as portrayed above right.

\noindent In other words, the Kleisli 2-category for this 2-monad is isomorphic to the \textit{colax slice 2-category} $\ck // C$\footnote{This 2-category can also be presented as the 2-category of strict coalgebras and lax morphisms for the 2-comonad $(-)\times C$, see \cite[Chapter 5]{laxcomma2cats}.}. Under this identification, we have a 2-adjunction:
% https://q.uiver.app/#q=WzAsMixbMCwwLCJcXGNrL0MiXSxbMiwwLCJcXGNrLy9DIl0sWzAsMSwiSiIsMix7ImN1cnZlIjo0fV0sWzEsMCwiVSIsMix7ImN1cnZlIjo0fV0sWzIsMywiIiwwLHsibGV2ZWwiOjEsInN0eWxlIjp7Im5hbWUiOiJhZGp1bmN0aW9uIn19XV0=
\[\begin{tikzcd}
	{\ck/C} && {\ck//C}
	\arrow[""{name=0, anchor=center, inner sep=0}, "J"', curve={height=24pt}, from=1-1, to=1-3]
	\arrow[""{name=1, anchor=center, inner sep=0}, "U"', curve={height=24pt}, from=1-3, to=1-1]
	\arrow["\dashv"{anchor=center, rotate=90}, draw=none, from=0, to=1]
\end{tikzcd}\]

The left 2-adjoint is the canonical inclusion, the right 2-adjoint sends an object $f: A \to C$ to the comma object projection $\pi_f: Pf \to C$. The counit $p: JU \Rightarrow 1_{\ck //C}$ evaluated at an object $f: A \to C$ is the colax commutative triangle $(\rho_f, \chi): \pi_f \to f$ from \eqref{EQ_comma_object_lonely}.
\end{pr}

\noindent In the remainder of this section we recall a class of lax-idempotent 2-comonads that come from two-dimensional monad theory. Recall the 2-categories $\talgs$, $\talg$, $\talgl$ of strict algebras and strict, pseudo and lax morphisms for a 2-monad $T$ from \cite[1.2]{twodim}. Also recall the notions of a \textit{codescent object} and a \textit{lax codescent object} from \cite[Page 228]{codobjcoh}.

\begin{defi}\label{DEFI_resolution}
Let $T$ be a 2-monad on a 2-category $\ck$ and let $(A,a)$ be a strict $T$-algebra. By its \textit{resolution}, denoted $\res{A,a}$, we mean the following diagram in $\talgs$:
% https://q.uiver.app/?q=WzAsMyxbMCwwLCJUXjNBIl0sWzIsMCwiVF4yQSJdLFs0LDAsIlRBIl0sWzIsMSwiVGlfQSIsMV0sWzEsMiwiVGEiLDEseyJvZmZzZXQiOjR9XSxbMSwyLCJtX0EiLDEseyJvZmZzZXQiOi00fV0sWzAsMSwibV97VF4yQX0iLDEseyJvZmZzZXQiOi00fV0sWzAsMSwiVG1fe1RBfSIsMV0sWzAsMSwiVF4yYSIsMSx7Im9mZnNldCI6NH1dXQ==
\[\begin{tikzcd}
	{T^3A} && {T^2A} && TA
	\arrow["{Ti_A}"{description}, from=1-5, to=1-3]
	\arrow["Ta"{description}, shift right=4, from=1-3, to=1-5]
	\arrow["{m_A}"{description}, shift left=4, from=1-3, to=1-5]
	\arrow["{m_{T^2A}}"{description}, shift left=4, from=1-1, to=1-3]
	\arrow["{Tm_{TA}}"{description}, from=1-1, to=1-3]
	\arrow["{T^2a}"{description}, shift right=4, from=1-1, to=1-3]
\end{tikzcd}\]
\end{defi}

\begin{theo}\label{THM_adjoint_talgs_talgl}
Let $T$ be a 2-monad on a 2-category $\ck$ and assume the 2-category $\talgs$ admits lax codescent objects of resolutions of strict algebras. Then the inclusion 2-functor $\talgs \to \talgl$ admits a left 2-adjoint. Similarly, assume the 2-category $\talgs$ admits codescent objects of resolutions of strict algebras. Then the inclusion 2-functor $\talgs \to \talg$ admits a left 2-adjoint:
% https://q.uiver.app/#q=WzAsNCxbMCwwLCJcXHRhbGdzIl0sWzIsMCwiXFx0YWxnbCJdLFs0LDAsIlxcdGFsZ3MiXSxbNiwwLCJcXHRhbGciXSxbMCwxLCJKIiwyLHsiY3VydmUiOjQsInN0eWxlIjp7InRhaWwiOnsibmFtZSI6Imhvb2siLCJzaWRlIjoidG9wIn19fV0sWzEsMCwiKC0pJyIsMix7ImN1cnZlIjo0LCJzdHlsZSI6eyJib2R5Ijp7Im5hbWUiOiJkYXNoZWQifX19XSxbMiwzLCJKX3AiLDIseyJjdXJ2ZSI6NCwic3R5bGUiOnsidGFpbCI6eyJuYW1lIjoiaG9vayIsInNpZGUiOiJ0b3AifX19XSxbMywyLCIoLSleXFxkYWdnZXIiLDIseyJjdXJ2ZSI6NCwic3R5bGUiOnsiYm9keSI6eyJuYW1lIjoiZGFzaGVkIn19fV0sWzUsNCwiIiwyLHsibGV2ZWwiOjEsInN0eWxlIjp7Im5hbWUiOiJhZGp1bmN0aW9uIn19XSxbNyw2LCIiLDAseyJsZXZlbCI6MSwic3R5bGUiOnsibmFtZSI6ImFkanVuY3Rpb24ifX1dXQ==
\[\begin{tikzcd}[cramped]
	\talgs && \talgl && \talgs && \talg
	\arrow[""{name=0, anchor=center, inner sep=0}, "J"', curve={height=24pt}, hook, from=1-1, to=1-3]
	\arrow[""{name=1, anchor=center, inner sep=0}, "{(-)'}"', curve={height=24pt}, dashed, from=1-3, to=1-1]
	\arrow[""{name=2, anchor=center, inner sep=0}, "{J_p}"', curve={height=24pt}, hook, from=1-5, to=1-7]
	\arrow[""{name=3, anchor=center, inner sep=0}, "{(-)^\dagger}"', curve={height=24pt}, dashed, from=1-7, to=1-5]
	\arrow["\dashv"{anchor=center, rotate=-90}, draw=none, from=1, to=0]
	\arrow["\dashv"{anchor=center, rotate=-90}, draw=none, from=3, to=2]
\end{tikzcd}\]

In the first case, the value of a left 2-adjoint at a $T$-algebra $(A,a)$ is given by the lax codescent object of the diagram $\res{A,a}$ in $\talgs$. In the second case, codescent object is used.
\end{theo}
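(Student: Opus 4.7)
The plan is to define $(A,a)' := \laxcod{\res{A,a}}$ (the lax codescent object computed in $\talgs$, which exists by hypothesis) and to establish, for each strict $T$-algebra $(B,b)$, a 2-natural isomorphism
\[
\talgs((A,a)', (B,b)) \;\cong\; \talgl((A,a), (B,b)).
\]
Since the left-hand side is representable by $(A,a)'$ and both sides are 2-functorial in $(B,b)$, this isomorphism exhibits $(-)'$ as left 2-adjoint to $J: \talgs \hookrightarrow \talgl$.

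First I would unpack the universal property of the lax codescent object: strict morphisms $(A,a)' \to (B,b)$ in $\talgs$ correspond bijectively and 2-naturally in $(B,b)$ to lax cocones under $\res{A,a}$ with apex $(B,b)$. Such a cocone consists of a strict morphism $x: (TA, m_A) \to (B,b)$ together with a coherence 2-cell $\xi$ relating the two composites through the parallel pair $Ta, m_A : T^2A \rightrightarrows TA$, subject to a unit condition involving the degeneracy $Ti_A$ and associativity conditions enforced by the parallel triple $T^3A \rightrightarrows T^2A$.

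Next I would translate this cocone data using the free-forgetful 2-adjunction $F \dashv U$ between $\ck$ and $\talgs$. Since $(TA, m_A) = FA$, strict morphisms $(TA, m_A) \to (B,b)$ correspond to 1-cells $f: A \to B$ in $\ck$ via $x \mapsto x \cdot i_A$, with inverse $f \mapsto b \cdot Tf$; similarly, 2-cells in $\talgs$ between strict morphisms out of a free algebra transpose to 2-cells in $\ck$ between their restrictions along the unit. Under this correspondence, $x \cdot Ta$ restricts to $f \cdot a$ and $x \cdot m_A$ restricts to $b \cdot Tf$, so $\xi$ transposes to a 2-cell $\bar f : f \cdot a \Rightarrow b \cdot Tf$, which is precisely the structure 2-cell of a lax $T$-morphism $(A,a) \to (B,b)$. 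The remaining cocone axioms become exactly the unit and associativity axioms of a lax morphism.

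The main obstacle is this term-by-term matching of axioms: verifying that each cocone axiom, coming from the parallel triple $T^3A \rightrightarrows T^2A$ and the degeneracy $Ti_A$, transposes to the correct lax $T$-morphism axiom via the $F \dashv U$ correspondence. This is a direct but somewhat involved diagram chase using the 2-monad axioms for $T$ and the naturality of $i$ and $m$. Once it is established, 2-naturality of the correspondence in $(B,b)$ is immediate from the construction, yielding the 2-adjunction $(-)' \dashv J$. The pseudo case is entirely analogous: ordinary codescent objects of $\res{A,a}$ classify pseudo cocones, i.e.\ cocones whose coherence 2-cell $\xi$ is invertible, and these correspond under the same translation to pseudo $T$-morphisms $(A,a) \to (B,b)$, giving the second adjunction $(-)^\dagger \dashv J_p$.
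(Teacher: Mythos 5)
Your proposal is correct and follows the same route as the proof the paper relies on: the paper simply defers to Lack's \emph{Codescent objects and coherence} (Lemma 3.2 and Theorem 2.6), and the argument there is precisely your identification of strict morphisms out of the (lax) codescent object of $\res{A,a}$ with (lax) cocones under the resolution, which transpose along the free--forgetful 2-adjunction to lax (resp.\ pseudo) $T$-morphisms $(A,a) \to (B,b)$, 2-naturally in $(B,b)$. The axiom-matching you flag as the main obstacle is indeed the only substantive computation, and it goes through as you describe.
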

\begin{proof}
See Lemma 3.2 and Theorem 2.6 in \cite{codobjcoh}.
\end{proof}

\begin{pozn}
The assumptions of Theorem \ref{THM_adjoint_talgs_talgl} are satisfied whenever the base 2-category $\ck$ is cocomplete and $T$ is finitary (preserves filtered colimits). This is because the codescent objects of a resolution of a strict algebra is \textit{reflexive}, and so is a filtered colimit by \cite[Proposition 4.3]{codobjcoh}.
\end{pozn}

\begin{defi}\label{DEFI_Ql_Qp}
We denote by $Q_l$ and $Q_p$ the 2-comonads generated by the 2-adjunctions in the above theorem and call them the \textit{lax morphism classifier 2-comonad} and the \textit{pseudo morphism classifier 2-comonad}.
\end{defi}

\noindent It is easy to see that $\talgl$ is isomorphic to $(\talgs)_{Q_l}$, the Kleisli 2-category for the 2-comonad $Q_l$. Similarly, $\talg \cong (\talgs)_{Q_p}$.

\medskip

\begin{prop}\label{THM_Ql_lax_idemp_Qp_ps_idemp}
Let $T$ be a 2-monad on a 2-category $\ck$ such that the left 2-adjoints to the inclusions $\talgs \xhookrightarrow{} \talg$, $\talgs \xhookrightarrow{} \talgl$ exist. Then:
\begin{itemize}
\item If $\ck$ admits oplax limits of arrows, $Q_l$ is lax-idempotent.
\item If $\ck$ admits pseudo limits of arrows, $Q_p$ is pseudo-idempotent.
\end{itemize}
\end{prop}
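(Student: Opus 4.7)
The plan is to prove lax-idempotence of $Q_l$ by exploiting its description as the 2-comonad generated by the 2-adjunction $(-)' \dashv J: \talgl \rightleftarrows \talgs$ coming from Theorem \ref{THM_adjoint_talgs_talgl}. A standard result about 2-comonads generated by 2-adjunctions reduces lax-idempotence of $Q_l$ to a local adjoint property of the unit $\eta: 1_{\talgl} \Rightarrow J(-)'$: concretely, for each $(A,a) \in \talgl$ the lax morphism $\eta_{(A,a)}: (A,a) \to JQ_l(A,a)$ should admit a right adjoint in $\talgl$ with identity unit 2-cell, from which the required adjunction between $\delta_{(A,a)}$ and a section of it in $\talgs$ can be extracted via the 2-adjoint isomorphism.

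Since $Q_l(A,a)$ is the lax codescent object of $\res{A,a}$ in $\talgs$, its universal property identifies $\talgs(Q_l(A,a), X)$ with the category of lax $T$-morphisms $(A,a) \to X$. Under this identification the candidate right adjoint to $\eta_{(A,a)}$ in $\talgl$ is the counit $\epsilon^{Q_l}_{(A,a)}: Q_l(A,a) \to (A,a)$, viewed as a lax morphism via $J$. To produce the unit 2-cell of the adjunction, one uses the oplax limit of arrow hypothesis: the oplax limit of the structure map $a: TA \to A$ in $\ck$ yields a universal factorization of a lax $T$-morphism as a strict morphism followed by the algebra structure, and through the forgetful 2-functor $U: \talgs \to \ck$ --- which creates the relevant colimits --- this produces the required 2-cell at the level of underlying objects. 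Doctrinal adjunction then lifts this $\ck$-level data to an adjunction in $\talgl$.

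The main obstacle is the compatibility check: one must verify that the 2-cell obtained from the oplax limit of $a$ is genuinely a lax $T$-algebra 2-cell and that the triangle identities hold. Both reduce to 2-naturality of the 2-monad $T$ applied to the universal property of oplax limits of arrows, combined with the coherence axioms of lax codescent objects and the resolution $\res{A,a}$. Concretely, one unpacks the definition of the unit 2-cell in terms of the projections from the oplax limit, and checks that the $T$-action on these projections factors through the comparison map to give a cell satisfying the lax morphism axioms.

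The proof that $Q_p$ is pseudo-idempotent follows exactly the same template, with ``oplax limit of arrow'', ``lax codescent object'', and ``lax $T$-morphism'' replaced throughout by their pseudo analogues. The pseudo limit of $a: TA \to A$ then provides an invertible 2-cell, upgrading the adjunction to a pseudo-equivalence and yielding pseudo-idempotence of $Q_p$ in place of just lax-idempotence.
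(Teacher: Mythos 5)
The paper does not actually prove this proposition itself: the proof is a citation of \cite[Lemma 2.5]{enhanced2cats}, so what you are reconstructing is that external argument. Your overall architecture is the right one --- reduce lax-idempotence of $Q_l$ to a componentwise adjunction between the unit $\eta_{(A,a)}$ and the counit $q_{(A,a)}$ of $(-)' \dashv J$, transfer it along the 2-functor $(-)'$ to obtain the adjunction involving the comultiplication, and use oplax limits of arrows to manufacture the one nontrivial 2-cell. But the key step has the handedness of the adjunction backwards, and this is a genuine error, not a convention issue. You ask for $\eta_{(A,a)}$ to \emph{admit a right adjoint} in $\talgl$, with $Jq_{(A,a)}$ as the candidate. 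By doctrinal adjunction --- the very tool you invoke to finish --- a left adjoint in $\talgl$ necessarily has invertible structure 2-cell, i.e.\ is a pseudo morphism; but $\eta_{(A,a)}$ is the universal lax morphism and is genuinely lax in general (if it were pseudo, every lax morphism out of $(A,a)$ would be pseudo, since all of them factor as a strict morphism after $\eta_{(A,a)}$). So the adjunction you posit cannot exist. The correct condition, which is the coop-dual of the paper's own Proposition \ref{THM_prop_kz_adjunkce}, is $Jq_{(A,a)} \dashv \eta_{(A,a)}$ with identity \emph{counit}: the strict counit component is the left adjoint and the lax unit component is the right adjoint. Transporting your version through $(-)'$ would yield $\delta \dashv Q_l q$, which is the \emph{colax}-idempotence condition and is false for $Q_l$ in general; the correct orientation yields $Q_l q \dashv \delta$, as required.

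Even after fixing the orientation, the step where the hypothesis on $\ck$ enters is too vague to verify. The nontrivial datum is the unit $1 \Rightarrow \eta_{(A,a)} \circ Jq_{(A,a)}$, a 2-cell in $\talgl$ whose codomain is a \emph{lax} morphism out of $Q_l(A,a)$; the universal property of the lax codescent object only controls \emph{strict} morphisms out of $Q_l(A,a)$, so neither this 2-cell nor the triangle identities can be read off directly from it. The cited argument resolves this by showing that when $\ck$ has oplax limits of arrows, $\talgs$ admits oplax limits of arrows with an enhanced universal property (a strict cone over an arrow $g: X \to Y$ consists of a strict leg to $X$, a \emph{lax} leg to $Y$, and a 2-cell), which converts statements about lax morphisms out of $Q_l(A,a)$ into statements about strict morphisms into such a limit, where the classifier's universal property applies. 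Your description --- the oplax limit of the structure map $a: TA \to A$ giving ``a universal factorization of a lax morphism as a strict morphism followed by the algebra structure,'' with $U$ ``creating the relevant colimits'' --- does not match this (creation of colimits by $U$ concerns the existence of the codescent objects, not the construction of this unit), and no checkable construction of the 2-cell is given. The same two corrections apply verbatim to the $Q_p$ case.
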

\begin{proof}
See \cite[Lemma 2.5]{enhanced2cats}.
\end{proof}

\begin{prop}\label{THM_mor_of_comonads_QlQp}
There is a morphism of 2-comonads $Q_l \to Q_p$.
\end{prop}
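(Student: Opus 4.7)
The key observation is that the inclusion of pseudo morphisms into lax morphisms defines a 2-functor $K: \talg \hookrightarrow \talgl$ satisfying $K J_p = J$. Writing $\eta_l, \epsilon_l$ and $\eta_p, \epsilon_p$ for the units and counits of the 2-adjunctions $(-)' \dashv J$ and $(-)^\dagger \dashv J_p$ of Theorem \ref{THM_adjoint_talgs_talgl}, I would define, for each $A \in \talgs$, a strict morphism $\phi_A: Q_l A \to Q_p A$ as the mate of the lax morphism $K \eta_{p,A}: JA \to JA^\dagger$ under the 2-adjunction $(-)' \dashv J$. Equivalently, $\phi_A$ is uniquely characterized by
\[
J \phi_A \circ \eta_{l, A} \;=\; K \eta_{p, A}
\]
in $\talgl$.

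To see that this family forms a 2-natural morphism of 2-comonads $Q_l \Rightarrow Q_p$, each of the three required axioms can be checked by translating the identity through the adjunction: parallel strict morphisms $A' \rightrightarrows B$ coincide if and only if their images $Jg \circ \eta_{l, A}$ in $\talgl$ agree. 2-naturality of $\phi$ then reduces to the identity $K\eta_{p, B} \circ Jf = J(J_p f)^\dagger \circ K\eta_{p, A}$ in $\talgl$, which follows from 2-naturality of $\eta_p$ at the pseudo morphism $J_p f$ combined with $K J_p = J$. Counit compatibility $\epsilon_{p, A} \circ \phi_A = \epsilon_{l, A}$ reduces to $K(J_p \epsilon_{p, A} \circ \eta_{p, A}) = 1_{JA} = J \epsilon_{l, A} \circ \eta_{l, A}$, which is immediate from the triangle identities of the two adjunctions.

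Comultiplication compatibility $\delta_{p, A} \circ \phi_A = Q_p \phi_A \circ \phi_{Q_l A} \circ \delta_{l, A}$ requires a somewhat longer but still routine chase: under the same translation, both sides reduce to $K(\eta_{p, A^\dagger} \circ \eta_{p, A})$ via the defining formulas $\delta_{l, A} = (\eta_{l, JA})'$ and $\delta_{p, A} = (\eta_{p, A})^\dagger$, the defining equation of $\phi$ applied at both $A$ and $A'$, and 2-naturality of $\eta_p$ applied to its own component $\eta_{p, A}$ and to the strict morphism $\phi_A$. The main bookkeeping obstacle throughout is the systematic rewriting of expressions $J(-)$ in $\talgl$ as $K J_p (-)$, so that the naturality and triangle identities of $\eta_p$, which live in $\talg$, can be invoked from inside $\talgl$. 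Viewed abstractly, the construction is an instance of the mate-theoretic fact that a commuting triangle of right 2-adjoints into a common 2-category induces a canonical morphism between the associated 2-comonads.
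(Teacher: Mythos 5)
Your proposal is correct and matches the paper's proof: the paper likewise defines the component $A' \to A^\dagger$ as the unique strict morphism corresponding, under the universal property of $A'$, to the unit $p_A^\dagger$ regarded as a lax morphism, and then checks the 2-comonad morphism axioms via that same universal property. Your write-up merely spells out the "readily seen" verification in more detail.
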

\begin{proof}

Denote the units of the adjunctions in Theorem \ref{THM_adjoint_talgs_talgl} by $p_A: A \rightsquigarrow A'$ and\\ $p_A^\dagger: A \rightsquigarrow A^\dagger$ respectively. Since $p^\dagger_A$ is a pseudo-morphism, it is in particular a lax morphism and thus there exists a unique strict $T$-algebra morphism $\theta_A : A' \to A^\dagger$ making the diagram commute:
% https://q.uiver.app/#q=WzAsMyxbMCwxLCJBIl0sWzIsMCwiQSciXSxbMiwxLCJBXlxcZGFnZ2VyIl0sWzEsMiwiXFxleGlzdHMgISBcXHRoZXRhX0EiLDAseyJzdHlsZSI6eyJib2R5Ijp7Im5hbWUiOiJkYXNoZWQifX19XSxbMCwxLCJwX0EiLDAseyJzdHlsZSI6eyJib2R5Ijp7Im5hbWUiOiJzcXVpZ2dseSJ9fX1dLFswLDIsInBeXFxkYWdnZXJfQSAiLDIseyJzdHlsZSI6eyJib2R5Ijp7Im5hbWUiOiJzcXVpZ2dseSJ9fX1dXQ==
\[\begin{tikzcd}
	&& {A'} \\
	A && {A^\dagger}
	\arrow["{\exists ! \theta_A}", dashed, from=1-3, to=2-3]
	\arrow["{p_A}", squiggly, from=2-1, to=1-3]
	\arrow["{p^\dagger_A }"', squiggly, from=2-1, to=2-3]
\end{tikzcd}\]

Using the universal property of $A'$, it is readily seen that the maps $\theta_A$'s assemble into a morphism of 2-comonads.
\end{proof}

\section{Relative Kan extensions and colax adjunctions}\label{SEKCE_Bunge}

In \cite{bunge}, Bunge introduced the notion of a relative Kan extensions with respect to a 2-functor $U$ and showed that for a collection $y_A: A \to UFA$ of 1-cells that admit these extensions (and satisfy certain coherence conditions), there is an induced left colax adjoint $F$ to $U$, where $F$ is a colax functor (\cite[Theorem 4.1]{bunge}). She also proves a partial converse to this result (\cite[Theorem 4.3]{bunge}). Note that at the same time these results also appeared in Gray's work (\cite[I,7.8.]{formalcat}).

In this section, we generalize these results to the case where $U$ is a pseudofunctor and, on the other hand, refine it by identifying conditions under which the colax left adjoint $F$ is actually a pseudofunctor. This enables us to describe, in Theorem \ref{THM_korespondence_lax_adj_u_coh_ext}, a symmetric relationship between $U$-extensions and colax adjunctions. We will see an application of these results to the settings of lax-idempotent pseudomonads in Section \ref{SEKCE_Kleisli2cat}.

\begin{defi}
Let $U: \cc \to \cd$ be a pseudofunctor, $y_A: A \to UFA$, $f: A \to UB$ 1-cells of $\cd$. The \textit{left $U$-extension} of $f$ along $y_A$ is a pair $(f',\psi_f)$ with the property that for any pair $(g,\alpha)$ pictured below, there is a unique 2-cell $\theta: f' \Rightarrow g$ such that the following 2-cells are equal:
% https://q.uiver.app/#q=WzAsNyxbMCwwLCJBIl0sWzIsMCwiVUZBIl0sWzIsMiwiVUIiXSxbNCwxLCI9Il0sWzUsMCwiQSJdLFs3LDAsIlVGQSJdLFs3LDIsIlVCIl0sWzAsMSwieV9BIl0sWzAsMiwiZiIsMl0sWzEsMiwiVWYnIiwxLHsibGFiZWxfcG9zaXRpb24iOjcwfV0sWzEsMiwiVWciLDAseyJjdXJ2ZSI6LTR9XSxbNCw1LCJ5X0EiXSxbNCw2LCJmIiwyXSxbNSw2LCJVZyJdLFs4LDEsIlxccHNpX2YiLDIseyJzaG9ydGVuIjp7InNvdXJjZSI6MjB9fV0sWzEyLDUsIlxcYWxwaGEiLDAseyJzaG9ydGVuIjp7InNvdXJjZSI6MjB9fV0sWzksMTAsIlVcXHRoZXRhIiwwLHsic2hvcnRlbiI6eyJzb3VyY2UiOjIwLCJ0YXJnZXQiOjIwfX1dXQ==
\[\begin{tikzcd}
	A && UFA &&& A && UFA \\
	&&&& {=} \\
	&& UB &&&&& UB
	\arrow["{y_A}", from=1-1, to=1-3]
	\arrow[""{name=0, anchor=center, inner sep=0}, "f"', from=1-1, to=3-3]
	\arrow[""{name=1, anchor=center, inner sep=0}, "{Uf'}"{description, pos=0.7}, from=1-3, to=3-3]
	\arrow[""{name=2, anchor=center, inner sep=0}, "Ug", curve={height=-24pt}, from=1-3, to=3-3]
	\arrow["{y_A}", from=1-6, to=1-8]
	\arrow[""{name=3, anchor=center, inner sep=0}, "f"', from=1-6, to=3-8]
	\arrow["Ug", from=1-8, to=3-8]
	\arrow["{\psi_f}"', shorten <=5pt, Rightarrow, from=0, to=1-3]
	\arrow["\alpha", shorten <=5pt, Rightarrow, from=3, to=1-8]
	\arrow["U\theta", shorten <=5pt, shorten >=5pt, Rightarrow, from=1, to=2]
\end{tikzcd}\]
\end{defi}

\begin{defi}\label{DEFI_coherently_closed_for_Uext}
Let $U: \cc \to \cd$ be a pseudofunctor. We say that a collection of 1-cells $y_A: A \to UFA$ for each object $A \in \cc$ are \textit{coherently closed for $U$-extensions} if:
\begin{itemize}
\item for every $f: A \to UB$ we have a choice of an $U$-extension $(f^\mbbd, \mbbd_f)$,
\item the following composite 2-cell exhibits $1_Y^\mbbd \circ (y_Y f)^\mbbd$ as the left $U$-extension of $f$ along $y_X$:
% https://q.uiver.app/#q=WzAsNSxbMCwwLCJYIl0sWzMsMCwiVUZYIl0sWzAsMiwiVVkiXSxbMywyLCJVRlVZIl0sWzMsNCwiVVkiXSxbMCwxLCJ5X1giXSxbMCwyLCJmIiwyXSxbMiwzLCJ5X3tVWX0iLDFdLFsxLDMsIlUoeV97VVl9IGYpXlxcbWJiZCIsMV0sWzIsNCwiIiwwLHsibGV2ZWwiOjIsInN0eWxlIjp7ImhlYWQiOnsibmFtZSI6Im5vbmUifX19XSxbMyw0LCJVMV97VVl9XlxcbWJiZCIsMV0sWzEsNCwiVSgxX3tVWX1eXFxtYmJkIFxcY2lyYyAoeV97VVl9IGYpXlxcbWJiZCkiLDAseyJvZmZzZXQiOi01LCJjdXJ2ZSI6LTV9XSxbMSw0LCIiLDAseyJjdXJ2ZSI6LTEsInN0eWxlIjp7ImJvZHkiOnsibmFtZSI6Im5vbmUifSwiaGVhZCI6eyJuYW1lIjoibm9uZSJ9fX1dLFs3LDUsIlxcbWJiZF97eV97VVl9IGZ9IiwyLHsic2hvcnRlbiI6eyJzb3VyY2UiOjIwLCJ0YXJnZXQiOjIwfSwiZWRnZV9hbGlnbm1lbnQiOnsic291cmNlIjpmYWxzZSwidGFyZ2V0IjpmYWxzZX19XSxbOSw3LCJcXG1iYmRfezFfe1VZfX0iLDIseyJzaG9ydGVuIjp7InNvdXJjZSI6MjAsInRhcmdldCI6MjB9LCJlZGdlX2FsaWdubWVudCI6eyJzb3VyY2UiOmZhbHNlLCJ0YXJnZXQiOmZhbHNlfX1dLFsxMiwxMSwiXFxnYW1tYV57LTF9IiwwLHsibGFiZWxfcG9zaXRpb24iOjcwLCJzaG9ydGVuIjp7InNvdXJjZSI6NDB9LCJlZGdlX2FsaWdubWVudCI6eyJ0YXJnZXQiOmZhbHNlfX1dXQ==
\begin{equation}\label{EQ_DEFI_coherently_closed_for_Uext}
\begin{tikzcd}
	X &&& UFX \\
	\\
	UY &&& UFUY \\
	\\
	&&& UY
	\arrow[""{name=0, anchor=center, inner sep=0}, "{y_X}", from=1-1, to=1-4]
	\arrow[""{name=0p, anchor=center, inner sep=0}, phantom, from=1-1, to=1-4, start anchor=center, end anchor=center]
	\arrow["f"', from=1-1, to=3-1]
	\arrow[""{name=1, anchor=center, inner sep=0}, "{y_{UY}}"{description}, from=3-1, to=3-4]
	\arrow[""{name=1p, anchor=center, inner sep=0}, phantom, from=3-1, to=3-4, start anchor=center, end anchor=center]
	\arrow[""{name=1p, anchor=center, inner sep=0}, phantom, from=3-1, to=3-4, start anchor=center, end anchor=center]
	\arrow["{U(y_{UY} f)^\mbbd}"{description}, from=1-4, to=3-4]
	\arrow[""{name=2, anchor=center, inner sep=0}, Rightarrow, no head, from=3-1, to=5-4]
	\arrow[""{name=2p, anchor=center, inner sep=0}, phantom, from=3-1, to=5-4, start anchor=center, end anchor=center]
	\arrow["{U1_{UY}^\mbbd}"{description}, from=3-4, to=5-4]
	\arrow[""{name=3, anchor=center, inner sep=0}, "{U(1_{UY}^\mbbd \circ (y_{UY} f)^\mbbd)}", shift left=5, curve={height=-30pt}, from=1-4, to=5-4]
	\arrow[""{name=3p, anchor=center, inner sep=0}, phantom, from=1-4, to=5-4, start anchor=center, end anchor=center, shift left=5, curve={height=-30pt}]
	\arrow[""{name=4, anchor=center, inner sep=0}, curve={height=-6pt}, draw=none, from=1-4, to=5-4]
	\arrow["{\mbbd_{y_{UY} f}}"', shorten <=9pt, shorten >=9pt, Rightarrow, from=1p, to=0p]
	\arrow["{\mbbd_{1_{UY}}}"', shorten <=4pt, shorten >=4pt, Rightarrow, from=2p, to=1p]
	\arrow["{\gamma^{-1}}"{pos=0.7}, shorten <=14pt, Rightarrow, from=4, to=3p]
\end{tikzcd}
\end{equation}
\end{itemize}
\end{defi}

\begin{theo}\label{THM_Bunge}
Let $U: \cc \to \cd$ be a pseudofunctor and $y_A : A \to UFA$ a collection of 1-cells coherently closed for $U$-extensions. Then:
\begin{itemize}
\item the mapping $A \mapsto FA$ can be extended to a colax functor $F: \cd \to \cc$,
\item $y$ can be extended to a colax natural transformation $1_\cd \Rightarrow UF$,
\item there exists a colax-natural transformation $\epsilon: FU \Rightarrow 1_\cc$ and a modification\\ $\Phi: 1_U \to U\epsilon \circ yU$.

\medskip

\noindent Assume moreover the \textit{composition} and \textit{unit} axioms for $U$-extensions: the diagram below left is a $U$-extension of $y_A$ along $y_A$, and the diagram below right is the $U$-extension of $y_C gf$ along $y_A$:
% https://q.uiver.app/#q=WzAsOSxbNCwwLCJBIl0sWzYsMCwiVUZBIl0sWzQsMiwiQiJdLFs2LDIsIlVGQiJdLFs0LDQsIkMiXSxbNiw0LCJVRkMiXSxbMCwwLCJBIl0sWzIsMCwiVUZBIl0sWzIsMiwiVUZBIl0sWzAsMSwieV9BIl0sWzEsMywiVUZmIl0sWzIsMywieV9CIiwxXSxbMCwyLCJmIiwyXSxbMiw0LCJnIiwyXSxbNCw1LCJ5X0MiLDJdLFszLDUsIlVGZyJdLFs2LDcsInlfQSJdLFs2LDgsInlfQSIsMix7Im9mZnNldCI6M31dLFs3LDgsIlUxX3tGQX0iLDAseyJjdXJ2ZSI6LTR9XSxbNyw4LCIiLDEseyJjdXJ2ZSI6NCwibGV2ZWwiOjIsInN0eWxlIjp7ImhlYWQiOnsibmFtZSI6Im5vbmUifX19XSxbMSw1LCJVKEZnIFxcY2lyYyBGZikiLDAseyJvZmZzZXQiOi01LCJjdXJ2ZSI6LTV9XSxbMTEsOSwiXFxtYmJkIiwwLHsic2hvcnRlbiI6eyJzb3VyY2UiOjIwLCJ0YXJnZXQiOjIwfX1dLFsxNCwxMSwiXFxtYmJkIiwwLHsic2hvcnRlbiI6eyJzb3VyY2UiOjIwLCJ0YXJnZXQiOjIwfX1dLFsxOSwxOCwiXFxpb3RhXnstMX0iLDAseyJzaG9ydGVuIjp7InNvdXJjZSI6MjAsInRhcmdldCI6MjB9fV0sWzMsMjAsIlxcZ2FtbWFeey0xfSIsMCx7InNob3J0ZW4iOnsidGFyZ2V0IjoyMH19XV0=
\begin{equation}\label{EQ_U_extensions_unit_comp}
\begin{tikzcd}
	A && UFA && A && UFA \\
	&&&& B && UFB \\
	&& UFA && C && UFC
	\arrow[""{name=0, anchor=center, inner sep=0}, "{y_A}", from=1-5, to=1-7]
	\arrow["U(y_Bf)^\mbbd"', from=1-7, to=2-7]
	\arrow["f"', from=1-5, to=2-5]
	\arrow["g"', from=2-5, to=3-5]
	\arrow[""{name=1, anchor=center, inner sep=0}, "{y_C}"', from=3-5, to=3-7]
	\arrow["U(y_Cg)^\mbbd"', from=2-7, to=3-7]
	\arrow["{y_A}", from=1-1, to=1-3]
	\arrow["{y_A}"', shift right=3, from=1-1, to=3-3]
	\arrow[""{name=2, anchor=center, inner sep=0}, "{U1_{FA}}", curve={height=-24pt}, from=1-3, to=3-3]
	\arrow[""{name=3, anchor=center, inner sep=0}, curve={height=24pt}, Rightarrow, no head, from=1-3, to=3-3]
	\arrow[""{name=4, anchor=center, inner sep=0}, "{U((y_Cg)^\mbbd (y_Bf)^\mbbd)}", shift left=5, curve={height=-30pt}, from=1-7, to=3-7]
	\arrow[""{name=4p, anchor=center, inner sep=0}, phantom, from=1-7, to=3-7, start anchor=center, end anchor=center, shift left=5, curve={height=-30pt}]
	\arrow[""{name=5, anchor=center, inner sep=0}, "{y_B}"{description}, from=2-5, to=2-7]
	\arrow["{\iota^{-1}}", shorten <=10pt, shorten >=10pt, Rightarrow, from=3, to=2]
	\arrow["\mbbd", shorten <=4pt, shorten >=4pt, Rightarrow, from=5, to=0]
	\arrow["\mbbd", shorten <=4pt, shorten >=4pt, Rightarrow, from=1, to=5]
	\arrow["{\gamma^{-1}}", shorten >=6pt, Rightarrow, from=2-7, to=4p]
\end{tikzcd}
\end{equation}

\noindent Then:
\item $F$ is a pseudofunctor,
\item there is an invertible modification $\Psi: \epsilon F \circ Fy \to 1_F$ and all this data give a colax adjunction:
\[
(\Psi,\Phi): (\epsilon, y): F \ddashv U: \cc \to \cd.
\]
\end{itemize}
\end{theo}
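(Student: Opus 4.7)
The plan is to define all required data by the universal property of $U$-extensions and then derive each coherence axiom from the corresponding uniqueness clause. On a 1-cell $f: A \to B$ of $\cd$ put $Ff := (y_B f)^\mbbd$. On a 2-cell $\alpha: f \Rightarrow f'$ let $F\alpha: Ff \Rightarrow Ff'$ be the unique factorization of $\mbbd_{y_B f'} \circ (y_B \alpha)$ through $\mbbd_{y_B f}$. The colax associator $\gamma^F_{f,g}: F(gf) \Rightarrow Fg \circ Ff$ and unitor $\iota^F_A: F 1_A \Rightarrow 1_{FA}$ are then defined as unique factorizations through $\mbbd_{y_C g f}$ and $\mbbd_{y_A}$ of the obvious comparison 2-cells built from the various $\mbbd$'s together with the pseudofunctor associator $\gamma$ and unitor $\iota$ of $U$. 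Functoriality of $F$ on 2-cells and the colax-functor coherence axioms are forced by the uniqueness clause of the universal property.

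Next, the 1-cells $y_A$ become colax-natural with 2-cell component $y_f := \mbbd_{y_B f}: y_B \circ f \Rightarrow UFf \circ y_A$, and the naturality, unit, and composition axioms hold by uniqueness. Set $\epsilon_X := (1_{UX})^\mbbd$ for $X \in \cc$. The coherent closure hypothesis \eqref{EQ_DEFI_coherently_closed_for_Uext} identifies $\epsilon_Y \circ FUh$ as a $U$-extension of $Uh$ along $y_{UX}$ for every $h: X \to Y$ in $\cc$, so the 2-cell
\[
Uh \;\xRightarrow{Uh \cdot \mbbd_{1_{UX}}}\; Uh \circ U\epsilon_X \circ y_{UX} \;\xRightarrow{\gamma^{-1} \cdot y_{UX}}\; U(h \circ \epsilon_X) \circ y_{UX}
\]
factors uniquely through it to yield the 2-cell component $\epsilon_h$. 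The modification $\Phi: 1_U \Rightarrow U\epsilon \circ y U$ is defined by $\Phi_X := \mbbd_{1_{UX}}$ (absorbed with $\iota$ of $U$), and its modification axiom is one more uniqueness statement.

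Under the additional unit and composition axioms \eqref{EQ_U_extensions_unit_comp}, the composite $Fg \circ Ff$ equipped with the pasting used to define $\gamma^F_{f,g}$ becomes a $U$-extension of $y_C gf$ along $y_A$; comparison with $F(gf)$ forces $\gamma^F_{f,g}$ to be invertible. Similarly $1_{FA}$ (paired with $\iota^{-1}$) is a $U$-extension of $y_A$ along $y_A$, so $\iota^F_A$ is invertible as well, and $F$ becomes a pseudofunctor. By the same principle applied to $\epsilon_{FA} \circ F y_A$ (a $U$-extension of $y_A$ along $y_A$ by coherent closure) and $1_{FA}$ (a $U$-extension by the unit axiom), we obtain an invertible component $\Psi_A: \epsilon_{FA} \circ F y_A \cong 1_{FA}$; modification-naturality of $\Psi$ follows automatically.

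Finally, each of the two swallowtail identities is an equation of 2-cells landing in a 1-cell of the form $U(\text{something}) \circ y_A$ (resp.\ $U(\text{something}) \circ y_{UX}$). Pre-composing with $\mbbd_{y_A}$ (resp.\ $\mbbd_{1_{UX}}$) and expanding the definitions of $\Psi, \Phi, \gamma^F, \iota^F, y_f, \epsilon_h$ reduces each swallowtail to an equality of 2-cells into a $U$-extension, which holds by uniqueness of factorization. The principal obstacle throughout will be bookkeeping: unlike in Bunge's 2-functor setting, the nontrivial $\gamma, \iota$ of the pseudofunctor $U$ intervene in every diagram and must be tracked and absorbed via the pseudofunctor coherence of $U$. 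Once the defining equations for $F\alpha, \gamma^F, \iota^F, y_f, \epsilon_h, \Phi, \Psi$ are pinned down, every remaining axiom reduces to a routine application of uniqueness of $U$-extension factorizations.
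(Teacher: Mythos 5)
Your proposal matches the paper's proof essentially step for step: the same definitions of $F$ on 1- and 2-cells, of $\gamma^F, \iota^F, y_f, \epsilon_h, \Phi, \Psi$ via unique factorization through $U$-extensions, the same use of the coherent-closure condition to construct $\epsilon_h$, the same identification of $\Psi_A$ by comparing the two $U$-extensions of $y_A$ along $y_A$, and the same strategy of verifying the remaining axioms (colax naturality of $\epsilon$, the modification axioms, the swallowtail identities) by pre-composing with the extension 2-cells and invoking uniqueness while tracking the constraints of $U$. The paper carries out those final verifications as explicit (lengthy) pasting computations in an appendix lemma, but the approach is the same.
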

\begin{proof}
Denote by $(f^\mbbd,\mbbd)$ the choice of a $U$-extension of $f: A \to UB$. Define the colax functor $F: \cd \to \cc$ on a morphism $f: A \to B$ as the following $U$-extension:
% https://q.uiver.app/#q=WzAsNCxbMCwwLCJBIl0sWzIsMCwiVUZBIl0sWzAsMiwiQiJdLFsyLDIsIlVGQiJdLFswLDEsInlfQSJdLFsxLDMsIlVGZiJdLFsyLDMsInlfQiIsMl0sWzAsMiwiZiIsMl0sWzYsNCwiXFxtYmJkIiwyLHsic2hvcnRlbiI6eyJzb3VyY2UiOjIwLCJ0YXJnZXQiOjIwfX1dXQ==
\[\begin{tikzcd}
	A && UFA \\
	\\
	B && UFB
	\arrow[""{name=0, anchor=center, inner sep=0}, "{y_A}", from=1-1, to=1-3]
	\arrow["UFf", from=1-3, to=3-3]
	\arrow[""{name=1, anchor=center, inner sep=0}, "{y_B}"', from=3-1, to=3-3]
	\arrow["f"', from=1-1, to=3-1]
	\arrow["\mbbd"', shorten <=9pt, shorten >=9pt, Rightarrow, from=1, to=0]
\end{tikzcd}\]
Define the action of $F$ on a 2-cell $\alpha$ as the unique 2-cell making the following equal:
% https://q.uiver.app/#q=WzAsOSxbMCwwLCJBIl0sWzIsMCwiVUZBIl0sWzAsMiwiQiJdLFsyLDIsIlVGQiJdLFs0LDEsIj0iXSxbNiwwLCJBIl0sWzYsMiwiQiJdLFs4LDAsIlVGQSJdLFs4LDIsIlVGQiJdLFswLDEsInlfQSJdLFsxLDMsIlVGZiIsMl0sWzIsMywieV9CIiwyXSxbMCwyLCJmIiwyXSxbMSwzLCJVRmciLDAseyJjdXJ2ZSI6LTV9XSxbNSw2LCJmIiwyLHsiY3VydmUiOjV9XSxbNSw2LCJnIl0sWzUsNywieV9BIl0sWzYsOCwieV9CIiwyXSxbNyw4LCJVRmciXSxbMTEsOSwiXFxtYmJkIiwwLHsic2hvcnRlbiI6eyJzb3VyY2UiOjIwLCJ0YXJnZXQiOjIwfSwiZWRnZV9hbGlnbm1lbnQiOnsic291cmNlIjpmYWxzZSwidGFyZ2V0IjpmYWxzZX19XSxbMTAsMTMsIlUoXFxleGlzdHMgISkiLDAseyJzaG9ydGVuIjp7InNvdXJjZSI6MTAsInRhcmdldCI6MTB9LCJlZGdlX2FsaWdubWVudCI6eyJzb3VyY2UiOmZhbHNlLCJ0YXJnZXQiOmZhbHNlfX1dLFsxNCwxNSwiXFxhbHBoYSIsMCx7ImxhYmVsX3Bvc2l0aW9uIjo0MCwic2hvcnRlbiI6eyJzb3VyY2UiOjEwLCJ0YXJnZXQiOjEwfSwiZWRnZV9hbGlnbm1lbnQiOnsic291cmNlIjpmYWxzZSwidGFyZ2V0IjpmYWxzZX19XSxbMTcsMTYsIlxcbWJiZCIsMix7InNob3J0ZW4iOnsic291cmNlIjoyMCwidGFyZ2V0IjoyMH0sImVkZ2VfYWxpZ25tZW50Ijp7InNvdXJjZSI6ZmFsc2UsInRhcmdldCI6ZmFsc2V9fV1d
\begin{equation}\label{EQP_Falpha}
\begin{tikzcd}
	A && UFA &&&& A && UFA \\
	&&&& {=} \\
	B && UFB &&&& B && UFB
	\arrow[""{name=0, anchor=center, inner sep=0}, "{y_A}", from=1-1, to=1-3]
	\arrow[""{name=0p, anchor=center, inner sep=0}, phantom, from=1-1, to=1-3, start anchor=center, end anchor=center]
	\arrow[""{name=1, anchor=center, inner sep=0}, "UFf"', from=1-3, to=3-3]
	\arrow[""{name=1p, anchor=center, inner sep=0}, phantom, from=1-3, to=3-3, start anchor=center, end anchor=center]
	\arrow[""{name=2, anchor=center, inner sep=0}, "{y_B}"', from=3-1, to=3-3]
	\arrow[""{name=2p, anchor=center, inner sep=0}, phantom, from=3-1, to=3-3, start anchor=center, end anchor=center]
	\arrow["f"', from=1-1, to=3-1]
	\arrow[""{name=3, anchor=center, inner sep=0}, "UFg", curve={height=-30pt}, from=1-3, to=3-3]
	\arrow[""{name=3p, anchor=center, inner sep=0}, phantom, from=1-3, to=3-3, start anchor=center, end anchor=center, curve={height=-30pt}]
	\arrow[""{name=4, anchor=center, inner sep=0}, "f"', curve={height=30pt}, from=1-7, to=3-7]
	\arrow[""{name=4p, anchor=center, inner sep=0}, phantom, from=1-7, to=3-7, start anchor=center, end anchor=center, curve={height=30pt}]
	\arrow[""{name=5, anchor=center, inner sep=0}, "g", from=1-7, to=3-7]
	\arrow[""{name=5p, anchor=center, inner sep=0}, phantom, from=1-7, to=3-7, start anchor=center, end anchor=center]
	\arrow[""{name=6, anchor=center, inner sep=0}, "{y_A}", from=1-7, to=1-9]
	\arrow[""{name=6p, anchor=center, inner sep=0}, phantom, from=1-7, to=1-9, start anchor=center, end anchor=center]
	\arrow[""{name=7, anchor=center, inner sep=0}, "{y_B}"', from=3-7, to=3-9]
	\arrow[""{name=7p, anchor=center, inner sep=0}, phantom, from=3-7, to=3-9, start anchor=center, end anchor=center]
	\arrow["UFg", from=1-9, to=3-9]
	\arrow["\mbbd", shorten <=9pt, shorten >=9pt, Rightarrow, from=2p, to=0p]
	\arrow["{U(\exists !)}", shorten <=3pt, shorten >=3pt, Rightarrow, from=1p, to=3p]
	\arrow["\alpha"{pos=0.4}, shorten <=3pt, shorten >=3pt, Rightarrow, from=4p, to=5p]
	\arrow["\mbbd"', shorten <=9pt, shorten >=9pt, Rightarrow, from=7p, to=6p]
\end{tikzcd}
\end{equation}

The above equation makes $y$ locally natural. The associator $\gamma': F(gf) \Rightarrow Fg \circ Ff$ and the unitor $\iota': F1_A \Rightarrow 1_{FA}$ for $F$ are given as the unique 2-cells satisfying these equations:
% https://q.uiver.app/#q=WzAsMTEsWzUsMCwiQSJdLFs3LDAsIlVGQSJdLFs1LDEsIkIiXSxbNywxLCJVRkIiXSxbNSwyLCJDIl0sWzcsMiwiVUZDIl0sWzAsMCwiQSJdLFswLDIsIkMiXSxbMiwyLCJVRkMiXSxbMiwwLCJVRkEiXSxbNCwxLCI9Il0sWzAsMSwieV9BIl0sWzEsMywiVUZmIiwyXSxbMiwzLCJ5X0IiLDFdLFswLDIsImYiLDJdLFsyLDQsImciLDJdLFs0LDUsInlfQyIsMl0sWzMsNSwiVUZnIiwyXSxbMSw1LCJVKEZnIFxcY2lyYyBGZikiLDAseyJvZmZzZXQiOi0zLCJjdXJ2ZSI6LTV9XSxbNiw3LCJnZiIsMl0sWzcsOCwieV9DIiwyXSxbNiw5LCJ5X0EiXSxbOSw4LCJVRihnZikiLDJdLFs5LDgsIlUoRmcgXFxjaXJjIEZmKSIsMCx7ImxhYmVsX3Bvc2l0aW9uIjozMCwiY3VydmUiOi01fV0sWzEsNSwiIiwwLHsib2Zmc2V0IjotNSwic3R5bGUiOnsiYm9keSI6eyJuYW1lIjoibm9uZSJ9LCJoZWFkIjp7Im5hbWUiOiJub25lIn19fV0sWzEzLDExLCJcXG1iYmQiLDAseyJzaG9ydGVuIjp7InNvdXJjZSI6MjAsInRhcmdldCI6MjB9LCJlZGdlX2FsaWdubWVudCI6eyJzb3VyY2UiOmZhbHNlLCJ0YXJnZXQiOmZhbHNlfX1dLFsyMCwyMSwiXFxtYmJkIiwwLHsic2hvcnRlbiI6eyJzb3VyY2UiOjIwLCJ0YXJnZXQiOjIwfSwiZWRnZV9hbGlnbm1lbnQiOnsic291cmNlIjpmYWxzZSwidGFyZ2V0IjpmYWxzZX19XSxbMjIsMjMsIlUoXFxleGlzdHMgISkiLDAseyJzaG9ydGVuIjp7InNvdXJjZSI6MjAsInRhcmdldCI6MjB9LCJlZGdlX2FsaWdubWVudCI6eyJzb3VyY2UiOmZhbHNlLCJ0YXJnZXQiOmZhbHNlfX1dLFsxNiwxMywiXFxtYmJkIiwwLHsic2hvcnRlbiI6eyJzb3VyY2UiOjIwLCJ0YXJnZXQiOjIwfSwiZWRnZV9hbGlnbm1lbnQiOnsic291cmNlIjpmYWxzZSwidGFyZ2V0IjpmYWxzZX19XSxbMjQsMTgsIlxcZ2FtbWFeey0xfSIsMCx7InNob3J0ZW4iOnsic291cmNlIjoxMH0sImVkZ2VfYWxpZ25tZW50Ijp7InNvdXJjZSI6ZmFsc2UsInRhcmdldCI6ZmFsc2V9fV1d
\begin{equation}\label{EQP_gamma_prime}
\begin{tikzcd}
	A && UFA &&& A && UFA \\
	&&&& {=} & B && UFB \\
	C && UFC &&& C && UFC
	\arrow[""{name=0, anchor=center, inner sep=0}, "{y_A}", from=1-6, to=1-8]
	\arrow[""{name=0p, anchor=center, inner sep=0}, phantom, from=1-6, to=1-8, start anchor=center, end anchor=center]
	\arrow["UFf"', from=1-8, to=2-8]
	\arrow[""{name=1, anchor=center, inner sep=0}, "{y_B}"{description}, from=2-6, to=2-8]
	\arrow[""{name=1p, anchor=center, inner sep=0}, phantom, from=2-6, to=2-8, start anchor=center, end anchor=center]
	\arrow[""{name=1p, anchor=center, inner sep=0}, phantom, from=2-6, to=2-8, start anchor=center, end anchor=center]
	\arrow["f"', from=1-6, to=2-6]
	\arrow["g"', from=2-6, to=3-6]
	\arrow[""{name=2, anchor=center, inner sep=0}, "{y_C}"', from=3-6, to=3-8]
	\arrow[""{name=2p, anchor=center, inner sep=0}, phantom, from=3-6, to=3-8, start anchor=center, end anchor=center]
	\arrow["UFg"', from=2-8, to=3-8]
	\arrow[""{name=3, anchor=center, inner sep=0}, "{U(Fg \circ Ff)}", shift left=3, curve={height=-30pt}, from=1-8, to=3-8]
	\arrow[""{name=3p, anchor=center, inner sep=0}, phantom, from=1-8, to=3-8, start anchor=center, end anchor=center, shift left=3, curve={height=-30pt}]
	\arrow["gf"', from=1-1, to=3-1]
	\arrow[""{name=4, anchor=center, inner sep=0}, "{y_C}"', from=3-1, to=3-3]
	\arrow[""{name=4p, anchor=center, inner sep=0}, phantom, from=3-1, to=3-3, start anchor=center, end anchor=center]
	\arrow[""{name=5, anchor=center, inner sep=0}, "{y_A}", from=1-1, to=1-3]
	\arrow[""{name=5p, anchor=center, inner sep=0}, phantom, from=1-1, to=1-3, start anchor=center, end anchor=center]
	\arrow[""{name=6, anchor=center, inner sep=0}, "{UF(gf)}"', from=1-3, to=3-3]
	\arrow[""{name=6p, anchor=center, inner sep=0}, phantom, from=1-3, to=3-3, start anchor=center, end anchor=center]
	\arrow[""{name=7, anchor=center, inner sep=0}, "{U(Fg \circ Ff)}"{pos=0.3}, curve={height=-30pt}, from=1-3, to=3-3]
	\arrow[""{name=7p, anchor=center, inner sep=0}, phantom, from=1-3, to=3-3, start anchor=center, end anchor=center, curve={height=-30pt}]
	\arrow[""{name=8, anchor=center, inner sep=0}, shift left=5, draw=none, from=1-8, to=3-8]
	\arrow[""{name=8p, anchor=center, inner sep=0}, phantom, from=1-8, to=3-8, start anchor=center, end anchor=center, shift left=5]
	\arrow["\mbbd", shorten <=4pt, shorten >=4pt, Rightarrow, from=1p, to=0p]
	\arrow["\mbbd", shorten <=9pt, shorten >=9pt, Rightarrow, from=4p, to=5p]
	\arrow["{U(\exists !)}", shorten <=6pt, shorten >=6pt, Rightarrow, from=6p, to=7p]
	\arrow["\mbbd", shorten <=4pt, shorten >=4pt, Rightarrow, from=2p, to=1p]
	\arrow["{\gamma^{-1}}", shorten <=3pt, Rightarrow, from=8p, to=3p]
\end{tikzcd}
\end{equation}

% https://q.uiver.app/#q=WzAsOSxbNSwwLCJBIl0sWzcsMCwiVUZBIl0sWzUsMiwiQSJdLFs3LDIsIlVGQSJdLFs0LDEsIj0iXSxbMCwwLCJBIl0sWzAsMiwiQSJdLFsyLDAsIlVGQSJdLFsyLDIsIlVGQSJdLFswLDEsInlfQSJdLFsxLDMsIiIsMCx7ImxldmVsIjoyLCJzdHlsZSI6eyJoZWFkIjp7Im5hbWUiOiJub25lIn19fV0sWzAsMiwiIiwyLHsibGV2ZWwiOjIsInN0eWxlIjp7ImhlYWQiOnsibmFtZSI6Im5vbmUifX19XSxbMiwzLCJ5X0EiLDJdLFsxLDMsIlUxX3tGQX0iLDAseyJjdXJ2ZSI6LTV9XSxbNyw4LCJVRjFfQSIsMl0sWzcsOCwiVTFfe0ZBfSIsMCx7ImN1cnZlIjotNX1dLFs1LDcsInlfQSJdLFs1LDYsIiIsMSx7ImxldmVsIjoyLCJzdHlsZSI6eyJoZWFkIjp7Im5hbWUiOiJub25lIn19fV0sWzYsOCwieV9BIiwyXSxbMTQsMTUsIlUoXFxleGlzdHMgISkiLDAseyJzaG9ydGVuIjp7InNvdXJjZSI6MjAsInRhcmdldCI6MjB9LCJlZGdlX2FsaWdubWVudCI6eyJzb3VyY2UiOmZhbHNlLCJ0YXJnZXQiOmZhbHNlfX1dLFsxOCwxNiwiXFxtYmJkIiwwLHsic2hvcnRlbiI6eyJzb3VyY2UiOjIwLCJ0YXJnZXQiOjIwfX1dLFsxMCwxMywiXFxpb3RhXnstMX0iLDAseyJzaG9ydGVuIjp7InNvdXJjZSI6MjAsInRhcmdldCI6MjB9LCJlZGdlX2FsaWdubWVudCI6eyJzb3VyY2UiOmZhbHNlLCJ0YXJnZXQiOmZhbHNlfX1dXQ==
\begin{equation}\label{EQP_iota_prime}
\begin{tikzcd}
	A && UFA &&& A && UFA \\
	&&&& {=} \\
	A && UFA &&& A && UFA
	\arrow["{y_A}", from=1-6, to=1-8]
	\arrow[""{name=0, anchor=center, inner sep=0}, Rightarrow, no head, from=1-8, to=3-8]
	\arrow[""{name=0p, anchor=center, inner sep=0}, phantom, from=1-8, to=3-8, start anchor=center, end anchor=center]
	\arrow[Rightarrow, no head, from=1-6, to=3-6]
	\arrow["{y_A}"', from=3-6, to=3-8]
	\arrow[""{name=1, anchor=center, inner sep=0}, "{U1_{FA}}", curve={height=-30pt}, from=1-8, to=3-8]
	\arrow[""{name=1p, anchor=center, inner sep=0}, phantom, from=1-8, to=3-8, start anchor=center, end anchor=center, curve={height=-30pt}]
	\arrow[""{name=2, anchor=center, inner sep=0}, "{UF1_A}"', from=1-3, to=3-3]
	\arrow[""{name=2p, anchor=center, inner sep=0}, phantom, from=1-3, to=3-3, start anchor=center, end anchor=center]
	\arrow[""{name=3, anchor=center, inner sep=0}, "{U1_{FA}}", curve={height=-30pt}, from=1-3, to=3-3]
	\arrow[""{name=3p, anchor=center, inner sep=0}, phantom, from=1-3, to=3-3, start anchor=center, end anchor=center, curve={height=-30pt}]
	\arrow[""{name=4, anchor=center, inner sep=0}, "{y_A}", from=1-1, to=1-3]
	\arrow[Rightarrow, no head, from=1-1, to=3-1]
	\arrow[""{name=5, anchor=center, inner sep=0}, "{y_A}"', from=3-1, to=3-3]
	\arrow["{U(\exists !)}", shorten <=6pt, shorten >=6pt, Rightarrow, from=2p, to=3p]
	\arrow["\mbbd", shorten <=9pt, shorten >=9pt, Rightarrow, from=5, to=4]
	\arrow["{\iota^{-1}}", shorten <=6pt, shorten >=6pt, Rightarrow, from=0p, to=1p]
\end{tikzcd}
\end{equation}
The colax functor axioms for $F$ follow from those of $U$ and can be readily proven using the universal property of $U$-extensions. The above equations also make $y$ into a colax-natural transformation $y: 1 \Rightarrow UF$. Next, define $\epsilon_B: FUB \to B$ and $\Phi_B$ as the $U$-extension of the identity on $UB$ along $y_{UB}$:
% https://q.uiver.app/#q=WzAsMyxbMCwwLCJVQiJdLFsyLDAsIlVGVUIiXSxbMiwyLCJVQiJdLFswLDEsInlfe1VCfSJdLFsxLDIsIlVcXGVwc2lsb25fQiJdLFswLDIsIiIsMix7ImxldmVsIjoyLCJzdHlsZSI6eyJoZWFkIjp7Im5hbWUiOiJub25lIn19fV0sWzUsMywiXFxQaGlfQiIsMix7InNob3J0ZW4iOnsic291cmNlIjoyMCwidGFyZ2V0IjoyMH0sImVkZ2VfYWxpZ25tZW50Ijp7InNvdXJjZSI6ZmFsc2UsInRhcmdldCI6ZmFsc2V9fV1d
\[\begin{tikzcd}
	UB && UFUB \\
	\\
	&& UB
	\arrow[""{name=0, anchor=center, inner sep=0}, "{y_{UB}}", from=1-1, to=1-3]
	\arrow[""{name=0p, anchor=center, inner sep=0}, phantom, from=1-1, to=1-3, start anchor=center, end anchor=center]
	\arrow["{U\epsilon_B}", from=1-3, to=3-3]
	\arrow[""{name=1, anchor=center, inner sep=0}, Rightarrow, no head, from=1-1, to=3-3]
	\arrow[""{name=1p, anchor=center, inner sep=0}, phantom, from=1-1, to=3-3, start anchor=center, end anchor=center]
	\arrow["{\Phi_B}"', shorten <=4pt, shorten >=4pt, Rightarrow, from=1p, to=0p]
\end{tikzcd}\]
The colax naturality square for $\epsilon$ at a 1-cell $h:B \to C$ is the unique 2-cell $\epsilon_h$ making the 2-cells below equal (it is guaranteed to uniquely exist because of the coherence for $U$-extensions):
% https://q.uiver.app/#q=WzAsMTAsWzAsMiwiVUMiXSxbMSwyLCJVRlVDIl0sWzIsNCwiVUMiXSxbMCwwLCJVQiJdLFsyLDAsIlVGVUIiXSxbNCwwLCJVQiJdLFs0LDIsIlVDIl0sWzYsNCwiVUMiXSxbNiwwLCJVRlVCIl0sWzYsMiwiVUIiXSxbMCwxLCJ5X3tVQ30iXSxbMSwyLCJVXFxlcHNpbG9uX0MiLDFdLFswLDIsIiIsMix7ImN1cnZlIjo0LCJsZXZlbCI6Miwic3R5bGUiOnsiaGVhZCI6eyJuYW1lIjoibm9uZSJ9fX1dLFszLDAsIlVoIiwyXSxbMyw0LCJ5X3tVQn0iXSxbNCwxLCJVRlVoIiwxXSxbNSw4LCJ5X3tVQn0iXSxbOCw5LCJVXFxlcHNpbG9uX0IiLDFdLFs1LDYsIlVoIiwyXSxbNiw3LCIiLDIseyJjdXJ2ZSI6NCwibGV2ZWwiOjIsInN0eWxlIjp7ImhlYWQiOnsibmFtZSI6Im5vbmUifX19XSxbOSw3LCJVaCIsMl0sWzUsOSwiIiwxLHsiY3VydmUiOjQsImxldmVsIjoyLCJzdHlsZSI6eyJoZWFkIjp7Im5hbWUiOiJub25lIn19fV0sWzQsMiwiVShoIFxcY2lyYyBcXGVwc2lsb25fQikiLDAseyJjdXJ2ZSI6LTV9XSxbNCwyLCJVKFxcZXBzaWxvbl9DIFxcY2lyYyBGVWgpIiwxLHsibGFiZWxfcG9zaXRpb24iOjcwLCJjdXJ2ZSI6M31dLFs4LDcsIlUoaCBcXGNpcmMgXFxlcHNpbG9uX0IpIiwwLHsib2Zmc2V0IjotNSwiY3VydmUiOi01fV0sWzEwLDE0LCJcXG1iYmQiLDAseyJzaG9ydGVuIjp7InNvdXJjZSI6MzAsInRhcmdldCI6MzB9fV0sWzIxLDgsIlxcUGhpX0IiLDAseyJzaG9ydGVuIjp7InNvdXJjZSI6MjB9fV0sWzEyLDEsIlxcUGhpX0MiLDAseyJzaG9ydGVuIjp7InNvdXJjZSI6MjAsInRhcmdldCI6MjB9LCJlZGdlX2FsaWdubWVudCI6eyJzb3VyY2UiOmZhbHNlfX1dLFsyMywyMiwiVVxcZXBzaWxvbl9oIiwwLHsic2hvcnRlbiI6eyJzb3VyY2UiOjIwLCJ0YXJnZXQiOjIwfSwiZWRnZV9hbGlnbm1lbnQiOnsic291cmNlIjpmYWxzZSwidGFyZ2V0IjpmYWxzZX19XSxbMSwyMywiXFxnYW1tYV57LTF9IiwwLHsic2hvcnRlbiI6eyJ0YXJnZXQiOjIwfSwiZWRnZV9hbGlnbm1lbnQiOnsidGFyZ2V0IjpmYWxzZX19XSxbOSwyNCwiXFxnYW1tYV57LTF9IiwwLHsic2hvcnRlbiI6eyJ0YXJnZXQiOjIwfSwiZWRnZV9hbGlnbm1lbnQiOnsidGFyZ2V0IjpmYWxzZX19XV0=
\begin{equation}\label{EQP_Phi_epsilon}
\begin{minipage}{0.8\textwidth}
\adjustbox{scale=0.9,center}{
\begin{tikzcd}
	UB && UFUB && UB && UFUB \\
	\\
	UC & UFUC &&& UC && UB \\
	\\
	&& UC &&&& UC
	\arrow[""{name=0, anchor=center, inner sep=0}, "{y_{UC}}", from=3-1, to=3-2]
	\arrow["{U\epsilon_C}"{description}, from=3-2, to=5-3]
	\arrow[""{name=1, anchor=center, inner sep=0}, curve={height=24pt}, Rightarrow, no head, from=3-1, to=5-3]
	\arrow[""{name=1p, anchor=center, inner sep=0}, phantom, from=3-1, to=5-3, start anchor=center, end anchor=center, curve={height=24pt}]
	\arrow["Uh"', from=1-1, to=3-1]
	\arrow[""{name=2, anchor=center, inner sep=0}, "{y_{UB}}", from=1-1, to=1-3]
	\arrow["UFUh"{description}, from=1-3, to=3-2]
	\arrow["{y_{UB}}", from=1-5, to=1-7]
	\arrow["{U\epsilon_B}"{description}, from=1-7, to=3-7]
	\arrow["Uh"', from=1-5, to=3-5]
	\arrow[curve={height=24pt}, Rightarrow, no head, from=3-5, to=5-7]
	\arrow["Uh"', from=3-7, to=5-7]
	\arrow[""{name=3, anchor=center, inner sep=0}, curve={height=24pt}, Rightarrow, no head, from=1-5, to=3-7]
	\arrow[""{name=4, anchor=center, inner sep=0}, "{U(h \circ \epsilon_B)}", curve={height=-30pt}, from=1-3, to=5-3]
	\arrow[""{name=4p, anchor=center, inner sep=0}, phantom, from=1-3, to=5-3, start anchor=center, end anchor=center, curve={height=-30pt}]
	\arrow[""{name=5, anchor=center, inner sep=0}, "{U(\epsilon_C \circ FUh)}"{description, pos=0.7}, curve={height=18pt}, from=1-3, to=5-3]
	\arrow[""{name=5p, anchor=center, inner sep=0}, phantom, from=1-3, to=5-3, start anchor=center, end anchor=center, curve={height=18pt}]
	\arrow[""{name=5p, anchor=center, inner sep=0}, phantom, from=1-3, to=5-3, start anchor=center, end anchor=center, curve={height=18pt}]
	\arrow[""{name=6, anchor=center, inner sep=0}, "{U(h \circ \epsilon_B)}", shift left=5, curve={height=-30pt}, from=1-7, to=5-7]
	\arrow[""{name=6p, anchor=center, inner sep=0}, phantom, from=1-7, to=5-7, start anchor=center, end anchor=center, shift left=5, curve={height=-30pt}]
	\arrow["\mbbd", shorten <=14pt, shorten >=14pt, Rightarrow, from=0, to=2]
	\arrow["{\Phi_B}", shorten <=9pt, Rightarrow, from=3, to=1-7]
	\arrow["{\Phi_C}", shorten <=6pt, shorten >=6pt, Rightarrow, from=1p, to=3-2]
	\arrow["{U\epsilon_h}", shorten <=10pt, shorten >=10pt, Rightarrow, from=5p, to=4p]
	\arrow["{\gamma^{-1}}", shorten >=2pt, Rightarrow, from=3-2, to=5p]
	\arrow["{\gamma^{-1}}", shorten >=6pt, Rightarrow, from=3-7, to=6p]
\end{tikzcd}
}
\end{minipage}
\end{equation}
This also makes $\Phi$ into a modification $1_U \to U\epsilon \circ yU$.
Let us now consider the additional assumptions. It is clear that $F$ will be a pseudofunctor. Define $\Psi_A: \epsilon_{FA} \circ Fy_A \Rightarrow 1_{FA}$ as the unique 2-cell making the two 2-cells below equal:
% https://q.uiver.app/#q=WzAsOSxbMCwyLCJVRkEiXSxbMiwyLCJVRlVGQSJdLFszLDQsIlVGQSJdLFswLDAsIkEiXSxbMywwLCJVRkEiXSxbNSwwLCJBIl0sWzUsMiwiVUZBIl0sWzcsNCwiVUZBIl0sWzcsMCwiVUZBIl0sWzAsMSwieV97VUZBfSJdLFsxLDIsIlVcXGVwc2lsb25fe0ZBfSIsMV0sWzAsMiwiIiwyLHsiY3VydmUiOjQsImxldmVsIjoyLCJzdHlsZSI6eyJoZWFkIjp7Im5hbWUiOiJub25lIn19fV0sWzMsMCwieV9BIiwyXSxbMyw0LCJ5X0EiXSxbNCwxLCJVRnlfQSIsMV0sWzQsMiwiVTFfe0ZBfSIsMCx7ImN1cnZlIjotNX1dLFs4LDcsIlUxX3tGQX0iLDAseyJjdXJ2ZSI6LTV9XSxbOCw3LCIiLDIseyJsZXZlbCI6Miwic3R5bGUiOnsiaGVhZCI6eyJuYW1lIjoibm9uZSJ9fX1dLFs1LDYsInlfQSIsMl0sWzYsNywiIiwwLHsiY3VydmUiOjQsImxldmVsIjoyLCJzdHlsZSI6eyJoZWFkIjp7Im5hbWUiOiJub25lIn19fV0sWzQsMiwiVShcXGVwc2lsb25fe0ZBfSBcXGNpcmMgRnlfQSkiLDEseyJsYWJlbF9wb3NpdGlvbiI6NjB9XSxbNSw4LCJ5X0EiXSxbOSwxMywiXFxtYmJkIiwwLHsic2hvcnRlbiI6eyJzb3VyY2UiOjMwLCJ0YXJnZXQiOjMwfSwiZWRnZV9hbGlnbm1lbnQiOnsic291cmNlIjpmYWxzZSwidGFyZ2V0IjpmYWxzZX19XSxbMTcsMTYsIlxcaW90YV57LTF9IiwwLHsibGFiZWxfcG9zaXRpb24iOjQwLCJzaG9ydGVuIjp7InNvdXJjZSI6MjAsInRhcmdldCI6MjB9LCJlZGdlX2FsaWdubWVudCI6eyJzb3VyY2UiOmZhbHNlLCJ0YXJnZXQiOmZhbHNlfX1dLFsyMCwxNSwiVShcXGV4aXN0cyAhKSIsMCx7InNob3J0ZW4iOnsic291cmNlIjoyMCwidGFyZ2V0IjoyMH0sImVkZ2VfYWxpZ25tZW50Ijp7InNvdXJjZSI6ZmFsc2UsInRhcmdldCI6ZmFsc2V9fV0sWzEsMjAsIlxcZ2FtbWFeey0xfSIsMCx7InNob3J0ZW4iOnsidGFyZ2V0IjoyMH0sImVkZ2VfYWxpZ25tZW50Ijp7InRhcmdldCI6ZmFsc2V9fV0sWzExLDEsIlxcUGhpX3tGQX0iLDAseyJzaG9ydGVuIjp7InNvdXJjZSI6MjAsInRhcmdldCI6MjB9LCJlZGdlX2FsaWdubWVudCI6eyJzb3VyY2UiOmZhbHNlfX1dXQ==
\begin{equation}\label{EQP_Psi}
\begin{minipage}{0.8\textwidth}
\adjustbox{scale=0.9,center}{
\begin{tikzcd}
	A &&& UFA && A && UFA \\
	\\
	UFA && UFUFA &&& UFA \\
	\\
	&&& UFA &&&& UFA
	\arrow[""{name=0, anchor=center, inner sep=0}, "{y_{UFA}}", from=3-1, to=3-3]
	\arrow[""{name=0p, anchor=center, inner sep=0}, phantom, from=3-1, to=3-3, start anchor=center, end anchor=center]
	\arrow["{U\epsilon_{FA}}"{description}, from=3-3, to=5-4]
	\arrow[""{name=1, anchor=center, inner sep=0}, curve={height=24pt}, Rightarrow, no head, from=3-1, to=5-4]
	\arrow[""{name=1p, anchor=center, inner sep=0}, phantom, from=3-1, to=5-4, start anchor=center, end anchor=center, curve={height=24pt}]
	\arrow["{y_A}"', from=1-1, to=3-1]
	\arrow[""{name=2, anchor=center, inner sep=0}, "{y_A}", from=1-1, to=1-4]
	\arrow[""{name=2p, anchor=center, inner sep=0}, phantom, from=1-1, to=1-4, start anchor=center, end anchor=center]
	\arrow["{UFy_A}"{description}, from=1-4, to=3-3]
	\arrow[""{name=3, anchor=center, inner sep=0}, "{U1_{FA}}", curve={height=-30pt}, from=1-4, to=5-4]
	\arrow[""{name=3p, anchor=center, inner sep=0}, phantom, from=1-4, to=5-4, start anchor=center, end anchor=center, curve={height=-30pt}]
	\arrow[""{name=4, anchor=center, inner sep=0}, "{U1_{FA}}", curve={height=-30pt}, from=1-8, to=5-8]
	\arrow[""{name=4p, anchor=center, inner sep=0}, phantom, from=1-8, to=5-8, start anchor=center, end anchor=center, curve={height=-30pt}]
	\arrow[""{name=5, anchor=center, inner sep=0}, Rightarrow, no head, from=1-8, to=5-8]
	\arrow[""{name=5p, anchor=center, inner sep=0}, phantom, from=1-8, to=5-8, start anchor=center, end anchor=center]
	\arrow["{y_A}"', from=1-6, to=3-6]
	\arrow[curve={height=24pt}, Rightarrow, no head, from=3-6, to=5-8]
	\arrow[""{name=6, anchor=center, inner sep=0}, "{U(\epsilon_{FA} \circ Fy_A)}"{description, pos=0.6}, from=1-4, to=5-4]
	\arrow[""{name=6p, anchor=center, inner sep=0}, phantom, from=1-4, to=5-4, start anchor=center, end anchor=center]
	\arrow[""{name=6p, anchor=center, inner sep=0}, phantom, from=1-4, to=5-4, start anchor=center, end anchor=center]
	\arrow["{y_A}", from=1-6, to=1-8]
	\arrow["\mbbd", shorten <=14pt, shorten >=14pt, Rightarrow, from=0p, to=2p]
	\arrow["{\iota^{-1}}"{pos=0.4}, shorten <=6pt, shorten >=6pt, Rightarrow, from=5p, to=4p]
	\arrow["{U(\exists !)}", shorten <=6pt, shorten >=6pt, Rightarrow, from=6p, to=3p]
	\arrow["{\gamma^{-1}}", shorten >=5pt, Rightarrow, from=3-3, to=6p]
	\arrow["{\Phi_{FA}}", shorten <=7pt, shorten >=7pt, Rightarrow, from=1p, to=3-3]
\end{tikzcd}
}
\end{minipage}
\end{equation}
By the assumption, $\Psi_A$ is invertible. This equality also proves the first swallowtail identity. What remains to prove is the following:
\begin{itemize}
\item $\epsilon$ is colax-natural,
\item $\Phi$ is a modification,
\item the second swallowtail identity.
\end{itemize}
These are all straightforward computations and we will prove them in the Appendix as Lemma \ref{THM_lemma_epsilon_phi_second_swallowtail}.
\end{proof}

%240
%225 examples of lax adjunctions

\begin{theo}\label{THM_Bunge_opak}
Let $(\Psi, \Phi): (\epsilon, y): F \ddashv U: \cc \to \cd$ be a colax adjunction between pseudofunctors in which $\Psi$ is invertible. Then:
\begin{itemize}
\item the components of the unit $y_A : A \to UFA$ are coherently closed for $U$-extensions,
\item the unit and composition axioms \eqref{EQ_U_extensions_unit_comp} for $U$-extensions hold.
\end{itemize}
\end{theo}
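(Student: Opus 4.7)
Given the colax adjunction with $\Psi$ invertible, the natural guess for the $U$-extension of $f : A \to UB$ along $y_A$ is the ``mate'' construction: set
\[
f^\mbbd := \epsilon_B \circ Ff \colon FA \to B,
\]
and define $\mbbd_f : f \Rightarrow Uf^\mbbd \circ y_A$ as the pasting
\[
f \xRightarrow{\Phi_B \cdot f} U\epsilon_B \circ y_{UB} \circ f \xRightarrow{U\epsilon_B \cdot y_f} U\epsilon_B \circ UFf \circ y_A \xrightarrow{\gamma^{-1}} U(\epsilon_B \circ Ff) \circ y_A,
\]
where $y_f$ is the colax-naturality 2-cell of $y$ at $f$, $\Phi_B$ is the modification component, and the final isomorphism is the pseudofunctor associator for $U$. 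I will verify that this pair has the required universal property and then check that it satisfies the coherence, unit and composition axioms.

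\textbf{Universal property.} Given a competing pair $(g : FA \to B,\ \alpha : f \Rightarrow Ug \circ y_A)$, I propose to define $\theta : f^\mbbd \Rightarrow g$ by the composite
\[
\epsilon_B \circ Ff \xRightarrow{\epsilon_B \cdot F\alpha} \epsilon_B \circ F(Ug \circ y_A) \xrightarrow{\gamma'^{-1}} \epsilon_B \circ FUg \circ Fy_A \xRightarrow{\epsilon_g \cdot Fy_A} g \circ \epsilon_{FA} \circ Fy_A \xRightarrow{g \cdot \Psi_A} g,
\]
using the pseudofunctor associator of $F$, the colax-naturality 2-cell $\epsilon_g$ of $\epsilon$ at $g$, and the invertible modification $\Psi$. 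Checking that $\theta$ satisfies $\alpha = (U\theta \cdot y_A) \circ \mbbd_f$ is a pasting-diagram computation whose essential ingredients are the colax-naturality axioms of $y$ and $\epsilon$, the modification axiom of $\Phi$, and the first swallowtail identity (which connects $\Phi$ and $\Psi$ via $U\epsilon_{FA} \circ U Fy_A$). Uniqueness of $\theta$ comes by reversing the recipe: applying it to any $\theta'$ satisfying the equation yields $\theta'$ itself, and here the invertibility of $\Psi_A$ is crucial because it turns the last whiskering $g \cdot \Psi_A$ into a bijection on the relevant hom-sets.

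\textbf{Coherence and axioms.} For the coherence condition \eqref{EQ_DEFI_coherently_closed_for_Uext}, applying the definition gives $1_{UY}^\mbbd = \epsilon_Y \circ F1_{UY} \cong \epsilon_Y$ via the pseudofunctor unitor $\iota'$, and $(y_{UY} \circ f)^\mbbd = \epsilon_{FUY} \circ F(y_{UY} f) \cong \epsilon_{FUY} \circ Fy_{UY} \circ Ff \xRightarrow{\Psi_{UY} \cdot Ff} Ff$ via $\gamma'^{-1}$ and the invertible modification $\Psi$. Composing produces a canonical iso $1_{UY}^\mbbd \circ (y_{UY} f)^\mbbd \cong \epsilon_Y \circ Ff = f^\mbbd$; I will then verify that the composite 2-cell of \eqref{EQ_DEFI_coherently_closed_for_Uext} is sent to $\mbbd_f$ under this iso, which exhibits it as a $U$-extension. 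The unit axiom in \eqref{EQ_U_extensions_unit_comp} reduces to $\Psi_A : \epsilon_{FA} \circ Fy_A \cong 1_{FA}$ being invertible together with the pseudofunctor unitor $\iota'$, and the composition axiom follows from $F(gf) \cong Fg \circ Ff$ combined with two applications of $\Psi$ of the shape $(y_C g)^\mbbd \cong Fg$ and $(y_B f)^\mbbd \cong Ff$.

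\textbf{Main obstacle.} The real labour lies in Step 2, the universal property: one has to organise a sizeable pasting diagram — mixing $\Phi_B$, $y_f$, $\gamma$, $\gamma'$, $\epsilon_g$, $\Psi_A$, and $F\alpha$ — and appeal to the full swallowtail identities together with the modification axioms for $\Phi$ and $\Psi$ to show the equation on the nose. Once the universal property is established, the coherence, unit and composition axioms are essentially formal consequences of pseudofunctoriality of $F$ and the invertibility of $\Psi$.
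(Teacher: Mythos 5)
Your proposal is correct and follows essentially the same route as the paper: the same mate $f^\mbbd = \epsilon_B\circ Ff$ with unit $\gamma^{-1}\circ U\epsilon_B\, y_f\circ \Phi_B f$, the same transpose formula for $\theta$ (which the paper packages as an adjunction $\cc(FA,B)\rightleftarrows\cd(A,UB)$ whose triangle identities come from the swallowtail identities), and the same use of the invertible $\Psi$ and the unitor/associator of $F$ to transfer the extension property and deduce the coherence, unit and composition axioms. The only cosmetic difference is that the paper isolates the comparison 2-cells $\beth_f$ and $\Xi_B$ to identify $(UFf,y_f)$ and $(U\epsilon_B,\Phi_B)$ as $U$-extensions before assembling the coherence condition, which is exactly the verification you defer.
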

\begin{proof}
Notice first that we have the following adjunction:
% https://q.uiver.app/#q=WzAsMixbMCwwLCJcXGNjKEZBLCBCKSJdLFsyLDAsIlxcY2QoQSxVQikiXSxbMCwxLCIoXFxldGFfQSleKiBcXGNpcmMgVToiLDAseyJjdXJ2ZSI6LTR9XSxbMSwwLCIoXFxlcHNpbG9uX0IpXyogXFxjaXJjIEYiLDAseyJjdXJ2ZSI6LTR9XSxbMywyLCIiLDAseyJsZXZlbCI6MSwic3R5bGUiOnsibmFtZSI6ImFkanVuY3Rpb24ifX1dXQ==
\[\begin{tikzcd}
	{\cc(FA, B)} && {\cd(A,UB)}
	\arrow[""{name=0, anchor=center, inner sep=0}, "{(\eta_A)^* \circ U:}", curve={height=-24pt}, from=1-1, to=1-3]
	\arrow[""{name=1, anchor=center, inner sep=0}, "{(\epsilon_B)_* \circ F}", curve={height=-24pt}, from=1-3, to=1-1]
	\arrow["\dashv"{anchor=center, rotate=90}, draw=none, from=1, to=0]
\end{tikzcd}\]

The counit and unit 2-cells evaluated at $h: FA \to B$ and $g: A \to UB$ are given as follows:
\[\begin{tikzcd}
	FA &&& A && UFA \\
	FUFA && FA & UB && UFUB \\
	FUB && B &&& UB
	\arrow[""{name=0, anchor=center, inner sep=0}, "{U(\epsilon_B Fg)}", shift left=5, curve={height=-30pt}, from=1-6, to=3-6]
	\arrow[""{name=0p, anchor=center, inner sep=0}, phantom, from=1-6, to=3-6, start anchor=center, end anchor=center, shift left=5, curve={height=-30pt}]
	\arrow[""{name=1, anchor=center, inner sep=0}, "{F(Uhy_A)}"', shift right=5, curve={height=30pt}, from=1-1, to=3-1]
	\arrow[""{name=1p, anchor=center, inner sep=0}, phantom, from=1-1, to=3-1, start anchor=center, end anchor=center, shift right=5, curve={height=30pt}]
	\arrow["{Fy_A}", from=1-1, to=2-1]
	\arrow["FUh", from=2-1, to=3-1]
	\arrow[""{name=2, anchor=center, inner sep=0}, "{\epsilon_B}"', from=3-1, to=3-3]
	\arrow[""{name=2p, anchor=center, inner sep=0}, phantom, from=3-1, to=3-3, start anchor=center, end anchor=center]
	\arrow[""{name=3, anchor=center, inner sep=0}, "{\epsilon_{FA}}"{description, pos=0.7}, from=2-1, to=2-3]
	\arrow[""{name=3p, anchor=center, inner sep=0}, phantom, from=2-1, to=2-3, start anchor=center, end anchor=center]
	\arrow[""{name=3p, anchor=center, inner sep=0}, phantom, from=2-1, to=2-3, start anchor=center, end anchor=center]
	\arrow["h", from=2-3, to=3-3]
	\arrow[""{name=4, anchor=center, inner sep=0}, Rightarrow, no head, from=1-1, to=2-3]
	\arrow[""{name=4p, anchor=center, inner sep=0}, phantom, from=1-1, to=2-3, start anchor=center, end anchor=center]
	\arrow["g"', from=1-4, to=2-4]
	\arrow[""{name=5, anchor=center, inner sep=0}, "{y_A}", from=1-4, to=1-6]
	\arrow[""{name=5p, anchor=center, inner sep=0}, phantom, from=1-4, to=1-6, start anchor=center, end anchor=center]
	\arrow["UFg"', from=1-6, to=2-6]
	\arrow["{U\epsilon_B}"', from=2-6, to=3-6]
	\arrow[""{name=6, anchor=center, inner sep=0}, "{y_{UB}}"{description, pos=0.3}, from=2-4, to=2-6]
	\arrow[""{name=6p, anchor=center, inner sep=0}, phantom, from=2-4, to=2-6, start anchor=center, end anchor=center]
	\arrow[""{name=6p, anchor=center, inner sep=0}, phantom, from=2-4, to=2-6, start anchor=center, end anchor=center]
	\arrow[""{name=7, anchor=center, inner sep=0}, Rightarrow, no head, from=2-4, to=3-6]
	\arrow[""{name=7p, anchor=center, inner sep=0}, phantom, from=2-4, to=3-6, start anchor=center, end anchor=center]
	\arrow[""{name=8, anchor=center, inner sep=0}, curve={height=-6pt}, draw=none, from=1-6, to=3-6]
	\arrow[""{name=8p, anchor=center, inner sep=0}, phantom, from=1-6, to=3-6, start anchor=center, end anchor=center, curve={height=-6pt}]
	\arrow[""{name=9, anchor=center, inner sep=0}, curve={height=6pt}, draw=none, from=1-1, to=3-1]
	\arrow[""{name=9p, anchor=center, inner sep=0}, phantom, from=1-1, to=3-1, start anchor=center, end anchor=center, curve={height=6pt}]
	\arrow["{\Psi_A}", shorten <=2pt, shorten >=2pt, Rightarrow, from=3p, to=4p]
	\arrow["{\epsilon_h}"', shorten <=4pt, shorten >=4pt, Rightarrow, from=2p, to=3p]
	\arrow["{y_g}", shorten <=4pt, shorten >=4pt, Rightarrow, from=6p, to=5p]
	\arrow["{\Phi_B}"', shorten <=2pt, shorten >=2pt, Rightarrow, from=7p, to=6p]
	\arrow["{\gamma^{-1}}"{pos=0.7}, shorten <=17pt, shorten >=3pt, Rightarrow, from=8p, to=0p]
	\arrow["\gamma"{pos=0.2}, shorten >=17pt, Rightarrow, from=1p, to=9p]
\end{tikzcd}\]
The triangle identities essentially follow from the swallowtail identities of the colax adjunction and we omit the proof for them. Denote by $\mbbd_g$ the unit of this adjunction evaluated at $g: A \to UB$ and denote $g^\mbbd := \epsilon_B Fg$. By definition, the pair $(g^\mbbd, \mbbd_g)$ is the left $U$-extension of $g$ along $y_A$. Next, notice that for $f: A \to B$, the invertible 2-cell:
\[
\beth_f := \Psi_B Ff \circ \epsilon_{FB} \gamma^{-1}_{f,y_B} : \epsilon_{FB} F(y_B f) \Rightarrow Ff: FA \to FB,
\]
satisfies the following equality (this again follows from a swallowtail identity): %holds and thus provides us with an isomorphism of $U$-pairs $((y_Bf)^\mbbd, \mbbd_f)$, $(UFf, y_f)$:
% https://q.uiver.app/#q=WzAsOSxbMCwwLCJBIl0sWzIsMCwiVUZBIl0sWzAsMiwiQiJdLFsyLDIsIlVGQiJdLFs0LDEsIj0iXSxbNSwwLCJBIl0sWzUsMiwiQiJdLFs3LDAsIlVGQSJdLFs3LDIsIlVGQiJdLFsxLDMsIlUoeV9CIGYpXlxcbWJiZCIsMix7ImxhYmVsX3Bvc2l0aW9uIjo3MH1dLFswLDIsImYiLDJdLFswLDEsInlfQSJdLFs1LDYsImYiLDJdLFs1LDcsInlfQSJdLFs3LDgsIlVGZiJdLFs2LDgsInlfQiIsMl0sWzEsMywiVUZmIiwwLHsiY3VydmUiOi01fV0sWzIsMywieV9CIiwyXSxbOSwxNiwiVVxcYmV0aF9mIiwwLHsic2hvcnRlbiI6eyJzb3VyY2UiOjIwLCJ0YXJnZXQiOjIwfSwiZWRnZV9hbGlnbm1lbnQiOnsic291cmNlIjpmYWxzZSwidGFyZ2V0IjpmYWxzZX19XSxbMTcsMTEsIlxcbWJiZF9mIiwwLHsic2hvcnRlbiI6eyJzb3VyY2UiOjIwLCJ0YXJnZXQiOjIwfX1dLFsxNSwxMywieV9mIiwyLHsic2hvcnRlbiI6eyJzb3VyY2UiOjIwLCJ0YXJnZXQiOjIwfX1dXQ==
\[\begin{tikzcd}
	A && UFA &&& A && UFA \\
	&&&& {=} \\
	B && UFB &&& B && UFB
	\arrow[""{name=0, anchor=center, inner sep=0}, "{U(y_B f)^\mbbd}"'{pos=0.7}, from=1-3, to=3-3]
	\arrow[""{name=0p, anchor=center, inner sep=0}, phantom, from=1-3, to=3-3, start anchor=center, end anchor=center]
	\arrow["f"', from=1-1, to=3-1]
	\arrow[""{name=1, anchor=center, inner sep=0}, "{y_A}", from=1-1, to=1-3]
	\arrow["f"', from=1-6, to=3-6]
	\arrow[""{name=2, anchor=center, inner sep=0}, "{y_A}", from=1-6, to=1-8]
	\arrow["UFf", from=1-8, to=3-8]
	\arrow[""{name=3, anchor=center, inner sep=0}, "{y_B}"', from=3-6, to=3-8]
	\arrow[""{name=4, anchor=center, inner sep=0}, "UFf", curve={height=-30pt}, from=1-3, to=3-3]
	\arrow[""{name=4p, anchor=center, inner sep=0}, phantom, from=1-3, to=3-3, start anchor=center, end anchor=center, curve={height=-30pt}]
	\arrow[""{name=5, anchor=center, inner sep=0}, "{y_B}"', from=3-1, to=3-3]
	\arrow["{U\beth_f}", shorten <=6pt, shorten >=6pt, Rightarrow, from=0p, to=4p]
	\arrow["{\mbbd_f}", shorten <=9pt, shorten >=9pt, Rightarrow, from=5, to=1]
	\arrow["{y_f}"', shorten <=9pt, shorten >=9pt, Rightarrow, from=3, to=2]
\end{tikzcd}\]
This proves that $(UFf, y_f)$ is also a $U$-extension of $y_B f$ along $y_A$. Next, notice that for an object $B$, the following invertible 2-cell:
\[
\Xi_B := \epsilon_{1_B} \circ \epsilon_B F\iota^{-1}: \epsilon_B F1_{UB} \Rightarrow \epsilon_B,
\]
satisfies this equality:% of $U$-pairs $(1_{UB}^\mbbd, \mbbd_{1_{UB}})$ and $(U\epsilon_B, \Phi_B)$:
% https://q.uiver.app/#q=WzAsNyxbMCwwLCJVQiJdLFsyLDAsIlVGVUIiXSxbMiwyLCJVQiJdLFs0LDEsIj0iXSxbNSwwLCJVQiJdLFs3LDAsIlVGVUIiXSxbNywyLCJVQiJdLFswLDIsIiIsMCx7ImxldmVsIjoyLCJzdHlsZSI6eyJoZWFkIjp7Im5hbWUiOiJub25lIn19fV0sWzAsMSwieV97VUJ9Il0sWzEsMiwiVTFeXFxtYmJkX3tVQn0iLDAseyJsYWJlbF9wb3NpdGlvbiI6ODB9XSxbMSwyLCJVXFxlcHNpbG9uX3tCfSIsMCx7Im9mZnNldCI6LTQsImN1cnZlIjotNX1dLFs0LDUsInlfe1VCfSJdLFs1LDYsIlVcXGVwc2lsb25fQiJdLFs0LDYsIiIsMSx7ImxldmVsIjoyLCJzdHlsZSI6eyJoZWFkIjp7Im5hbWUiOiJub25lIn19fV0sWzcsOSwiXFxtYmJkX3sxX3tVQn19IiwwLHsic2hvcnRlbiI6eyJzb3VyY2UiOjIwLCJ0YXJnZXQiOjIwfSwiZWRnZV9hbGlnbm1lbnQiOnsic291cmNlIjpmYWxzZSwidGFyZ2V0IjpmYWxzZX19XSxbOSwxMCwiVVxcWGlfQiIsMCx7InNob3J0ZW4iOnsic291cmNlIjoyMCwidGFyZ2V0IjoyMH0sImVkZ2VfYWxpZ25tZW50Ijp7InNvdXJjZSI6ZmFsc2UsInRhcmdldCI6ZmFsc2V9fV0sWzEzLDEyLCJcXFBoaV97Qn0iLDAseyJzaG9ydGVuIjp7InNvdXJjZSI6MjAsInRhcmdldCI6MjB9LCJlZGdlX2FsaWdubWVudCI6eyJzb3VyY2UiOmZhbHNlLCJ0YXJnZXQiOmZhbHNlfX1dXQ==
\[\begin{tikzcd}
	UB && UFUB &&& UB && UFUB \\
	&&&& {=} \\
	&& UB &&&&& UB
	\arrow[""{name=0, anchor=center, inner sep=0}, Rightarrow, no head, from=1-1, to=3-3]
	\arrow[""{name=0p, anchor=center, inner sep=0}, phantom, from=1-1, to=3-3, start anchor=center, end anchor=center]
	\arrow["{y_{UB}}", from=1-1, to=1-3]
	\arrow[""{name=1, anchor=center, inner sep=0}, "{U1^\mbbd_{UB}}"{pos=0.8}, from=1-3, to=3-3]
	\arrow[""{name=1p, anchor=center, inner sep=0}, phantom, from=1-3, to=3-3, start anchor=center, end anchor=center]
	\arrow[""{name=1p, anchor=center, inner sep=0}, phantom, from=1-3, to=3-3, start anchor=center, end anchor=center]
	\arrow[""{name=2, anchor=center, inner sep=0}, "{U\epsilon_{B}}", shift left=4, curve={height=-30pt}, from=1-3, to=3-3]
	\arrow[""{name=2p, anchor=center, inner sep=0}, phantom, from=1-3, to=3-3, start anchor=center, end anchor=center, shift left=4, curve={height=-30pt}]
	\arrow["{y_{UB}}", from=1-6, to=1-8]
	\arrow[""{name=3, anchor=center, inner sep=0}, "{U\epsilon_B}", from=1-8, to=3-8]
	\arrow[""{name=3p, anchor=center, inner sep=0}, phantom, from=1-8, to=3-8, start anchor=center, end anchor=center]
	\arrow[""{name=4, anchor=center, inner sep=0}, Rightarrow, no head, from=1-6, to=3-8]
	\arrow[""{name=4p, anchor=center, inner sep=0}, phantom, from=1-6, to=3-8, start anchor=center, end anchor=center]
	\arrow["{\mbbd_{1_{UB}}}", shorten <=7pt, shorten >=7pt, Rightarrow, from=0p, to=1p]
	\arrow["{U\Xi_B}", shorten <=8pt, shorten >=8pt, Rightarrow, from=1p, to=2p]
	\arrow["{\Phi_{B}}", shorten <=7pt, shorten >=7pt, Rightarrow, from=4p, to=3p]
\end{tikzcd}\]

This proves that $(U\epsilon_B, \Phi_B)$ is also a $U$-extension of $1_{UB}$ along $y_{UB}$. Using these two isomorphisms of $U$-extensions, it is clear that the composite 2-cell \eqref{EQ_DEFI_coherently_closed_for_Uext} in Definition \ref{DEFI_coherently_closed_for_Uext} is a $U$-extension if and only if the pair $(f^\mbbd, \mbbd_f)$ is a $U$-extension - which it is, as we have proven. We thus have that the collection $y_A: A \to UFA$ is coherently closed for $U$-extensions.

Let us now prove the composition and unit axioms \eqref{EQ_U_extensions_unit_comp}. The proof that the pair\\ $(1_{FA}, \iota^{-1}y_A)$ is a $U$-extension follows immediately from the fact that it is isomorphic to the $U$-extension $(y_A^\mbbd, \mbbd_{y_A})$ via the modification $\Psi_A$ (this is the first swallowtail identity):
% https://q.uiver.app/#q=WzAsNyxbMCwwLCJBIl0sWzIsMCwiVUZBIl0sWzIsMiwiVUZBIl0sWzQsMSwiPSJdLFs1LDAsIkEiXSxbNywwLCJVRkEiXSxbNywyLCJVRkEiXSxbMCwxLCJ5X0EiXSxbMCwyLCJ5X0EiLDJdLFsxLDIsIlUxX3tGQX0iLDAseyJjdXJ2ZSI6LTV9XSxbMSwyLCIiLDEseyJsZXZlbCI6Miwic3R5bGUiOnsiaGVhZCI6eyJuYW1lIjoibm9uZSJ9fX1dLFs0LDUsInlfQSJdLFs1LDYsIlV5X0FeXFxtYmJkIiwwLHsibGFiZWxfcG9zaXRpb24iOjgwfV0sWzQsNiwieV9BIiwyXSxbNSw2LCJVMV97RkF9IiwwLHsib2Zmc2V0IjotNSwiY3VydmUiOi01fV0sWzEwLDksIlxcaW90YV57LTF9IiwwLHsic2hvcnRlbiI6eyJzb3VyY2UiOjIwLCJ0YXJnZXQiOjIwfSwiZWRnZV9hbGlnbm1lbnQiOnsic291cmNlIjpmYWxzZSwidGFyZ2V0IjpmYWxzZX19XSxbMTMsMTIsIlxcbWJiZF97eV9BfSIsMCx7InNob3J0ZW4iOnsic291cmNlIjoyMCwidGFyZ2V0IjoyMH0sImVkZ2VfYWxpZ25tZW50Ijp7InNvdXJjZSI6ZmFsc2UsInRhcmdldCI6ZmFsc2V9fV0sWzEyLDE0LCJVXFxQc2lfQSIsMCx7InNob3J0ZW4iOnsic291cmNlIjoyMCwidGFyZ2V0IjoyMH0sImVkZ2VfYWxpZ25tZW50Ijp7InNvdXJjZSI6ZmFsc2UsInRhcmdldCI6ZmFsc2V9fV1d
\[\begin{tikzcd}
	A && UFA &&& A && UFA \\
	&&&& {=} \\
	&& UFA &&&&& UFA
	\arrow["{y_A}", from=1-1, to=1-3]
	\arrow["{y_A}"', from=1-1, to=3-3]
	\arrow[""{name=0, anchor=center, inner sep=0}, "{U1_{FA}}", curve={height=-30pt}, from=1-3, to=3-3]
	\arrow[""{name=0p, anchor=center, inner sep=0}, phantom, from=1-3, to=3-3, start anchor=center, end anchor=center, curve={height=-30pt}]
	\arrow[""{name=1, anchor=center, inner sep=0}, Rightarrow, no head, from=1-3, to=3-3]
	\arrow[""{name=1p, anchor=center, inner sep=0}, phantom, from=1-3, to=3-3, start anchor=center, end anchor=center]
	\arrow["{y_A}", from=1-6, to=1-8]
	\arrow[""{name=2, anchor=center, inner sep=0}, "{Uy_A^\mbbd}"{pos=0.8}, from=1-8, to=3-8]
	\arrow[""{name=2p, anchor=center, inner sep=0}, phantom, from=1-8, to=3-8, start anchor=center, end anchor=center]
	\arrow[""{name=2p, anchor=center, inner sep=0}, phantom, from=1-8, to=3-8, start anchor=center, end anchor=center]
	\arrow[""{name=3, anchor=center, inner sep=0}, "{y_A}"', from=1-6, to=3-8]
	\arrow[""{name=3p, anchor=center, inner sep=0}, phantom, from=1-6, to=3-8, start anchor=center, end anchor=center]
	\arrow[""{name=4, anchor=center, inner sep=0}, "{U1_{FA}}", shift left=5, curve={height=-30pt}, from=1-8, to=3-8]
	\arrow[""{name=4p, anchor=center, inner sep=0}, phantom, from=1-8, to=3-8, start anchor=center, end anchor=center, shift left=5, curve={height=-30pt}]
	\arrow["{\iota^{-1}}", shorten <=6pt, shorten >=6pt, Rightarrow, from=1p, to=0p]
	\arrow["{\mbbd_{y_A}}", shorten <=7pt, shorten >=7pt, Rightarrow, from=3p, to=2p]
	\arrow["{U\Psi_A}", shorten <=8pt, shorten >=8pt, Rightarrow, from=2p, to=4p]
\end{tikzcd}\]

Again by using the isomorphism above, the question whether the 2-cell below right is a $U$-extension is equivalent to asking whether the 2-cell below left is a $U$-extension:

% https://q.uiver.app/#q=WzAsMTIsWzAsMCwiQSJdLFsyLDAsIlVGQSJdLFswLDEsIkIiXSxbMiwxLCJVRkIiXSxbMCwyLCJDIl0sWzIsMiwiVUZDIl0sWzUsMCwiQSJdLFs3LDAsIlVGQSJdLFs1LDEsIkIiXSxbNywxLCJVRkIiXSxbNSwyLCJDIl0sWzcsMiwiVUZDIl0sWzIsMywieV9CIiwxXSxbMCwyLCJmIiwyXSxbMSwzLCJVRmYiXSxbMiw0LCJnIiwyXSxbMyw1LCJVRmciXSxbNCw1LCJ5X0MiLDJdLFsxLDUsIlVGKGdmKSIsMCx7Im9mZnNldCI6LTUsImN1cnZlIjotNX1dLFswLDEsInlfQSJdLFs2LDcsInlfQSJdLFs3LDksIlUoeV9CZileXFxtYmJkIiwyXSxbOSwxMSwiVSh5X0NnKV5cXG1iYmQiLDJdLFs4LDEwLCJnIiwyXSxbMTAsMTEsInlfQyIsMV0sWzYsOCwiZiIsMl0sWzgsOSwieV9CIiwxXSxbNywxMSwiVSgoeV9DZyleXFxtYmJkICh5X0JmKV5cXG1iYmQpIiwwLHsib2Zmc2V0IjotNSwiY3VydmUiOi01fV0sWzE3LDEyLCJ5X2ciLDIseyJzaG9ydGVuIjp7InNvdXJjZSI6MjAsInRhcmdldCI6MjB9fV0sWzMsMTgsIlxcZ2FtbWEiLDAseyJsYWJlbF9wb3NpdGlvbiI6MzAsImVkZ2VfYWxpZ25tZW50Ijp7InRhcmdldCI6ZmFsc2V9fV0sWzEyLDE5LCJ5X2YiLDIseyJzaG9ydGVuIjp7InNvdXJjZSI6MjAsInRhcmdldCI6MjB9fV0sWzI2LDIwLCJcXG1iYmRfZiIsMCx7InNob3J0ZW4iOnsic291cmNlIjoyMCwidGFyZ2V0IjoyMH19XSxbMjQsMjYsIlxcbWJiZF9nIiwwLHsic2hvcnRlbiI6eyJzb3VyY2UiOjIwLCJ0YXJnZXQiOjIwfX1dLFs5LDI3LCJcXGdhbW1hIiwwLHsiZWRnZV9hbGlnbm1lbnQiOnsidGFyZ2V0IjpmYWxzZX19XV0=
\[\begin{tikzcd}
	A && UFA &&& A && UFA \\
	B && UFB &&& B && UFB \\
	C && UFC &&& C && UFC
	\arrow[""{name=0, anchor=center, inner sep=0}, "{y_B}"{description}, from=2-1, to=2-3]
	\arrow["f"', from=1-1, to=2-1]
	\arrow["UFf", from=1-3, to=2-3]
	\arrow["g"', from=2-1, to=3-1]
	\arrow["UFg", from=2-3, to=3-3]
	\arrow[""{name=1, anchor=center, inner sep=0}, "{y_C}"', from=3-1, to=3-3]
	\arrow[""{name=2, anchor=center, inner sep=0}, "{UF(gf)}", shift left=5, curve={height=-30pt}, from=1-3, to=3-3]
	\arrow[""{name=2p, anchor=center, inner sep=0}, phantom, from=1-3, to=3-3, start anchor=center, end anchor=center, shift left=5, curve={height=-30pt}]
	\arrow[""{name=3, anchor=center, inner sep=0}, "{y_A}", from=1-1, to=1-3]
	\arrow[""{name=4, anchor=center, inner sep=0}, "{y_A}", from=1-6, to=1-8]
	\arrow["{U(y_Bf)^\mbbd}"', from=1-8, to=2-8]
	\arrow["{U(y_Cg)^\mbbd}"', from=2-8, to=3-8]
	\arrow["g"', from=2-6, to=3-6]
	\arrow[""{name=5, anchor=center, inner sep=0}, "{y_C}"{description}, from=3-6, to=3-8]
	\arrow["f"', from=1-6, to=2-6]
	\arrow[""{name=6, anchor=center, inner sep=0}, "{y_B}"{description}, from=2-6, to=2-8]
	\arrow[""{name=7, anchor=center, inner sep=0}, "{U((y_Cg)^\mbbd (y_Bf)^\mbbd)}", shift left=5, curve={height=-30pt}, from=1-8, to=3-8]
	\arrow[""{name=7p, anchor=center, inner sep=0}, phantom, from=1-8, to=3-8, start anchor=center, end anchor=center, shift left=5, curve={height=-30pt}]
	\arrow["{y_g}"', shorten <=4pt, shorten >=4pt, Rightarrow, from=1, to=0]
	\arrow["\gamma"{pos=0.3}, Rightarrow, from=2-3, to=2p]
	\arrow["{y_f}"', shorten <=4pt, shorten >=4pt, Rightarrow, from=0, to=3]
	\arrow["{\mbbd_f}", shorten <=4pt, shorten >=4pt, Rightarrow, from=6, to=4]
	\arrow["{\mbbd_g}", shorten <=4pt, shorten >=4pt, Rightarrow, from=5, to=6]
	\arrow["\gamma", Rightarrow, from=2-8, to=7p]
\end{tikzcd}\]
But this 2-cell equals $y_{gf}$ and is thus a $U$-extension by what we have proven above.

\end{proof}

%355

\begin{theo}\label{THM_korespondence_lax_adj_u_coh_ext}
Fix a pseudofunctor $U: \cd \to \cc$ between 2-categories. The following are equivalent for a collection of 1-cells $\{ y_A: A \to UFA \}$ with $A \in \cc$:
\begin{itemize}
\item the collection $y_A$ is coherently closed for $U$-extensions and satisfies composition and unit axioms \eqref{EQ_U_extensions_unit_comp},
\item there is a colax adjunction $(\Psi, \Phi): (\epsilon, \eta): F \ddashv U$ for which $\Psi$ is invertible, $F$ is a pseudofunctor and the 1-cell component of the unit at each $A \in \cc$ equals  $y_A$.
\end{itemize}
\end{theo}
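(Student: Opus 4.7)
The plan is to observe that this theorem is essentially a repackaging of the two preceding theorems, \ref{THM_Bunge} and \ref{THM_Bunge_opak}, and so the proof should be a short invocation of both results in the two directions, with a small verification that the 1-cell components of the unit match on the nose.

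For the direction ``$\Rightarrow$'', I would apply Theorem \ref{THM_Bunge}. Indeed, the coherent closure assumption gives us the first three conclusions of that theorem (a colax functor $F$, a colax natural transformation $y: 1 \Rightarrow UF$, and $\epsilon, \Phi$); and the additional unit and composition axioms \eqref{EQ_U_extensions_unit_comp} upgrade this to the full statement that $F$ is a pseudofunctor and that there is an invertible modification $\Psi$ yielding a colax adjunction $(\Psi, \Phi): (\epsilon, y): F \ddashv U$. By construction in the proof of Theorem \ref{THM_Bunge}, the 1-cell component of the unit of this colax adjunction at $A$ is precisely $y_A$, so all the desired conditions in the second bullet hold.

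For the direction ``$\Leftarrow$'', I would directly apply Theorem \ref{THM_Bunge_opak}. Given a colax adjunction with invertible $\Psi$ whose unit has components $y_A: A \to UFA$, that theorem asserts exactly the two conditions requested: coherent closure of the $y_A$ for $U$-extensions, and the unit and composition axioms \eqref{EQ_U_extensions_unit_comp}.

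There is no real obstacle here since all the genuine work has been absorbed into Theorems \ref{THM_Bunge} and \ref{THM_Bunge_opak}. The only point worth being explicit about is that one really does obtain a colax adjunction whose unit on 1-cells is $y_A$ (not merely a $y'_A$ equivalent to $y_A$), and that in the converse direction no additional hypothesis on $F$ (such as its being a pseudofunctor) is used to extract the coherent closure from the colax adjunction data; but both are transparent from inspecting the constructions in the two preceding proofs.
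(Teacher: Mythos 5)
Your proposal is correct and coincides with the paper's intended argument: the theorem is stated without a separate proof precisely because it is the conjunction of Theorem \ref{THM_Bunge} (whose construction makes the unit's 1-cell components literally the given $y_A$) and Theorem \ref{THM_Bunge_opak}. One small caveat: your closing remark that the converse direction uses no hypothesis such as $F$ being a pseudofunctor is not quite accurate --- Theorem \ref{THM_Bunge_opak} is stated for colax adjunctions \emph{between pseudofunctors} and its proof invokes the invertible associator and unitor of $F$ (in the 2-cells $\beth_f$ and $\Xi_B$) --- but this is harmless here, since that hypothesis is part of the second bullet of the statement.
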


\begin{pozn}
In the above theorem, we do not have a one-to-one correspondence; instead, there is a suitable ``equivalence” between these two concepts. Starting with coherent $U$-extensions $(f^\mbbd, \mbbd_f)$ of $f$ along $y_A$, producing a colax adjunction and then going back to $U$-extensions gives the $U$-extension $(\epsilon_B Ff, \gamma^{-1}y_A \circ U\epsilon_B y_f \circ \Phi_B f)$, which in general will not be equal to $(f^\mbbd, \mbbd_f)$ (but will be canonically isomorphic to it). Similarly, starting with left colax adjoint $F$, going to $U$-extensions and back only gives a pseudofunctor isomorphic to $F$.
\end{pozn}

\noindent In our applications to two-dimensional monad theory, we will encounter this very special case of $U$-extensions:

\begin{defi}\label{DEFI_strictly_closed_for_Uexts}
Let $U: \cc \to \cd$ be a 2-functor. We will say that a collection of 1-cells $y_A: A \to UFA$ is \textit{strictly closed for $U$-extensions} if:
\begin{itemize}
\item for every $f: A \to UB$ there is a $U$-extension $(f^\mbbd, 1_f)$ along $y_A$ with the 2-cell component being the identity,
\item $y_A^\mbbd = 1_{FA}$, 
\item for $f: X \to Y$, $g: Y \to Z$ we have $Ff \circ Fg = F(fg)$, where we denote $Ff := (y_Y \circ f)^\mbbd$,
\item for $f: A \to UB$ we have $\epsilon_Y \circ Ff = f^\mbbd$, where we denote $\epsilon_Y := (1_Y)^\mbbd$.
\end{itemize}
\end{defi}

\begin{pozn}\label{POZN_strict_BungesTHM}
It is clear from the proof of Theorem \ref{THM_Bunge} that a collection strictly closed for $U$-extension gives rise to a colax adjunction $(\epsilon, y): F \ddashv U$ for which:
\begin{itemize}
\item y is a 2-natural transformation,
\item $F$ is a 2-functor,
\item the modifications $\Phi, \Psi$ are the identities.
\end{itemize}
(This will in general not be a 2-adjunction because $\epsilon$ will  only be colax natural.)
\end{pozn}

\section{On the Kleisli 2-category for a left Kan pseudomonad}\label{SEKCE_Kleisli2cat}

This section is devoted to studying the Kleisli 2-category for a general left Kan pseudomonad $(D,y)$ on a 2-category $\ck$.

In \ref{SUBS_3_charakterizace_algeber} we prove a result characterizing the pseudo-$D$-algebra structure on an object in terms of the existence of certain adjoints (Theorem \ref{THM_characterization_D-algs_D-corefl-incl}).

In \ref{SUBS_3_big_lax_adj_thm} we use this result and Theorem \ref{THM_Bunge} to prove that any left biadjoint $\ck \to \cl$ that factorizes through the Kleisli 2-category gives rise to a lax left adjoint $\ck_D \to \cl$. We list several applications, one of which is the assertion that there is a canonical colax adjunction between EM and Kleisli 2-categories for left Kan pseudomonads.

Another application is given in \ref{SUBS_2_coreflector_limits} where we define \textit{coreflector-limits}, the aforementioned lax analogue of bilimits, and list elementary examples. The main result here is Theorem \ref{THM_Kleisli2cat_is_corefl_complete} which asserts that whenever the base 2-category $\ck$ admits $J$-indexed bilimits, the Kleisli 2-category for a left Kan pseudomonad on $\ck$ will admit them as coreflector-limits.

First, let us recall the following terminology:

\begin{defi}\label{DEFI_lali_rali_rari_lari}
Let the following be an adjunction in a 2-category $\ck$:
% https://q.uiver.app/#q=WzAsMyxbMSwwLCJBIl0sWzMsMCwiQiJdLFswLDAsIihcXGVwc2lsb24sIFxcZXRhKToiXSxbMCwxLCJmIiwyLHsiY3VydmUiOjR9XSxbMSwwLCJ1IiwyLHsiY3VydmUiOjR9XSxbMyw0LCIiLDAseyJsZXZlbCI6MSwic3R5bGUiOnsibmFtZSI6ImFkanVuY3Rpb24ifX1dXQ==
\[\begin{tikzcd}
	{(\epsilon, \eta):} & A && B
	\arrow[""{name=0, anchor=center, inner sep=0}, "f"', curve={height=24pt}, from=1-2, to=1-4]
	\arrow[""{name=1, anchor=center, inner sep=0}, "u"', curve={height=24pt}, from=1-4, to=1-2]
	\arrow["\dashv"{anchor=center, rotate=90}, draw=none, from=0, to=1]
\end{tikzcd}\]
\begin{itemize}
\item If the counit $\epsilon$ is invertible, call $f$ a \textit{reflector} and $u$ a \textit{reflection-inclusion}. In this case $f$. In case the counit is the identity, $f$ is called a \textit{lali} (\textit{left adjoint-left inverse}) and $u$ a \textit{rari} (\textit{right adjoint-right inverse}).
\item if the unit $\eta$ is invertible, call $f$ a  \textit{coreflection-inclusion} and $u$ a \textit{coreflector}. In case the unit is the identity, $f$ is called a \textit{lari} and $u$ a \textit{rali}.
\end{itemize}
\end{defi}

\begin{pozn}[Duals]\label{POZN_duals_lali}
A morphism $f$ is a reflector (a lali) in $\ck$ if and only if:
\begin{itemize}
\item it is a reflection-inclusion (a rari) in $\ck^{op}$,
\item it is a coreflector (a rali) in $\ck^{co}$,
\item it is a coreflection-inclusion (a lari) in $\ck^{coop}$.
\end{itemize}
\end{pozn}

%290
%257

\subsection{A characterization of algebras}\label{SUBS_3_charakterizace_algeber}

\begin{defi}\label{DEFI_D_coreflector}
Let $F: \ck \to \cl$ be a pseudofunctor. We will call a morphism $f: A \to B$ in $\ck$ an \textit{$F$-coreflector} if $Ff$ is a coreflector in the 2-category $\cl$. Similarly for the other variants from Definition \ref{DEFI_lali_rali_rari_lari}.
\end{defi}

\begin{pr}
Let $P: \CAT \to \prof$ be the canonical inclusion pseudofunctor. In Example \ref{PR_presheaf_psmonad_charakterizace_algeber} below we will show that a functor $f: \ca \to \cb$ between locally small categories is a $P$-coreflection-inclusion if and only if it is fully faithful and satisfies a certain smallness condition.
\end{pr}

\begin{pr}\label{PR_doctrinal_adj}
Consider the lax morphism classifier 2-comonad $Q_l$ associated to a 2-monad $T$ on a 2-category $\ck$. Denote by $J: \talgs \to \talgl$ the canonical inclusion to the Kleisli 2-category and by $U: \talgs \to \ck$ the forgetful 2-functor. Notice that by virtue of doctrinal adjunction \cite{doctrinaladj}, a strict algebra morphism is a $J$-reflector if and only if it is a $U$-reflector, that is, the underlying morphism in $\ck$ is a reflector.
\end{pr}

\begin{pozn}
Given a lax-idempotent pseudomonad $P$ on a 2-category $\ck$, 1-cells in $\ck$ that are $P$-left adjoints have been studied in the literature (\cite{bungebicomma}, \cite{adjfunthms}) under the name of \textit{$P$-admissible} 1-cells.
\end{pozn}

\noindent The following lemma is the left Kan pseudomonad version of \cite[Theorem 3.4]{threepieces}:

\begin{lemma}\label{THM_lemma_Dcoreflincl_is_JDcoreflincl}
Let $(D,y)$ be a left Kan pseudomonad on $\ck$. Denote by $D: \ck \to \ck$ the corresponding endo-pseudofunctor and by $J_D: \ck \to \ck_D$ the inclusion to the Kleisli 2-category. The following are equivalent for a 1-cell $f: B \to C$:
\begin{itemize}
\item $f$ is a $D$-coreflection-inclusion,
\item $f$ is a $J_D$-coreflection-inclusion.
\end{itemize}
\end{lemma}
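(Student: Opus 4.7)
The plan is to prove the equivalence by establishing each implication separately.

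For the $(\Leftarrow)$ direction, the approach is immediate: the forgetful pseudofunctor $U_D \colon \ck_D \to \ck$ of Proposition~\ref{THM_biadjunkce_ck_ck_D} preserves adjunctions and the invertibility of 2-cells, so a coreflection-inclusion adjunction $J_D f \dashv v$ in $\ck_D$ transports to the adjunction $U_D J_D f \dashv U_D v$ in $\ck$ with invertible unit. Since $U_D J_D f = Df$, this exhibits $f$ as a $D$-coreflection-inclusion.

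For the $(\Rightarrow)$ direction, I will produce the right adjoint of $J_Df$ in $\ck_D$ explicitly via a free extension. Starting from $Df \dashv g$ in $\ck$ with invertible unit $\eta \colon 1_{DB} \cong g \circ Df$ and counit $\varepsilon \colon Df \circ g \Rightarrow 1_{DC}$, take $\tilde g := (g \circ y_C)^\mbbd \colon DC \to DB$; this is a morphism in $\ck_D$ since every free extension $\phi^\mbbd$ is a pseudo-$D$-morphism by the composition axiom of the left Kan pseudomonad. Assembling the composition axiom, the Kan extension property $h^\mbbd \circ y \cong h$, pseudonaturality $y_C \circ f \cong Df \circ y_B$, invertibility of $\eta$, and the axiom $y_B^\mbbd \cong 1_{DB}$, the key computation in $\ck$ runs as
\[
\tilde g \circ J_D f \;=\; (gy_C)^\mbbd \circ (y_Cf)^\mbbd \;\cong\; (g \circ y_C \circ f)^\mbbd \;\cong\; (g \circ Df \circ y_B)^\mbbd \;\cong\; y_B^\mbbd \;\cong\; 1_{DB},
\]
and the inverse of this composite is the invertible unit $\tilde\eta$ of the prospective adjunction. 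For the counit, the fact that $J_Df = Df$ is itself a pseudo-$D$-morphism yields $J_D f \circ \tilde g \cong (Df \circ g \circ y_C)^\mbbd$; extending the whiskered counit $\varepsilon \circ y_C \colon Df \circ g \circ y_C \Rightarrow y_C$ then produces $\tilde\varepsilon \colon J_D f \circ \tilde g \Rightarrow y_C^\mbbd \cong 1_{DC}$.

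The main obstacle will be verifying the two triangle identities for $J_D f \dashv \tilde g$ in $\ck_D$. Each side of each identity is a 2-cell in $\ck$ between pseudo-$D$-morphisms with free algebra domain and codomain; by the universal property of the defining left Kan extensions, such an equation can be checked after pre-composition with $y_B$ or $y_C$. Once restricted, the triangle identities collapse to those of the ambient adjunction $Df \dashv g$ in $\ck$, which hold by hypothesis. This reduction is routine coherence bookkeeping, but it requires careful tracking of the canonical isomorphisms introduced in the construction of $\tilde\eta$ and $\tilde\varepsilon$.
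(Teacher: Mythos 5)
Your proof is correct, and the two directions split the same way as in the paper. The backward direction is identical: apply the forgetful pseudofunctor $U_D$. For the forward direction, however, you take a genuinely different (and more self-contained) route. The paper simply cites \cite[Proposition 1.3]{bungebicomma}, which says that the right adjoint $g$ of $Df$ in $\ck$ is automatically a pseudo-$D$-morphism, so the ambient adjunction $Df \dashv g$ already lives in $\ck_D$ (2-cells between pseudo-$D$-morphisms being arbitrary 2-cells of $\ck$). You instead sidestep the question of whether $g$ itself is a $D$-morphism by replacing it with the free extension $\tilde g = (g\circ y_C)^{\mbbd}$, which is a morphism of $\ck_D$ by construction, and then verifying the adjunction $J_Df \dashv \tilde g$ directly. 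This is essentially an unpacking of the proof of the cited result: a posteriori $\tilde g \cong g$ by uniqueness of right adjoints, which recovers the statement that $g$ is a pseudo-$D$-morphism. The trade-off is that your route avoids an external citation at the cost of the deferred coherence check. That check does go through, but note that the two triangle identities are not perfectly symmetric: the identity $\tilde g\tilde\varepsilon \circ \tilde\eta\tilde g = 1_{\tilde g}$, after restricting along $y_C$, involves the whiskering $\tilde\eta \, g y_C$, which is not directly controlled by the universal property along $y_B$; one needs the auxiliary comparison $\theta\colon \tilde g \Rightarrow g$ (determined by $\theta y_C = \mu^{-1}$, where $\mu$ is the extension unit of $\tilde g$) together with the identity $\theta Df \circ \tilde\eta = \eta$ and an interchange argument to reduce to the second triangle identity of $Df \dashv g$. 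This is still routine, but slightly more than ``precompose and collapse,'' so it deserves a sentence if you write the argument out in full.
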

%379
\begin{proof}
``$(1) \Rightarrow (2)$” follows from \cite[Proposition 1.3]{bungebicomma}: namely, the right adjoint to $Df$ in $\ck$ is actually a $D$-algebra homomorphism and thus is an adjoint in $\ck_D$. ``$(2) \Rightarrow (1)$” is obvious because we have the forgetful 2-functor $U_D: \ck_D \to \ck$ that satisfies $D = U_D J_D$.
\end{proof}

\begin{lemma}\label{THM_Lemma_Kan_extensions}
The following holds in a 2-category $\ck$:
\begin{itemize}
%304
\item Let $f \dashv u: B \to A$ be an adjunction with unit $\eta$ and let $(\mbbd,g^\mbbd)$ be the left Kan extension of $g: A' \to C$ along $y: A' \to A$. Then the diagram below left exhibits $g^\mbbd u$ as the left Kan extension of $g$ along $f y$:
% https://q.uiver.app/#q=WzAsOSxbMCwyLCJBJyJdLFswLDEsIkEiXSxbNCwyLCJDIl0sWzAsMCwiQiJdLFsyLDEsIkEiXSxbNSwyLCJCJyJdLFs3LDIsIkQiXSxbNSwxLCJCIl0sWzUsMCwiQyJdLFswLDEsInkiXSxbMCwyLCJnIiwyXSxbMSwzLCJmIl0sWzQsMiwiZ15cXG1hdGhiYntEfSJdLFsxLDQsIiIsMCx7ImxldmVsIjoyLCJzdHlsZSI6eyJoZWFkIjp7Im5hbWUiOiJub25lIn19fV0sWzMsNCwidSJdLFs1LDYsImYiLDJdLFs1LDcsImkiXSxbNyw4LCJrIl0sWzgsNiwiaCIsMCx7ImN1cnZlIjotNH1dLFs3LDYsImhrIiwwLHsiY3VydmUiOi0zfV0sWzEsMywiXFxldGEiLDIseyJsYWJlbF9wb3NpdGlvbiI6MTAsIm9mZnNldCI6NSwic2hvcnRlbiI6eyJ0YXJnZXQiOjUwfSwibGV2ZWwiOjJ9XSxbMCwxLCJcXG1hdGhiYntEfSIsMix7Im9mZnNldCI6NSwic2hvcnRlbiI6eyJzb3VyY2UiOjEwLCJ0YXJnZXQiOjEwfSwibGV2ZWwiOjJ9XSxbNSw3LCJcXGFscGhhIiwyLHsib2Zmc2V0Ijo1LCJzaG9ydGVuIjp7InRhcmdldCI6MjB9LCJsZXZlbCI6Mn1dXQ==
\[\begin{tikzcd}
	B &&&&& C \\
	A && A &&& B \\
	{A'} &&&& C & {B'} && D
	\arrow["y", from=3-1, to=2-1]
	\arrow["g"', from=3-1, to=3-5]
	\arrow["f", from=2-1, to=1-1]
	\arrow["{g^\mathbb{D}}", from=2-3, to=3-5]
	\arrow[Rightarrow, no head, from=2-1, to=2-3]
	\arrow["u", from=1-1, to=2-3]
	\arrow["f"', from=3-6, to=3-8]
	\arrow["i", from=3-6, to=2-6]
	\arrow["k", from=2-6, to=1-6]
	\arrow["h", curve={height=-24pt}, from=1-6, to=3-8]
	\arrow["hk", curve={height=-18pt}, from=2-6, to=3-8]
	\arrow["\eta"'{pos=0.1}, shift right=5, shorten >=5pt, Rightarrow, from=2-1, to=1-1]
	\arrow["{\mathbb{D}}"', shift right=5, shorten <=1pt, shorten >=1pt, Rightarrow, from=3-1, to=2-1]
	\arrow["\alpha"', shift right=5, shorten >=2pt, Rightarrow, from=3-6, to=2-6]
\end{tikzcd}\]
\item In the diagram above right, suppose that the top and outer diagrams are left Kan extensions. If all left Kan extensions along $k$ exist and have invertible unit, then $\alpha$ is a left Kan extension of $f$ along $j$.
\end{itemize}
\end{lemma}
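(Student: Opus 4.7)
The first bullet I would prove by showing that the pasted $2$-cell $g^{\mathbb{D}} \eta y \circ \mathbb{D}: g \Rightarrow g^{\mathbb{D}} u f y$ exhibits $g^{\mathbb{D}} u$ as the left Kan extension of $g$ along $fy$. The strategy is to factor the required universal-property bijection into two standard bijections for any test $1$-cell $h: B \to C$. First, the adjunction $f \dashv u$ induces an adjunction $(- \circ u) \dashv (- \circ f)$ between hom-categories, giving the mate bijection sending a $2$-cell $\tau: g^{\mathbb{D}} u \Rightarrow h$ to $\tau f \circ g^{\mathbb{D}} \eta: g^{\mathbb{D}} \Rightarrow hf$. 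Second, the left Kan extension property of $(g^{\mathbb{D}}, \mathbb{D})$ gives a bijection between $2$-cells $g^{\mathbb{D}} \Rightarrow hf$ and $2$-cells $g \Rightarrow hfy$, sending $\sigma$ to $\sigma y \circ \mathbb{D}$. Their composite sends $\tau$ to $\tau fy \circ g^{\mathbb{D}} \eta y \circ \mathbb{D}$, which is exactly the factorization map for our candidate, proving the universal property.

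For the second bullet, fix a test $1$-cell $j: B \to D$ and a $2$-cell $\beta: f \Rightarrow ji$. The plan is to exploit the invertible-unit hypothesis to convert the problem into one about Kan extensions along $ki$. Let $(j^{*}, \eta_j : j \Rightarrow j^{*} k)$ be the left Kan extension of $j$ along $k$; by assumption $\eta_j$ is invertible. Form the composite $\eta_j i \circ \beta : f \Rightarrow j^{*} k i$ and, by the universal property of the outer Kan extension (of $f$ along $ki$), obtain the unique $\bar\beta: h \Rightarrow j^{*}$ with $\bar\beta k i \circ \alpha = \eta_j i \circ \beta$. Define $\tilde\beta := \eta_j^{-1} \circ \bar\beta k : hk \Rightarrow j$; a routine whiskering computation then confirms $\tilde\beta i \circ \alpha = \beta$.

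For uniqueness, suppose $\tilde\beta': hk \Rightarrow j$ also satisfies $\tilde\beta' i \circ \alpha = \beta$. The $2$-cell $\eta_j \circ \tilde\beta': hk \Rightarrow j^{*} k$ factors through the top Kan extension---where the identity $2$-cell exhibits $h$ as the left Kan extension of $hk$ along $k$---as $\sigma k$ for a unique $\sigma: h \Rightarrow j^{*}$; whiskering with $i$ and pasting with $\alpha$ produces $\sigma k i \circ \alpha = \eta_j i \circ \beta$, which by the universal property of the outer Kan extension forces $\sigma = \bar\beta$, and inverting $\eta_j$ recovers $\tilde\beta' = \tilde\beta$. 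The main obstacle I anticipate is purely bookkeeping in the second part, namely tracking the whiskerings carefully and invoking the two Kan extension universal properties in the right order; the content is otherwise routine formal $2$-category theory.
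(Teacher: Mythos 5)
Your proposal is correct and takes essentially the same route as the paper's proof: for the first bullet you compose the adjunction bijection $\ck(B,C)(g^\mbbd u, h) \cong \ck(A,C)(g^\mbbd, hf)$ with the Kan extension bijection $\ck(A,C)(g^\mbbd, hf) \cong \ck(A',C)(g, hfy)$, exactly as the paper does. For the second bullet your existence and uniqueness arguments (factoring through the invertible-unit left Kan extension along $k$ and then invoking the universal property of the outer extension) coincide step for step with the paper's.
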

\begin{proof}
The first point follows by composing the following bijections. For a 1-cell $h: B \to C$, the first one is given by the adjunction $f \dashv u$, the second one is given by the definition of $g^\mbbd$:
\[
\ck(B,C)(g^\mbbd u, h) \cong \ck(A,C)(g^\mbbd, h f) \cong \ck(A',C)(g, hfy).
\]
In the second point, assume we have a 2-cell $\beta$ as pictured below, and we want to find a unique 2-cell solving this equation:
% https://q.uiver.app/#q=WzAsNyxbMCwwLCJCIl0sWzAsMiwiQiciXSxbMiwyLCJEIl0sWzMsMSwiPSJdLFs0LDIsIkInIl0sWzQsMCwiQiJdLFs2LDIsIkQiXSxbMSwyLCJmIiwyXSxbMSwwLCJcXGlvdGEiXSxbMCwyLCJoayIsMV0sWzQsNSwiXFxpb3RhIl0sWzQsNiwiZiIsMl0sWzAsMiwibCIsMCx7ImN1cnZlIjotNX1dLFs1LDYsImwiLDAseyJjdXJ2ZSI6LTN9XSxbMSw5LCJcXGFscGhhIiwyLHsic2hvcnRlbiI6eyJzb3VyY2UiOjIwLCJ0YXJnZXQiOjIwfX1dLFs5LDEyLCJcXHRoZXRhIiwwLHsic2hvcnRlbiI6eyJzb3VyY2UiOjIwLCJ0YXJnZXQiOjIwfX1dLFs0LDEzLCJcXGJldGEiLDIseyJzaG9ydGVuIjp7InNvdXJjZSI6MjAsInRhcmdldCI6MjB9fV1d
\begin{equation}\label{EQ_PROOF_alphathetabeta}
\begin{tikzcd}
	B &&&& B \\
	&&& {=} \\
	{B'} && D && {B'} && D
	\arrow["f"', from=3-1, to=3-3]
	\arrow["\iota", from=3-1, to=1-1]
	\arrow[""{name=0, anchor=center, inner sep=0}, "hk"{description}, from=1-1, to=3-3]
	\arrow["\iota", from=3-5, to=1-5]
	\arrow["f"', from=3-5, to=3-7]
	\arrow[""{name=1, anchor=center, inner sep=0}, "l", curve={height=-30pt}, from=1-1, to=3-3]
	\arrow[""{name=2, anchor=center, inner sep=0}, "l", curve={height=-18pt}, from=1-5, to=3-7]
	\arrow["\alpha"', shorten <=6pt, shorten >=6pt, Rightarrow, from=3-1, to=0]
	\arrow["?", shorten <=5pt, shorten >=5pt, Rightarrow, from=0, to=1]
	\arrow["\beta"', shorten <=8pt, shorten >=8pt, Rightarrow, from=3-5, to=2]
\end{tikzcd}
\end{equation}

First note that we have a unique 2-cell $\theta$ making the following diagram equal (here $l^\mbba$ is the left Kan extension of $l$ along $k$ that exists by assumption):
% https://q.uiver.app/#q=WzAsMTQsWzAsMCwiQyJdLFswLDEsIkIiXSxbMCwyLCJCJyJdLFszLDIsIkQiXSxbNCwxLCI9Il0sWzUsMiwiQiciXSxbOCwyLCJEIl0sWzUsMSwiQiJdLFs1LDAsIkMiXSxbNiwxXSxbNiwyXSxbMSwxXSxbMSwyXSxbNiwwXSxbMiwzLCJmIiwyXSxbMiwxLCJcXGlvdGEiXSxbMSwzLCJoayIsMV0sWzEsMCwiayJdLFswLDMsImgiLDIseyJjdXJ2ZSI6LTF9XSxbMCwzLCJsXlxcbWJiYSIsMCx7ImN1cnZlIjotNX1dLFs1LDYsImYiLDJdLFs3LDgsImsiXSxbNSw3LCJcXGlvdGEiXSxbOCw2LCJsXlxcbWJiYSIsMCx7ImN1cnZlIjotNX1dLFs3LDYsImwiLDEseyJjdXJ2ZSI6LTF9XSxbMTAsOSwiXFxiZXRhIiwwLHsic2hvcnRlbiI6eyJzb3VyY2UiOjEwLCJ0YXJnZXQiOjQwfSwibGV2ZWwiOjJ9XSxbMTIsMTEsIlxcYWxwaGEiLDIseyJsYWJlbF9wb3NpdGlvbiI6MzAsIm9mZnNldCI6LTUsInNob3J0ZW4iOnsidGFyZ2V0Ijo4MH0sImxldmVsIjoyfV0sWzksMTMsIlxcbWJiYSIsMCx7ImxhYmVsX3Bvc2l0aW9uIjo3MCwic2hvcnRlbiI6eyJ0YXJnZXQiOjMwfSwibGV2ZWwiOjJ9XSxbMTgsMTksIlxcdGhldGEnIiwyLHsic2hvcnRlbiI6eyJzb3VyY2UiOjIwLCJ0YXJnZXQiOjIwfX1dXQ==
\[\begin{tikzcd}
	C &&&&& C & {} \\
	B & {} &&& {=} & B & {} \\
	{B'} & {} && D && {B'} & {} && D
	\arrow["f"', from=3-1, to=3-4]
	\arrow["\iota", from=3-1, to=2-1]
	\arrow["hk"{description}, from=2-1, to=3-4]
	\arrow["k", from=2-1, to=1-1]
	\arrow[""{name=0, anchor=center, inner sep=0}, "h"', curve={height=-6pt}, from=1-1, to=3-4]
	\arrow[""{name=1, anchor=center, inner sep=0}, "{l^\mbba}", curve={height=-30pt}, from=1-1, to=3-4]
	\arrow["f"', from=3-6, to=3-9]
	\arrow["k", from=2-6, to=1-6]
	\arrow["\iota", from=3-6, to=2-6]
	\arrow["{l^\mbba}", curve={height=-30pt}, from=1-6, to=3-9]
	\arrow["l"{description}, curve={height=-6pt}, from=2-6, to=3-9]
	\arrow["\beta", shorten <=1pt, shorten >=4pt, Rightarrow, from=3-7, to=2-7]
	\arrow["\alpha"'{pos=0.3}, shift left=5, shorten >=9pt, Rightarrow, from=3-2, to=2-2]
	\arrow["\mbba"{pos=0.7}, shorten >=3pt, Rightarrow, from=2-7, to=1-7]
	\arrow["{\theta'}"', shorten <=4pt, shorten >=4pt, Rightarrow, from=0, to=1]
\end{tikzcd}\]
Clearly, $\theta := \mbba^{-1} \circ \theta' k$ solves the equation \eqref{EQ_PROOF_alphathetabeta}, giving us \textbf{the existence} part of the proof. To show \textbf{the uniqueness}, let $\phi$ be a different 2-cell solving \eqref{EQ_PROOF_alphathetabeta}. Note that there exists a unique 2-cell $\phi'$ solving the following:
% https://q.uiver.app/#q=WzAsNyxbMCwyLCJCIl0sWzIsMywiRCJdLFszLDIsIj0iXSxbNCwyLCJCIl0sWzYsMywiRCJdLFswLDAsIkMiXSxbNCwwLCJDIl0sWzAsNSwiayJdLFswLDEsImhrIiwyXSxbMyw0LCJoayIsMl0sWzMsNiwiayJdLFs2LDQsImxeXFxtYmJhIiwwLHsiY3VydmUiOi01fV0sWzYsNCwiaCIsMV0sWzUsMSwibF5cXG1iYmEiLDAseyJjdXJ2ZSI6LTV9XSxbMCwxLCJsIiwxLHsibGFiZWxfcG9zaXRpb24iOjMwLCJjdXJ2ZSI6LTR9XSxbMTIsMTEsIlxccGhpJyIsMix7InNob3J0ZW4iOnsic291cmNlIjoyMCwidGFyZ2V0IjoyMH19XSxbMTQsMTMsIlxcbWJiYSIsMCx7InNob3J0ZW4iOnsic291cmNlIjoyMCwidGFyZ2V0IjoyMH19XSxbOCwxNCwiXFxwaGkiLDAseyJsYWJlbF9wb3NpdGlvbiI6MzAsInNob3J0ZW4iOnsic291cmNlIjoyMCwidGFyZ2V0IjoyMH19XV0=
\[\begin{tikzcd}
	C &&&& C \\
	\\
	B &&& {=} & B \\
	&& D &&&& D
	\arrow["k", from=3-1, to=1-1]
	\arrow[""{name=0, anchor=center, inner sep=0}, "hk"', from=3-1, to=4-3]
	\arrow["hk"', from=3-5, to=4-7]
	\arrow["k", from=3-5, to=1-5]
	\arrow[""{name=1, anchor=center, inner sep=0}, "{l^\mbba}", curve={height=-30pt}, from=1-5, to=4-7]
	\arrow[""{name=2, anchor=center, inner sep=0}, "h"{description}, from=1-5, to=4-7]
	\arrow[""{name=3, anchor=center, inner sep=0}, "{l^\mbba}", curve={height=-30pt}, from=1-1, to=4-3]
	\arrow[""{name=4, anchor=center, inner sep=0}, "l"{description, pos=0.3}, curve={height=-24pt}, from=3-1, to=4-3]
	\arrow["{\phi'}"', shorten <=5pt, shorten >=5pt, Rightarrow, from=2, to=1]
	\arrow["\mbba", shorten <=4pt, shorten >=4pt, Rightarrow, from=4, to=3]
	\arrow["\phi"{pos=0.3}, shorten <=3pt, shorten >=3pt, Rightarrow, from=0, to=4]
\end{tikzcd}\]
Pre-pasting this with $\alpha$ and using the diagram above this one, we see that $\phi' = \theta'$. From this we obtain:
\[
\mbba^{-1} \circ \theta' k = \mbba^{-1} \circ \phi' k = \mbba^{-1} \circ \mbba \circ \phi = \phi.
\]

\end{proof}
%301 proofs

\begin{prop}\label{THM_characterization_D-algs_D-corefl-incl}
Let $(D, y)$ be a left Kan pseudomonad on a 2-category $\ck$. Denote by $J_D$ the inclusion to the Kleisli 2-category and by $D$ the endo-pseudofunctor associated to the left Kan pseudomonad. The following are equivalent for an object $A \in \ck$:
\begin{enumerate}
\item $A$ admits the structure of a pseudo-$D$-algebra,
\item for every object $B \in \ck$, the left Kan extension of a 1-cell $B \to A$ along $y_B: B \to DB$ exists and has invertible unit. In other words, $\ck(-,A): \ck^{op} \to \cat$ sends each $y_B$ to a coreflector,
\item $y_A$ admits a reflector (left adjoint with invertible counit),
\item $\ck(-,A)$ sends $J_D$-coreflection-inclusions in $\ck$ to coreflectors in $\cat$,
\item $\ck(-,A)$ sends $D$-coreflection-inclusions in $\ck$ to coreflectors in $\cat$.
\end{enumerate}
\end{prop}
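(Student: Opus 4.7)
My plan is to prove the five conditions equivalent by closing a small cycle of implications, leveraging Lemma~\ref{THM_lemma_Dcoreflincl_is_JDcoreflincl} for (4)$\Leftrightarrow$(5), Lemma~\ref{THM_Lemma_Kan_extensions} to transfer Kan extensions along adjunctions, and Proposition~\ref{THM_prop_kz_adjunkce} to identify $y_B$ as a canonical $J_D$-coreflection-inclusion.

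First I would dispatch the direct implications. The implication (1)$\Rightarrow$(2) is immediate from the definition of pseudo-$D$-algebra. The equivalence (4)$\Leftrightarrow$(5) follows at once from Lemma~\ref{THM_lemma_Dcoreflincl_is_JDcoreflincl}. For (4)$\Rightarrow$(2), I would observe that by Proposition~\ref{THM_prop_kz_adjunkce} the adjunction $J_D y \dashv pJ_D$ has invertible unit $\Psi^{-1}$, so each $y_B$ is itself a $J_D$-coreflection-inclusion; applying (4) to $y_B$ gives that $\ck(y_B, A)$ is a coreflector in $\cat$, which unpacks precisely to the existence of left Kan extensions along $y_B$ with invertible unit, i.e.\ condition (2).

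For (2)$\Rightarrow$(3), I would specialize (2) to $f = 1_A$, obtaining $r := 1_A^\mbbd \colon DA \to A$ with invertible Kan extension unit $\eta \colon 1_A \cong r y_A$. To upgrade this to an adjunction $r \dashv y_A$ with counit $\eta^{-1}$, I would construct the adjunction unit $\tilde\eta \colon 1_{DA} \Rightarrow y_A r$ via the universal property of the Kan extension $(1_{DA}, 1_{y_A})$ of $y_A$ along $y_A$ (a left Kan pseudomonad axiom), taking $\tilde\eta$ as the unique 2-cell with $\tilde\eta y_A = y_A \eta$. The first triangle identity is then immediate from this defining equation together with $y_A \eta^{-1} \circ y_A \eta = 1$, while the second follows from the uniqueness part of the universal property of $r$ as the Kan extension of $1_A$. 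For (3)$\Rightarrow$(2), given the reflector $r$ of $y_A$, for $f \colon B \to A$ I would set $f^\mbbd := r \circ (y_A f)^\mbbd$ and verify its universal property via the composite natural bijections
\[
\ck(DB,A)(r(y_A f)^\mbbd, h) \cong \ck(DB,DA)((y_A f)^\mbbd, y_A h) \cong \ck(B,DA)(y_A f, y_A h y_B) \cong \ck(B,A)(f, h y_B),
\]
using respectively $r \dashv y_A$, the universal property of the pseudomonad Kan extension, and the local fully-faithfulness of $y_A$ implied by $r y_A \cong 1_A$.

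To close the equivalences, I would prove (3)$\Rightarrow$(1) and (2)$\Rightarrow$(5). For the first, the extensions $f^\mbbd = r(y_A f)^\mbbd$ satisfy the preservation axiom of pseudo-$D$-algebras: for any $f \colon A' \to DB$ and $g \colon B \to A$, the composite $g^\mbbd$ preserves the pseudomonad Kan extension of $f$ along $y_{A'}$, since $(y_A g)^\mbbd$ does so by a pseudomonad axiom and $r$ preserves left Kan extensions by being a left adjoint. For (2)$\Rightarrow$(5), let $g \colon X \to Y$ be a $D$-coreflection-inclusion, so $Dg \dashv R$ with invertible unit for some $R \colon DY \to DX$. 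For $f \colon X \to A$, the Kan extension $f^\mbbd$ along $y_X$ exists by (2); applying Lemma~\ref{THM_Lemma_Kan_extensions}(1) to $Dg \dashv R$ exhibits $f^\mbbd R$ as the Kan extension of $f$ along $y_Y g \cong Dg \circ y_X$; and Lemma~\ref{THM_Lemma_Kan_extensions}(2) applied to the factorization of $y_Y g$ through $g$ and $y_Y$---with its hypothesis supplied by (2) applied at $Y$---yields $f^\mbbd R y_Y$ as the Kan extension of $f$ along $g$, with the invertible unit inherited from the outer Kan extension. The main obstacle I anticipate is the verification of the second triangle identity in (2)$\Rightarrow$(3); the diagram chase requires careful interaction between the universal property of $r$ as the Kan extension of $1_A$ and the canonical Kan extension of $y_A$ along $y_A$ coming from the left Kan pseudomonad structure.
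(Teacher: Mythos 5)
Your proof follows essentially the same route as the paper's: the same construction of the reflector in (2)$\Rightarrow$(3) (extend $1_A$, build the adjunction unit from the universal property of $1_{DA}$ as the extension of $y_A$ along itself, check the second triangle identity after precomposing with $y_A$), the same use of Lemma \ref{THM_lemma_Dcoreflincl_is_JDcoreflincl} for (4)$\Leftrightarrow$(5), the same observation that $y_B$ is itself a ($J_D$-)coreflection-inclusion to get back to (2), and the same two-step application of Lemma \ref{THM_Lemma_Kan_extensions} for (2)$\Rightarrow$(5). The one genuine difference is that you prove (3)$\Rightarrow$(2) and (3)$\Rightarrow$(1) directly --- via the bijection chain for $r\circ(y_Af)^\mbbd$ and the fact that left adjoints preserve left Kan extensions --- where the paper simply cites the literature for (1)$\Leftrightarrow$(3); your arguments are correct and make the proof self-contained, at the cost of a little extra verification (e.g.\ that the composite bijection is indeed implemented by pasting with the proposed unit $r\mbbd\circ\eta f$).

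There is, however, one concrete gap in (2)$\Rightarrow$(5). Lemma \ref{THM_Lemma_Kan_extensions}(2) has \emph{two} hypotheses besides the outer triangle being a Kan extension: that all extensions along $k=y_Y$ exist with invertible unit (which you supply from condition (2) at $Y$), and that the \emph{top} triangle is a left Kan extension --- here, that $f^\mbbd R$ equipped with the identity 2-cell exhibits $f^\mbbd R$ as the left Kan extension of $f^\mbbd R\, y_Y$ along $y_Y$. You never address this, and it is not automatic: condition (2) guarantees that \emph{some} 1-cell $DY\to A$ is the extension of $f^\mbbd R\, y_Y$ along $y_Y$, but not that the canonical comparison from it to $f^\mbbd R$ is invertible. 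The paper dispatches this with the remark that the corresponding composite ``is a $D$-morphism''; to make that precise one can, for instance, use the reflector $a\dashv y_A$ already produced by (2)$\Rightarrow$(3) to write $f^\mbbd\cong a\circ Df$, and note that $a$ (a left adjoint), $Df$ and $R$ (morphisms in $\ck_D$ between free algebras) each preserve left Kan extensions along the units $y$. Without some such verification the application of Lemma \ref{THM_Lemma_Kan_extensions}(2) is not licensed.
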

\begin{proof}
The equivalence ``$(1) \Leftrightarrow (3)$” is well known, for lax-idempotent 2-monads this has been done for example in \cite[Proposition 1.1.13]{elephant}, but the same argument works for lax-idempotent pseudomonads as well. ``$(1) \Rightarrow (2)$” is obvious.

For ``$(2) \Rightarrow (3)$”, denote by $(a: DA \to A,\mbba)$ the left Kan extension of $1_A$ along $y_A$. Because the identity 2-cell on $DA$ exibits $1_{DA}$ as the left Kan extension of $y_A$ along $y_A$, there exists a unique 2-cell $\eta$ making these 2-cells equal:
% https://q.uiver.app/#q=WzAsOCxbMSwwLCJEQSJdLFsyLDEsIkEiXSxbMCwxLCJBIl0sWzMsMSwiREEiXSxbNSwxLCJEQSJdLFs3LDEsIkRBIl0sWzQsMSwiQSJdLFs2LDAsIkEiXSxbMiwxLCIiLDIseyJsZXZlbCI6Miwic3R5bGUiOnsiaGVhZCI6eyJuYW1lIjoibm9uZSJ9fX1dLFsxLDMsInlfQSIsMl0sWzIsMCwieV9BIl0sWzAsMSwiYSJdLFs2LDQsInlfQSJdLFs0LDUsIiIsMCx7ImxldmVsIjoyLCJzdHlsZSI6eyJoZWFkIjp7Im5hbWUiOiJub25lIn19fV0sWzQsNywiYSJdLFs3LDUsInlfQSJdLFsxMyw3LCJcXGV0YSIsMCx7InNob3J0ZW4iOnsic291cmNlIjoyMH19XSxbOCwwLCJcXG1iYmEiLDIseyJzaG9ydGVuIjp7InNvdXJjZSI6MjB9fV1d
\[\begin{tikzcd}
	& DA &&&&& A \\
	A && A & DA & A & DA && DA
	\arrow[""{name=0, anchor=center, inner sep=0}, Rightarrow, no head, from=2-1, to=2-3]
	\arrow["{y_A}"', from=2-3, to=2-4]
	\arrow["{y_A}", from=2-1, to=1-2]
	\arrow["a", from=1-2, to=2-3]
	\arrow["{y_A}", from=2-5, to=2-6]
	\arrow[""{name=1, anchor=center, inner sep=0}, Rightarrow, no head, from=2-6, to=2-8]
	\arrow["a", from=2-6, to=1-7]
	\arrow["{y_A}", from=1-7, to=2-8]
	\arrow["\eta", shorten <=3pt, Rightarrow, from=1, to=1-7]
	\arrow["\mbba"', shorten <=3pt, Rightarrow, from=0, to=1-2]
\end{tikzcd}\]

We will now show that $(\mbba^{-1}, \eta): a \dashv y_A$ is an adjunction. The triangle identity $y_A \mbba^{-1} \circ \eta y_A = 1_{y_A}$ is guaranteed by the above formula -- let us prove the other one:
\[
\mbba^{-1}a \circ a\eta  = 1_a.
\]
Because $a$ is the left Kan extension along $y_A$, it suffices to prove that both sides of this equation become equal after pre-composing them with $y_A$. It then becomes:
\[
\mbba^{-1}ay_A \circ a\eta y_A = \mbba^{-1}ay_A \circ a y_A \mbba = \mbba^{-1}ay_A \circ \mbba a y_A = 1_{ay_A}.
\]

``$(4) \Leftrightarrow (5)$” follows from Lemma \ref{THM_lemma_Dcoreflincl_is_JDcoreflincl} and ``$(5) \Rightarrow (2)$” is obvious since $y_B$ is a $D$-coreflection-inclusion.

We will now prove ``$(2) \Rightarrow (5)$”. Let $f: B \to C$ such that there is an adjunction in $\ck_D$ where the \textbf{unit} $\eta$ is invertible:
% https://q.uiver.app/#q=WzAsMyxbMSwwLCJEQiJdLFszLDAsIkRDIl0sWzAsMCwiKFxcZXBzaWxvbixcXGV0YSk6Il0sWzAsMSwiRGYiLDIseyJjdXJ2ZSI6NH1dLFsxLDAsInIiLDIseyJjdXJ2ZSI6NH1dLFszLDQsIiIsMCx7ImxldmVsIjoxLCJzdHlsZSI6eyJuYW1lIjoiYWRqdW5jdGlvbiJ9fV1d
\[\begin{tikzcd}
	{(\epsilon,\eta):} & DB && DC
	\arrow[""{name=0, anchor=center, inner sep=0}, "Df"', curve={height=24pt}, from=1-2, to=1-4]
	\arrow[""{name=1, anchor=center, inner sep=0}, "r"', curve={height=24pt}, from=1-4, to=1-2]
	\arrow["\dashv"{anchor=center, rotate=90}, draw=none, from=0, to=1]
\end{tikzcd}\]
We wish to show that the functor $f^*: \ck(C,A) \to \ck(B,A)$ has a left adjoint with invertible unit. We will define this left adjoint by the following formula:
\[
L: (g: B \to A) \mapsto (g^\mbba \circ r \circ y_C : C \to A)
\]
Define the component of the unit $\widetilde{\eta}$ at $g: B \to A$ as the following composite 2-cell:
% https://q.uiver.app/#q=WzAsMTAsWzAsMSwiQiJdLFswLDAsIkMiXSxbNSwxLCJEQiJdLFs0LDAsIkRDIl0sWzYsMSwiQSJdLFsyLDEsIkRCIl0sWzQsMV0sWzMsMl0sWzMsMV0sWzIsMF0sWzAsNSwieV9CIl0sWzAsMSwiZiJdLFsxLDMsInlfQyJdLFszLDIsInIiXSxbNSwyLCIiLDAseyJsZXZlbCI6Miwic3R5bGUiOnsiaGVhZCI6eyJuYW1lIjoibm9uZSJ9fX1dLFsyLDQsImdeXFxtYmJhIl0sWzUsMywiRGYiXSxbMCw0LCJnIiwyLHsiY3VydmUiOjV9XSxbNiwzLCJcXGV0YSIsMCx7InNob3J0ZW4iOnsic291cmNlIjozMH0sImxldmVsIjoyfV0sWzUsOSwiXFxtYmJkXnstMX0iLDAseyJzaG9ydGVuIjp7InNvdXJjZSI6MjB9LCJsZXZlbCI6Mn1dLFs3LDgsIlxcbWJiYSIsMCx7ImxhYmVsX3Bvc2l0aW9uIjo3MCwic2hvcnRlbiI6eyJzb3VyY2UiOjgwfSwibGV2ZWwiOjJ9XV0=
\[\begin{tikzcd}
	C && {} && DC \\
	B && DB & {} & {} & DB & A \\
	&&& {}
	\arrow["{y_B}", from=2-1, to=2-3]
	\arrow["f", from=2-1, to=1-1]
	\arrow["{y_C}", from=1-1, to=1-5]
	\arrow["r", from=1-5, to=2-6]
	\arrow[Rightarrow, no head, from=2-3, to=2-6]
	\arrow["{g^\mbba}", from=2-6, to=2-7]
	\arrow["Df", from=2-3, to=1-5]
	\arrow["g"', curve={height=30pt}, from=2-1, to=2-7]
	\arrow["\eta", shorten <=3pt, Rightarrow, from=2-5, to=1-5]
	\arrow["{\mbbd^{-1}}", shorten <=2pt, Rightarrow, from=2-3, to=1-3]
	\arrow["\mbba"{pos=0.7}, shorten <=9pt, Rightarrow, from=3-4, to=2-4]
\end{tikzcd}\]

We wish to show that this has the universal property of the unit, in other words, $g^\mbba \circ r \circ y_C$ is the left Kan extension of $g: B \to A$ along $f: B \to C$.

By Lemma \ref{THM_Lemma_Kan_extensions} \textbf{point 1}, $g^\mbba \circ r$ is the left Kan extension of $g$ along $Df \circ y_B$. Equivalently it is a left Kan extension of $g$ along $y_C f$ with the 2-cell component given by the composite 2-cell above. Since $g^\mbba r$ is a $D$-morphism, $g^\mbba r$ (with the identity 2-cell component) is the left Kan extension of $g^\mbba r y_C$ along $y_C$. By Lemma \ref{THM_Lemma_Kan_extensions} \textbf{point 2}, for $h := g^\mbba r$, $k := y_C$, $i := f$, $f := g$ and $\alpha$ the 2-cell above, the result follows.

\end{proof}

% CHARAKTERIZACE D-ALGEBER OLD
%372

%270
%271

\begin{pozn}
Using the terminology of \cite[Definition 1.2]{kaninj}, in Theorem \ref{THM_characterization_D-algs_D-corefl-incl}, the equivalence ``$(1) \Leftrightarrow (4)$” says that an object $A$ is a pseudo-$D$-algebra if and only if it is \textit{left Kan injective} with respect to the class of 1-cells given by $J_D$-coreflection-inclusions.

Let us also note that a version of ``$(1) \Rightarrow (5)$” for $D$-left adjoints in Theorem \ref{THM_characterization_D-algs_D-corefl-incl} has already been proven in \cite[Proposition 1.5]{bungebicomma}.
\end{pozn}

\begin{pozn}\label{POZN_strict_charakterizace_D_algeber}
Given a left Kan 2-monad $(D,y)$, a pseudo-$D$-algebra $C$ will be said to be \textit{normal} if the left Kan extension 2-cell $\mbbc_f$ in Definition \ref{DEFI_D_algebra} is the identity for all 1-cells $f$. Notice that a variation of Proposition \ref{THM_characterization_D-algs_D-corefl-incl} may be proven for normal pseudo-$D$-algebras, where we replace all invertible 2-cells by identities, for instance replace a ``reflector” by a ``lali”.
\end{pozn}

\noindent In the remainder of this section we will demonstrate Proposition \ref{THM_characterization_D-algs_D-corefl-incl} on the case of small presheaf pseudomonad from Example \ref{PR_presheafpsmonad}. An application to the lax morphism classifier 2-comonads will be described in Section \ref{SEKCE_applications_to_twodim}.

\begin{pr}\label{PR_presheaf_psmonad_charakterizace_algeber}
Consider the small presheaf pseudomonad $P$ on $\CAT$. Note that if we pass to a bigger universe and use the bicategory $\PROF$ of locally small categories and \textbf{all} profunctors, for any functor $f: \ca \to \cb$, the small profunctor $Pf = \cb(-,f-): \cb^{op} \times \ca \to \set$ has a right adjoint:
% https://q.uiver.app/#q=WzAsMixbMCwwLCJcXGNhIl0sWzIsMCwiXFxjYiJdLFswLDEsIlxcY2IoLSxmLSkiLDIseyJjdXJ2ZSI6NH1dLFsxLDAsIlxcY2IoZi0sLSkiLDIseyJjdXJ2ZSI6NH1dLFsyLDMsIiIsMCx7ImxldmVsIjoxLCJzdHlsZSI6eyJuYW1lIjoiYWRqdW5jdGlvbiJ9fV1d
\[\begin{tikzcd}
	\ca && \cb
	\arrow[""{name=0, anchor=center, inner sep=0}, "{\cb(-,f-)}"', curve={height=24pt}, from=1-1, to=1-3]
	\arrow[""{name=1, anchor=center, inner sep=0}, "{\cb(f-,-)}"', curve={height=24pt}, from=1-3, to=1-1]
	\arrow["\dashv"{anchor=center, rotate=90}, draw=none, from=0, to=1]
\end{tikzcd}\]

We will call a functor $f: \ca \to \cb$ \textit{small} if the right adjoint is also a small profunctor (belongs to $\text{Prof}$). Clearly, this happens if and only if $Pf$ has a right adjoint in $\prof$.

Next, note that the unit of the adjunction is a collection of functions for every pair $(a',a'') \in \ca^{op} \times \ca$ like this:
\begin{align*}
\ca(a',a'') &\to \int^{b \in \cb} \cb(fa',b) \times \cb(b,fa''),\\
(\theta: a' \to a'') &\mapsto [1_{fa'},f(\theta)].
\end{align*}
As is readily seen, the unit is invertible if and only if $f$ is fully faithful. So a functor $f: \ca \to \cb$ is a $P$-coreflection-inclusion if and only if it is fully faithful and small. The precomposition functor $f^* : \CAT(\cb, \cc) \to \CAT(\ca, \cc)$ is a coreflector if and only if left Kan extensions along $f$ exist in $\cc$. Theorem \ref{THM_characterization_D-algs_D-corefl-incl} for the small presheaf pseudomonad now gives a folklore result: a category $\cc$ is cocomplete if and only if left Kan extensions along small fully faithful functors exist in $\cc$.
%375
\end{pr}

\subsection{Colax adjunctions out of the Kleisli 2-category}\label{SUBS_3_big_lax_adj_thm}

%384 ASSUMPTION

\begin{prop}\label{THM_bigadjthm_HL_is_D-algebra}
Let $(D, y)$ be a left Kan pseudomonad on a 2-category $\ck$ and assume there are pseudofunctors $G,H$ and a biadjunction as pictured below:
% https://q.uiver.app/#q=WzAsMyxbMCwwLCJcXGNrIl0sWzIsMCwiXFxja19EIl0sWzQsMCwiXFxjbCJdLFswLDEsIkpfRCIsMl0sWzEsMiwiRyIsMl0sWzIsMCwiSCIsMix7ImN1cnZlIjo1fV0sWzEsNSwiIiwwLHsibGV2ZWwiOjEsInN0eWxlIjp7Im5hbWUiOiJhZGp1bmN0aW9uIn19XV0=
\[\begin{tikzcd}
	\ck && {\ck_D} && \cl
	\arrow["{J_D}"', from=1-1, to=1-3]
	\arrow["G"', from=1-3, to=1-5]
	\arrow[""{name=0, anchor=center, inner sep=0}, "H"', curve={height=30pt}, from=1-5, to=1-1]
	\arrow["\dashv"{anchor=center, rotate=90}, draw=none, from=1-3, to=0]
\end{tikzcd}\]
Then for every object $L \in \cl$, the object $HL$ admits the structure of a pseudo-$D$-algebra.
\end{prop}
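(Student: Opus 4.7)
The plan is to verify condition~(2) of Proposition~\ref{THM_characterization_D-algs_D-corefl-incl}: that the representable $\ck(-, HL): \ck^{op} \to \cat$ sends every unit component $y_B: B \to DB$ to a coreflector in $\cat$.

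The starting point is Proposition~\ref{THM_prop_kz_adjunkce}, which supplies, for each $B \in \ck$, an adjunction $J_D y_B \dashv p_{J_D B}$ in $\ck_D$ whose unit (the $B$-component of $\Psi^{-1}$) is invertible. Hence $J_D y_B$ is a coreflection-inclusion in $\ck_D$. Pseudofunctors preserve adjunctions and the invertibility of 2-cells, so $G$ sends this adjunction to $GJ_D y_B \dashv Gp_{J_D B}$ in $\cl$ with invertible unit, making $GJ_D y_B$ a coreflection-inclusion in $\cl$. Applying the representable $\cl(-, L)$ contravariantly yields $(Gp_{J_D B})^* \dashv (GJ_D y_B)^*$ in $\cat$ with invertible unit; equivalently, $(GJ_D y_B)^*: \cl(GJ_D DB, L) \to \cl(GJ_D B, L)$ is a coreflector in $\cat$.

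To transport this back to $\ck$, I would invoke the biadjunction $GJ_D \dashv H$, which supplies a pseudonatural equivalence of pseudofunctors $\ck(-, HL) \simeq \cl(GJ_D(-), L): \ck^{op} \to \cat$. Evaluating at the 1-cell $y_B$ (that is, applying the pseudonaturality square at $y_B$) identifies $y_B^*: \ck(DB, HL) \to \ck(B, HL)$, up to natural isomorphism, with $(GJ_D y_B)^*$. Since being a coreflector is preserved under such equivalences of functors, $y_B^*$ is a coreflector in $\cat$ for every $B$. Condition~(2) of Proposition~\ref{THM_characterization_D-algs_D-corefl-incl} then immediately yields the desired pseudo-$D$-algebra structure on $HL$.

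The main step requiring care, unpacking the pseudonaturality of the biadjunction to justify the identification $y_B^* \simeq (GJ_D y_B)^*$, is a standard calculation pasting the hom-category equivalence with the naturality of the unit of the biadjunction at $y_B$, and presents no serious obstacle.
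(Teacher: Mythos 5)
Your proof is correct and follows essentially the same route as the paper's: both transport the problem along the pseudonatural equivalence $\ck(-,HL)\simeq\cl(GJ_D-,L)$ supplied by the biadjunction and use that pseudofunctors preserve coreflection-inclusions while contravariant representables turn them into coreflectors. The only (minor) difference is that you verify clause (2) of Proposition \ref{THM_characterization_D-algs_D-corefl-incl} — just the units $y_B$, which obliges you to additionally cite Proposition \ref{THM_prop_kz_adjunkce} for the fact that $J_Dy_B$ is a coreflection-inclusion — whereas the paper verifies clause (4) for arbitrary $J_D$-coreflection-inclusions, where that input is absorbed into the definition.
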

\begin{proof}
By Proposition \ref{THM_characterization_D-algs_D-corefl-incl}, it suffices to show that $\ck(-,HL): \ck^{op} \to \cat$ sends $J_D$-coreflection-inclusions to coreflectors. Notice that we have the following pseudo-natural equivalence:
\[
\ck(-,HL) \simeq \cl(GJ_D-,L) = \cl(G-,L) \circ J_D.
\]
Now, by definition, $J_D$ sends $J_D$-coreflection-inclusions to coreflection-inclusions. Since $\cl(G-,L)$ is a (contravariant) pseudofunctor, it sends coreflection-inclusions to coreflectors. We thus obtain the result.
\end{proof}

\begin{pozn}\label{POZN_explicit_form_h_L}
Going through the proof of Proposition \ref{THM_characterization_D-algs_D-corefl-incl} for the case of $HL$, we see that the algebra multiplication map $h_L : DHL \to HL$ (the reflector of the morphism $y_{HL}: HL \to DHL$) is given by the following composite:
% https://q.uiver.app/#q=WzAsNCxbMCwwLCJESEwiXSxbMiwwLCJIR0ReMkhMIl0sWzQsMCwiSEdESEwiXSxbNiwwLCJITCJdLFswLDEsImNfe0RITH0iXSxbMSwyLCJIR3Bfe0RITH0iXSxbMiwzLCJIc19MIl1d
\[\begin{tikzcd}
	DHL && {HGD^2HL} && HGDHL && HL
	\arrow["{c_{DHL}}", from=1-1, to=1-3]
	\arrow["{HGp_{DHL}}", from=1-3, to=1-5]
	\arrow["{Hs_L}", from=1-5, to=1-7]
\end{tikzcd}\]
Also, the counit of the adjunction $h_L \dashv y_{HL}$, an invertible 2-cell $\epsilon_L: h_L y_{HL} \Rightarrow 1_{HL}$, is given by the following:
% https://q.uiver.app/#q=WzAsOCxbMCwxLCJITCJdLFsxLDAsIkRITCJdLFsyLDAsIkhHRF4ySEwiXSxbNCwwLCJIR0RITCJdLFs1LDEsIkhMIl0sWzEsMSwiSEdESEwiXSxbMiwxXSxbMiwyXSxbMCwxLCJ5X3tITH0iXSxbMSwyLCJjX3tESEx9Il0sWzIsMywiSEdwX3tESEx9Il0sWzMsNCwiSHNfTCJdLFswLDUsImNfe0hMfSIsMl0sWzUsMiwiSEdEeV97SEx9IiwxXSxbNSwzLCIiLDEseyJjdXJ2ZSI6MywibGV2ZWwiOjIsInN0eWxlIjp7ImhlYWQiOnsibmFtZSI6Im5vbmUifX19XSxbMCw0LCIiLDEseyJvZmZzZXQiOjIsImN1cnZlIjo1LCJsZXZlbCI6Miwic3R5bGUiOnsiaGVhZCI6eyJuYW1lIjoibm9uZSJ9fX1dLFsxLDUsImNfe3lfe0hMfX0iLDIseyJzaG9ydGVuIjp7InNvdXJjZSI6MjB9LCJsZXZlbCI6Mn1dLFsyLDYsIihIR1xcUHNpKV97SEx9IiwwLHsibGFiZWxfcG9zaXRpb24iOjQwLCJzaG9ydGVuIjp7InRhcmdldCI6NDB9LCJsZXZlbCI6Mn1dLFs2LDcsIlxcdGF1XnstMX1fTCIsMCx7ImxhYmVsX3Bvc2l0aW9uIjo0MCwic2hvcnRlbiI6eyJ0YXJnZXQiOjEwfSwibGV2ZWwiOjJ9XV0=
\[\begin{tikzcd}
	& DHL & {HGD^2HL} && HGDHL \\
	HL & HGDHL & {} &&& HL \\
	&& {}
	\arrow["{y_{HL}}", from=2-1, to=1-2]
	\arrow["{c_{DHL}}", from=1-2, to=1-3]
	\arrow["{HGp_{DHL}}", from=1-3, to=1-5]
	\arrow["{Hs_L}", from=1-5, to=2-6]
	\arrow["{c_{HL}}"', from=2-1, to=2-2]
	\arrow["{HGDy_{HL}}"{description}, from=2-2, to=1-3]
	\arrow[curve={height=18pt}, Rightarrow, no head, from=2-2, to=1-5]
	\arrow[shift right=2, curve={height=30pt}, Rightarrow, no head, from=2-1, to=2-6]
	\arrow["{c_{y_{HL}}}"', shorten <=2pt, Rightarrow, from=1-2, to=2-2]
	\arrow["{(HG\Psi)_{HL}}"{pos=0.4}, shorten >=4pt, Rightarrow, from=1-3, to=2-3]
	\arrow["{\tau^{-1}_L}"{pos=0.4}, shorten >=1pt, Rightarrow, from=2-3, to=3-3]
\end{tikzcd}\]
\end{pozn}

%kos252 old
%387 old but newer
\begin{theo}[\textbf{The main colax adjunction theorem}]\label{THM_BIG_lax_adj_thm}
Let $(D, y)$ be a left Kan pseudomonad on a 2-category $\ck$. Any biadjunction whose left adjoint factorizes through the Kleisli 2-category $\ck_D$ induces a colax adjunction pictured below:
% https://q.uiver.app/#q=WzAsNixbMCwwLCJcXGNrIl0sWzIsMCwiXFxja19EIl0sWzQsMCwiXFxjbCJdLFs1LDAsIlxccmlnaHRzcXVpZ2Fycm93Il0sWzYsMCwiXFxja19EIl0sWzgsMCwiXFxjbCJdLFswLDEsIkpfRCIsMl0sWzEsMiwiRyIsMl0sWzIsMCwiSCIsMix7ImN1cnZlIjo1fV0sWzQsNSwiRyIsMix7ImN1cnZlIjo0fV0sWzUsNCwiSl9ESCIsMix7ImN1cnZlIjo0fV0sWzEsOCwiIiwwLHsibGV2ZWwiOjEsInN0eWxlIjp7Im5hbWUiOiJhZGp1bmN0aW9uIn19XSxbOSwxMCwiXFx0dG9wIiwxLHsic2hvcnRlbiI6eyJzb3VyY2UiOjIwLCJ0YXJnZXQiOjIwfSwic3R5bGUiOnsiYm9keSI6eyJuYW1lIjoibm9uZSJ9LCJoZWFkIjp7Im5hbWUiOiJub25lIn19fV1d
\[\begin{tikzcd}
	\ck && {\ck_D} && \cl & \rightsquigarrow & {\ck_D} && \cl
	\arrow["{J_D}"', from=1-1, to=1-3]
	\arrow["G"', from=1-3, to=1-5]
	\arrow[""{name=0, anchor=center, inner sep=0}, "H"', curve={height=30pt}, from=1-5, to=1-1]
	\arrow[""{name=1, anchor=center, inner sep=0}, "G"', curve={height=24pt}, from=1-7, to=1-9]
	\arrow[""{name=2, anchor=center, inner sep=0}, "{J_DH}"', curve={height=24pt}, from=1-9, to=1-7]
	\arrow["\dashv"{anchor=center, rotate=90}, draw=none, from=1-3, to=0]
	\arrow["\ttop"{description}, draw=none, from=1, to=2]
\end{tikzcd}\]
%383
\end{theo}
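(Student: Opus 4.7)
My plan is to apply Theorem \ref{THM_Bunge} with $U := J_D H : \cl \to \ck_D$ and exhibit $G: \ck_D \to \cl$ as a left colax adjoint. Every object of $\ck_D$ is a free algebra $J_D A_0$ for some $A_0 \in \ck$. Writing $\eta_{A_0}: A_0 \to HGJ_D A_0$ for the unit of the biadjunction $GJ_D \dashv H$, I define the required unit 1-cells by
\[
y'_{J_D A_0} := J_D \eta_{A_0} : J_D A_0 \to J_D(HGJ_D A_0) = (J_D H G)(J_D A_0).
\]

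The key ingredient for the extensions is Proposition \ref{THM_bigadjthm_HL_is_D-algebra}, which equips each $HL$ ($L \in \cl$) with a pseudo-$D$-algebra structure; by Proposition \ref{THM_characterization_D-algs_D-corefl-incl} this corresponds to an adjunction $h_L \dashv y_{HL}$ with invertible counit, where $h_L: DHL \to HL$ is the reflector described explicitly in Remark \ref{POZN_explicit_form_h_L}. Using the biequivalence $\mathrm{Kl}(D) \simeq \ck_D$ of Remark \ref{POZN_Kleisli_bicat}, a 1-cell $f: J_D A_0 \to J_D HL$ in $\ck_D$ corresponds to a morphism $\overline{f}: A_0 \to DHL$ in $\ck$; I take $f^\mbbd: GJ_D A_0 \to L$ to be the biadjoint transpose of $h_L \circ \overline{f}: A_0 \to HL$. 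The comparison 2-cell $\mbbd_f$ is assembled from the unit $1 \Rightarrow y_{HL} h_L$ of the reflection applied to $\overline{f}$, together with the triangle identity of the biadjunction. Given a competing 2-cell $\alpha: f \Rightarrow J_D H g \circ y'_A$ for some $g: GA \to L$, I translate $\alpha$ through the adjunction $h_L \dashv y_{HL}$ into a 2-cell $h_L \overline{f} \Rightarrow Hg \circ \eta_{A_0}$ and then transpose through the biadjunction to produce the unique induced 2-cell $f^\mbbd \Rightarrow g$, establishing the universal property.

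It then remains to verify the coherence conditions of Definition \ref{DEFI_coherently_closed_for_Uext} together with the unit and composition axioms \eqref{EQ_U_extensions_unit_comp}, after which Theorem \ref{THM_Bunge} produces the desired colax adjunction. These reduce to the pseudonaturality of $\eta$, the pseudofunctoriality of $G$, $H$ and $J_D$, and compatibility of the reflectors $h_L$ as $L$ varies (which is automatic, since each $h_L$ is characterised up to canonical isomorphism by the adjunction $h_L \dashv y_{HL}$).

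The main obstacle I anticipate is the bookkeeping in verifying the universal property and the composition axiom: one has to match universality across three layers---the Kleisli biequivalence, the biadjunction $GJ_D \dashv H$, and the pointwise reflection $h_L \dashv y_{HL}$---while tracking the associators and unitors of the pseudofunctors in play. The composition axiom in \eqref{EQ_U_extensions_unit_comp} is especially delicate, as it requires describing how the extensions $f^\mbbd$ compose in $\ck_D$; this is where the explicit description of $h_L$ from Remark \ref{POZN_explicit_form_h_L} enters essentially, allowing us to pass between the algebra multiplication $h_L$ and the counit of the biadjunction $GJ_D \dashv H$ under the pseudofunctor $HG$.
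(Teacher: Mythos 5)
Your strategy is sound and reaches the same theorem, but by the mirror-image route to the paper's. The paper applies the \emph{dual} of Theorem \ref{THM_Bunge}: it shows that the counit components $s_L: GDHL \to L$ of the biadjunction are coherently closed for right $G$-lifts, thereby constructing a right colax adjoint $R$ to the given $G$, and only afterwards produces an invertible icon $R \cong J_DH$. You instead apply Theorem \ref{THM_Bunge} directly with $U := J_DH$, taking the 1-cells $J_D\eta_{A_0}$ as unit components and building left $U$-extensions inside $\ck_D$. Both routes pivot on the same two ingredients --- the pseudo-$D$-algebra structure on $HL$ from Proposition \ref{THM_bigadjthm_HL_is_D-algebra}, i.e.\ the reflection $h_L \dashv y_{HL}$, and a chain of hom-category adjunctions threading the Kleisli hom-equivalence, that reflection, and the biadjunction --- so your universal-property argument is essentially the transpose of the paper's bijections (A)--(D). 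What the paper's direction buys is twofold: $G$ itself is left untouched (it is the functor being right-adjointed to), and the intermediate observation that \emph{any} pair $(Dl',\lambda)$ with $l': A \to HL$ and $\lambda$ invertible is automatically a $G$-lift makes the coherence, unit and composition axioms nearly free. Your direction keeps $J_DH$ on the nose, but Theorem \ref{THM_Bunge} then manufactures a left colax adjoint $F$ with $Ff = (J_D\eta_{B_0}\circ f)^\mbbd$, which is only canonically isomorphic to $Gf$; you still owe the final transport of the colax-adjoint structure across this isomorphism (the mirror of the paper's icon $\delta: R \Rightarrow J_DH$), a step your proposal does not mention. I would also recommend first proving the analogue of the paper's ``invertibility suffices'' lemma --- that any pair $(g,\alpha)$ with $\alpha$ invertible and $g$ a biadjoint transpose of a 1-cell into $HL$ is automatically a $U$-extension --- before attacking the coherence and composition axioms; otherwise the bookkeeping you rightly anticipate will be considerably heavier than it needs to be.
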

\begin{proof}
Denote the unit, counit and the modifications of the biadjunction as follows:%Note that the transformations $s,c$ and the modifications $\sigma, \tau$ have the following form:
\begin{align*}
s &: GJ_D H \Rightarrow 1_\cl, &&\sigma: sGJ_D \circ GJ_Dc \cong 1_{GJ_D} ,\\
c &: 1_\ck \Rightarrow HGJ_D,  &&\tau : 1_{H} \cong Hs \circ cH.
\end{align*}

We will show that the components of the counit $s_L: GDHL \to L$ are coherently closed for $G$-lifts. By (the dual of) Theorem \ref{THM_Bunge}, there is a right colax adjoint to $G$. We will prove that it is isomorphic to $J_D H$.

Let us first prove the following: given a 1-cell $l: GDA \to L$ in $\cl$, \textbf{any} pair $(Dl', \lambda)$ where $l': A \to HL$ is a 1-cell and $\lambda$ is an invertible 2-cell as pictured below exhibits $Dl'$ as the right $G$-lift of $l$ along $s_L$:
% https://q.uiver.app/#q=WzAsMyxbMCwwLCJMIl0sWzIsMCwiR0RITCJdLFsyLDIsIkdEQSJdLFsyLDEsIkdEbCciLDJdLFsxLDAsInNfTCIsMl0sWzIsMCwibCJdLFsxLDUsIlxcbGFtYmRhIiwyLHsic2hvcnRlbiI6eyJ0YXJnZXQiOjIwfX1dXQ==
\[\begin{tikzcd}
	L && GDHL \\
	\\
	&& GDA
	\arrow["{GDl'}"', from=3-3, to=1-3]
	\arrow["{s_L}"', from=1-3, to=1-1]
	\arrow[""{name=0, anchor=center, inner sep=0}, "l", from=3-3, to=1-1]
	\arrow["\lambda"', shorten >=6pt, Rightarrow, from=1-3, to=0]
\end{tikzcd}\]

By Theorem \ref{THM_bigadjthm_HL_is_D-algebra}, $HL$ has the structure of a $D$-algebra. Denoting its multiplication map by $h_L$ as in Remark \ref{POZN_explicit_form_h_L}, we have the following composite adjunction with invertible counit:
% https://q.uiver.app/#q=WzAsMyxbMCwwLCJcXGNrKEEsSEwpIl0sWzIsMCwiXFxjayhBLERITCkiXSxbNCwwLCJcXGNrX0QoREEsREhMKSJdLFswLDEsIih5X3tITH0pXyoiLDJdLFswLDIsIkpfRCIsMix7ImN1cnZlIjo1fV0sWzEsMCwiKGhfTClfKiIsMix7ImN1cnZlIjo1fV0sWzEsMiwicF97REhMfUpfRCgtKSIsMl0sWzIsMSwiVV9EKC0peV9BIiwyLHsiY3VydmUiOjV9XSxbMiwwLCIoLSleXFwjIiwyLHsiY3VydmUiOjV9XSxbNSwzLCIiLDIseyJsZXZlbCI6MSwiZWRnZV9hbGlnbm1lbnQiOnsic291cmNlIjpmYWxzZSwidGFyZ2V0IjpmYWxzZX0sInN0eWxlIjp7Im5hbWUiOiJhZGp1bmN0aW9uIn19XSxbNiw3LCJcXHNpbWVxIiwxLHsic2hvcnRlbiI6eyJzb3VyY2UiOjIwLCJ0YXJnZXQiOjIwfSwic3R5bGUiOnsiYm9keSI6eyJuYW1lIjoibm9uZSJ9LCJoZWFkIjp7Im5hbWUiOiJub25lIn19fV0sWzgsMSwiOj0iLDMseyJzaG9ydGVuIjp7InNvdXJjZSI6MjB9LCJzdHlsZSI6eyJib2R5Ijp7Im5hbWUiOiJub25lIn0sImhlYWQiOnsibmFtZSI6Im5vbmUifX19XSxbMSw0LCJcXGNvbmciLDEseyJzaG9ydGVuIjp7InRhcmdldCI6MjB9LCJzdHlsZSI6eyJib2R5Ijp7Im5hbWUiOiJub25lIn0sImhlYWQiOnsibmFtZSI6Im5vbmUifX19XV0=
\begin{equation}\label{EQ_hL_yA_yHL_JD}
\begin{tikzcd}
	{\ck(A,HL)} && {\ck(A,DHL)} && {\ck_D(DA,DHL)}
	\arrow[""{name=0, anchor=center, inner sep=0}, "{(y_{HL})_*}"', from=1-1, to=1-3]
	\arrow[""{name=0p, anchor=center, inner sep=0}, phantom, from=1-1, to=1-3, start anchor=center, end anchor=center]
	\arrow[""{name=1, anchor=center, inner sep=0}, "{J_D}"', curve={height=30pt}, from=1-1, to=1-5]
	\arrow[""{name=2, anchor=center, inner sep=0}, "{(h_L)_*}"', curve={height=30pt}, from=1-3, to=1-1]
	\arrow[""{name=2p, anchor=center, inner sep=0}, phantom, from=1-3, to=1-1, start anchor=center, end anchor=center, curve={height=30pt}]
	\arrow[""{name=3, anchor=center, inner sep=0}, "{p_{DHL}J_D(-)}"', from=1-3, to=1-5]
	\arrow[""{name=4, anchor=center, inner sep=0}, "{U_D(-)y_A}"', curve={height=30pt}, from=1-5, to=1-3]
	\arrow[""{name=5, anchor=center, inner sep=0}, "{(-)^\#}"', curve={height=60pt},shift right=3, from=1-5, to=1-1]
	\arrow["\dashv"{anchor=center, rotate=-93}, draw=none, from=2p, to=0p]
	\arrow["\simeq"{description}, draw=none, from=3, to=4]
	\arrow["{:=}"{marking, allow upside down}, draw=none, from=5, to=1-3]
	\arrow["\cong"{description}, draw=none, from=1-3, to=1]
\end{tikzcd}
\end{equation}

Notice that there is an isomorphism:
\[
\beth: s_L GJ_D(-)^\# \cong s_L G(-): \ck_D(DA,DHL) \to \cl(GDA,L),
\]
with the component at $f: DA \to DHL$ being given by the 2-cell\footnote{We have not shown it in this diagram, but the 2-cell has to be pre-composed with the associators for the pseudofunctor $GD$ so that its source really equals $s_L GDf^\#$.}:
% https://q.uiver.app/#q=WzAsMTAsWzAsMCwiR0ReMkhMIl0sWzIsMCwiR0RIR0ReMkhMIl0sWzQsMCwiR0RIR0RITCJdLFs2LDAsIkdESEwiXSxbMiwyLCJHRF4ySEwiXSxbNiwyLCJMIl0sWzQsMiwiR0RITCJdLFswLDIsIkdEXjJBIl0sWzIsMywiR0RBIl0sWzAsNCwiR0RBIl0sWzAsNCwiIiwxLHsibGV2ZWwiOjIsInN0eWxlIjp7ImhlYWQiOnsibmFtZSI6Im5vbmUifX19XSxbMCwxLCJHRGNfe0RITH0iXSxbMSw0LCJzX3tHRF4ySEx9IiwxXSxbNCw2LCJHcF97REhMfSIsMV0sWzYsNSwic19MIiwyXSxbMSwyLCJHREhHcF97REhMfSJdLFsyLDYsInNfe0dESEx9IiwxXSxbMiwzLCJHREhzX0wiXSxbMyw1LCJzX0wiXSxbOCw2LCJHZiIsMl0sWzcsOCwiR3Bfe0RBfSJdLFs3LDAsIkdEZiJdLFs5LDcsIkdEeV9BIl0sWzksOCwiIiwxLHsibGV2ZWwiOjIsInN0eWxlIjp7ImhlYWQiOnsibmFtZSI6Im5vbmUifX19XSxbNCw4LCIoR3ApX2YiLDAseyJsZXZlbCI6Mn1dLFswLDMsIkdEaF9MIiwwLHsib2Zmc2V0IjotMiwiY3VydmUiOi01fV0sWzE1LDEzLCJzX3tHcF97REhMfX0iLDAseyJzaG9ydGVuIjp7InNvdXJjZSI6MjAsInRhcmdldCI6MjB9LCJlZGdlX2FsaWdubWVudCI6eyJzb3VyY2UiOmZhbHNlLCJ0YXJnZXQiOmZhbHNlfX1dLFsxNywxNCwic197c19MfSIsMCx7InNob3J0ZW4iOnsic291cmNlIjoyMCwidGFyZ2V0IjoyMH0sImVkZ2VfYWxpZ25tZW50Ijp7InNvdXJjZSI6ZmFsc2UsInRhcmdldCI6ZmFsc2V9fV0sWzExLDEwLCJcXHNpZ21hX3tESEx9IiwwLHsic2hvcnRlbiI6eyJzb3VyY2UiOjIwLCJ0YXJnZXQiOjIwfSwiZWRnZV9hbGlnbm1lbnQiOnsic291cmNlIjpmYWxzZSwidGFyZ2V0IjpmYWxzZX19XSxbMjAsMjMsIihHXFxQc2kpX0EiLDIseyJzaG9ydGVuIjp7InNvdXJjZSI6MjAsInRhcmdldCI6MjB9fV0sWzI1LDE1LCIoKikiLDEseyJsYWJlbF9wb3NpdGlvbiI6MjAsInNob3J0ZW4iOnsic291cmNlIjoyMCwidGFyZ2V0IjoyMH0sImVkZ2VfYWxpZ25tZW50Ijp7InNvdXJjZSI6ZmFsc2UsInRhcmdldCI6ZmFsc2V9LCJzdHlsZSI6eyJib2R5Ijp7Im5hbWUiOiJub25lIn0sImhlYWQiOnsibmFtZSI6Im5vbmUifX19XV0=
\begin{equation}\label{EQ_sigma_s_s_p_psi}
\begin{minipage}{0.8\textwidth}
\adjustbox{scale=0.9,center}{
\begin{tikzcd}
	{GD^2HL} && {GDHGD^2HL} && GDHGDHL && GDHL \\
	\\
	{GD^2A} && {GD^2HL} && GDHL && L \\
	&& GDA \\
	GDA
	\arrow[""{name=0, anchor=center, inner sep=0}, Rightarrow, no head, from=1-1, to=3-3]
	\arrow[""{name=0p, anchor=center, inner sep=0}, phantom, from=1-1, to=3-3, start anchor=center, end anchor=center]
	\arrow[""{name=1, anchor=center, inner sep=0}, "{GDc_{DHL}}", from=1-1, to=1-3]
	\arrow[""{name=1p, anchor=center, inner sep=0}, phantom, from=1-1, to=1-3, start anchor=center, end anchor=center]
	\arrow["{s_{GD^2HL}}"{description}, from=1-3, to=3-3]
	\arrow[""{name=2, anchor=center, inner sep=0}, "{Gp_{DHL}}"{description}, from=3-3, to=3-5]
	\arrow[""{name=2p, anchor=center, inner sep=0}, phantom, from=3-3, to=3-5, start anchor=center, end anchor=center]
	\arrow[""{name=3, anchor=center, inner sep=0}, "{s_L}"', from=3-5, to=3-7]
	\arrow[""{name=3p, anchor=center, inner sep=0}, phantom, from=3-5, to=3-7, start anchor=center, end anchor=center]
	\arrow[""{name=4, anchor=center, inner sep=0}, "{GDHGp_{DHL}}", from=1-3, to=1-5]
	\arrow[""{name=4p, anchor=center, inner sep=0}, phantom, from=1-3, to=1-5, start anchor=center, end anchor=center]
	\arrow[""{name=4p, anchor=center, inner sep=0}, phantom, from=1-3, to=1-5, start anchor=center, end anchor=center]
	\arrow["{s_{GDHL}}"{description}, from=1-5, to=3-5]
	\arrow[""{name=5, anchor=center, inner sep=0}, "{GDHs_L}", from=1-5, to=1-7]
	\arrow[""{name=5p, anchor=center, inner sep=0}, phantom, from=1-5, to=1-7, start anchor=center, end anchor=center]
	\arrow["{s_L}", from=1-7, to=3-7]
	\arrow["Gf"', from=4-3, to=3-5]
	\arrow[""{name=6, anchor=center, inner sep=0}, "{Gp_{DA}}", from=3-1, to=4-3]
	\arrow["GDf", from=3-1, to=1-1]
	\arrow["{GDy_A}", from=5-1, to=3-1]
	\arrow[""{name=7, anchor=center, inner sep=0}, Rightarrow, no head, from=5-1, to=4-3]
	\arrow["{(Gp)_f}", Rightarrow, from=3-3, to=4-3]
	\arrow[""{name=8, anchor=center, inner sep=0}, "{GDh_L}", shift left=2, curve={height=-30pt}, from=1-1, to=1-7]
	\arrow[""{name=8p, anchor=center, inner sep=0}, phantom, from=1-1, to=1-7, start anchor=center, end anchor=center, shift left=2, curve={height=-30pt}]
	\arrow["{s_{Gp_{DHL}}}", shorten <=9pt, shorten >=9pt, Rightarrow, from=4p, to=2p]
	\arrow["{s_{s_L}}", shorten <=9pt, shorten >=9pt, Rightarrow, from=5p, to=3p]
	\arrow["{\sigma_{DHL}}", shorten <=4pt, shorten >=4pt, Rightarrow, from=1p, to=0p]
	\arrow["{(G\Psi)_A}"', shorten <=4pt, shorten >=4pt, Rightarrow, from=6, to=7]
	\arrow["{}"{description, pos=0.2}, draw=none, from=8p, to=4p]
\end{tikzcd}
}
\end{minipage}
\end{equation}

We have the following chain of bijections:
\begin{align*}
\ck_D(DA,DHL)(f,Dl') &\stackrel{(A)}{\cong} \ck(A,HL)(f^\#,l') \\
&\stackrel{(B)}{\cong} \cl(GDA,L)(s_L \circ GDf^\#, s_L \circ GDl')\\
&\stackrel{(C)}{\cong} \cl(GDA,L)(s_L \circ Gf, s_L \circ GDl')\\
&\stackrel{(D)}{\cong} \cl(GDA,L)(s_L \circ Gf, l)\\
\end{align*}
The bijection \textbf{(A)} follows from the adjunction \eqref{EQ_hL_yA_yHL_JD} above. \textbf{(B)} is given by the action on morphisms of the following functor:
\begin{equation}\label{EQ_sLcircGD}
s_L \circ GJ_D(-): \ck(A,HL) \to \cl(GDA,L).
\end{equation}
This functor is (by assumption) an equivalence -- in particular it is fully faithful. \textbf{(C)} is given by the pre-composition with $\beth^{-1}_f$ and \textbf{(D)} is given by the post-composition with $\lambda$. To conclude that $(Dl', \lambda)$ is a $G$-lift, it has to be shown that the composite bijection is given by the assignment $\alpha \mapsto \lambda \circ s_LG\alpha$. Equivalently, the composite of the first three bijections is the assignment $\alpha \mapsto s_LG\alpha$. We prove this fact in the appendix as Lemma \ref{THM_lemma_big_l_adj_thm}.

%339
\medskip

The pair $(Dl', \lambda)$ is thus a $G$-lift. Because the functor \eqref{EQ_sLcircGD} is essentially surjective, such a $G$-lift is guaranteed to always exist. Make now a choice of a lift for every $l: GDA \to L$ and denote it by $(Dl^\mbbl, \mbbl)$. To prove that our choice is coherently closed for $G$-lifts, the following 2-cell needs to be shown to be a $G$-lift of $l: GDA \to L$ along $s_L$:
% https://q.uiver.app/#q=WzAsNSxbMCwwLCJMIl0sWzIsMCwiR0RITCJdLFswLDIsIkdEQSJdLFsyLDIsIkdESEdEQSJdLFsyLDQsIkdEQSJdLFszLDEsIkdEKGwgc197R0RBfSleXFxtYmJsIiwxXSxbMSwwLCJzX0wiLDJdLFsyLDAsImwiXSxbMywyLCJzX3tHREF9IiwyXSxbNCwzLCJHRDFfQV5cXG1iYmwiLDFdLFs0LDIsIiIsMix7ImxldmVsIjoyLCJzdHlsZSI6eyJoZWFkIjp7Im5hbWUiOiJub25lIn19fV0sWzQsMSwiRyhEKGwgc197R0RBfSleXFxtYmJsIFxcY2lyYyBEMV9BXlxcbWJibCkiLDIseyJvZmZzZXQiOjUsImN1cnZlIjo1fV0sWzYsOCwiXFxtYmJsIiwyLHsic2hvcnRlbiI6eyJzb3VyY2UiOjIwLCJ0YXJnZXQiOjIwfSwiZWRnZV9hbGlnbm1lbnQiOnsic291cmNlIjpmYWxzZSwidGFyZ2V0IjpmYWxzZX19XSxbOCwxMCwiXFxtYmJsIiwwLHsic2hvcnRlbiI6eyJzb3VyY2UiOjIwLCJ0YXJnZXQiOjIwfSwiZWRnZV9hbGlnbm1lbnQiOnsic291cmNlIjpmYWxzZSwidGFyZ2V0IjpmYWxzZX19XSxbMywxMSwiXFxnYW1tYSIsMCx7ImxhYmVsX3Bvc2l0aW9uIjozMCwic2hvcnRlbiI6eyJ0YXJnZXQiOjIwfX1dXQ==
\[\begin{tikzcd}
	L && GDHL \\
	\\
	GDA && GDHGDA \\
	\\
	&& GDA
	\arrow["{GD(l s_{GDA})^\mbbl}"{description}, from=3-3, to=1-3]
	\arrow[""{name=0, anchor=center, inner sep=0}, "{s_L}"', from=1-3, to=1-1]
	\arrow[""{name=0p, anchor=center, inner sep=0}, phantom, from=1-3, to=1-1, start anchor=center, end anchor=center]
	\arrow["l", from=3-1, to=1-1]
	\arrow[""{name=1, anchor=center, inner sep=0}, "{s_{GDA}}"', from=3-3, to=3-1]
	\arrow[""{name=1p, anchor=center, inner sep=0}, phantom, from=3-3, to=3-1, start anchor=center, end anchor=center]
	\arrow[""{name=1p, anchor=center, inner sep=0}, phantom, from=3-3, to=3-1, start anchor=center, end anchor=center]
	\arrow["{GD1_A^\mbbl}"{description}, from=5-3, to=3-3]
	\arrow[""{name=2, anchor=center, inner sep=0}, Rightarrow, no head, from=5-3, to=3-1]
	\arrow[""{name=2p, anchor=center, inner sep=0}, phantom, from=5-3, to=3-1, start anchor=center, end anchor=center]
	\arrow[""{name=3, anchor=center, inner sep=0}, "{G(D(l s_{GDA})^\mbbl \circ D1_A^\mbbl)}"', shift right=8, curve={height=30pt}, from=5-3, to=1-3]
	\arrow["\mbbl"', shorten <=9pt, shorten >=9pt, Rightarrow, from=0p, to=1p]
	\arrow["\mbbl", shorten <=4pt, shorten >=4pt, Rightarrow, from=1p, to=2p]
	\arrow["\gamma"{pos=0.3}, shorten >=4pt, Rightarrow, from=3-3, to=3]
\end{tikzcd}\]
But this follows from the what we have shown at the beginning since this composite 2-cell is invertible and the 1-cell component of the proposed $G$-lift is (isomorphic to) $Dh$ for a 1-cell $h$ in $\ck$. For the same reasons, the unit and composition axioms in the assumptions of Theorem \ref{THM_Bunge} are satisfied.

We thus have a right colax adjoint to $G: \ck_D \to \cl$, let us denote it by $R: \cl \to \ck_D$. Since the pseudonaturality square of the counit $s$ is a $G$-lift (this again follows from what we have proven at the beginning of the proof), for any 1-cell $l: L \to K$ there exists a unique invertible 2-cell $\delta_l: Rl \Rightarrow J_D Hl$ making the following diagrams equal:
% https://q.uiver.app/#q=WzAsOSxbMCwwLCJMIl0sWzIsMCwiR0RITCJdLFswLDIsIksiXSxbMiwyLCJHREhLIl0sWzQsMSwiPSJdLFs1LDAsIkwiXSxbNSwyLCJLIl0sWzcsMiwiR0RISyJdLFs3LDAsIkdESEwiXSxbMywxLCJHREhsIl0sWzEsMCwic19MIiwyXSxbMiwwLCJsIl0sWzMsMiwic197TH0iXSxbMSwzLCJHUmwiLDAseyJjdXJ2ZSI6LTV9XSxbNiw1LCJsIl0sWzcsNiwic19LIl0sWzgsNSwic19MIiwyXSxbNyw4LCJHUmwiLDJdLFsxMCwxMiwic19sIiwwLHsic2hvcnRlbiI6eyJzb3VyY2UiOjIwLCJ0YXJnZXQiOjIwfSwiZWRnZV9hbGlnbm1lbnQiOnsic291cmNlIjpmYWxzZSwidGFyZ2V0IjpmYWxzZX19XSxbMTMsOSwiR1xcZGVsdGFfbCIsMix7InNob3J0ZW4iOnsic291cmNlIjoyMCwidGFyZ2V0IjoyMH19XSxbMTYsMTUsIlxcbWJibCIsMCx7InNob3J0ZW4iOnsic291cmNlIjoyMCwidGFyZ2V0IjoyMH19XV0=
\[\begin{tikzcd}
	L && GDHL &&& L && GDHL \\
	&&&& {=} \\
	K && GDHK &&& K && GDHK
	\arrow[""{name=0, anchor=center, inner sep=0}, "GDHl", from=3-3, to=1-3]
	\arrow[""{name=1, anchor=center, inner sep=0}, "{s_L}"', from=1-3, to=1-1]
	\arrow[""{name=1p, anchor=center, inner sep=0}, phantom, from=1-3, to=1-1, start anchor=center, end anchor=center]
	\arrow["l", from=3-1, to=1-1]
	\arrow[""{name=2, anchor=center, inner sep=0}, "{s_{L}}", from=3-3, to=3-1]
	\arrow[""{name=2p, anchor=center, inner sep=0}, phantom, from=3-3, to=3-1, start anchor=center, end anchor=center]
	\arrow[""{name=3, anchor=center, inner sep=0}, "GRl", curve={height=-30pt}, from=1-3, to=3-3]
	\arrow["l", from=3-6, to=1-6]
	\arrow[""{name=4, anchor=center, inner sep=0}, "{s_K}", from=3-8, to=3-6]
	\arrow[""{name=5, anchor=center, inner sep=0}, "{s_L}"', from=1-8, to=1-6]
	\arrow["GRl"', from=3-8, to=1-8]
	\arrow["{s_l}", shorten <=9pt, shorten >=9pt, Rightarrow, from=1p, to=2p]
	\arrow["{G\delta_l}"', shorten <=6pt, shorten >=6pt, Rightarrow, from=3, to=0]
	\arrow["\mbbl", shorten <=9pt, shorten >=9pt, Rightarrow, from=5, to=4]
\end{tikzcd}\]
It is now routine to verify that this data gives an invertible icon $\delta: R \Rightarrow J_D H$ (which is an isomorphism in $\psd{\cl}{\ck_D}$), proving that the pseudofunctor $J_D H$ is right colax adjoint to $G: \ck_D \to \cl$ as well.
\end{proof}

\medskip

\noindent Our first application will be the following:
\begin{cor}
Given a left Kan pseudomonad $(D,y)$ on a 2-category $\ck$, the biadjunction between the base 2-category and the Kleisli 2-category induces a colax adjunction on the Kleisli 2-category:
% https://q.uiver.app/#q=WzAsNSxbMCwwLCJcXGNrIl0sWzIsMCwiXFxja19EIl0sWzMsMCwiXFxyaWdodHNxdWlnYXJyb3ciXSxbNCwwLCJcXGNrX0QiXSxbNiwwLCJcXGNrX0QiXSxbMCwxLCJKX0QiLDIseyJjdXJ2ZSI6NH1dLFsxLDAsIkZfRCIsMix7ImN1cnZlIjo0fV0sWzMsNCwiIiwyLHsiY3VydmUiOjQsImxldmVsIjoyLCJzdHlsZSI6eyJoZWFkIjp7Im5hbWUiOiJub25lIn19fV0sWzQsMywiSl9ERl9EIiwyLHsiY3VydmUiOjR9XSxbNSw2LCIiLDIseyJsZXZlbCI6MSwic3R5bGUiOnsibmFtZSI6ImFkanVuY3Rpb24ifX1dLFs3LDgsIlxcdHRvcCIsMSx7ImxldmVsIjoxLCJzdHlsZSI6eyJib2R5Ijp7Im5hbWUiOiJub25lIn0sImhlYWQiOnsibmFtZSI6Im5vbmUifX19XV0=
\[\begin{tikzcd}
	\ck && {\ck_D} & \rightsquigarrow & {\ck_D} && {\ck_D}
	\arrow[""{name=0, anchor=center, inner sep=0}, "{J_D}"', curve={height=24pt}, from=1-1, to=1-3]
	\arrow[""{name=1, anchor=center, inner sep=0}, "{F_D}"', curve={height=24pt}, from=1-3, to=1-1]
	\arrow[""{name=2, anchor=center, inner sep=0}, curve={height=24pt}, Rightarrow, no head, from=1-5, to=1-7]
	\arrow[""{name=3, anchor=center, inner sep=0}, "{J_DF_D}"', curve={height=24pt}, from=1-7, to=1-5]
	\arrow["\dashv"{anchor=center, rotate=90}, draw=none, from=0, to=1]
	\arrow["\ttop"{description}, draw=none, from=2, to=3]
\end{tikzcd}\]
\end{cor}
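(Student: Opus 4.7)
The corollary should follow directly as a specialization of Theorem \ref{THM_BIG_lax_adj_thm}, applied to the ``trivial'' situation in which the Kleisli 2-category is mapped into itself by the identity pseudofunctor.

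First I would recall that Proposition \ref{THM_biadjunkce_ck_ck_D} furnishes a biadjunction $J_D \dashv F_D$ between $\ck$ and $\ck_D$, whose left biadjoint is $J_D$ itself. This left biadjoint factorizes through $\ck_D$ in the most tautological way, as $J_D = 1_{\ck_D} \circ J_D$, so the hypothesis of Theorem \ref{THM_BIG_lax_adj_thm} is verified by taking $\cl := \ck_D$, $G := 1_{\ck_D}$ and $H := F_D$. Instantiating the conclusion of that theorem then yields a colax adjunction
\[
1_{\ck_D} \ddashv J_D F_D \colon \ck_D \to \ck_D,
\]
in which the identity pseudofunctor plays the role of the left colax adjoint. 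The unit, counit and both modifications are read off from the construction in the proof of Theorem \ref{THM_BIG_lax_adj_thm}: the pseudo-$D$-algebra structure on $H L = F_D(DA)$ coming from Proposition \ref{THM_bigadjthm_HL_is_D-algebra} reduces here to the canonical lax-idempotent structure of Proposition \ref{THM_prop_kz_adjunkce}, and the $G$-lifts used to build the right colax adjoint are essentially tautological because $G$ is the identity.

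Since the argument is purely a matter of specialization, there is no substantial obstacle. The only step that needs to be checked is that the trivial factorization of $J_D$ through itself is a legitimate instance of the hypothesis of Theorem \ref{THM_BIG_lax_adj_thm}, and that the resulting right colax adjoint $J_D H$ indeed reads as $J_D F_D$ in this case --- both of which are immediate.
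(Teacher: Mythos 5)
Your proposal is correct and matches the paper's intent exactly: the corollary is stated without proof as an immediate specialization of Theorem \ref{THM_BIG_lax_adj_thm}, obtained by taking $\cl = \ck_D$, $G = 1_{\ck_D}$, $H = F_D$ and the tautological factorization $J_D = 1_{\ck_D} \circ J_D$. Your identification of the induced algebra structure on $F_D(DA) = DA$ with the free-algebra structure of Proposition \ref{THM_prop_kz_adjunkce} is also accurate.
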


\noindent The following is a categorification of the fact that for an idempotent monad, the Kleisli and EM-categories are equivalent:

\begin{cor}\label{THM_lax_adjunkce_EM_Kleisli}
Given a left Kan pseudomonad $(D, y)$, the associated free-forgetful biadjunction induces a colax adjunction between the Kleisli 2-category and the 2-category of algebras:
% https://q.uiver.app/#q=WzAsNSxbMCwwLCJcXGNrIl0sWzIsMCwiXFx0ZXh0e1BzLX1EXFx0ZXh0ey1BbGd9Il0sWzMsMCwiXFxyaWdodHNxdWlnYXJyb3ciXSxbNCwwLCJcXGNrX0QiXSxbNiwwLCJcXHRleHR7UHMtfURcXHRleHR7LUFsZ30iXSxbMCwxLCJGXkQiLDIseyJjdXJ2ZSI6NH1dLFsxLDAsIlVeRCIsMix7ImN1cnZlIjo0fV0sWzMsNCwiIiwyLHsiY3VydmUiOjQsInN0eWxlIjp7InRhaWwiOnsibmFtZSI6Imhvb2siLCJzaWRlIjoidG9wIn19fV0sWzQsMywiSl9EXFxjaXJjIFVeRCIsMix7ImN1cnZlIjo0fV0sWzUsNiwiIiwyLHsibGV2ZWwiOjEsInN0eWxlIjp7Im5hbWUiOiJhZGp1bmN0aW9uIn19XSxbNyw4LCJcXHR0b3AiLDEseyJsZXZlbCI6MSwic3R5bGUiOnsiYm9keSI6eyJuYW1lIjoibm9uZSJ9LCJoZWFkIjp7Im5hbWUiOiJub25lIn19fV1d
\[\begin{tikzcd}
	\ck && {\text{Ps-}D\text{-Alg}} & \rightsquigarrow & {\ck_D} && {\text{Ps-}D\text{-Alg}}
	\arrow[""{name=0, anchor=center, inner sep=0}, "{F^D}"', curve={height=24pt}, from=1-1, to=1-3]
	\arrow[""{name=1, anchor=center, inner sep=0}, "{U^D}"', curve={height=24pt}, from=1-3, to=1-1]
	\arrow[""{name=2, anchor=center, inner sep=0}, curve={height=24pt}, hook, from=1-5, to=1-7]
	\arrow[""{name=3, anchor=center, inner sep=0}, "{J_D\circ U^D}"', curve={height=24pt}, from=1-7, to=1-5]
	\arrow["\dashv"{anchor=center, rotate=90}, draw=none, from=0, to=1]
	\arrow["\ttop"{description}, draw=none, from=2, to=3]
\end{tikzcd}\]
\end{cor}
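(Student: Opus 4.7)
The plan is to apply Theorem \ref{THM_BIG_lax_adj_thm} directly to the Eilenberg--Moore biadjunction $F^D \dashv U^D : \text{Ps-}D\text{-Alg} \to \ck$. The only thing to check is the factorization hypothesis: that the free pseudofunctor $F^D$ factors through the Kleisli 2-category $\ck_D$. This is immediate from the definition of $\ck_D$ as the full sub-2-category of $\text{Ps-}D\text{-Alg}$ spanned by the free pseudo-$D$-algebras (i.e.\ those of the form $DA$ with extension operation $(-)^\mbbd$). Since $F^D$ sends $A \in \ck$ to the free algebra on $A$, and sends $f: A \to B$ to the $D$-pseudomorphism $(y_B f)^\mbbd = J_D f$, there is a canonical factorization
\[
F^D = \iota \circ J_D : \ck \xrightarrow{J_D} \ck_D \xhookrightarrow{\iota} \text{Ps-}D\text{-Alg},
\]
where $\iota$ denotes the full sub-2-category inclusion.

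Having verified the hypothesis, I apply Theorem \ref{THM_BIG_lax_adj_thm} with $G := \iota$, $H := U^D$, and $\cl := \text{Ps-}D\text{-Alg}$. The theorem produces a colax adjunction $\iota \ddashv J_D \circ U^D$ between $\ck_D$ and $\text{Ps-}D\text{-Alg}$, which is exactly the statement of the corollary.

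There is essentially no obstacle here: the content of the corollary is packaged entirely inside Theorem \ref{THM_BIG_lax_adj_thm}, and the role of this result is to spotlight the particularly natural case where $\cl$ is itself the 2-category of pseudo-algebras. One minor point worth noting (though not strictly needed for the proof) is that by Proposition \ref{THM_bigadjthm_HL_is_D-algebra}, every object $HL = U^D L$ automatically carries a pseudo-$D$-algebra structure, which in this case is simply its given structure as an object of $\text{Ps-}D\text{-Alg}$; this is consistent with the fact that on free algebras the colax adjunction must collapse to the identity, matching the classical statement that for an idempotent monad the Kleisli and Eilenberg--Moore categories agree.
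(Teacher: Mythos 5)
Your proposal is correct and is exactly the intended argument: the paper states this corollary without proof as an immediate application of Theorem \ref{THM_BIG_lax_adj_thm}, and the only hypothesis to verify is the factorization $F^D = \iota \circ J_D$ through the Kleisli 2-category, which holds by definition of $\ck_D$ as the full sub-2-category of $\text{Ps-}D\text{-Alg}$ on free algebras. Your closing remarks about Proposition \ref{THM_bigadjthm_HL_is_D-algebra} and the idempotent-monad analogy are consistent with the paper's framing of this result as a categorification of the equivalence of Kleisli and Eilenberg--Moore categories for an idempotent monad.
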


\noindent The following is a change-of-base-style theorem:
\begin{cor}\label{THM_change_of_base_kleisli}
Let $D$ be a lax-idempotent pseudomonad on a 2-category $\ck$ and $T$ be a pseudomonad on a 2-category $\cl$. Assume that:

\begin{itemize}
\item there is a biadjunction between the base 2-categories:
% https://q.uiver.app/#q=WzAsMixbMCwwLCJcXGNrIl0sWzIsMCwiXFxjbCJdLFswLDEsIkwiLDIseyJjdXJ2ZSI6NH1dLFsxLDAsIlIiLDIseyJjdXJ2ZSI6NH1dLFsyLDMsIiIsMCx7ImxldmVsIjoxLCJzdHlsZSI6eyJuYW1lIjoiYWRqdW5jdGlvbiJ9fV1d
\[\begin{tikzcd}
	\ck && \cl
	\arrow[""{name=0, anchor=center, inner sep=0}, "L"', curve={height=24pt}, from=1-1, to=1-3]
	\arrow[""{name=1, anchor=center, inner sep=0}, "R"', curve={height=24pt}, from=1-3, to=1-1]
	\arrow["\dashv"{anchor=center, rotate=90}, draw=none, from=0, to=1]
\end{tikzcd}\]
\item the left biadjoint admits an extension to the Kleisli 2-categories:
% https://q.uiver.app/#q=WzAsNCxbMCwwLCJcXGNrIl0sWzIsMCwiXFxjayJdLFswLDEsIlxcY2tfRCJdLFsyLDEsIlxcY2xfVCJdLFsyLDMsIkxee1xcI30iLDJdLFswLDEsIkwiXSxbMCwyLCJKX0QiLDJdLFsxLDMsIkpfVCJdXQ==
\[\begin{tikzcd}
	\ck && \ck \\
	{\ck_D} && {\cl_T}
	\arrow["{L^{\#}}"', from=2-1, to=2-3]
	\arrow["L", from=1-1, to=1-3]
	\arrow["{J_D}"', from=1-1, to=2-1]
	\arrow["{J_T}", from=1-3, to=2-3]
\end{tikzcd}\]
\end{itemize}
Then there is an induced colax adjunction between the Kleisli 2-categories:
% https://q.uiver.app/#q=WzAsMixbMCwwLCJcXGNrX0QiXSxbMiwwLCJcXGNsX1QiXSxbMCwxLCJMXlxcIyIsMix7ImN1cnZlIjo0fV0sWzEsMCwiIiwyLHsiY3VydmUiOjR9XSxbMiwzLCJcXHR0b3AiLDEseyJsZXZlbCI6MSwic3R5bGUiOnsiYm9keSI6eyJuYW1lIjoibm9uZSJ9LCJoZWFkIjp7Im5hbWUiOiJub25lIn19fV1d
\[\begin{tikzcd}
	{\ck_D} && {\cl_T}
	\arrow[""{name=0, anchor=center, inner sep=0}, "{L^\#}"', curve={height=24pt}, from=1-1, to=1-3]
	\arrow[""{name=1, anchor=center, inner sep=0}, curve={height=24pt}, from=1-3, to=1-1]
	\arrow["\ttop"{description}, draw=none, from=0, to=1]
\end{tikzcd}\]
\end{cor}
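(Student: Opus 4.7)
The plan is to apply the main colax adjunction theorem, Theorem \ref{THM_BIG_lax_adj_thm}, to an appropriate composite biadjunction between $\ck$ and $\cl_T$. First, I would combine the given biadjunction $L \dashv R : \cl \to \ck$ with the free-forgetful Kleisli biadjunction $J_T \dashv U_T : \cl_T \to \cl$ for the pseudomonad $T$ (a standard Kleisli construction, analogous to Proposition \ref{THM_biadjunkce_ck_ck_D}) to form the composite biadjunction
\[
J_T \circ L \;\dashv\; R \circ U_T : \cl_T \to \ck.
\]

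Next, the hypothesis that $L^{\#} : \ck_D \to \cl_T$ is an extension of $L$ across the Kleisli 2-categories says precisely that $L^{\#} \circ J_D \simeq J_T \circ L$. Consequently, the left biadjoint $J_T L$ of the composite biadjunction above factorizes (up to pseudonatural isomorphism) through the Kleisli 2-category $\ck_D$ as $L^{\#} \circ J_D$.

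With this setup, Theorem \ref{THM_BIG_lax_adj_thm}, applied with its $\cl$ replaced here by $\cl_T$, with $G := L^{\#}$ and $H := R \circ U_T$, produces a colax adjunction $L^{\#} \ttop J_D \circ R \circ U_T : \ck_D \to \cl_T$, which is exactly the claimed colax adjunction; as a byproduct, the right colax adjoint is identified concretely as $J_D R U_T$.

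I do not anticipate any serious obstacle: once Theorem \ref{THM_BIG_lax_adj_thm} is available, this corollary amounts to composing biadjunctions and observing that the extension condition is literally the factorization hypothesis required by that theorem. The only minor point to check is that the free-forgetful Kleisli biadjunction $J_T \dashv U_T$ exists for the general pseudomonad $T$, which is classical and needs no lax-idempotency assumption.
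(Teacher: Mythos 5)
Your proposal is correct and is essentially identical to the paper's own proof: the paper likewise composes the $L \dashv R$ biadjunction with the Kleisli biadjunction for $T$ to obtain a biadjunction $\ck \to \cl_T$ whose left biadjoint factors through $\ck_D$ as $L^{\#} \circ J_D$, and then applies Theorem \ref{THM_BIG_lax_adj_thm}. The identification of the right colax adjoint as $J_D \circ R \circ U_T$ is the same conclusion that theorem delivers.
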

\begin{proof}
Composing the Kleisli biadjunction with the $L \dashv R$ biadjunction we obtain the following the following biadjunction on which we can apply the theorem:
% https://q.uiver.app/#q=WzAsNCxbMCwwLCJcXGNrIl0sWzIsMCwiXFxjbCJdLFs0LDAsIlxcY2xfVCJdLFsyLDIsIlxcY2tfRCJdLFswLDEsIkwiLDIseyJjdXJ2ZSI6NH1dLFsxLDAsIlIiLDIseyJjdXJ2ZSI6NH1dLFsxLDIsIlQiLDIseyJjdXJ2ZSI6NH1dLFsyLDEsIkZfVCIsMix7ImN1cnZlIjo0fV0sWzAsMywiSl9EIiwyLHsiY3VydmUiOjN9XSxbMywyLCJMXlxcIyIsMix7ImN1cnZlIjozfV0sWzYsNywiIiwyLHsibGV2ZWwiOjEsInN0eWxlIjp7Im5hbWUiOiJhZGp1bmN0aW9uIn19XSxbNCw1LCIiLDAseyJsZXZlbCI6MSwic3R5bGUiOnsibmFtZSI6ImFkanVuY3Rpb24ifX1dXQ==
\[\begin{tikzcd}
	\ck && \cl && {\cl_T} \\
	\\
	&& {\ck_D}
	\arrow[""{name=0, anchor=center, inner sep=0}, "L"', curve={height=24pt}, from=1-1, to=1-3]
	\arrow[""{name=1, anchor=center, inner sep=0}, "R"', curve={height=24pt}, from=1-3, to=1-1]
	\arrow[""{name=2, anchor=center, inner sep=0}, "T"', curve={height=24pt}, from=1-3, to=1-5]
	\arrow[""{name=3, anchor=center, inner sep=0}, "{F_T}"', curve={height=24pt}, from=1-5, to=1-3]
	\arrow["{J_D}"', curve={height=18pt}, from=1-1, to=3-3]
	\arrow["{L^\#}"', curve={height=18pt}, from=3-3, to=1-5]
	\arrow["\dashv"{anchor=center, rotate=90}, draw=none, from=2, to=3]
	\arrow["\dashv"{anchor=center, rotate=90}, draw=none, from=0, to=1]
\end{tikzcd}\]
\end{proof}

\begin{pr}
Consider a 2-category $\ck$ with comma objects and pullbacks and take for $D$ the fibration 2-monad on $\ck/C$ and $T$ the fibration 2-monad on $\ck/D$. For any 1-cell $k: C \to D$ gives a 2-functor $k_*: \ck/C \to \ck/D$ with a right 2-adjoint $k^*$ given by pulling back. The 2-functor $k_*$ clearly extends to the colax slice 2-categories, hence giving rise to a \textbf{lax} adjunction between the colax slices:
% https://q.uiver.app/#q=WzAsMixbMCwwLCJcXGNrLy9DIl0sWzIsMCwiXFxjay8vRCJdLFswLDEsImtfKiIsMix7ImN1cnZlIjo0fV0sWzEsMCwiRCBcXGNpcmMgRl9EIFxcY2lyYyBrXioiLDIseyJjdXJ2ZSI6NH1dLFsyLDMsIlxcdHRvcCIsMSx7ImxldmVsIjoxLCJzdHlsZSI6eyJib2R5Ijp7Im5hbWUiOiJub25lIn0sImhlYWQiOnsibmFtZSI6Im5vbmUifX19XV0=
\[\begin{tikzcd}
	{\ck//C} && {\ck//D}
	\arrow[""{name=0, anchor=center, inner sep=0}, "{k_*}"', curve={height=24pt}, from=1-1, to=1-3]
	\arrow[""{name=1, anchor=center, inner sep=0}, "{D \circ F_D \circ k^*}"', curve={height=24pt}, from=1-3, to=1-1]
	\arrow["\ttop"{description}, draw=none, from=0, to=1]
\end{tikzcd}\]
\end{pr}

\begin{pr}
In the next section (Corollary \ref{THM_COR_change_of_base_SalgTalg})  we will see how, when given a morphism of 2-monads $\theta: S \to T$, this gives rise to a colax adjunction between $\talgl$ and $S\text{-Alg}_l$.
\end{pr}
%316

\begin{pozn}[Left Kan 2-monads]\label{POZN_leftkan2monads_big_l_adj}

Assume that $(D,y)$ is a left Kan 2-monad and that we have the same starting biadjunction as in Theorem \ref{THM_BIG_lax_adj_thm}, except now the modifications $\sigma, \tau$ are the identities and the counit $s$ is 2-natural. Going through the proof, note that $s_L \circ GJ_D(-): \ck(A,HL) \to \cl(GDA,L)$ is an isomorphism of categories: for each $l: GDA \to L$ there is a \textbf{unique} $l^\mbbl: A \to HL$ such that $s_L \circ GDl^\mbbl = l$. Because $J_D: \ck \to \ck_D$ is now a 2-functor and because of the uniqueness of each $l^\mbbl$, the collection $s_L: GDHL \to L$ is  strictly closed for $G$-lifts (dual of Definition \ref{DEFI_strictly_closed_for_Uexts}). By Remark \ref{POZN_strict_BungesTHM} we obtain a colax adjunction for which the modifications are the identities and the counit $s$ is 
2-natural.
\end{pozn}

\subsection{Coreflector-limits}\label{SUBS_2_coreflector_limits}

\begin{defi}\label{DEFI_rali_colimit}
Let $\ck$ be a 2-category and $F: \cj \to \ck$, $W: \cj \to \cat$ 2-functors. A \textit{coreflector-limit} of $F$ weighted by $W$  is given by an object $L \in \ck$ and a 2-natural transformation $\lambda: W \Rightarrow \ck(L,F-)$ with the property that for every $A \in \ck$, the \textit{canonical comparison} functor
\begin{align*}
\kappa_A : \ck(A,L) &\to [\cj, \cat](W, \ck(A, F?)),\\
\kappa_A : (\theta: A \to L) &\mapsto (\ck(\theta,F?) \circ \lambda),
\end{align*}
is a coreflector in $\cat$. \textit{Coreflector-colimits} in $\ck$ are defined as coreflector-limits in $\ck^{op}$. Analogously, we say that $\lambda$ is an \textit{\textbf{X}-limit} if $\kappa_A$ is in class \textbf{X} of functors for every $A$.
%309
\end{defi}

\begin{pozn}\label{poznnaturalitalcolims}
Because the maps $\kappa_A: \ck(A,L) \to [\cj, \cat](W, \ck(A,F?))$ together form a 2-natural transformation $\kappa: \ck(-,L) \Rightarrow [\cj, \cat](W,\ck(-,F?))$, by \cite[Theorem 1]{twoconstr} the above definition is equivalent to requiring that $\kappa$ is a coreflector in the 2-category $\text{Colax}[\ck, \cat]$ (of 2-functors $\ck \to \cat$, colax natural transformations and modifications).
%291
\end{pozn}

\begin{pozn}[Enriched weakness]
The notion of a coreflector-colimit is a special case of an \textit{enriched weak colimit} in the sense of \cite[Section 4]{enrichedweakness}. The enriching category $\cv$ is equal to $\cat$ with the class $\ce$ being functors that are coreflectors. In \cite{enrichedweakness}, the authors studied coreflector-colimits for which $\kappa$ is actually a \textit{retract equivalence} -- meaning that the unit of the adjunction is the identity and the counit is invertible.
\end{pozn}

\begin{pozn}
Conical (left-adjoint)-limits of 2-functors have first been introduced \cite[I,7.9.1]{formalcat} under the name \textit{quasi-limits}\footnote{In fact, the definition in \cite{formalcat} is stronger than ours because it requires the existence of a 2-functor picking the limits that is right lax adjoint to the constant embedding 2-functor $\ck \to \ck^\cj$.}.
\end{pozn}

\begin{pozn}[Ordinary weakness]\label{POZN_ordinary_weakness}
Notice that every rali- and lali-limit cone $\lambda$ is a \textit{weak limit} of $F$ weighted by $W$. What this means is that given a different cone $\mu: W \Rightarrow \ck(A,F-)$, the left adjoint $L_A$ to $\kappa_A$ gives a comparison map $L_A \mu: A \to L$ such that:
\[
\mu = \ck(L\mu,F-) \circ \lambda.
\]
This is like the definition of a 2-limit except that there is no uniqueness requirement. It is not the case that every weak limit is a rali-limit. This is because if the 2-category $\ck$ is locally discrete, the notion of rali-limit coincides with an ordinary limit and not a weak limit.
\end{pozn}

%\medskip

\begin{pr}
An object $I$ in a 2-category $\ck$ is lali-initial if the unique functor into the terminal category admits a left adjoint for every object $A \in \ck$:
% https://q.uiver.app/#q=WzAsMixbMCwwLCJcXGNrKEksQSkiXSxbMiwwLCIqIl0sWzAsMSwiISIsMix7ImN1cnZlIjo0fV0sWzEsMCwiIiwyLHsiY3VydmUiOjR9XSxbMywyLCIiLDAseyJsZXZlbCI6MSwic3R5bGUiOnsibmFtZSI6ImFkanVuY3Rpb24ifX1dXQ==
\[\begin{tikzcd}
	{\ck(I,A)} && {*}
	\arrow[""{name=0, anchor=center, inner sep=0}, "{!}"', curve={height=24pt}, from=1-1, to=1-3]
	\arrow[""{name=1, anchor=center, inner sep=0}, curve={height=24pt}, from=1-3, to=1-1]
	\arrow["\dashv"{anchor=center, rotate=-90}, draw=none, from=1, to=0]
\end{tikzcd}\]
Clearly, this happens if and only if the hom-category $\ck(I,A)$ has an initial object for every $A \in \ck$. For a particular example, consider the 2-category $\text{MonCat}_l$ of monoidal categories and lax monoidal functors. The terminal monoidal category $*$ is lali-initial because for every monoidal category $\mbba$ we have an isomorphism between $\text{MonCat}_l(*,\mbba)$ and the category $\text{Mon}(\mbba)$ of monoids in $\mbba$, and this category has an initial object given by the monoidal unit of $\mbba$.
\end{pr}

\begin{pr}
Consider a 2-category $\ck$ with a zero object $0 \in \ck$ and with a further property that the zero morphism $0_{A,B}: A \to B$ is the initial object in $\ck(A,B)$ for every pair of objects $A,B$. Then $0$ is a conical lali-colimit of any 2-functor $F: \cj \to \ck$. This is because in the definition of a lali-colimit:
% https://q.uiver.app/#q=WzAsMixbMCwwLCJcXGNrKDAsQSkiXSxbMiwwLCJcXHRleHR7Q29jb25lfShBLEYpIl0sWzAsMSwiIiwwLHsiY3VydmUiOjR9XSxbMSwwLCIiLDAseyJjdXJ2ZSI6NH1dLFsyLDMsIiIsMCx7ImxldmVsIjoxLCJzdHlsZSI6eyJuYW1lIjoiYWRqdW5jdGlvbiJ9fV1d
\[\begin{tikzcd}
	{\ck(0,A)} && {\text{Cocone}(A,F)}
	\arrow[""{name=0, anchor=center, inner sep=0}, curve={height=24pt}, from=1-1, to=1-3]
	\arrow[""{name=1, anchor=center, inner sep=0}, curve={height=24pt}, from=1-3, to=1-1]
	\arrow["\dashv"{anchor=center, rotate=90}, draw=none, from=0, to=1]
\end{tikzcd}\]
we have $\ck(0,A) \cong *$, and so the question becomes whether the category of cocones of $F$ with apex $A$ has an initial object. But it does and it is given by the cocone whose components are the zero morphisms.
This for instance applies to the poset-enriched categories $\rel$ of sets and relations and $\parf$ of sets and partial functions.
\end{pr}

\begin{pr}\label{PR_colax_slice_2cat_weaklimit_1}
Let $\ck$ be a 2-category with pullbacks and comma objects and consider the slice 2-category $\ck / C$ and the colax slice 2-category $\ck // C$ from Example \ref{PR_colax_slice_2cat}. It is known that the product of two objects $f_1, f_2$ in $\ck / C$ is the diagonal in the pullback square of $f_1, f_2$ in $\ck$:
% https://q.uiver.app/#q=WzAsNCxbMSwyLCJDIl0sWzAsMSwiQV8xIl0sWzIsMSwiQV8yIl0sWzEsMCwiTCJdLFsxLDAsImZfMSIsMl0sWzIsMCwiZl8yIl0sWzMsMV0sWzMsMl0sWzMsMCwiIiwxLHsic3R5bGUiOnsibmFtZSI6ImNvcm5lciJ9fV1d
\[\begin{tikzcd}
	& L \\
	{A_1} && {A_2} \\
	& C
	\arrow["{f_1}"', from=2-1, to=3-2]
	\arrow["{f_2}", from=2-3, to=3-2]
	\arrow[from=1-2, to=2-1]
	\arrow[from=1-2, to=2-3]
	\arrow["\lrcorner"{anchor=center, pos=0.125, rotate=-45}, draw=none, from=1-2, to=3-2]
\end{tikzcd}\]

One guess would be that this becomes a weak product in $\ck//C$ after applying the inclusion 2-functor $\ck/C \xhookrightarrow{} \ck//C$, but that would be a wrong guess. To calculate the weak product of $f_1, f_2$ in $\ck // C$, we first calculate the product of the comma object projections for $f_1, f_2$ (using the notation from Example \ref{PR_colax_slice_2cat}) in $\ck/C$ as pictured below left:
% https://q.uiver.app/#q=WzAsOSxbMSwyLCJDIl0sWzAsMSwiUGZfMSJdLFsyLDEsIlBmXzIiXSxbMSwwLCJMIl0sWzUsMCwiUGZfaSJdLFs3LDAsIkFfMSJdLFs1LDIsIkMiXSxbNywyLCJDIl0sWzQsMCwiTCJdLFsxLDAsIlxccGlfe2ZfMX0iLDJdLFsyLDAsIlxccGlfe2ZfMn0iXSxbMywxLCJcXHRhdV8xIiwyXSxbMywyLCJcXHRhdV8yIl0sWzMsMCwiIiwxLHsic3R5bGUiOnsibmFtZSI6ImNvcm5lciJ9fV0sWzQsNSwiXFxyaG9faSJdLFs1LDcsImZfaSJdLFs2LDcsIiIsMix7ImxldmVsIjoyLCJzdHlsZSI6eyJoZWFkIjp7Im5hbWUiOiJub25lIn19fV0sWzQsNiwiXFxwaV97Zl9pfSIsMV0sWzgsNCwiXFx0YXVfaSJdLFs4LDYsImwiLDJdLFsxNCwxNiwiXFxjaGlfaSIsMCx7InNob3J0ZW4iOnsic291cmNlIjoyMCwidGFyZ2V0IjoyMH19XV0=
\[\begin{tikzcd}
	& L &&& L & {Pf_i} && {A_1} \\
	{Pf_1} && {Pf_2} \\
	& C &&&& C && C
	\arrow["{\tau_1}"', from=1-2, to=2-1]
	\arrow["{\tau_2}", from=1-2, to=2-3]
	\arrow["\lrcorner"{anchor=center, pos=0.125, rotate=-45}, draw=none, from=1-2, to=3-2]
	\arrow["{\tau_i}", from=1-5, to=1-6]
	\arrow["l"', from=1-5, to=3-6]
	\arrow[""{name=0, anchor=center, inner sep=0}, "{\rho_i}", from=1-6, to=1-8]
	\arrow["{\pi_{f_i}}"{description}, from=1-6, to=3-6]
	\arrow["{f_i}", from=1-8, to=3-8]
	\arrow["{\pi_{f_1}}"', from=2-1, to=3-2]
	\arrow["{\pi_{f_2}}", from=2-3, to=3-2]
	\arrow[""{name=1, anchor=center, inner sep=0}, Rightarrow, no head, from=3-6, to=3-8]
	\arrow["{\chi_i}", shorten <=9pt, shorten >=9pt, Rightarrow, from=0, to=1]
\end{tikzcd}\]
Denote $l := \pi_{f_1} \circ \tau_1$. The claim now is that the object $l \in \ck//C$ together with the colax triangles $(\rho_i \tau_i, \chi_i \tau_i): l \to f_i$ (here $\chi_i$ is the comma object square pictured above right) is the lali-product of $f_1, f_2$ in $\ck // C$. We will establish why this is the case after we prove the main theorem in this section.
\end{pr}

\begin{pr}
\textit{Bilimits} are a special case of coreflector-limits where $\kappa_A$ is an equivalence for every $A \in \ck$. In case $\kappa_A$ is an isomorphism for every $A \in \ck$, this is the notion of an ordinary \textit{2-limit}.
\end{pr}

\begin{pozn}[Uniqueness of rali-limits]\label{POZN_rali_limits_not_unique_up_to_equiv}
Rali-limits are \textbf{not} unique up to an equivalence. It is not even the case that given two rali-limit objects $L_1, L_2$, there exists a left (or right) adjoint 1-cell $L_1 \to L_2$. For a particular example, consider again the poset-enriched category $\parf$ of sets and partial functions. 
The empty set $\emptyset$ is the terminal object in $\parf$, in particular it is rali-terminal. The singleton set $*$ is rali-terminal: the ordered set $\parf(A,*)$ has a maximal element given by the unique total function $!: A \to *$. They are also ``normalized” in the sense that $1_\emptyset$ and $1_*$ are terminal objects in the hom ordered sets they belong to.

Since this 2-category is poset-enriched, equivalent objects would be isomorphic, and an isomorphism in $\parf$ has to be a total function. Thus $*$ and $\emptyset$ can not be equivalent in $\parf$. Moreover, it can be seen that there is no left adjoint 1-cell $* \to \emptyset$. 

We may now also give an example of two non-equivalent left colax adjoints that we have promised in Remark \ref{POZN_lax_adjs_not_unique_up_to_equiv}. It can be seen that given a 2-category $\ck$, the 2-functor $* \to \ck$ picking an object $L$ is a colax left adjoint to the unique 2-functor $\ck \to *$ if and only if $L$ is rali-initial. In this colax adjunction, the modification $\Psi$ is invertible if and only if the 1-cell $1_L$ is the initial object of $\ck(L,L)$. Based on above paragraphs, the unique 2-functor $\parf^{op} \to *$ has two left colax adjoints that are not equivalent.
\end{pozn}

\begin{theo}\label{THM_Kleisli2cat_is_corefl_complete}
Assume $\ck$ is a 2-category that admits weighted $J$-indexed bilimits and assume $D$ is a lax-idempotent pseudomonad on $\ck$. Then, the Kleisli 2-category $\ck_D$ admits weighted $J$-indexed coreflector-limits.
\end{theo}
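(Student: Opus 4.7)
The plan is to construct the coreflector-$W$-limit of a diagram $F: \cj \to \ck_D$ by lifting a bilimit from $\ck$ along the Kleisli inclusion $J_D$ and then verifying the universal property by a single application of Theorem \ref{THM_BIG_lax_adj_thm}. Given $F$ and the weight $W: \cj \to \cat$, the hypothesis provides the $W$-weighted bilimit $L_0 := \{W, U_D F\}$ in $\ck$ together with its universal cone $\mu: W \Rightarrow \ck(L_0, U_D F-)$. My candidate coreflector-$W$-limit is $L := J_D L_0 = DL_0 \in \ck_D$, equipped with the cone $\lambda: W \Rightarrow \ck_D(DL_0, F-)$ obtained by transposing $\mu$ along the biadjunction $J_D \dashv U_D$ of Proposition \ref{THM_biadjunkce_ck_ck_D}: the 1-cell $\lambda_j(w)$ corresponds to $\mu_j(w)$ under $\ck_D(J_D L_0, Fj) \simeq \ck(L_0, U_D Fj)$.

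To verify the coreflector property uniformly in the apex, I would package the $W$-weighted bilimits of $\ck_D$-valued diagrams into a single biadjunction. Let $\cl$ be a 2-category of $W$-weighted $\cj$-diagrams in $\ck_D$, constructed so that $\cl(\Delta A, G) \simeq [\cj, \cat](W, \ck_D(A, G-))$ for $A \in \ck_D$ and $G \in \cl$, where $\Delta: \ck_D \to \cl$ is the constant-diagram pseudofunctor, and set $H: \cl \to \ck$ by $H(G) := \{W, U_D G\}$. Composing $J_D \dashv U_D$ with the universal property of the bilimit yields the hom-equivalence
\[
\cl(\Delta J_D A, G) \;\simeq\; [\cj,\cat](W, \ck_D(J_D A, G-)) \;\simeq\; [\cj,\cat](W, \ck(A, U_D G-)) \;\simeq\; \ck(A, HG),
\]
so that $\Delta \circ J_D \dashv H: \cl \to \ck$ is a biadjunction. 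Since its left biadjoint factors through $\ck_D$ via $\Delta$, Theorem \ref{THM_BIG_lax_adj_thm} produces a colax adjunction $\Delta \ddashv J_D H: \ck_D \to \cl$ with invertible swallowtail modification $\Psi$, whose right colax adjoint sends $F$ to $J_D H F = DL_0$.

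Finally, to extract the pointwise coreflector condition, I would invoke Theorem \ref{THM_Bunge_opak}: since $\Psi$ is invertible, for every $X \in \ck_D$ and $G \in \cl$ there is an adjunction of hom-categories
\[
\cl(\Delta X, G) \;\rightleftarrows\; \ck_D(X, J_D H G)
\]
with invertible unit, whose right adjoint is post-composition with the counit of the colax adjunction. Specializing to $G = F$ and using the identification $\cl(\Delta X, F) \simeq [\cj,\cat](W, \ck_D(X, F-))$, this right adjoint is precisely the comparison functor $\kappa_X$ of Definition \ref{DEFI_rali_colimit}, so $\kappa_X$ is a coreflector in $\cat$, as required. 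The principal technical obstacle is the construction of $\cl$ so that its hom-categories genuinely compute $W$-weighted cones into $\ck_D$; this is handled by standard 2-categorical machinery (for instance by reducing to conical weights via the Grothendieck construction on $W$, or by presenting $\cl$ via an enriched functor 2-category), after which the factorization of the biadjunction through $\ck_D$ is immediate.
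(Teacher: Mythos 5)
Your strategy is, at its core, the same as the paper's: take the candidate limit to be $J_D\{W,U_DF\}$, package the bilimits of $\ck$ into a biadjunction whose left biadjoint factors through $\ck_D$, feed that into Theorem \ref{THM_BIG_lax_adj_thm}, and then read off the pointwise coreflector condition from the hom-category adjunction with invertible unit coming from the start of the proof of Theorem \ref{THM_Bunge_opak}. The genuine difference is the choice of the auxiliary 2-category $\cl$, and here your version carries a real outstanding obligation. You fix the weight $W$ and let the diagram vary, which requires manufacturing a 2-category $\cl$ with $\cl(\Delta A,G)\simeq[\cj,\cat](W,\ck_D(A,G-))$; you rightly flag this as the main obstacle, but the remedies you sketch do not obviously deliver it: a functor 2-category such as $\psd{\cj}{\ck_D}$ computes only \emph{conical} cones against constant diagrams, and reducing $W$ to a conical shape via the Grothendieck construction changes both the indexing 2-category and the flavour of the cones (lax rather than pseudo), so one would still have to check compatibility with $\Delta$, with the factorization through $\ck_D$, and with the identification of the hom-adjunction's right adjoint with $\kappa_X$. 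The paper sidesteps all of this by making the opposite choice: fix the diagram $G$ and let the \emph{weight} vary, taking $\cl:=\psd{\cj}{\cat}^{op}$ with left biadjoint $\widetilde{G}\colon DA\mapsto\ck_D(DA,G-)$ and right biadjoint $W\mapsto\{W,U_DG\}$. Then $\cl(\widetilde{G}(DA),W)=\psd{\cj}{\cat}(W,\ck_D(DA,G-))$ is the weighted cone category \emph{by definition}, the factorization through $\ck_D$ is tautological, and the counit of the resulting colax adjunction is exactly the transposed bilimit cone you wrote down, so the rest of your argument goes through verbatim. I would substitute this choice of $\cl$; everything else in your proposal is sound.
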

\begin{proof}
Let $G: \cp \to \ck_D$ be a 2-functor, write $\widetilde{G}$ for the 2-functor $DA \mapsto \ck_D(DA,G?)$. Denoting again by $U_D$ the canonical pseudofunctor $\ck_D \to \ck$, there is a biadjunction where the right biadjoint sends a weight $W$ to the bilimit of $U_D G: \cp \to \ck$ weighted by $W$:
% https://q.uiver.app/#q=WzAsMyxbMCwwLCJcXGNrIl0sWzIsMCwiXFxja19EIl0sWzQsMCwiXFxwc2R7XFxjcH17XFxjYXR9XntvcH0iXSxbMiwwLCJcXHstLFVfREcgXFx9IiwyLHsiY3VydmUiOjV9XSxbMCwxLCJKX0QiLDIseyJzdHlsZSI6eyJ0YWlsIjp7Im5hbWUiOiJob29rIiwic2lkZSI6InRvcCJ9fX1dLFsxLDIsIlxcd2lkZXRpbGRle0d9IiwyXSxbMSwzLCJcXHRvcCIsMSx7ImxldmVsIjoxLCJzdHlsZSI6eyJib2R5Ijp7Im5hbWUiOiJub25lIn0sImhlYWQiOnsibmFtZSI6Im5vbmUifX19XV0=
\[\begin{tikzcd}
	\ck && {\ck_D} && {\psd{\cp}{\cat}^{op}}
	\arrow["{J_D}"', hook, from=1-1, to=1-3]
	\arrow["{\widetilde{G}}"', from=1-3, to=1-5]
	\arrow[""{name=0, anchor=center, inner sep=0}, "{\{-,U_DG \}}"', curve={height=30pt}, from=1-5, to=1-1]
	\arrow["\top"{description}, draw=none, from=1-3, to=0]
\end{tikzcd}\]
This is because of the following equivalences that are pseudo-natural with respect to $W \in [\cp, \cat]$ and $A \in \ck$:
\begin{align*}
\ck(A, \{ W, U_D G \}) &\simeq \text{Psd}[\cp,\cat](W, \ck(A,U_D G-))\\
&\simeq \text{Psd}[\cp,\cat](W, \ck_D(DA,G-))\\
&= \text{Psd}[\cp,\cat]^{op}(\ck_D(DA,G-),W).
\end{align*}

Transferring the identity across those equivalences, we see that the \textbf{counit} of the biadjunction is the composite pseudonatural transformation:
% https://q.uiver.app/#q=WzAsMyxbMCwwLCJXIl0sWzIsMCwiXFxjayhcXHsgVywgVV9ERyBcXH0sIFVfRCBHLSkiXSxbNCwwLCJcXGNrX0QoRCBcXHsgVywgVV9EIEcgXFx9LCBHLSApLCJdLFswLDEsIlxcbGFtYmRhX1ciLDAseyJsZXZlbCI6Mn1dLFsxLDIsIihwX3tHLX0pXyogXFxjaXJjIEpfRCIsMCx7ImxldmVsIjoyfV1d
\[\begin{tikzcd}
	W && {\ck(\{ W, U_DG \}, U_D G-)} && {\ck_D(D \{ W, U_D G \}, G- ),}
	\arrow["{\lambda_W}", Rightarrow, from=1-1, to=1-3]
	\arrow["{(p_{G-})_* \circ J_D}", Rightarrow, from=1-3, to=1-5]
\end{tikzcd}\]

where $\lambda_W$ is the $W$-weighted bilimit cone for $U_DG : \cp \to \ck$. By Theorem \ref{THM_BIG_lax_adj_thm} this induces a colax adjunction with the same counit:
% https://q.uiver.app/#q=WzAsMixbMCwwLCJcXGNrX0QiXSxbMiwwLCJcXHRleHR7UHNkfVtcXGNwLFxcY2F0XV57b3B9Il0sWzAsMSwiXFx3aWRldGlsZGV7R30iLDJdLFsxLDAsIkpfRFxcey0sVV9ERyBcXH0iLDIseyJjdXJ2ZSI6NH1dLFsyLDMsIlxcdHRvcCIsMSx7ImxldmVsIjoxLCJlZGdlX2FsaWdubWVudCI6eyJzb3VyY2UiOmZhbHNlLCJ0YXJnZXQiOmZhbHNlfSwic3R5bGUiOnsiYm9keSI6eyJuYW1lIjoibm9uZSJ9LCJoZWFkIjp7Im5hbWUiOiJub25lIn19fV1d
\[\begin{tikzcd}
	{\ck_D} && {\text{Psd}[\cp,\cat]^{op}}
	\arrow[""{name=0, anchor=center, inner sep=0}, "{\widetilde{G}}"', from=1-1, to=1-3]
	\arrow[""{name=0p, anchor=center, inner sep=0}, phantom, from=1-1, to=1-3, start anchor=center, end anchor=center]
	\arrow[""{name=1, anchor=center, inner sep=0}, "{J_D\{-,U_DG \}}"', curve={height=24pt}, from=1-3, to=1-1]
	\arrow[""{name=1p, anchor=center, inner sep=0}, phantom, from=1-3, to=1-1, start anchor=center, end anchor=center, curve={height=24pt}]
	\arrow["\ttop"{description}, draw=none, from=0p, to=1p]
\end{tikzcd}\]

Notice that from the beginning of the proof of Theorem \ref{THM_Bunge_opak} it can be seen that there is an adjunction on hom categories described below whose unit is invertible:
% https://q.uiver.app/#q=WzAsMixbMCwwLCJcXGNrX0QoQixKX0QgXFx7IFcsIFVfREdcXH0pXFxwaGFudG9tey4uLi4uLi4uLi4uLi4uLn0iXSxbMiwwLCJcXHBoYW50b217Li4uLi4uLi4uLi4uLi4uLi59XFx0ZXh0e1BzZH1bXFxjcCxcXGNhdF0oVyxcXGNrX0QoQixHLSkpIl0sWzAsMSwiXFx0aGV0YSBcXG1hcHN0byBcXGNrX0QoXFx0aGV0YSxHLSkgXFxjaXJjIChwX3tHLX0pXyogXFxjaXJjIEpfRCBcXGNpcmMgXFxsYW1iZGFfVyIsMix7ImN1cnZlIjo0fV0sWzEsMCwiIiwwLHsiY3VydmUiOjR9XSxbMiwzLCIiLDAseyJsZXZlbCI6MSwic3R5bGUiOnsibmFtZSI6ImFkanVuY3Rpb24ifX1dXQ==
\[\begin{tikzcd}
	{\ck_D(B,J_D \{ W, U_DG\})\phantom{...............}} && {\phantom{.................}\text{Psd}[\cp,\cat](W,\ck_D(B,G-))}
	\arrow[""{name=0, anchor=center, inner sep=0}, "{\theta \mapsto \ck_D(\theta,G-) \circ (p_{G-})_* \circ J_D \circ \lambda_W}"', curve={height=24pt}, from=1-1, to=1-3]
	\arrow[""{name=1, anchor=center, inner sep=0}, curve={height=24pt}, from=1-3, to=1-1]
	\arrow["\dashv"{anchor=center, rotate=90}, draw=none, from=0, to=1]
\end{tikzcd}\]
This exhibits the pseudonatural transformation $(p_{G-})_* \circ J_D \circ \lambda_W$ as the coreflector-limit of $G$ weighted by $W$.
\end{proof}

\begin{pr}\label{PR_prof_coreflector_complete}
The bicategory $\prof$ of locally small categories and small profunctors is coreflector-complete. This is because by Example \ref{PR_presheafpsmonad}, it is a Kleisli bicategory for a left Kan pseudomonad on a complete 2-category $\CAT$.
\end{pr}

\begin{pr}\label{PR_colax_slice_2cat_weaklimit_2}
The proof of Theorem \ref{THM_Kleisli2cat_is_corefl_complete} gives a concrete way to compute limits. Consider the \textbf{colax}-idempotent pseudomonad from Example \ref{PR_colax_slice_2cat}. We can see that the process of computing \textbf{lali}-product of two objects $(f_1: A_1 \to C, f_2: A_2 \to C)$ in the colax-slice 2-category $\ck//C$ agrees with the process described in Example \ref{PR_colax_slice_2cat_weaklimit_1}.
\end{pr}

\begin{pozn}\label{POZN_strict_weaklimits}%XXSTRICTFORLEFTKAN2MONADS
Given a left Kan 2-monad $(D,y)$, going through the proof of Theorem \ref{THM_Kleisli2cat_is_corefl_complete} (and considering Remark \ref{POZN_leftkan2monads_big_l_adj}) we may now replace $\psd{\cp}{\cat}$ by $[\cp, \cat]$ and the result can be changed to the claim that $\ck_D$ admits $J$-indexed lali-limit whenever $\ck$ admits them as 2-limits.
\end{pozn}

We end the section with introducing the concept of preservation of weak limits:

\begin{defi}
We say that a pseudofunctor $H: \ck \to \cl$ \textit{preserves} \textbf{X}-limits (where \textbf{X} is any of the classes of morphisms in Definition \ref{POZN_duals_lali} for the case of $\ck = \cat$) if, whenever $\lambda: W \Rightarrow \ck(L,F-)$ exhibits $L$ as a \textbf{X}-limit of $F: \cj \to \ck$ weighted by $W: \cj \to \cat$, the composite pictured below  is an \textbf{X}-limit of $HF$ weighted by $W$:
% https://q.uiver.app/#q=WzAsMyxbMCwwLCJXIl0sWzIsMCwiXFxjayhMLCBGLSkiXSxbNCwwLCJcXGNsKEhMLEhGLSkiXSxbMCwxLCJcXGxhbWJkYSIsMCx7ImxldmVsIjoyfV0sWzEsMiwiSCIsMCx7ImxldmVsIjoyfV1d
\[\begin{tikzcd}
	W && {\ck(L, F-)} && {\cl(HL,HF-)}
	\arrow["\lambda", Rightarrow, from=1-1, to=1-3]
	\arrow["H", Rightarrow, from=1-3, to=1-5]
\end{tikzcd}\]
\end{defi}

\begin{pr}
In case $\ck, \cl$ admit comma objects, their preservation as rari-limits has been studied in \cite[Definition 7.1]{yoneda2toposes} where it has been called \textit{preservation of lax pullbacks up to a right adjoint section}. For instance, given a finitely complete 2-category $\ck$, the 2-functor $(-) \times Z$ has this property for any $Z \in \ck$ (see \cite[Example 7.3]{yoneda2toposes}). In Weber's later work \cite[6.1 THEOREM]{familial}, the class of  \textit{familial} functors have been shown to preserve comma objects as lari-limits.
\end{pr}

%277

\section{Applications to two-dimensional monad theory}\label{SEKCE_applications_to_twodim}

\begin{defi}\label{DEFI_property_L}
Let $T$ be a 2-monad on a 2-category $\ck$. We will say that it satisfies \textbf{Property L} if the inclusion $\talgs \xhookrightarrow{} \talgl$ admits a left 2-adjoint and the corresponding lax-morphism classifier 2-comonad $Q_l$ on $\talgs$ is lax-idempotent.
\end{defi}

By Theorem \ref{THM_adjoint_talgs_talgl}, Proposition \ref{THM_Ql_lax_idemp_Qp_ps_idemp}, a 2-monad $T$ on $\ck$ will have this property when $\ck$ admits oplax limits of an arrow and $\talgs$ is sufficiently cocomplete (admits lax codescent objects).

To apply the (appropriate dual of the) results developed in Section \ref{SUBS_3_big_lax_adj_thm} to the lax-idempotent 2-comonad $Q_l$, notice that (with the hint of the lists in Remark \ref{POZN_duals} and \ref{POZN_duals_lali}) this amounts to ``going” from $\ck$ to $\ck^{coop}$. For instance ``coreflection-inclusion” gets replaced by ``reflector”.%, \textit{lax-natural} by \textit{colax-natural} and so on.

\subsection{Lax flexibility}\label{SUBS_4_lax_flexibility}

For this section, recall the notions of \textit{semiflexible} and \textit{flexible} algebras for a 2-monad $T$ from \cite[Remark 4.5, page 23]{twodim}. By \cite[Proposition 1]{onsemiflexible}, a $T$-algebra $(A,a)$ is semi-flexible if and only if it admits the structure of a pseudo-$Q_p$-coalgebra. A \textit{pie} $T$-algebra was then defined to be a $T$-algebra that admits a strict $Q_p$-coalgebra structure. This motivates us to define:

\begin{defi}Let $T$ be a 2-monad on a 2-category $\ck$ that satisfies \textbf{Property L}. A $T$-algebra $(A,a)$ is said to be:
\begin{itemize}
\item \textit{lax-semiflexible} if it admits a pseudo-$Q_l$-coalgebra structure.
\item \textit{lax-flexible} if it admits a normal pseudo-$Q_l$-coalgebra structure.
\item \textit{lax-pie} if it admits a strict $Q_l$-coalgebra structure.
\end{itemize}
\end{defi}

\begin{pozn}\label{POZN_lax_semiflexible_is_semiflexible}
Every lax-\textbf{Y} $T$-algebra is \textbf{Y}, where $\textbf{Y} \in \{ \text{flexible, semiflexible, pie} \}$. This is because of the fact that by Proposition \ref{THM_mor_of_comonads_QlQp} there is an induced 2-functor from pseudo-$Q_l$-coalgebras to pseudo-$Q_p$-coalgebras that commutes with the 2-functors that forget the coalgebra structure (and thus keeps the $T$-algebra structure intact). 
\end{pozn}

\begin{pr}
In Corollary \ref{THM_free_forgetful_colax_adj} we will see that every free $T$-algebra is lax-flexible; this is a strengthening of the fact that every free $T$-algebra is flexible (\cite[Corollary 5.6]{twodim}).
\end{pr}

\begin{pr}
Fix a category $\cj$ and consider the 2-monad $T$ on $[ \ob \cj, \cat]$ whose algebras are weights (2-functors) $\cj \to \cat$. Weights that index \textit{lax limits} are precisely the wights that are cofree-$Q_l$-coalgebras, i.e. those of the form $Q_l W$ (see \cite[Chapter 5]{elemobs}). Since a lax limit is in general not a pseudo-limit \cite[Remark 5.5]{twodim}, not every pie algebra is lax-pie.
\end{pr}

Following Example \ref{PR_doctrinal_adj} and Remark \ref{POZN_strict_charakterizace_D_algeber}, the application of (the dual of) Proposition \ref{THM_characterization_D-algs_D-corefl-incl} to the lax-idempotent 2-comonad $Q_l$ provides a lax version of \cite[Theorem 20 a)]{onsemiflexible}. It reads as:

\begin{theo}\label{THM_charakterizace_lax_semiflexible_algs}
Let $T$ be a 2-monad on a 2-category $\ck$ satisfying \textbf{Property L} and denote by $U: \talgs \to \ck$ the forgetful 2-functor. A $T$-algebra is lax-semiflexible, semiflexible, if and only if, respectively:
\begin{itemize}
%\item $(A,a)$ is lax-semiflexible,
\item $\talgs((A,a),-): \talgs \to \cat$ sends $U$-reflectors to reflectors in $\cat$.
\item $\talgs((A,a),-): \talgs \to \cat$ sends $U$-lalis to lalis in $\cat$.
\end{itemize}
\end{theo}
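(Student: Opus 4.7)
The plan is to reduce the theorem to the $(-)^{coop}$-dual of Proposition \ref{THM_characterization_D-algs_D-corefl-incl}, applied to the lax-idempotent 2-comonad $Q_l$ on $\talgs$, and then to translate the resulting $J$-reflector condition into a $U$-reflector condition using doctrinal adjunction as in Example \ref{PR_doctrinal_adj}.

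First, I would spell out the 2-comonad dual of Proposition \ref{THM_characterization_D-algs_D-corefl-incl}. By Remark \ref{POZN_duals}, a lax-idempotent 2-comonad $Q$ on $\cl$ corresponds to a lax-idempotent pseudomonad on $\cl^{coop}$; by Remark \ref{POZN_duals_lali}, ``coreflection-inclusion'' in $\cl^{coop}$ becomes ``reflector'' in $\cl$; and the contravariant hom $\cl^{coop}(-,A)$ becomes the covariant hom $\cl(A,-)$ (the $(-)^{co}$ on $\cat$ that appears in passing is invisible to the reflector/lali property). The dualized statement then reads: an object $A \in \cl$ admits a pseudo-$Q$-coalgebra structure if and only if $\cl(A,-) : \cl \to \cat$ sends $J_Q$-reflectors in $\cl$ to reflectors in $\cat$, where $J_Q : \cl \to \cl_Q$ is the Kleisli inclusion.

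Next, I would specialize to $Q=Q_l$ on $\cl = \talgs$ (lax-idempotence being granted by Property L), and use the identification $(\talgs)_{Q_l}\cong \talgl$ so that $J_{Q_l}$ coincides with the canonical inclusion $J : \talgs \hookrightarrow \talgl$. This yields: $(A,a)$ is lax-semiflexible iff $\talgs((A,a),-)$ sends $J$-reflectors to reflectors. By Example \ref{PR_doctrinal_adj}, doctrinal adjunction identifies $J$-reflectors with $U$-reflectors in $\talgs$, establishing the first bullet. For the second bullet I would invoke the strict variant flagged in Remark \ref{POZN_strict_charakterizace_D_algeber}: since $Q_l$ is an honest 2-comonad (not merely a pseudo-comonad), its \emph{normal} pseudo-coalgebras are characterized by the same condition with ``reflector'' replaced by ``lali'' and all invertible 2-cells replaced by identities; doctrinal adjunction then identifies $J$-lalis with $U$-lalis, yielding the claim.

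The step I expect to be the main obstacle is the careful bookkeeping of the duality, since Proposition \ref{THM_characterization_D-algs_D-corefl-incl} is formulated for \emph{pseudomonads} and the target result concerns a \emph{2-comonad}: one must verify that the $(-)^{coop}$ passage really does exchange ``pseudo-$D$-algebra'' with ``pseudo-$Q$-coalgebra'', exchange the two variances of the hom 2-functor, and turn ``$J_D$-coreflection-inclusion'' into ``$J_Q$-reflector'' uniformly. Once this dictionary is laid out, the rest is a direct specialization of pre-established results combined with a single application of doctrinal adjunction.
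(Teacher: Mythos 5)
Your argument coincides with the paper's: the theorem is presented there as an immediate consequence of the $(-)^{coop}$-dual of Proposition \ref{THM_characterization_D-algs_D-corefl-incl} applied to the lax-idempotent 2-comonad $Q_l$ on $\talgs$ (with $(\talgs)_{Q_l}\cong\talgl$), combined with the $J$-reflector/$U$-reflector identification of Example \ref{PR_doctrinal_adj} and the normal/lali variant of Remark \ref{POZN_strict_charakterizace_D_algeber} -- exactly the chain you describe, and your duality bookkeeping (coreflection-inclusions in $\cl^{coop}$ becoming reflectors, the contravariant hom becoming $\cl(A,-)$) is correct. The only point worth flagging is that your second bullet, like the Remark the paper itself cites, characterizes \emph{normal} pseudo-$Q_l$-coalgebras, i.e.\ \emph{lax-flexible} algebras in the sense of Section \ref{SUBS_4_lax_flexibility}, so the word ``semiflexible'' in the statement appears to be a slip and your reading is the one the intended proof supports.
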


\begin{pozn}
In a future work we will study lax-pie $T$-algebras for a 2-monad $T$. Using a comonadicity theorem, it can be shown that when $T$ is a 2-monad of form $\cat(T')$ for a cartesian monad $T'$ on a category $\ce$ with pullbacks, lax-pie $T$-algebras are equivalent to $T'$-multicategories.
\end{pozn}

%317

\subsection{Colax adjunctions and lali-cocompleteness of lax morphisms}\label{SUBS_4_on_various_colax_adjs}

Considering Remark \ref{POZN_leftkan2monads_big_l_adj}, the application of Theorem \ref{THM_bigadjthm_HL_is_D-algebra} and Theorem \ref{THM_BIG_lax_adj_thm} for the 2-comonad $Q_l$ reads as:

\begin{theo}\label{THM_big_colax_adj_THM}
Let $T$ be a 2-monad satisfying \textbf{Property L}. Any 2-adjunction below left induces a colax adjunction below right:
% https://q.uiver.app/#q=WzAsNixbMCwwLCJcXHRhbGdzIl0sWzIsMCwiXFx0YWxnbCJdLFs0LDAsIlxcY2wiXSxbNSwwLCJcXHJpZ2h0c3F1aWdhcnJvdyJdLFs2LDAsIlxcdGFsZ2wiXSxbOCwwLCJcXGNsIl0sWzAsMSwiSiIsMl0sWzEsMiwiRyIsMl0sWzIsMCwiSCIsMix7ImN1cnZlIjo1fV0sWzQsNSwiSkgiLDJdLFs1LDQsIkciLDIseyJjdXJ2ZSI6NH1dLFs4LDEsIiIsMix7ImxldmVsIjoxLCJzdHlsZSI6eyJuYW1lIjoiYWRqdW5jdGlvbiJ9fV0sWzksMTAsIlxcdGJvdCIsMSx7InNob3J0ZW4iOnsic291cmNlIjoyMCwidGFyZ2V0IjoyMH0sInN0eWxlIjp7ImJvZHkiOnsibmFtZSI6Im5vbmUifSwiaGVhZCI6eyJuYW1lIjoibm9uZSJ9fX1dXQ==
\[\begin{tikzcd}
	\talgs && \talgl && \cl & \rightsquigarrow & \talgl && \cl
	\arrow["J"', from=1-1, to=1-3]
	\arrow["G"', from=1-3, to=1-5]
	\arrow[""{name=0, anchor=center, inner sep=0}, "H"', curve={height=30pt}, from=1-5, to=1-1]
	\arrow[""{name=1, anchor=center, inner sep=0}, "JH"', from=1-7, to=1-9]
	\arrow[""{name=2, anchor=center, inner sep=0}, "G"', curve={height=24pt}, from=1-9, to=1-7]
	\arrow["\dashv"{anchor=center, rotate=-90}, draw=none, from=0, to=1-3]
	\arrow["\tbot"{description}, draw=none, from=1, to=2]
\end{tikzcd}\]
Moreover, for every $L \in \cl$, the $T$-algebra $HL$ is lax-flexible.
\end{theo}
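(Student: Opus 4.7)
The plan is to derive this theorem as a direct application of the coop-dual of Theorem \ref{THM_BIG_lax_adj_thm}, in the strict form provided by Remark \ref{POZN_leftkan2monads_big_l_adj}, together with the coop-dual of Theorem \ref{THM_bigadjthm_HL_is_D-algebra}. First I would unpack the relevant duality: by \textbf{Property L}, $Q_l$ is a lax-idempotent 2-comonad on $\talgs$ whose Kleisli 2-category is $\talgl$, with $J: \talgs \to \talgl$ playing the role of the (right) Kleisli 2-adjoint. By Remark \ref{POZN_duals}, a lax-idempotent 2-comonad on $\talgs$ is equivalent to a lax-idempotent 2-monad on $\talgs^{coop}$; since $Q_l$ is a strict 2-comonad, the corresponding 2-monad is strict, hence a left Kan 2-monad. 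The hypothesised 2-adjunction $H \dashv GJ$, whose right 2-adjoint $GJ$ factors through $\talgl$, then dualises to a 2-adjunction on coop-categories whose \emph{left} 2-adjoint factors through the Kleisli 2-category -- precisely the hypothesis of Theorem \ref{THM_BIG_lax_adj_thm}.

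Next I would invoke the strict refinement of Theorem \ref{THM_BIG_lax_adj_thm} given by Remark \ref{POZN_leftkan2monads_big_l_adj}: because the input data are strict (a 2-adjunction and a left Kan 2-monad), the resulting colax adjunction has identity modifications and 2-natural counit. Undualising via $(-)^{coop}$, I read off the asserted colax adjunction between $G: \talgl \to \cl$ and $JH: \cl \to \talgl$ (with $JH$ as left colax adjoint).

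For the second assertion, that each $HL$ is lax-flexible, I would apply the coop-dual of Theorem \ref{THM_bigadjthm_HL_is_D-algebra}. Its proof, in turn, invokes the implication $(2)\Rightarrow(3)$ of Proposition \ref{THM_characterization_D-algs_D-corefl-incl} to construct an adjunction $h_L \dashv y_{HL}$ with invertible counit, thereby exhibiting the pseudo-$Q_l$-coalgebra structure on $HL$. In the strict setting of Remark \ref{POZN_leftkan2monads_big_l_adj} the invertible counit becomes the identity -- equivalently, $h_L$ becomes a lali -- so by the coop-dual of Remark \ref{POZN_strict_charakterizace_D_algeber} the resulting coalgebra structure is in fact \emph{normal}. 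By definition this means $HL$ is lax-flexible.

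The hard part will be purely bookkeeping: tracking the $(-)^{coop}$-duality coherently across the interchange of 2-monads with 2-comonads, of left adjoints with right adjoints, and of coreflection-inclusions with reflectors (Remark \ref{POZN_duals_lali}), so that each coop-dualised statement really does produce the claim under consideration. No genuinely new categorical content beyond Section \ref{SEKCE_Kleisli2cat} is required.
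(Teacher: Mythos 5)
Your proposal is correct and follows essentially the same route as the paper, which simply states the theorem as the $(-)^{coop}$-dual application of Theorem \ref{THM_BIG_lax_adj_thm} and Proposition \ref{THM_bigadjthm_HL_is_D-algebra} to the lax-idempotent 2-comonad $Q_l$, read through the strictness Remark \ref{POZN_leftkan2monads_big_l_adj}. Your extra care in deducing \emph{normality} of the $Q_l$-coalgebra structure (hence lax-flexibility rather than mere lax-semiflexibility) via the identity counit and Remark \ref{POZN_strict_charakterizace_D_algeber} is exactly the detail the paper leaves implicit.
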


\begin{cor}\label{THM_free_forgetful_colax_adj}
The free-forgetful adjunction for a 2-monad $T$ on a 2-category $\ck$ satisfying \textbf{Property L} induces a colax adjunction between $\talgl$ and $\ck$. In particular, every free $T$-algebra is lax-flexible.
% https://q.uiver.app/#q=WzAsNixbMCwwLCJcXHRhbGdzIl0sWzQsMCwiXFxjayJdLFs1LDAsIlxccmlnaHRzcXVpZ2Fycm93Il0sWzYsMCwiXFx0YWxnbCJdLFs4LDAsIlxcY2siXSxbMiwwLCJcXHRhbGdsIl0sWzEsMCwiRl5UIiwyLHsiY3VydmUiOjV9XSxbMyw0LCJVIiwyLHsiY3VydmUiOjR9XSxbNCwzLCJKIEZeVCIsMix7ImN1cnZlIjo0fV0sWzAsNSwiSiIsMl0sWzUsMSwiVSIsMl0sWzcsOCwiXFx0Ym90IiwxLHsibGV2ZWwiOjEsInN0eWxlIjp7ImJvZHkiOnsibmFtZSI6Im5vbmUifSwiaGVhZCI6eyJuYW1lIjoibm9uZSJ9fX1dLFs2LDUsIiIsMCx7ImxldmVsIjoxLCJzdHlsZSI6eyJuYW1lIjoiYWRqdW5jdGlvbiJ9fV1d
\[\begin{tikzcd}
	\talgs && \talgl && \ck & \rightsquigarrow & \talgl && \ck
	\arrow[""{name=0, anchor=center, inner sep=0}, "{F^T}"', curve={height=30pt}, from=1-5, to=1-1]
	\arrow[""{name=1, anchor=center, inner sep=0}, "U"', curve={height=24pt}, from=1-7, to=1-9]
	\arrow[""{name=2, anchor=center, inner sep=0}, "{J F^T}"', curve={height=24pt}, from=1-9, to=1-7]
	\arrow["J"', from=1-1, to=1-3]
	\arrow["U"', from=1-3, to=1-5]
	\arrow["\tbot"{description}, draw=none, from=1, to=2]
	\arrow["\dashv"{anchor=center, rotate=-90}, draw=none, from=0, to=1-3]
\end{tikzcd}\]
\end{cor}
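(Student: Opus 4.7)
The plan is to deduce this as an immediate instance of Theorem \ref{THM_big_colax_adj_THM} by recognizing the classical free-forgetful adjunction as having the shape of the starting 2-adjunction in that theorem. Concretely, I take $\cl = \ck$, let $G: \talgl \to \ck$ be the forgetful 2-functor from lax morphisms (which is genuinely a 2-functor since lax $T$-morphisms, their lax-morphism 2-cells aside, have underlying 1-cells in $\ck$ and their 2-cells are 2-cells in $\ck$), and let $H = F^T: \ck \to \talgs$ be the usual free 2-functor $A \mapsto (TA, m_A)$.

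The second step is to observe that the composite $GJ: \talgs \xhookrightarrow{J} \talgl \xrightarrow{G} \ck$ is just the ordinary forgetful 2-functor $U: \talgs \to \ck$, and so $H \dashv GJ$ is the standard free-forgetful 2-adjunction $F^T \dashv U$, whose existence requires no assumption on $T$. Hence the hypotheses of Theorem \ref{THM_big_colax_adj_THM} are verified verbatim.

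Invoking the first conclusion of the theorem then yields the displayed colax adjunction
\[
J F^T \ddashv U : \talgl \to \ck.
\]
For the second assertion, the same theorem says that for every $L \in \cl = \ck$, the $T$-algebra $HL = F^T L = (TL, m_L)$ is lax-flexible; this is exactly the statement that every free $T$-algebra is lax-flexible.

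I do not expect any real obstacle: the entire content of the corollary is the identification of the free-forgetful 2-adjunction with the generic data of Theorem \ref{THM_big_colax_adj_THM}, together with the (harmless) observation that the forgetful 2-functor extends along $J$ to $\talgl$ since lax morphisms, like strict and pseudo ones, live over 1-cells in $\ck$.
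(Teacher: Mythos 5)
Your proposal is correct and is exactly the intended argument: the paper states this corollary without a separate proof precisely because it is the instantiation of Theorem \ref{THM_big_colax_adj_THM} at $\cl = \ck$, $G = U\colon \talgl \to \ck$ and $H = F^T$, using that $GJ$ is the usual forgetful 2-functor so that $F^T \dashv GJ$ is the standard free--forgetful 2-adjunction. The lax-flexibility of every free algebra likewise falls out of the ``moreover'' clause of that theorem applied to $HL = F^T L$, just as you say.
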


\begin{cor}\label{THM_COR_change_of_base_SalgTalg}
Let $T,S$ be two 2-monads on a 2-category $\ck$ satisfying \textbf{Property L} and let $\theta: S \to T$ be a strict monad morphism. Assume that the induced 2-functor $\theta^*: \talgs \to S\text{-Alg}_s$ admits a left 2-adjoint $\theta_*$ (this is the case when $\ck$ is complete and cocomplete and $T$ is finitary, see \cite[Theorem 3.9]{twodim}). Then there is an induced colax adjunction between $\talgl$ and $\salgl$:
% https://q.uiver.app/#q=WzAsNyxbMCwwLCJcXHRhbGdzIl0sWzIsMCwiU1xcdGV4dHstQWxnfV9zIl0sWzQsMCwiU1xcdGV4dHstQWxnfV9sIl0sWzIsMiwiXFx0YWxnbCJdLFs1LDAsIlxccmlnaHRzcXVpZ2Fycm93Il0sWzYsMCwiXFx0YWxnbCJdLFs4LDAsIlNcXHRleHR7LUFsZ31fbCJdLFswLDEsIlxcdGhldGFeKiIsMix7ImN1cnZlIjo0fV0sWzEsMCwiXFx0aGV0YV8qIiwyLHsiY3VydmUiOjR9XSxbMSwyLCJKIiwyLHsiY3VydmUiOjR9XSxbMiwxLCIoLSknIiwyLHsiY3VydmUiOjR9XSxbMCwzLCJKIiwyLHsiY3VydmUiOjN9XSxbMywyLCJcXHRoZXRhXioiLDIseyJjdXJ2ZSI6M31dLFs1LDYsIlxcdGhldGFeKiIsMix7ImN1cnZlIjo0fV0sWzYsNSwiIiwyLHsiY3VydmUiOjR9XSxbOSwxMCwiIiwyLHsibGV2ZWwiOjEsInN0eWxlIjp7Im5hbWUiOiJhZGp1bmN0aW9uIn19XSxbNyw4LCIiLDAseyJsZXZlbCI6MSwic3R5bGUiOnsibmFtZSI6ImFkanVuY3Rpb24ifX1dLFsxMywxNCwiXFx0dG9wIiwxLHsic2hvcnRlbiI6eyJzb3VyY2UiOjIwLCJ0YXJnZXQiOjIwfSwic3R5bGUiOnsiYm9keSI6eyJuYW1lIjoibm9uZSJ9LCJoZWFkIjp7Im5hbWUiOiJub25lIn19fV1d
\[\begin{tikzcd}[column sep=scriptsize]
	\talgs && {S\text{-Alg}_s} && {S\text{-Alg}_l} & \rightsquigarrow & \talgl && {S\text{-Alg}_l} \\
	\\
	&& \talgl
	\arrow[""{name=0, anchor=center, inner sep=0}, "{\theta^*}"', curve={height=24pt}, from=1-1, to=1-3]
	\arrow[""{name=1, anchor=center, inner sep=0}, "{\theta_*}"', curve={height=24pt}, from=1-3, to=1-1]
	\arrow[""{name=2, anchor=center, inner sep=0}, "J"', curve={height=24pt}, from=1-3, to=1-5]
	\arrow[""{name=3, anchor=center, inner sep=0}, "{(-)'}"', curve={height=24pt}, from=1-5, to=1-3]
	\arrow["J"', curve={height=18pt}, from=1-1, to=3-3]
	\arrow["{\theta^*}"', curve={height=18pt}, from=3-3, to=1-5]
	\arrow[""{name=4, anchor=center, inner sep=0}, "{\theta^*}"', curve={height=24pt}, from=1-7, to=1-9]
	\arrow[""{name=5, anchor=center, inner sep=0}, curve={height=24pt}, from=1-9, to=1-7]
	\arrow["\dashv"{anchor=center, rotate=90}, draw=none, from=2, to=3]
	\arrow["\dashv"{anchor=center, rotate=90}, draw=none, from=0, to=1]
	\arrow["\ttop"{description}, draw=none, from=4, to=5]
\end{tikzcd}\]
\end{cor}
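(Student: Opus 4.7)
The plan is to apply Theorem \ref{THM_big_colax_adj_THM} to the 2-monad $T$ with base 2-category $\cl := \salgl$ and with $G$ equal to the 2-functor $\theta^* : \talgl \to \salgl$ that extends $\theta^* : \talgs \to \salgs$ to lax morphisms. Explicitly, a lax $T$-morphism $(f, \overline{f}) : (A,a) \to (B,b)$ is sent to the lax $S$-morphism with underlying 1-cell $f$ whose structure 2-cell is obtained by whiskering $\overline{f}$ with $\theta$ along the (strict) naturality $\theta_B \cdot Sf = Tf \cdot \theta_A$. Since $\theta$ is a strict monad morphism this assignment is strictly functorial, and the two evident routes from $\talgs$ to $\salgl$ coincide on the nose:
\[
\theta^* \circ J_T \;=\; J_S \circ \theta^* \;:\; \talgs \longrightarrow \salgl,
\]
both sending a strict $T$-algebra $(A,a)$ to $(A, a \cdot \theta_A) \in \salgl$ with identity structure 2-cell on each strict morphism.

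The next step is to construct the left 2-adjoint demanded by Theorem \ref{THM_big_colax_adj_THM}. Because $S$ satisfies \textbf{Property L}, Theorem \ref{THM_adjoint_talgs_talgl} provides a 2-adjunction $(-)' \dashv J_S$. Composing with the assumed 2-adjunction $\theta_* \dashv \theta^*$ yields
\[
H \;:=\; \theta_* \circ (-)' \;\;\dashv\;\; J_S \circ \theta^* \;=\; \theta^* \circ J_T \;=\; G \circ J_T,
\]
exhibiting $H : \salgl \to \talgs$ as a left 2-adjoint to $G \circ J_T : \talgs \to \salgl$. This is precisely the input hypothesis of Theorem \ref{THM_big_colax_adj_THM}.

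Feeding this 2-adjunction into that theorem produces a colax adjunction whose right colax adjoint to $\theta^* : \talgl \to \salgl$ is the composite
\[
J_T \circ H \;=\; J_T \circ \theta_* \circ (-)' \;:\; \salgl \longrightarrow \talgl,
\]
which is exactly the colax adjunction asserted by the corollary. I do not anticipate any genuine difficulty: once the strict equality $J_S \theta^* = \theta^* J_T$ is noted, the argument is entirely parallel to that of Corollary \ref{THM_change_of_base_kleisli}, and amounts to composing two 2-adjunctions and invoking the main colax adjunction theorem a single time.
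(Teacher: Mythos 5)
Your proposal is correct and follows essentially the same route as the paper: one composes the 2-adjunctions $\theta_* \dashv \theta^*$ and $(-)' \dashv J_S$ from Theorem \ref{THM_adjoint_talgs_talgl}, observes the strict equality $J_S \circ \theta^* = \theta^* \circ J_T$, and feeds the resulting 2-adjunction $\theta_*\circ(-)' \dashv \theta^* \circ J_T$ into Theorem \ref{THM_big_colax_adj_THM} with $G = \theta^*$ and $\cl = \salgl$; this is exactly what the paper's diagram encodes. The one slip is the handedness at the end: since Theorem \ref{THM_big_colax_adj_THM} is the $(-)^{coop}$-dual of Theorem \ref{THM_BIG_lax_adj_thm}, the composite $J_T \circ \theta_* \circ (-)'$ is the \emph{left} colax adjoint and $\theta^*:\talgl\to\salgl$ the \emph{right} colax adjoint (consistent with the classical situation where $\theta^*$ has a left biadjoint, and with Corollary \ref{THM_free_forgetful_colax_adj} where $JF^T$ is left colax adjoint to $U$), not the other way around as you state.
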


% pseudo-morphism classifier 2-comonad
%373

\noindent The following shows lali-cocompleteness of $\talgl$:

\begin{theo}\label{THM_talgl_reflector_cocomplete}
Let $T$ be a 2-monad on a 2-category $\ck$ that admits oplax limits of an arrow. Assume that $\talgs$ is cocomplete (in particular $T$ satisfies \textbf{Property L}). Then $\talgl$ is lali-cocomplete.
\end{theo}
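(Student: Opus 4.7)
The plan is to realize $\talgl$ as the Kleisli 2-category of a strict lax-idempotent 2-comonad on $\talgs$ and then invoke the strict, colimit-dual of Theorem~\ref{THM_Kleisli2cat_is_corefl_complete} (cf.\ Remark~\ref{POZN_strict_weaklimits}).

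First I would check that the hypotheses guarantee that $T$ satisfies \textbf{Property L}. Since $\ck$ admits oplax limits of arrows and $\talgs$ is cocomplete (so in particular admits the reflexive lax codescent objects appearing in Theorem~\ref{THM_adjoint_talgs_talgl}), that theorem provides a left 2-adjoint to the inclusion $J: \talgs \hookrightarrow \talgl$, and Proposition~\ref{THM_Ql_lax_idemp_Qp_ps_idemp} then shows that the induced strict 2-comonad $Q_l$ on $\talgs$ is lax-idempotent. This identifies $\talgl$ with the Kleisli 2-category $(\talgs)_{Q_l}$.

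Next I would invoke the dual of Theorem~\ref{THM_Kleisli2cat_is_corefl_complete} in the strict form of Remark~\ref{POZN_strict_weaklimits}, for a strict lax-idempotent 2-comonad on a 2-cocomplete 2-category. Concretely, the dualization proceeds by rerunning the proof of Theorem~\ref{THM_Kleisli2cat_is_corefl_complete}: given $G : \cp \to \talgl$ and a weight $W : \cp \to \cat$, the 2-colimit $W \star (U_{Q_l} G)$ exists in $\talgs$ by cocompleteness, and the associated 2-adjunction between $\talgs$ and $[\cp, \cat]$ factors through $\talgl$ via the free-forgetful Kleisli 2-adjunction for $Q_l$. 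Applying the strict dual of Theorem~\ref{THM_BIG_lax_adj_thm}, as strengthened by Remark~\ref{POZN_leftkan2monads_big_l_adj}, then yields an induced colax adjunction between $\talgl$ and $[\cp, \cat]$ whose modifications are identities and whose unit is 2-natural. Evaluating this unit at $W$ will exhibit $J(W \star (U_{Q_l} G))$ as the lali-colimit of $G$ weighted by $W$ in $\talgl$.

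The principal obstacle I expect is the careful bookkeeping of the dualization: one must transpose the argument of Theorem~\ref{THM_Kleisli2cat_is_corefl_complete} from limits to colimits and from pseudomonads to 2-comonads, and must further verify that the strictness inherited from $Q_l$ being a genuine 2-comonad, together with the 2-cocompleteness (and not merely bicocompleteness) of $\talgs$, genuinely upgrades the conclusion from coreflector-colimits (which would follow from a direct dualization of the pseudomonad version) to lali-colimits, via the appeal to Remark~\ref{POZN_leftkan2monads_big_l_adj}.
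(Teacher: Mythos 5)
Your proposal is correct and follows the same route as the paper, whose proof simply cites the dual of Remark~\ref{POZN_strict_weaklimits}: identify $\talgl$ with the Kleisli 2-category of the lax-idempotent 2-comonad $Q_l$ on the cocomplete 2-category $\talgs$ and apply the strict, dualized form of Theorem~\ref{THM_Kleisli2cat_is_corefl_complete}. You have merely unpacked the same chain of references (Theorem~\ref{THM_adjoint_talgs_talgl}, Proposition~\ref{THM_Ql_lax_idemp_Qp_ps_idemp}, Remarks~\ref{POZN_leftkan2monads_big_l_adj} and~\ref{POZN_strict_weaklimits}) in more detail than the paper does.
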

\begin{proof}
This follows from (the dual of) Remark \ref{POZN_strict_weaklimits}.
\end{proof}

\begin{pozn}
By Remark \ref{POZN_ordinary_weakness}, this in particular shows that $\talgl$ is weakly cocomplete.
\end{pozn}

\begin{cor}
The following 2-categories are lali-cocomplete:
\begin{enumerate}
\item for a category $\cj$, the 2-category $\lax{\cj}{\cat}$ of 2-functors $\cj \to \cat$, lax-natural transformations and modifications,
\item the 2-category of monoidal categories and lax-monoidal functors and its symmetric/braided variants,
\item the 2-category of small 2-categories, lax functors, and icons,
\item for a set $\Phi$ of small categories, the 2-category $\Phi\text{-Colim}_l$ of small categories that admit a choice of $J$-indexed colimits for $J \in \Phi$ and \textbf{all} functors between them. 
\end{enumerate}
\end{cor}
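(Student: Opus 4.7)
The plan is to apply Theorem \ref{THM_talgl_reflector_cocomplete} to each case. In each instance I will exhibit the listed 2-category as $\talgl$ for an accessible 2-monad $T$ on a complete and cocomplete base 2-category $\ck$. Completeness of $\ck$ supplies oplax limits of an arrow, while accessibility of $T$ together with cocompleteness of $\ck$ ensures that $\talgs$ is cocomplete by the standard machinery of two-dimensional monad theory (see the discussion following Theorem \ref{THM_adjoint_talgs_talgl}); thus \textbf{Property L} holds in each case.

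The 2-monads to take are all standard. For (1), it is the 2-monad on the power 2-category $[\ob \cj, \cat]$ whose strict algebras are 2-functors $\cj \to \cat$; its lax morphisms recover the lax natural transformations and its algebra 2-cells the modifications. For (2), it is the free (strict) monoidal category 2-monad on $\cat$, or its symmetric/braided variant; lax morphisms of algebras for this 2-monad are precisely the lax monoidal functors. For (3), it is the free 2-category 2-monad on the 2-category of $\cat$-graphs; as recorded in \cite{enhanced2cats}, its lax morphisms are lax functors and its algebra 2-cells are exactly icons. For (4), it is the accessible 2-monad on $\cat$ whose strict algebras are small categories equipped with a chosen $J$-indexed colimit for each $J \in \Phi$.

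The main subtle identification is in item (4): I would need to check that the lax morphism data on an arbitrary functor $F$ between two such $\Phi$-cocomplete categories is uniquely determined by $F$ itself, so that lax morphisms in the sense of 2-monad theory correspond bijectively to all functors. This is so because the comparison 2-cell $b \circ TF \Rightarrow F \circ a$ demanded by the lax morphism structure is canonically produced by the universal property of the chosen colimit in the codomain, and the coherence axioms then hold automatically; spelling this out carefully is the main technical point. Granting all four identifications, Theorem \ref{THM_talgl_reflector_cocomplete} applies uniformly and delivers the claimed lali-cocompleteness.
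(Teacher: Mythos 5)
Your proposal is correct and follows essentially the same route as the paper: exhibit each 2-category as $\talgl$ for a standard 2-monad on a complete and cocomplete base (the same four 2-monads the paper cites) and apply Theorem \ref{THM_talgl_reflector_cocomplete}. The only cosmetic difference is in item (4), where you sketch by hand why every functor carries a canonical lax morphism structure; the paper gets this in one line by citing that the relevant 2-monad is lax-idempotent, which is exactly the property your argument is reproving.
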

\begin{proof}
Each of these is a 2-category of form $\talgl$, where $\ck$ is a complete and cocomplete 2-category and $T$ is one of the following 2-monads:
\begin{enumerate}
\item the 2-monad $T$ on $[\ob \cj, \cat]$ given by the left Kan extension along $\ob \cj \to \cj$ followed by restriction, see \cite[6.6]{twodim},
\item the 2-monad on $\cat$ for monoidal categories, see \cite[5.5]{companion},
\item the 2-category 2-monad $T$ on the 2-category $\cat \text{-Gph}$ of $\cat$-enriched graphs, see \cite[3.3]{onsemiflexible},
\item the 2-monad $T$ described in \cite[Theorem 6.1]{onthemonadicity} whose strict $T$-morphisms are functors that preserve the choices of $\Phi$-colimits. Lax $T$-morphisms are all functors because this 2-monad is lax-idempotent by \cite[Theorem 6.3]{onthemonadicity}.
\end{enumerate}
\end{proof}

\begin{pozn}
There is also a dual version for the 2-category $\talgc$ of $T$-algebras and \textbf{colax} $T$-morphisms. If $\talgs$ is sufficiently cocomplete, there exists an induced 2-comonad $Q_c$ (the \textit{colax morphism classifier 2-comonad}) and if $\ck$ admits lax limits of arrows, $Q_c$ is colax-idempotent. If $\talgs$ is cocomplete, $\talgc$ can be seen to be rali-cocomplete.
\end{pozn}

% LAX PIE ALGEBRAS
% 315

%\newpage
\appendix

\section{Auxiliary lemmas}
\begingroup
\allowdisplaybreaks

\begin{lemma}\label{THM_lemma_epsilon_phi_second_swallowtail}
In the proof of Theorem \ref{THM_Bunge}:
\begin{itemize}
\item $\epsilon$ is colax-natural,
\item $\Phi$ is a modification,
\item the second swallowtail identity.
\end{itemize}
\end{lemma}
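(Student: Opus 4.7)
The strategy for all three statements is uniform: each is an equation between 2-cells in $\cc$ whose codomain is (an iterated) $U$-extension, so the uniqueness clause in the universal property reduces the equation to one verified after pre-composition with the appropriate component of $y$. Coherent closedness \eqref{EQ_DEFI_coherently_closed_for_Uext} further guarantees that pasting composites of $U$-extension 2-cells are themselves $U$-extension 2-cells, so such pre-compositions absorb intervening associators and unitors of $U$. After this reduction, the only remaining data are the pseudofunctor coherences $\gamma,\iota$ of $U$, the colax coherences $\gamma',\iota'$ of $F$ from \eqref{EQP_gamma_prime}--\eqref{EQP_iota_prime}, and the defining equations \eqref{EQP_Phi_epsilon} for $\epsilon_h$ and \eqref{EQP_Psi} for $\Psi_A$.

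For the \emph{colax naturality of $\epsilon$} I plan to verify the three axioms one at a time. Local naturality at a 2-cell $\alpha: h\Rightarrow h'$ will follow by pasting two instances of \eqref{EQP_Phi_epsilon} (one for $\epsilon_h$, one for $\epsilon_{h'}$) and invoking the local naturality of $y$ at $U\alpha$, with uniqueness then identifying the two sides. The unit axiom reduces, after pre-composition with $y_{UB}$, to the fact that $\Phi_B$ was defined as the $U$-extension of $1_{UB}$ along $y_{UB}$, so the required composite collapses via the coherences of $U$ and $F$. The composition axiom is the longest: I would paste the defining diagrams for $\epsilon_g$ and $\epsilon_h$ side by side, rewrite the $UFU(gh)$ column using the associator $\gamma^{-1}$ of $U$ and $\gamma'$ of $F$, and match the total against the defining diagram for $\epsilon_{gh}$ using uniqueness.

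For the \emph{modification axiom} $\Phi:1_U\to U\epsilon\circ yU$, the argument is short: after writing the modification equation at a 1-cell $h:B\to C$ and unfolding $U\epsilon$ as a colax transformation, it reads verbatim as equation \eqref{EQP_Phi_epsilon}, so it holds by construction. For the \emph{second swallowtail identity} at an object $B$ I plan to use uniqueness applied to the $U$-extension defining $\epsilon_B$: any 2-cell $\epsilon_B\Rightarrow\epsilon_B$ is determined by its image under $U$ pre-composed with $y_{UB}$. Applied to the swallowtail composite built from $\epsilon\epsilon$, $F\Phi$ and $\Psi U$, this reduces the identity to a diagram in $\cd$ in which \eqref{EQP_Psi} applied at the object $UB$ cancels the $\Psi U$ factor, and \eqref{EQP_Phi_epsilon} applied with $h=\epsilon_B$ cancels the $F\Phi$ factor, leaving the identity on $\Phi_B$ after coherence cleanup.

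The principal obstacle will be organizational rather than conceptual: the pasting diagrams contain many layers of associators and unitors for the pseudofunctor $U$ together with the coherence 2-cells $\gamma',\iota'$ of $F$, and keeping track of their placement is tedious. The coherent closedness axiom \eqref{EQ_DEFI_coherently_closed_for_Uext} is precisely what permits these coherence 2-cells to be absorbed into the $U$-extension data at each stage, so each of the three verifications reduces to pseudofunctor bookkeeping carried out once the pre-composition with $y$ has been performed. I expect the composition axiom in the colax naturality of $\epsilon$ to consume most of the diagrammatic work.
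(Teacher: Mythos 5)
Your overall strategy---reduce each required equation to one between 2-cells whose codomain is a $U$-extension, pre-compose with the relevant component of $y$ together with the extension 2-cells, and invoke the uniqueness clause---is exactly the paper's, and your plans for the colax naturality of $\epsilon$ (local naturality, unit and composition axioms, with the composition axiom the longest) and for the second swallowtail identity match the computations actually carried out, which are long equational chains through the defining equations \eqref{EQP_Falpha}, \eqref{EQP_gamma_prime}, \eqref{EQP_iota_prime}, \eqref{EQP_Phi_epsilon} and \eqref{EQP_Psi} together with middle-four interchange and the pseudofunctor laws for $U$.

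There is, however, a gap in your treatment of the middle bullet. You are right that the modification axiom for $\Phi: 1_U \to U\epsilon \circ yU$ is verbatim the defining equation \eqref{EQP_Phi_epsilon} of $\epsilon_h$---the body of the proof of Theorem \ref{THM_Bunge} already records this immediately after that equation. But the substantive obligation that the appendix must discharge under this heading is the modification axiom for $\Psi: \epsilon F \circ Fy \to 1_F$: the components $\Psi_A$ are defined object-by-object in \eqref{EQP_Psi}, and one must check their compatibility with each 1-cell $f: A \to B$, i.e.\ that pasting $\Psi_A$ with $\epsilon_{Ff}$, $Fy_f$ and the associators $\gamma'$ of $F$ agrees with the corresponding pasting of $\Psi_B$. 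This is not a tautology and is not implied by \eqref{EQP_Phi_epsilon}; without it the data $(\Psi,\Phi)$ does not constitute a colax adjunction, so Theorem \ref{THM_Bunge} is not yet proved. In the paper this is the second of the three long computations (performed by pre-composing with the $U$-extension data for $y_B f$ and chasing through \eqref{EQP_Falpha}, \eqref{EQP_gamma_prime}, \eqref{EQP_Phi_epsilon} and \eqref{EQP_Psi}), comparable in length to your composition axiom for $\epsilon$. You should add this verification; your one-line dismissal of the middle bullet covers only the part that was already settled in the main text.
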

\begin{proof}
In this proof, we will reference the defining equations for $\gamma', \iota', F\alpha, \epsilon, \Psi$ above the equals sign. In the unlabeled equations we use the middle-four interchange rule combined with the pseudofunctor laws.

\textbf{$\epsilon$ is colax-natural}: The \textbf{composition axiom} amounts to proving the equality of the following 2-cells:
% https://q.uiver.app/#q=WzAsMTUsWzIsMCwiRlVCIl0sWzIsMSwiRlVDIl0sWzQsMCwiQiJdLFs0LDEsIkMiXSxbMiwyLCJGVUQiXSxbNCwyLCJEIl0sWzUsMCwiRlVCIl0sWzUsMiwiRlVEIl0sWzcsMCwiQiJdLFs3LDEsIkMiXSxbNywyLCJEIl0sWzEsMCwiRlVCIl0sWzEsMiwiRlVEIl0sWzAsMCwiRlVCIl0sWzAsMiwiRlVEIl0sWzAsMiwiXFxlcHNpbG9uX0IiXSxbMiwzLCJoIl0sWzMsNSwiZyJdLFswLDEsIkZVaCJdLFsxLDQsIkZVZyJdLFs0LDUsIlxcZXBzaWxvbl9EIiwyXSxbMSwzLCJcXGVwc2lsb25fQyIsMV0sWzExLDEyLCJGKFVnIFxcY2lyYyBVaCkiLDEseyJsYWJlbF9wb3NpdGlvbiI6ODB9XSxbMTMsMTQsIkZVKGdoKSIsMl0sWzE0LDEyLCIiLDEseyJsZXZlbCI6Miwic3R5bGUiOnsiaGVhZCI6eyJuYW1lIjoibm9uZSJ9fX1dLFsxMiw0LCIiLDEseyJsZXZlbCI6Miwic3R5bGUiOnsiaGVhZCI6eyJuYW1lIjoibm9uZSJ9fX1dLFsxMywxMSwiIiwxLHsibGV2ZWwiOjIsInN0eWxlIjp7ImhlYWQiOnsibmFtZSI6Im5vbmUifX19XSxbMTEsMCwiIiwxLHsibGV2ZWwiOjIsInN0eWxlIjp7ImhlYWQiOnsibmFtZSI6Im5vbmUifX19XSxbNiw3LCJGVShnaCkiLDFdLFs2LDgsIlxcZXBzaWxvbl9CIl0sWzgsOSwiaCJdLFs5LDEwLCJnIl0sWzcsMTAsIlxcZXBzaWxvbl9EIiwyXSxbMjEsMTUsIlxcZXBzaWxvbl9oIiwyLHsic2hvcnRlbiI6eyJzb3VyY2UiOjIwLCJ0YXJnZXQiOjIwfX1dLFsyMCwyMSwiXFxlcHNpbG9uX2ciLDIseyJzaG9ydGVuIjp7InNvdXJjZSI6MjAsInRhcmdldCI6MjB9fV0sWzMyLDI5LCJcXGVwc2lsb25fe2dofSIsMix7InNob3J0ZW4iOnsic291cmNlIjoyMCwidGFyZ2V0IjoyMH19XSxbMjIsMSwiXFxnYW1tYSciLDAseyJzaG9ydGVuIjp7InNvdXJjZSI6MjB9fV0sWzIzLDIyLCJGXFxnYW1tYSIsMCx7InNob3J0ZW4iOnsic291cmNlIjoyMCwidGFyZ2V0IjoyMH19XV0=
\[\begin{tikzcd}
	FUB & FUB & FUB && B & FUB && B \\
	&& FUC && C &&& C \\
	FUD & FUD & FUD && D & FUD && D
	\arrow[""{name=0, anchor=center, inner sep=0}, "{\epsilon_B}", from=1-3, to=1-5]
	\arrow["h", from=1-5, to=2-5]
	\arrow["g", from=2-5, to=3-5]
	\arrow["FUh", from=1-3, to=2-3]
	\arrow["FUg", from=2-3, to=3-3]
	\arrow[""{name=1, anchor=center, inner sep=0}, "{\epsilon_D}"', from=3-3, to=3-5]
	\arrow[""{name=2, anchor=center, inner sep=0}, "{\epsilon_C}"{description}, from=2-3, to=2-5]
	\arrow[""{name=3, anchor=center, inner sep=0}, "{F(Ug \circ Uh)}"{description, pos=0.8}, from=1-2, to=3-2]
	\arrow[""{name=4, anchor=center, inner sep=0}, "{FU(gh)}"', from=1-1, to=3-1]
	\arrow[Rightarrow, no head, from=3-1, to=3-2]
	\arrow[Rightarrow, no head, from=3-2, to=3-3]
	\arrow[Rightarrow, no head, from=1-1, to=1-2]
	\arrow[Rightarrow, no head, from=1-2, to=1-3]
	\arrow["{FU(gh)}"{description}, from=1-6, to=3-6]
	\arrow[""{name=5, anchor=center, inner sep=0}, "{\epsilon_B}", from=1-6, to=1-8]
	\arrow["h", from=1-8, to=2-8]
	\arrow["g", from=2-8, to=3-8]
	\arrow[""{name=6, anchor=center, inner sep=0}, "{\epsilon_D}"', from=3-6, to=3-8]
	\arrow["{\epsilon_h}"', shorten <=4pt, shorten >=4pt, Rightarrow, from=2, to=0]
	\arrow["{\epsilon_g}"', shorten <=4pt, shorten >=4pt, Rightarrow, from=1, to=2]
	\arrow["{\epsilon_{gh}}"', shorten <=9pt, shorten >=9pt, Rightarrow, from=6, to=5]
	\arrow["{\gamma'}", shorten <=5pt, Rightarrow, from=3, to=2-3]
	\arrow["F\gamma", shorten <=7pt, shorten >=7pt, Rightarrow, from=4, to=3]
\end{tikzcd}\]
It is enough to prove these after applying $U(-) \circ \gamma y_{UB} \circ U\epsilon_D \mbbd \circ \Phi_D U(gh)$ on both sides. We then have:
\begin{align*}
& U(LHS) \circ \gamma^{-1} y_{UB} \circ U\epsilon_D \mbbd_{y_{UC} U(gh)} \circ \Phi_D U(gh) =
\phantom{.}
\phantom{.}
\phantom{.}\\
&= U(g\epsilon_h \circ \epsilon_g FUh \circ \epsilon_D \gamma')y_{UB} \circ \gamma^{-1}y_{UB} \circ U\epsilon_D UF\gamma y_{UB} \circ U\epsilon_D \mbbd_{y_{UC} U(gh)} \circ \Phi_D U(gh)
\phantom{.}
\phantom{.}
\phantom{.}\\
&\sstackrel{\eqref{EQP_Falpha}}{=} U(g\epsilon_h \circ \epsilon_g FUh \circ \epsilon_D \gamma')y_{UB} \circ \gamma^{-1} y_{UB} \circ U\epsilon_D \mbbd_{y_{UC} Ug Uh} \circ \Phi_D Ug Uh \circ \gamma
\phantom{.}
\phantom{.}
\phantom{.}\\
&= U(g\epsilon_h \circ \epsilon_g FUh)y_{UB} \circ \gamma^{-1} y_{UB} \circ U\epsilon_D U\gamma' y_{UB} \circ U\epsilon_D \mbbd_{y_{UC} Ug Uh} \circ \Phi_D Ug Uh \circ \gamma
\phantom{.}
\phantom{.}
\phantom{.}\\
&\sstackrel{\eqref{EQP_gamma_prime}}{=} U(g\epsilon_h  \circ \epsilon_g FUh)y_{UB} \circ \gamma^{-1} y_{UB} \circ U\epsilon_D \gamma^{-1} y_{UB} \circ U\epsilon_g UFUg \mbbd_{y_{UC} Uh} \circ\\
&\phantom{=}\circ U\epsilon_D \mbbd_{y_{UD} Ug} Uh \circ \Phi_D Ug Uh \circ \gamma
\phantom{.}
\phantom{.}
\phantom{.}\\
&= U(g\epsilon_h)y_{UB} \circ \gamma^{-1} y_{UB} \circ U\epsilon_g UFUh y_{UB} \circ \gamma^{-1} UFUh y_{UB} \circ U\epsilon_D UFUg \mbbd_{y_{UC} Uh} \circ \\
&\phantom{=}\circ U\epsilon_D \mbbd_{y_{UD} Ug} Uh \circ \Phi_D Ug Uh \circ \gamma 
\phantom{.}
\phantom{.}
\phantom{.}\\
&\sstackrel{\eqref{EQP_Phi_epsilon}}{=} U(g \epsilon_h) y_{UB} \circ \gamma^{-1} y_{UB} \circ \gamma^{-1} UFUh y_{UB} \circ Ug U\epsilon_B \mbbd_{y_{UC} Uh} \circ Ug \Phi_B Uh \circ \gamma
\phantom{.}
\phantom{.}
\phantom{.}\\
&= \gamma^{-1} y_{UB} \circ Ug U\epsilon_h y_{UB} \circ Ug \gamma^{-1} y_{UB} \circ Ug U\epsilon_B \mbbd_{y_{UC} Uh} \circ Ug \Phi_B Uh \circ \gamma
\phantom{.}
\phantom{.}
\phantom{.}\\
&\sstackrel{\eqref{EQP_Phi_epsilon}}{=} \gamma^{-1}y_{UB} \circ Ug \gamma^{-1}y_{UB} \circ Ug Uh \Phi_B \circ \gamma
\phantom{.}
\phantom{.}
\phantom{.}\\
&= \gamma^{-1} y_{UB} \circ \gamma^{-1} U\epsilon_B y_{UB} \circ Ug Uh \Phi_B \circ \gamma
\phantom{.}
\phantom{.}
\phantom{.}\\
&\sstackrel{\eqref{EQP_Phi_epsilon}}{=} U(RHS) \circ \gamma^{-1} y_{UB} \circ U\epsilon_D \mbbd_{y_{UC}U(gh)} \circ \Phi_D U(gh).
\end{align*}
%352 FULL VERSION

The \textbf{unit axiom} for $\epsilon$ amounts to showing that:
% https://q.uiver.app/#q=WzAsOCxbMCwwLCJGVUIiXSxbMCwyLCJGVUIiXSxbMiwyLCJCIl0sWzIsMCwiQiJdLFszLDEsIj0iXSxbNCwxLCJGVUIiXSxbNiwxLCJGVUIiXSxbOCwxLCJCIl0sWzUsNiwiIiwxLHsiY3VydmUiOi00LCJsZXZlbCI6Miwic3R5bGUiOnsiaGVhZCI6eyJuYW1lIjoibm9uZSJ9fX1dLFs1LDYsIkZVMV9CIiwyLHsiY3VydmUiOjR9XSxbNSw2LCJGMV97VUJ9IiwxXSxbNiw3LCJcXGVwc2lsb25fQiJdLFswLDMsIlxcZXBzaWxvbl9CIl0sWzAsMSwiRlUxX0IiLDJdLFszLDIsIiIsMSx7ImxldmVsIjoyLCJzdHlsZSI6eyJoZWFkIjp7Im5hbWUiOiJub25lIn19fV0sWzEsMiwiXFxlcHNpbG9uX0IiLDJdLFsxNSwxMiwiXFxlcHNpbG9uX3sxX0J9IiwyLHsic2hvcnRlbiI6eyJzb3VyY2UiOjIwLCJ0YXJnZXQiOjIwfX1dLFs5LDEwLCJGXFxpb3RhIiwyLHsic2hvcnRlbiI6eyJzb3VyY2UiOjIwLCJ0YXJnZXQiOjIwfX1dLFsxMCw4LCJcXGlvdGEnIiwyLHsic2hvcnRlbiI6eyJzb3VyY2UiOjIwLCJ0YXJnZXQiOjIwfX1dXQ==
\[\begin{tikzcd}
	FUB && B \\
	&&& {=} & FUB && FUB && B \\
	FUB && B
	\arrow[""{name=0, anchor=center, inner sep=0}, curve={height=-24pt}, Rightarrow, no head, from=2-5, to=2-7]
	\arrow[""{name=1, anchor=center, inner sep=0}, "{FU1_B}"', curve={height=24pt}, from=2-5, to=2-7]
	\arrow[""{name=2, anchor=center, inner sep=0}, "{F1_{UB}}"{description}, from=2-5, to=2-7]
	\arrow["{\epsilon_B}", from=2-7, to=2-9]
	\arrow[""{name=3, anchor=center, inner sep=0}, "{\epsilon_B}", from=1-1, to=1-3]
	\arrow["{FU1_B}"', from=1-1, to=3-1]
	\arrow[Rightarrow, no head, from=1-3, to=3-3]
	\arrow[""{name=4, anchor=center, inner sep=0}, "{\epsilon_B}"', from=3-1, to=3-3]
	\arrow["{\epsilon_{1_B}}"', shorten <=9pt, shorten >=9pt, Rightarrow, from=4, to=3]
	\arrow["F\iota"', shorten <=3pt, shorten >=3pt, Rightarrow, from=1, to=2]
	\arrow["{\iota'}"', shorten <=3pt, shorten >=3pt, Rightarrow, from=2, to=0]
\end{tikzcd}\]
It suffices to prove that these 2-cells are equal after applying the 2-cell\\ $U(-) \circ \gamma^{-1} y_{UB} \circ U\epsilon_D \mbbd_{y_{UB} U1_B} \circ \Phi_D U1_B$ on both sides. This is done as follows:

\[
\adjustbox{scale=0.90,center}{
\begin{tikzcd}
	{U\epsilon_B UFU1_B y_{UB}} & {U(\epsilon_B FU1_B)y_{UB}} && {U(\epsilon_B F1_{UB})y_{UB}} \\
	& {U\epsilon_B UF1_B y_{UB}} & {U\epsilon_B U1_{FUB} y_{UB}} & {U\epsilon_B y_{UB}} \\
	{U\epsilon_B y_{UB}U1_B } &&& {U\epsilon_B y_{UB}} \\
	& {U1_B U\epsilon_B y_{UB}} & {\eqref{EQP_Phi_epsilon}} \\
	{U1_B} & {U\epsilon_B y_{UB} U1_B} & {U\epsilon_B UFU1_B y_{UB}} & {U(\epsilon_B \circ FU1_B)y_{UB}}
	\arrow["{\gamma^{-1}y_{UB}}", from=1-1, to=1-2]
	\arrow["{U\epsilon_B UF\iota y_{UB}}"{description}, from=1-1, to=2-2]
	\arrow["{U(\epsilon_B F\iota)y_{UB}}", from=1-2, to=1-4]
	\arrow["{U(\epsilon_B \iota') y_{UB}}", from=1-4, to=2-4]
	\arrow["{\gamma^{-1}y_{UB}}"{description}, curve={height=-12pt}, from=2-2, to=1-4]
	\arrow["{U\epsilon_B U\iota' y_{UB}}", from=2-2, to=2-3]
	\arrow["{\eqref{EQP_Falpha}}"{description}, draw=none, from=2-2, to=3-1]
	\arrow["{\gamma^{-1}y_{UB}}", from=2-3, to=2-4]
	\arrow["{U\epsilon_B \mbbd_{y_{UB} U1_B}}", from=3-1, to=1-1]
	\arrow["{U\epsilon_B y_{UB} \iota}"{description, pos=0.3}, from=3-1, to=3-4]
	\arrow[""{name=0, anchor=center, inner sep=0}, "{U\epsilon_B \mbbd_{y_{UB}}}"{pos=0.7}, from=3-4, to=2-2]
	\arrow[""{name=1, anchor=center, inner sep=0}, "{U\epsilon_B \iota^{-1}y_{UB}}"', from=3-4, to=2-3]
	\arrow["1"', from=3-4, to=2-4]
	\arrow["{\gamma^{-1} y_{UB}}"{description}, from=4-2, to=3-4]
	\arrow["{\Phi_B U1_B}", from=5-1, to=3-1]
	\arrow["{U1_B \Phi_B}", from=5-1, to=4-2]
	\arrow["{\Phi_B U1_B}"', from=5-1, to=5-2]
	\arrow["{U\epsilon_B \mbbd_{y_{UB} U1_B}}"', shift right=2, from=5-2, to=5-3]
	\arrow["{\gamma^{-1} y_{UB}}"', from=5-3, to=5-4]
	\arrow["{U\epsilon_{1_B}y_{UB}}"', from=5-4, to=3-4]
	\arrow["{\eqref{EQP_iota_prime}}"'{pos=1}, draw=none, from=1, to=0]
\end{tikzcd}
}
\]
%347 FUNKCNI SEZNAM
%344

The \textbf{local naturality} for $\epsilon$ amounts to showing that the 2-cells below are equal:
% https://q.uiver.app/#q=WzAsMTIsWzEsMCwiRlVCIl0sWzEsMSwiRlVDIl0sWzMsMSwiQyJdLFszLDAsIkIiXSxbNCwxLCJGVUMiXSxbNCwwLCJGVUIiXSxbNiwwLCJCIl0sWzYsMSwiQyJdLFswLDAsIkZVQiJdLFswLDEsIkZVQyJdLFs3LDAsIkIiXSxbNywxLCJDIl0sWzAsMSwiRlVrIl0sWzAsMywiXFxlcHNpbG9uX0IiXSxbMSwyLCJcXGVwc2lsb25fQyIsMl0sWzMsMiwiayJdLFs4LDAsIiIsMix7ImxldmVsIjoyLCJzdHlsZSI6eyJoZWFkIjp7Im5hbWUiOiJub25lIn19fV0sWzksMSwiIiwyLHsibGV2ZWwiOjIsInN0eWxlIjp7ImhlYWQiOnsibmFtZSI6Im5vbmUifX19XSxbOCw5LCJGVWgiLDJdLFs1LDYsIlxcZXBzaWxvbl9CIl0sWzYsNywiaCIsMl0sWzUsNCwiRlVoIiwyXSxbNCw3LCJcXGVwc2lsb25fQyIsMl0sWzYsMTAsIiIsMSx7ImxldmVsIjoyLCJzdHlsZSI6eyJoZWFkIjp7Im5hbWUiOiJub25lIn19fV0sWzcsMTEsIiIsMSx7ImxldmVsIjoyLCJzdHlsZSI6eyJoZWFkIjp7Im5hbWUiOiJub25lIn19fV0sWzEwLDExLCJrIl0sWzE4LDEyLCJGVVxcYWxwaGEiLDIseyJzaG9ydGVuIjp7InNvdXJjZSI6MjAsInRhcmdldCI6MjB9fV0sWzIyLDE5LCJcXGVwc2lsb25faCIsMix7InNob3J0ZW4iOnsic291cmNlIjoyMCwidGFyZ2V0IjoyMH19XSxbMjAsMjUsIlxcYWxwaGEiLDAseyJzaG9ydGVuIjp7InNvdXJjZSI6MjAsInRhcmdldCI6MjB9fV0sWzE0LDEzLCJcXGVwc2lsb25fayIsMCx7InNob3J0ZW4iOnsic291cmNlIjoyMCwidGFyZ2V0IjoyMH19XV0=
\[\begin{tikzcd}
	FUB & FUB && B & FUB && B & B \\
	FUC & FUC && C & FUC && C & C
	\arrow[""{name=0, anchor=center, inner sep=0}, "FUk", from=1-2, to=2-2]
	\arrow[""{name=1, anchor=center, inner sep=0}, "{\epsilon_B}", from=1-2, to=1-4]
	\arrow[""{name=2, anchor=center, inner sep=0}, "{\epsilon_C}"', from=2-2, to=2-4]
	\arrow["k", from=1-4, to=2-4]
	\arrow[Rightarrow, no head, from=1-1, to=1-2]
	\arrow[Rightarrow, no head, from=2-1, to=2-2]
	\arrow[""{name=3, anchor=center, inner sep=0}, "FUh"', from=1-1, to=2-1]
	\arrow[""{name=4, anchor=center, inner sep=0}, "{\epsilon_B}", from=1-5, to=1-7]
	\arrow[""{name=5, anchor=center, inner sep=0}, "h"', from=1-7, to=2-7]
	\arrow["FUh"', from=1-5, to=2-5]
	\arrow[""{name=6, anchor=center, inner sep=0}, "{\epsilon_C}"', from=2-5, to=2-7]
	\arrow[Rightarrow, no head, from=1-7, to=1-8]
	\arrow[Rightarrow, no head, from=2-7, to=2-8]
	\arrow[""{name=7, anchor=center, inner sep=0}, "k", from=1-8, to=2-8]
	\arrow["FU\alpha"', shorten <=7pt, shorten >=7pt, Rightarrow, from=3, to=0]
	\arrow["{\epsilon_h}"', shorten <=4pt, shorten >=4pt, Rightarrow, from=6, to=4]
	\arrow["\alpha", shorten <=6pt, shorten >=6pt, Rightarrow, from=5, to=7]
	\arrow["{\epsilon_k}", shorten <=4pt, shorten >=4pt, Rightarrow, from=2, to=1]
\end{tikzcd}\]

An analogous approach will be done here as well, this time pre-composing with the 2-cell $U(-) \circ \gamma^{-1} y_{UB} \circ U\epsilon_D \mbbd_{y_{UC} Uh} \circ \Phi_D Uh$:
\[
\adjustbox{scale=0.9,center}{
\begin{tikzcd}
	Uh && {U\epsilon_C y_{UC} Uh} & {U\epsilon_C UFUh y_{UB}} & {U(\epsilon_C FUh)y_{UB}} \\
	& Uk & {U\epsilon_C y_{UC} Uk} & {U\epsilon_C UFUky_{UB}} & {U(\epsilon_C FUk)y_{UB}} \\
	{U\epsilon_C y_{UC} Uh} & {Uh U\epsilon_B y_{UB}} && {Uk U\epsilon_B y_{UB}} \\
	{U\epsilon_C UFUh y_{UB}} & {U(\epsilon_C FUh)} & {U(h \epsilon_B) y_{UB}} && {U(k \epsilon_B)y_{UB}}
	\arrow["{\Phi_C Uh}", from=1-1, to=1-3]
	\arrow["U\alpha"{description}, from=1-1, to=2-2]
	\arrow["{\Phi_D Uh}"', from=1-1, to=3-1]
	\arrow["{Uh\Phi_B}"', from=1-1, to=3-2]
	\arrow[""{name=0, anchor=center, inner sep=0}, "{U\epsilon_C \mbbd_{y_{UC} Uh}}", shift left=2, from=1-3, to=1-4]
	\arrow["{U\epsilon_C y_{UC} U\alpha}"', from=1-3, to=2-3]
	\arrow["{\gamma^{-1} y_{UB}}", from=1-4, to=1-5]
	\arrow["{U\epsilon_C UFU\alpha y_{UB}}", from=1-4, to=2-4]
	\arrow["{U(\epsilon_C FU\alpha)y_{UB}}", from=1-5, to=2-5]
	\arrow["{\Phi_C Uk}", from=2-2, to=2-3]
	\arrow["{Uk\Phi_B}"{description}, from=2-2, to=3-4]
	\arrow[""{name=1, anchor=center, inner sep=0}, "{U\epsilon_D \mbbd_{y_{UC} Uk}}"', shift right=2, from=2-3, to=2-4]
	\arrow["{\gamma^{-1} y_{UB}}"', from=2-4, to=2-5]
	\arrow["{\eqref{EQP_Phi_epsilon}}"{description}, draw=none, from=2-4, to=3-4]
	\arrow["{U\epsilon_k y_{UB}}", from=2-5, to=4-5]
	\arrow["{U\epsilon_D \mbbd_{y_{UC} Uh}}"', from=3-1, to=4-1]
	\arrow["{\eqref{EQP_Phi_epsilon}}"{description}, draw=none, from=3-2, to=3-1]
	\arrow["{U\alpha U\epsilon_B y_{UB}}"', from=3-2, to=3-4]
	\arrow["{\gamma^{-1} y_{UB}}"{description, pos=0.4}, from=3-2, to=4-3]
	\arrow["{\gamma^{-1} y_{UB}}"{description}, from=3-4, to=4-5]
	\arrow["{\gamma^{-1}y_{UB}}"', from=4-1, to=4-2]
	\arrow["{U\epsilon_h y_{UB}}"', from=4-2, to=4-3]
	\arrow["{U(\alpha \epsilon_B)y_{UB}}"', from=4-3, to=4-5]
	\arrow["{\eqref{EQP_Falpha}}"{description}, draw=none, from=0, to=1]
\end{tikzcd}
}\]

%348 FUNKCNI SEZNAM

\textbf{$\Psi$ is a modification}:
This amounts to showing that these 2-cells are equal:
\[\begin{tikzcd}
	& {} & {} \\
	FA & FUFA & {} & FA & FA & FB && FA \\
	\\
	& FUFB && FB && FUFB && FB
	\arrow[shift left=3, curve={height=-24pt}, Rightarrow, no head, from=2-1, to=2-4]
	\arrow[curve={height=-24pt}, Rightarrow, no head, from=2-5, to=2-8]
	\arrow["{Fy_A}", from=2-1, to=2-2]
	\arrow["FUFf", from=2-2, to=4-2]
	\arrow[""{name=0, anchor=center, inner sep=0}, "{F(y_Bf)}"', curve={height=30pt}, from=2-1, to=4-2]
	\arrow[""{name=0p, anchor=center, inner sep=0}, phantom, from=2-1, to=4-2, start anchor=center, end anchor=center, curve={height=30pt}]
	\arrow[""{name=1, anchor=center, inner sep=0}, "{\epsilon_{FA}}"{description}, from=2-2, to=2-4]
	\arrow[""{name=2, anchor=center, inner sep=0}, "{\epsilon_{FB}}"', from=4-2, to=4-4]
	\arrow["Ff", from=2-4, to=4-4]
	\arrow[""{name=3, anchor=center, inner sep=0}, "{F(y_B f)}"{description, pos=0.7}, curve={height=30pt}, from=2-5, to=4-6]
	\arrow["Ff"{description}, from=2-5, to=2-6]
	\arrow["{Fy_B}", from=2-6, to=4-6]
	\arrow["{\epsilon_{FB}}"', from=4-6, to=4-8]
	\arrow["Ff", from=2-8, to=4-8]
	\arrow[""{name=4, anchor=center, inner sep=0}, curve={height=-12pt}, Rightarrow, no head, from=2-6, to=4-8]
	\arrow[""{name=5, anchor=center, inner sep=0}, from=2-1, to=4-2]
	\arrow[""{name=5p, anchor=center, inner sep=0}, phantom, from=2-1, to=4-2, start anchor=center, end anchor=center]
	\arrow[""{name=6, anchor=center, inner sep=0}, draw=none, from=1-3, to=1-2]
	\arrow[""{name=6p, anchor=center, inner sep=0}, phantom, from=1-3, to=1-2, start anchor=center, end anchor=center]
	\arrow[""{name=7, anchor=center, inner sep=0}, draw=none, from=2-3, to=2-2]
	\arrow[""{name=7p, anchor=center, inner sep=0}, phantom, from=2-3, to=2-2, start anchor=center, end anchor=center]
	\arrow["{\Psi_B}"', shorten >=7pt, Rightarrow, from=4-6, to=4]
	\arrow["{Fy_f}"', shorten <=5pt, shorten >=5pt, Rightarrow, from=0p, to=5p]
	\arrow["{\gamma'}", shorten <=4pt, Rightarrow, from=5, to=2-2]
	\arrow["{\epsilon_{Ff}}"', shorten <=9pt, shorten >=9pt, Rightarrow, from=2, to=1]
	\arrow["{\gamma'}", shorten <=8pt, Rightarrow, from=3, to=2-6]
	\arrow["{\Psi_A}"'{pos=0.3}, shorten <=4pt, shorten >=9pt, Rightarrow, from=7p, to=6p]
\end{tikzcd}\]

This time we precompose both sides with $U(-) \circ \gamma^{-1} y_{UB} \circ U\epsilon_{FB} \mbbd_{y_{UFB}y_B f} \circ \Phi_{FB} y_B f$ to obtain:
\begin{align*}
&U(LHS)y_A \circ \gamma^{-1} y_{UB} \circ U\epsilon_{FB} \mbbd_{y_{UFB}y_B f} \circ \Phi_{FB} y_B f =
\phantom{.}
\phantom{.}
\phantom{.}\\
&= U(Ff \Psi_A \circ \epsilon_{Ff} Fy_A)y_A \circ \gamma^{-1}y_A \circ U\epsilon_{FB} U\gamma'y_A \circ U\epsilon_{FB} UF\mbbd_{y_Bf}y_A \circ \\
&\phantom{=} \circ U\epsilon_{FB} \mbbd_{y_{UFB} y_B f} \circ \Phi_{FB} y_B f
\phantom{.}
\phantom{.}
\phantom{.}\\
&\sstackrel{\eqref{EQP_Falpha}}{=} U(Ff \Psi_A \circ \epsilon_{Ff} Fy_A)y_A \circ \gamma^{-1}y_A \circ U\epsilon_{FB} U\gamma'y_A \circ U\epsilon_{FB}\mbbd_{UFf y_A} \circ \\
&\phantom{=} \circ U\epsilon_{FB} y_{UFB} \mbbd_{y_B f} \circ \Phi_{FB} y_B f
\phantom{.}
\phantom{.}
\phantom{.}\\
&\sstackrel{\eqref{EQP_gamma_prime}}{=} U(Ff \Psi_A \circ \epsilon_{Ff} Fy_A)y_A \circ \gamma^{-1}y_A \circ U\epsilon_{FB} \gamma^{-1} y_A \circ U\epsilon_{FB} UFUFf \mbbd_{y_{UFA}y_A} \circ\\
&\phantom{=}\circ U\epsilon_{FB} UFUFf \mbbd_{y_{UFB} UFf} y_A \circ U\epsilon_{FB} y_{UFB} \mbbd_{y_B f} \circ \Phi_{FB} y_B f
\phantom{.}
\phantom{.}
\phantom{.}\\
&= U(Ff \Psi_A)y_A \circ \gamma^{-1}y_A \circ U\epsilon_{Ff} UFy_A y_A \circ U(\epsilon_{FB} FUFf)\mbbd_{y_{UFA}y_A} \circ \\
&\phantom{=}\circ \gamma^{-1} y_{UFA} y_A \circ U\epsilon_{FB} \mbbd_{y_{UFB} UFf} y_A \circ \Phi_{FB} UFf y_A \circ \mbbd_{y_B f}
\phantom{.}
\phantom{.}
\phantom{.}\\
&\sstackrel{\eqref{EQP_Phi_epsilon}}{=} U(Ff \Psi_A)y_A \circ \gamma^{-1}y_A \circ \gamma^{-1} UFy_A y_A \circ UFf U\epsilon_{FA} \mbbd_{y_{UFA} y_A} \circ \\
&\phantom{=} \circ UFf \Phi_A y_A \circ \mbbd_{y_B f}
\phantom{.}
\phantom{.}
\phantom{.}\\
&= \gamma^{-1}y_A \circ UFf U\Psi_A y_A \circ UFf \gamma^{-1} y_A \circ UFf U\epsilon_{FA} \mbbd_{y_{UFA} y_A} \circ \\
&\phantom{=}\circ UFf \Phi_A y_A \circ \mbbd_{y_B f}
\phantom{.}
\phantom{.}
\phantom{.}\\
&\sstackrel{\eqref{EQP_Psi}}{=} \gamma^{-1} y_A \circ UFf \iota^{-1} y_A \circ \mbbd_{y_B f}
\phantom{.}
\phantom{.}
\phantom{.}\\
&\sstackrel{\eqref{EQP_Psi}}{=} \gamma^{-1} y_A \circ U\Psi_B UFf y_A \circ U\epsilon_{FB} \gamma^{-1} y_A \circ U\epsilon_{FB} UFy_B \mbbd_{y_B f}  \circ \\
&\phantom{=} \circ U\epsilon_{FB} \mbbd_{y_{UFB} y_B} f \circ \Phi_{FB} y_B f
\phantom{.}
\phantom{.}
\phantom{.}\\
&= U(\Psi_B Ff) y_A \circ \gamma^{-1}y_A \circ U\epsilon_{FB} \gamma^{-1} y_A \circ U\epsilon_{FB} UFy_B \mbbd_{y_B f} \circ \\
&\phantom{=}\circ U\epsilon_{FB} \mbbd_{y_{UFB} y_B} f \circ \Phi_{FB} y_B f
\phantom{.}
\phantom{.}
\phantom{.}\\
&\sstackrel{\eqref{EQP_gamma_prime}}{=} U(\Psi_B Ff) y_A \circ \gamma^{-1}y_A \circ U\epsilon_{FB} U\gamma' y_A \circ U\epsilon_{FB} \mbbd_{y_{UFB} y_B f} \circ \Phi_{FB} y_B f
\phantom{.}
\phantom{.}
\phantom{.}\\
&= U(RHS)y_A \circ \gamma^{-1} y_{UB} \circ U\epsilon_{FB} \mbbd_{y_{UFB} y_B f} \circ \Phi_{FB} y_B f.
\end{align*}
%351 FULL VERSION (ale nedokončená)
%349 TOO LARGE PICTURE

%381 NEWER PICTURE

\textbf{The second swallowtail identity}: This amounts to showing the following equality, which we will again do by an appropriate pre-composition:

% https://q.uiver.app/#q=WzAsMTAsWzIsMiwiRlVGVSJdLFsyLDQsIkZVQiJdLFs0LDIsIkZVQiJdLFs0LDQsIkIiXSxbMCwwLCJGVUIiXSxbNSwyLCI9Il0sWzYsMiwiXFxlcHNpbG9uX0IgXFxpb3RhIl0sWzAsMl0sWzMsMV0sWzEsMl0sWzIsNCwiIiwyLHsiY3VydmUiOjQsImxldmVsIjoyLCJzdHlsZSI6eyJoZWFkIjp7Im5hbWUiOiJub25lIn19fV0sWzQsMSwiRjFfe1VCfSIsMix7Im9mZnNldCI6NSwiY3VydmUiOjR9XSxbMCwyLCJcXGVwc2lsb25fe0ZVQn0iXSxbMCwxLCJGVSBcXGVwc2lsb25fQiIsMV0sWzEsMywiXFxlcHNpbG9uX0IiLDJdLFsyLDMsIlxcZXBzaWxvbl9CIl0sWzEsMiwiXFxlcHNpbG9uX3tcXGVwc2lsb25fQn0iLDEseyJsZXZlbCI6Mn1dLFs0LDAsIkZ5X3tVQn0iLDFdLFs0LDEsIkYoVVxcZXBzaWxvbl9CIHlfe1VCfSkiLDEseyJsYWJlbF9wb3NpdGlvbiI6NzB9XSxbNyw5LCIoRlxcUGhpKV9CIiwyLHsibGFiZWxfcG9zaXRpb24iOjgwLCJzaG9ydGVuIjp7InNvdXJjZSI6NjB9LCJsZXZlbCI6Mn1dLFs5LDAsIlxcZ2FtbWEnIiwwLHsic2hvcnRlbiI6eyJzb3VyY2UiOjMwfSwibGV2ZWwiOjJ9XSxbMTcsOCwiXFxQc2lfe1VCfSIsMix7ImxhYmVsX3Bvc2l0aW9uIjo2MCwib2Zmc2V0IjozLCJzaG9ydGVuIjp7InNvdXJjZSI6NDAsInRhcmdldCI6NDB9fV1d
\[\begin{tikzcd}
	FUB \\
	&&& {} \\
	{} & {} & FUFU && FUB & {=} & {\epsilon_B \iota} \\
	\\
	&& FUB && B
	\arrow[""{name=0, anchor=center, inner sep=0}, "{Fy_{UB}}"{description}, from=1-1, to=3-3]
	\arrow["{F1_{UB}}"', shift right=5, curve={height=24pt}, from=1-1, to=5-3]
	\arrow["{F(U\epsilon_B y_{UB})}"{description, pos=0.7}, from=1-1, to=5-3]
	\arrow["{(F\Phi)_B}"'{pos=0.8}, shorten <=10pt, Rightarrow, from=3-1, to=3-2]
	\arrow["{\gamma'}", shorten <=4pt, Rightarrow, from=3-2, to=3-3]
	\arrow["{\epsilon_{FUB}}", from=3-3, to=3-5]
	\arrow["{FU \epsilon_B}"{description}, from=3-3, to=5-3]
	\arrow[curve={height=24pt}, Rightarrow, no head, from=3-5, to=1-1]
	\arrow["{\epsilon_B}", from=3-5, to=5-5]
	\arrow["{\epsilon_{\epsilon_B}}"{description}, Rightarrow, from=5-3, to=3-5]
	\arrow["{\epsilon_B}"', from=5-3, to=5-5]
	\arrow["{\Psi_{UB}}"'{pos=0.6}, shift right=3, shorten <=25pt, shorten >=25pt, Rightarrow, from=0, to=2-4]
\end{tikzcd}\]

%393 druhej radek puvodne
%394 nad radkem "radek nade mnou" \phantom{X}
%395 \phantom{Y}
%396 \phantom{Z}
\begin{align*}
&U(LHS)y_{UB} \circ \gamma^{-1} y_{UB} \circ U\epsilon_B \mbbd_{y_{UB}} \circ \Phi_B = \\
\phantom{.}
\phantom{.}
\phantom{.}
&= U(\epsilon_B \Psi_{UB} \circ \epsilon_{\epsilon_B} Fy_{UB})y_{UB} \circ \gamma^{-1}y_{UB} \circ U\epsilon_B U\gamma' y_{UB} \circ U\epsilon_B UF\Phi_B y_{UB} \circ \\
&\phantom{=}\circ U\epsilon_B \mbbd_{y_{UB}} \circ \Phi_B \\
\phantom{.}
\phantom{.}
\phantom{.}
&\sstackrel{\eqref{EQP_Falpha}}{=} U(\epsilon_B \Psi_{UB} \circ \epsilon_{\epsilon_B}Fy_{UB})y_{UB} \circ \gamma^{-1}y_{UB} \circ U\epsilon_B U\gamma' y_{UB} \circ U\epsilon_B \mbbd_{y_{UB} U\epsilon_B y_{UB}} \circ \\
&\phantom{=}\circ \Phi_B U\epsilon_B y_{UB} \circ \Phi_B\\
\phantom{.}
\phantom{.}
\phantom{.}
&\sstackrel{\eqref{EQP_gamma_prime}}{=} U(\epsilon_B \Psi_{UB} \circ \epsilon_{\epsilon_B}Fy_{UB})y_{UB} \circ \gamma^{-1}y_{UB} \circ U\epsilon_B \gamma^{-1}y_{UB} \circ \\
&\phantom{=}\circ U\epsilon_B UFU\epsilon_B \mbbd_{y_{UFUB}y_{UB}} \circ U\epsilon_B \mbbd_{y_{UB}U\epsilon_B} y_{UB} \circ \Phi_B U\epsilon_B y_{UB} \circ \Phi_B \\
\phantom{.}
\phantom{.}
\phantom{X}
&= U(\epsilon_B \Psi_{UB}) y_{UB} \circ \gamma^{-1} y_{UB} \circ U\epsilon_{\epsilon_B} UFy_{UB} y_{UB} \circ \gamma^{-1} UFy_{UB} y_{UB} \circ \\
&\phantom{=} \circ U\epsilon_B UFU\epsilon_B \mbbd_{y_{UFUB}y_{UB}} \circ U\epsilon_B \mbbd_{y_{UB}\epsilon_B} y_{UB} \circ  U\epsilon_B y_{UB} \Phi_B \circ \Phi_B\\
\phantom{.}
\phantom{.}
\phantom{.}
&\sstackrel{\eqref{EQP_Phi_epsilon}}{=} U(\epsilon_B \Psi_{UB})y_{UB} \circ \gamma^{-1} y_{UB} \circ \gamma^{-1} UFy_{UB} y_{UB} \circ U\epsilon_B U\epsilon_{FUB} \mbbd \circ \\
&\phantom{=}\circ U\epsilon_B \Phi_B y_{UB} \circ \Phi_B      \\
\phantom{.}
\phantom{.}
\phantom{Y}
&= \gamma^{-1}y_{UB} \circ U\epsilon_B U\Psi_B y_{UB} \circ U\epsilon_B U\epsilon_{FUB} \mbbd_{y_{UFUB}y_{UB}} \circ U\epsilon_B \Phi_B y_{UB} \circ \Phi_B      \\
\phantom{.}
\phantom{.}
\phantom{.}
&\sstackrel{\eqref{EQP_Psi}}{=} \gamma^{-1}y_{UB} \circ U\epsilon_B \iota^{-1}y_{UB} \circ \Phi_B \\
\phantom{.}
\phantom{.}
\phantom{.}
&\sstackrel{\eqref{EQP_iota_prime}}{=} \gamma^{-1}y_{UB} \circ U\epsilon_B U\iota' y_{UB} \circ U\epsilon_B \mbbd_{y_{UB}} \circ \Phi_B\\
\phantom{.}
\phantom{.}
\phantom{.}
&= U(RHS) y_{UB} \circ \gamma^{-1}y_{UB} \circ U\epsilon_B \mbbd_{y_{UB}} \circ \Phi_B.
\end{align*}

%346

\end{proof}

\begin{lemma}\label{THM_lemma_big_l_adj_thm}
The composite bijection \textbf{(A)+(B)+(C)} in the proof of Theorem \ref{THM_BIG_lax_adj_thm}:
\[ \ck_D(DA,DHL)(f,Dl') \cong \cl(GDA,L)(s_L \circ Gf, s_L \circ GDl'), \]
is given by the assignment:
\[
\alpha \mapsto s_LG\alpha.
\]
\end{lemma}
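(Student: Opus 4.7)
The plan is to trace $\alpha: f \Rightarrow Dl'$ through each of the three bijections and then apply the naturality of the canonical isomorphism $\beth$ to reduce the claim to the special case $\alpha = 1_{Dl'}$.

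First I would unpack bijection (A). By its construction in \eqref{EQ_hL_yA_yHL_JD}, (A) is the composite of three simpler bijections: the Kleisli restriction-to-$y_A$ equivalence, post-composition with the invertible Kan-extension 2-cell $\mbbd_{l'}^{-1}: Dl'\, y_A \stackrel{\cong}{\Rightarrow} y_{HL}\, l'$, and the transpose bijection for $h_L \dashv y_{HL}$ (whose counit $\epsilon_{l'}: h_L y_{HL}\, l' \cong l'$ is invertible because $h_L$ is a reflector). Tracing $\alpha$ through yields
\[
\alpha^\# \;=\; \epsilon_{l'} \circ h_L\bigl(\mbbd_{l'}^{-1} \circ \alpha\, y_A\bigr) \;:\; f^\# \Rightarrow l'.
\]
Since $J_D$ and $D$ agree on 2-cells, (B) sends $\alpha^\#$ to $s_L GD\alpha^\#$, and (C) precomposes with $\beth_f^{-1}$, so the full composite is $s_L GD\alpha^\# \circ \beth_f^{-1}$.

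To show this equals $s_L G\alpha$, I would invoke the naturality of $\beth: s_L GJ_D(-)^\# \Rightarrow s_L G(-)$ at the 2-cell $\alpha$. Since the functor $(-)^\#$ acts on 2-cells by $\alpha \mapsto h_L\, \alpha\, y_A$, naturality gives
\[
s_L G\alpha \circ \beth_f \;=\; \beth_{Dl'} \circ s_L GD(h_L\, \alpha\, y_A).
\]
Expanding $\alpha^\#$ and applying pseudofunctoriality of $D$ (up to the usual associators) reduces the lemma to the single identity
\[
\beth_{Dl'} \;=\; s_L GD\bigl(\epsilon_{l'} \circ h_L \mbbd_{l'}^{-1}\bigr),
\]
which is precisely the special case $\alpha = 1_{Dl'}$ of the lemma.

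Finally, I would verify this key identity by unpacking the pasting diagram \eqref{EQ_sigma_s_s_p_psi} defining $\beth$ at $f = Dl'$, using the explicit formulas $h_L = Hs_L \circ HGp_{DHL} \circ c_{DHL}$ and the description of $\epsilon_{l'}$ from Remark \ref{POZN_explicit_form_h_L}, and then matching both sides via the pseudonaturality squares of $s$ and $c$, the modifications $\sigma$ and $\tau$, and the swallowtail identities of the biadjunction $GJ_D \dashv H$. The hard part will be this last pasting-diagram computation: $\beth_{Dl'}$ is a substantial composite involving $\sigma_{DHL}$, two $s$-naturality 2-cells, $(G\Psi)_A$, and various pseudofunctor associators, while the right-hand side is built purely from the algebra-theoretic data of $HL$; bridging them amounts to a careful chase that repeatedly invokes the swallowtail identities.
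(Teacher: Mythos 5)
Your reduction is structurally sound and isolates exactly the right pieces: unwinding \textbf{(A)}+\textbf{(B)}+\textbf{(C)} does give $\alpha \mapsto s_L GD(\epsilon_{l'} \circ h_L \mbbd_{l'}^{-1} \circ h_L \alpha\, y_A) \circ \beth_f^{-1}$, in agreement with the paper, and invoking the naturality of $\beth$ to strip off the $\alpha$-dependent factor and reduce to the case $\alpha = 1_{Dl'}$ is a legitimate --- and arguably cleaner --- organization. The paper does not separate the two steps: its single large commutative diagram treats general $\alpha$ throughout, and the cells you would delegate to ``naturality of $\beth$'' appear there concretely as the local naturality of $Gp$ and the interchange cells (labelled $(a)$ and $(*)$). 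Note, though, that the naturality of $\beth$ in $f$ is itself something to record (it follows from the local naturality of the pseudonatural transformation $p$ plus interchange); you assert it without checking it, and the paper never states it as a separate fact either.

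The genuine gap is that the residual identity $\beth_{Dl'} = s_L GD(\epsilon_{l'} \circ h_L \mbbd_{l'}^{-1})$ --- which you correctly flag as ``the hard part'' --- is the entire computational content of the lemma, and you only list the ingredients (the explicit formulas for $h_L$ and $\epsilon_L$ from Remark \ref{POZN_explicit_form_h_L}, the pseudonaturality of $s$ and $c$, the modifications $\sigma$ and $\tau$, the swallowtail identities) without performing the pasting-diagram chase. This is precisely where the paper spends its effort: after the naturality cells, its diagram closes the remaining region using the modification axiom for $(G\Psi)^{-1}$, the swallowtail identity for $(s,c)$, the local naturality of $s$, and the modification axiom for $\sigma^{-1}$, in a specific order that is not obvious in advance. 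Until that chase is actually carried out, what you have is a correct plan with the decisive verification deferred, not a proof.
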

\begin{proof}

Because of the swallowtail identity for the biadjunction in Proposition \ref{THM_biadjunkce_ck_ck_D}, it can be seen that the counit of the adjunction \eqref{EQ_hL_yA_yHL_JD} evaluated at $f: A \to HL$ is equal to the following (using notation from Remark \ref{POZN_explicit_form_h_L}):
% https://q.uiver.app/#q=WzAsNixbMCwwLCJBIl0sWzIsMCwiREEiXSxbNCwwLCJESEwiXSxbNiwwLCJITCJdLFsyLDEsIkhMIl0sWzQsMV0sWzAsMSwieV9BIl0sWzEsMiwiRGYiXSxbMiwzLCJoX0wiXSxbMCw0LCJmIiwyXSxbNCwyLCJ5X3tITH0iLDFdLFs0LDMsIiIsMix7ImN1cnZlIjozLCJsZXZlbCI6Miwic3R5bGUiOnsiaGVhZCI6eyJuYW1lIjoibm9uZSJ9fX1dLFsxLDQsInleey0xfV9mIiwwLHsibGV2ZWwiOjJ9XSxbMiw1LCJcXGVwc2lsb25fTCIsMCx7ImxhYmVsX3Bvc2l0aW9uIjozMCwic2hvcnRlbiI6eyJ0YXJnZXQiOjYwfSwibGV2ZWwiOjJ9XV0=
\[\begin{tikzcd}
	A && DA && DHL && HL \\
	&& HL && {}
	\arrow["{y_A}", from=1-1, to=1-3]
	\arrow["Df", from=1-3, to=1-5]
	\arrow["{h_L}", from=1-5, to=1-7]
	\arrow["f"', from=1-1, to=2-3]
	\arrow["{y_{HL}}"{description}, from=2-3, to=1-5]
	\arrow[curve={height=18pt}, Rightarrow, no head, from=2-3, to=1-7]
	\arrow["{y^{-1}_f}", Rightarrow, from=1-3, to=2-3]
	\arrow["{\epsilon_L}"{pos=0.3}, shorten >=6pt, Rightarrow, from=1-5, to=2-5]
\end{tikzcd}\]
The composite bijection \textbf{(A)+(B)+(C)} is thus the assignment:
\[
\alpha \mapsto s_L \circ GD(\epsilon_L f \circ h_L y^{-1}_{Dl'} \circ h_L \alpha y_A) \circ \beth_f^{-1}.
\]

Unwrapping the definitions of variables $h_L, \epsilon_L, \beth_f^{-1}$, what we need to show that the composite 2-cell below equals $s_L G\alpha$ (note that we use the same convention for the modifications on which a pseudofunctor is applied as in Definition \ref{DEFI_lax_adjoint}):
\[
\adjustbox{scale=0.85,center}{%
\begin{tikzcd}[column sep=small]
	&&&& GDA \\
	&&&& {GD^2HL} && GDHL & L \\
	\\
	GDA & {GD^2A} && {GD^2HL} & {GDHGD^2HL} && GDHGDHL & GDHL \\
	\\
	&& GDHL & GDHGDHL
	\arrow[""{name=0, anchor=center, inner sep=0}, Rightarrow, no head, from=4-4, to=2-5]
	\arrow[""{name=0p, anchor=center, inner sep=0}, phantom, from=4-4, to=2-5, start anchor=center, end anchor=center]
	\arrow[""{name=1, anchor=center, inner sep=0}, "{GDc_{DHL}}"', from=4-4, to=4-5]
	\arrow[""{name=1p, anchor=center, inner sep=0}, phantom, from=4-4, to=4-5, start anchor=center, end anchor=center]
	\arrow["{s_{GD^2HL}}"{description}, from=4-5, to=2-5]
	\arrow[""{name=2, anchor=center, inner sep=0}, "{Gp_{DHL}}"{description}, from=2-5, to=2-7]
	\arrow[""{name=2p, anchor=center, inner sep=0}, phantom, from=2-5, to=2-7, start anchor=center, end anchor=center]
	\arrow[""{name=3, anchor=center, inner sep=0}, "{s_L}", from=2-7, to=2-8]
	\arrow[""{name=3p, anchor=center, inner sep=0}, phantom, from=2-7, to=2-8, start anchor=center, end anchor=center]
	\arrow[""{name=4, anchor=center, inner sep=0}, from=4-5, to=4-7]
	\arrow[""{name=4p, anchor=center, inner sep=0}, phantom, from=4-5, to=4-7, start anchor=center, end anchor=center]
	\arrow["{s_{GDHL}}"{description}, from=4-7, to=2-7]
	\arrow[""{name=5, anchor=center, inner sep=0}, "{GDHs_L}"', from=4-7, to=4-8]
	\arrow[""{name=5p, anchor=center, inner sep=0}, phantom, from=4-7, to=4-8, start anchor=center, end anchor=center]
	\arrow["{s_L}"', from=4-8, to=2-8]
	\arrow["Gf", from=1-5, to=2-7]
	\arrow[""{name=6, anchor=center, inner sep=0}, "{Gp_{DA}}"{description, pos=0.6}, from=4-2, to=1-5]
	\arrow[""{name=7, anchor=center, inner sep=0}, "GDf", curve={height=-12pt}, from=4-2, to=4-4]
	\arrow["{GDy_A}"', from=4-1, to=4-2]
	\arrow[""{name=8, anchor=center, inner sep=0}, curve={height=-24pt}, Rightarrow, no head, from=4-1, to=1-5]
	\arrow[""{name=9, anchor=center, inner sep=0}, "{GD^2l'}"', curve={height=12pt}, from=4-2, to=4-4]
	\arrow["{GDl'}"', curve={height=18pt}, from=4-1, to=6-3]
	\arrow["{GDy_{HL}}"{description, pos=0.7}, from=6-3, to=4-4]
	\arrow[from=6-3, to=6-4]
	\arrow["{GDHGDy_{HL}}"{description}, from=6-4, to=4-5]
	\arrow[""{name=10, anchor=center, inner sep=0}, curve={height=18pt}, Rightarrow, no head, from=6-4, to=4-7]
	\arrow[""{name=11, anchor=center, inner sep=0}, shift right=5, curve={height=30pt}, Rightarrow, no head, from=6-3, to=4-8]
	\arrow["{s_{Gp_{DHL}}^{-1}}", shorten <=9pt, shorten >=9pt, Rightarrow, from=2p, to=4p]
	\arrow["{s_{s_L}^{-1}}", shorten <=9pt, shorten >=9pt, Rightarrow, from=3p, to=5p]
	\arrow["GD\alpha", shorten <=3pt, shorten >=3pt, Rightarrow, from=7, to=9]
	\arrow["{(GDy)_{l'}}"'{pos=0.7}, shorten <=12pt, Rightarrow, from=9, to=6-3]
	\arrow["{(GDc)_{y_{HL}}}"'{pos=1}, shift right=3, shorten <=21pt, Rightarrow, from=1, to=6-4]
	\arrow["{(GDHG\Psi)_{HL}}", shorten >=7pt, Rightarrow, from=4-5, to=10]
	\arrow["{(GD\tau)^{-1}_L}", shorten <=3pt, shorten >=3pt, Rightarrow, from=10, to=11]
	\arrow["{(G\Psi)^{-1}_A}"', shorten <=6pt, shorten >=6pt, Rightarrow, from=8, to=6]
	\arrow["{\sigma_{DHL}^{-1}}", shorten <=4pt, shorten >=4pt, Rightarrow, from=0p, to=1p]
	\arrow["{(Gp)_f^{-1}}", shorten <=6pt, shorten >=9pt, Rightarrow, from=6, to=4-4]
\end{tikzcd}
}\]

The diagram below proves this equality:
\[
\adjustbox{scale=0.8,center}{
\begin{tikzcd}
	{s_L Gf} & {s_L Gf Gp_{DA}GDy_A} & {s_L Gp_{DHL} GDf GDy_A} \\
	{s_L GDl'} & {s_L GDl' Gp_{DA}GDy_A} & {s_L Gp_{DHL} GD^2l' GDy_A} \\
	{s_L GDl'} && {s_L Gp_{DHL} GDy_{HL} GDl'} \\
	& {s_L Gp_{DHL} GDy_{HL}s_{GDHL} GDc_{HL} GDl'} & {s_L Gp_{DHL} s_{GD^2HL} GDc_{DHL} GDy_{HL} GDl'} \\
	{s_L GDl'} && {s_L Gp_{DHL} s_{GD^2HL}GDHGDy_{HL} GDc_{HL} GDl'} \\
	{s_L GDHs_L GDc_{HL} GDl'} & {s_L s_{GDHL} GDc_{HL} GDl'} & {s_L s_{GDHL} GDHGp_{DHL} GDHGDy_{HL} GDc_{HL} GDl'}
	\arrow[""{name=0, anchor=center, inner sep=0}, "{s_L Gf (G\Psi)^{-1}_A}", from=1-1, to=1-2]
	\arrow[""{name=0p, anchor=center, inner sep=0}, phantom, from=1-1, to=1-2, start anchor=center, end anchor=center]
	\arrow["{s_L(Gp)_f^{-1}GDy_A}", from=1-2, to=1-3]
	\arrow["{s_L Gp_{DHL} GD\alpha GDy_A}", from=1-3, to=2-3]
	\arrow["{s_L Gp_{DHL} GDy_{l'}^{-1}}", from=2-3, to=3-3]
	\arrow["{s_L Gp_{DHL} \sigma^{-1}_{DHL} GDy_{HL} GDl'}", from=3-3, to=4-3]
	\arrow["{s_L (GD\tau)^{-1}_L GDl'}", from=6-1, to=5-1]
	\arrow["{s_LG\alpha Gp_{DA} GDy_A}", from=1-2, to=2-2]
	\arrow[""{name=1, anchor=center, inner sep=0}, "{s_L (Gp)^{-1}_{Dl}GDy_A}"{description}, from=2-2, to=2-3]
	\arrow["{s_L G\alpha}"', from=1-1, to=2-1]
	\arrow[""{name=2, anchor=center, inner sep=0}, "{s_L GDl'(G\Psi)^{-1}_A}", from=2-1, to=2-2]
	\arrow[""{name=2p, anchor=center, inner sep=0}, phantom, from=2-1, to=2-2, start anchor=center, end anchor=center]
	\arrow[""{name=3, anchor=center, inner sep=0}, "{s_L (G\Psi)^{-1}_{HL}GDl'}"{description}, from=2-1, to=3-3]
	\arrow["1"', from=3-1, to=5-1]
	\arrow["1"', from=2-1, to=3-1]
	\arrow["{s_L Gp_{DHL} s_{GD^2HL} GDc_{y_{HL}} GDl'}", from=4-3, to=5-3]
	\arrow["{s_L s^{-1}_{Gp_{DHL}} GDHGDy_{HL} GDc_{HL} GDl'}", from=5-3, to=6-3]
	\arrow[""{name=4, anchor=center, inner sep=0}, "{s_L s_{GDHL} (GDHG\Psi)_{HL}GDc_{HL} GDl'}", shift left=3, from=6-3, to=6-2]
	\arrow[""{name=5, anchor=center, inner sep=0}, "{s^{-1}_{s_L} GDc_{HL} GDl'}", from=6-2, to=6-1]
	\arrow[""{name=6, anchor=center, inner sep=0}, "{s_L \sigma_{HL} GDl'}"{description}, from=6-2, to=5-1]
	\arrow[""{name=7, anchor=center, inner sep=0}, "{s_L Gp_{DHL} s_{GDy_{HL}} GDc_{HL} GDl'}"{description}, from=5-3, to=4-2]
	\arrow["{s_L (G\Psi)_{HL} s_{GDHL} GDc_{HL} GDl'}", from=4-2, to=6-2]
	\arrow[""{name=8, anchor=center, inner sep=0}, "{s_L Gp_{DHL}GDy_{HL} \sigma^{-1}_{HL} GDl'}"{description}, from=3-3, to=4-2]
	\arrow[""{name=9, anchor=center, inner sep=0}, "{s_L (G\Psi)_{HL} GDl'}"{description}, from=3-3, to=3-1]
	\arrow["{s_L \sigma^{-1}_{HL} GDl'}"{description}, from=3-1, to=6-2]
	\arrow["{(a)}"{description}, draw=none, from=1-3, to=1]
	\arrow["{(b)}"{description}, draw=none, from=1, to=3]
	\arrow["{(c)}"{description}, draw=none, from=6, to=5]
	\arrow["{(d)}"{description, pos=0.8}, draw=none, from=7, to=4]
	\arrow["{(*)}"{description}, draw=none, from=9, to=4-2]
	\arrow["{(*)}"{description}, draw=none, from=0p, to=2p]
	\arrow["{(e)}"{description}, draw=none, from=8, to=7]
\end{tikzcd}}\]

In this diagram:
\begin{itemize}
\item $(a)$ is the local naturality of $(Gp)^{-1}$,
\item $(b)$ is the modification axiom for $(G\Psi)^{-1}$
\item $(c)$ is the swallowtail identity for $(s,c)$,
\item $(d)$ is the equation derived from the local naturality of $s$,
\item $(e)$ is the modification axiom for $\sigma^{-1}$,
\item $(*)$'s are the middle-four-interchange laws.
\end{itemize}

%382 puvodni

\end{proof}
\endgroup

%\LaTeX{} \cite{latex2e} \cite{knuth:1984} %is a set of macros built atop \TeX{} \cite{texbook}.
\bibliographystyle{plain} % We choose the "plain" reference style
\bibliography{ClanekIIREF} % Entries are in the refs.bib file

%138 PAST BIBLIOGRAPHY

\end{document}